\tikzstyle{edge}=[->,thick,font=\tiny]
\tikzstyle{face}=[double equal sign distance,-implies,font=\tiny]
\tikzset{>=Latex}
\newlength{\R}
\tikzset{labelsize/.style={font=\scriptsize}}
\tikzset{2cell/.style={-implies,double,double equal sign distance,shorten >=9pt, shorten <=10pt}}
\newcommand{\labeleditem}[1]{
\item[\text{#1}]\protected@edef\@currentlabel{\text{#1}}\phantomsection
}
\newcounter{nodemaker}
\def\twocell#1#2{%
  \global\edef\mynodeone{twocell\arabic{nodemaker}}%
  \stepcounter{nodemaker}%
  \global\edef\mynodetwo{twocell\arabic{nodemaker}}%
  \stepcounter{nodemaker}%
  \ar[#1,phantom,shift left=3,""{name=\mynodeone}]%
  \ar[#1,phantom,shift right=3,""'{name=\mynodetwo}]%
  \ar[Rightarrow,from=\mynodeone,to=\mynodetwo,"\,#2"]%
}
 \numberwithin{equation}{section}
\renewcommand{\phi}{\varphi}
\renewcommand{\epsilon}{\varepsilon}
 \newtheorem{proposition}{Proposition}[section]
 \newtheorem*{lemma*}{Lemma}
 \newtheorem{theorem}[proposition]{Theorem}
 \newtheorem{corollary}[proposition]{Corollary}
\newcommand{\thistheoremname}{}
\newtheorem*{genericthm}{\thistheoremname}
 \theoremstyle{definition}
 \newtheorem{definition}[proposition]{Definition}
 \newtheorem{notation}[proposition]{Notation}
 \newtheorem*{definition*}{Definition}
 \newtheorem{example}[proposition]{Example}
 \newtheorem*{example*}{Example}
 \newtheorem{remark}[proposition]{Remark}
 \newtheorem{construction}[proposition]{Construction}
\numberwithin{equation}{section}
\newcommand{\ca}{\ensuremath{\mathcal A}\xspace}
\newcommand{\cb}{\ensuremath{\mathcal B}\xspace}
\newcommand{\cc}{\ensuremath{\mathcal C}\xspace}
\newcommand{\cd}{\ensuremath{\mathcal D}\xspace}
\newcommand{\ce}{\ensuremath{\mathcal E}\xspace}
\newcommand{\cg}{\ensuremath{\mathcal G}\xspace}
\newcommand{\ck}{\ensuremath{\mathcal K}\xspace}
\newcommand{\cl}{\ensuremath{\mathcal L}\xspace}
\newcommand{\cp}{\ensuremath{\mathcal P}\xspace}
\newcommand{\cq}{\ensuremath{\mathcal Q}\xspace}
\newcommand{\bbd}{\ensuremath{\mathbb D}\xspace}
\newcommand{\bbe}{\ensuremath{\mathbb E}\xspace}
\newcommand{\bbn}{\ensuremath{\mathbb N}\xspace}
\newcommand{\bbs}{\ensuremath{\mathbb S}\xspace}
\newcommand{\x}{\times}
\newcommand{\two}{\ensuremath{\mathbf{2}}\xspace}
\newcommand{\op}{\ensuremath{{}^{\textrm{op}}}\xspace}
\newcommand{\Set}{\ensuremath{\mathbf{Set}}\xspace}
\newcommand{\Cat}{\ensuremath{\mathbf{Cat}}\xspace}
\newcommand{\Gph}{\ensuremath{\mathbf{Gph}}\xspace}
\newcommand{\RGph}{\ensuremath{\mathbf{RGph}}\xspace}
\newcommand{\Mgmd}{\ensuremath{\mathbf{Mgmd}}\xspace}
\newcommand{\SemiCat}{\ensuremath{\mathbf{SemiCat}}\xspace}
\newcommand\ssl{\stretchrel*{$/$}{\textsc{e}}}
\DeclareMathOperator{\Eq}{Eq}
\DeclareMathOperator{\Ins}{Ins}
\DeclareMathOperator{\Fam}{Fam}
\DeclareMathOperator{\Lan}{Lan}
\DeclareMathOperator{\Ran}{Ran}
\DeclareMathOperator{\colim}{colim}
\DeclareMathOperator{\ob}{ob}
\DeclareMathOperator{\im}{im}
\DeclareMathOperator{\Elt}{Elt}
\newcommand{\Dtop}{\Delta_\mathrm{r}}
\newcommand{\Dtopop}{\Delta_\mathrm{r}^\mathrm{op}}
\newcommand{\res}[1]{#1} 
\newcommand{\und}[1]{\widehat{#1}} 
\newcommand{\ts}[1]{\mathbf{#1}} 
\begin{document}

\title{Nerves of generalized multicategories}

\author{Soichiro Fujii}
\address{National Institute of Informatics, Tokyo, Japan}
\email{s.fujii.math@gmail.com}

\author{Stephen Lack}
\address{School of Mathematical and Physical Sciences, Macquarie University NSW 2109, 
Australia}
\email{steve.lack@mq.edu.au}

\keywords{Generalized multicategory, $T$-category, nerve, simplicial object, local presentability}
\subjclass[2020]{18M65, 18N50, 18C35, 18N10, 18D20}

\begin{abstract}
For any category $\mathcal{E}$ and monad $T$ thereon, we introduce the notion of $T$-simplicial object in $\mathcal{E}$.
Any $T$-category in the sense of Burroni induces a $T$-simplicial object as its nerve.
This nerve construction defines a fully faithful functor from the category $\mathbf{Cat}_T(\mathcal{E})$ of $T$-categories to the category $s_T\mathcal{E}$ of $T$-simplicial objects, 
whose essential image is characterized by a simple condition.
We show that the category $s_T\mathcal{E}$ is enriched over the category of simplicial sets, and that 
this induces the usual 2-category structure on $\mathbf{Cat}_T(\mathcal{E})$.
We also study enriched limits and colimits in $s_T\mathcal{E}$ and $\mathbf{Cat}_T(\mathcal{E})$, and show
that if $\mathcal{E}$ is locally finitely presentable and $T$ is finitary, then  $\mathbf{Cat}_T(\mathcal{E})$ is locally finitely presentable as a 2-category and $s_T\mathcal{E}$ is locally finitely presentable as a simplicially-enriched category.
\end{abstract} 
\date{March 12, 2026}
\maketitle
\setcounter{tocdepth}{1}
\tableofcontents

\section{Introduction}\label{sect:intro}

The notion of $T$-category for a monad $T$ is due to Burroni
\cite{Burroni-Multicategories}, and has since been studied by many
authors under names including multicategory, $T$-multicategory, and
generalized multicategory, often under the further assumption that the
monad $T$ is cartesian.

It includes as special cases many other important notions. If $T$ is the identity monad on a category
$\ce$, then a $T$-category is just an internal category in $\ce$. When
$T$ is the ultrafilter monad on $\Set$, a $T$-category is a
generalized notion of topological space \cite{Barr-RelationalAlgebras}. When $T$ is the free monoid monad
on $\Set$, a $T$-category is a multicategory in the sense of
Lambek \cite{Lambek-multicat}. When $T$ is the free category monad on the category $\Gph$ of
graphs, a $T$-category is what was called a multicat\'egorie in \cite{Burroni-Multicategories} and an fc-multicategory in
\cite{Leinster-book,Leinster-GeneralizedEnrichment}, but is now usually called a virtual double
category \cite{CruttwellShulman}.

As well as being studied in their own right, $T$-categories have been
used in relation to questions of coherence
\cite{Hermida-representable-multicats, Hermida-CoherentUniversal} and as a base for
enrichment \cite{Leinster-book,Leinster-GeneralizedEnrichment}; it is
the relation to enrichment which led to our interest \cite{Fujii-Lack-enrichment-families} in the notion. 
There are also applications to universal algebra; among other things,
a one-object multicategory is the same as a (non-symmetric) operad. The introduction to \cite{CruttwellShulman} contains a good overview to the many ways in which $T$-categories have been used.

While many aspects of $T$-categories have been heavily studied,
relatively little attention has been given to the category (in fact a
2-category) of all $T$-categories for a given $T$ (and $\ce$). This is
the first goal of the paper. The second goal, which is also helpful in
relation to the first, is to introduce and study a nerve construction
for $T$-categories. Each (small) category has an associated simplicial
set called its nerve, and there is a corresponding construction for
internal categories, taking a category in $\ce$ to a simplicial object
in $\ce$, giving a fully faithful embedding of categories in
simplicial objects.  If instead we start with a $T$-category, we show how to define its nerve, which is a new structure that we call a
$T$-simplicial object (in $\ce$). Every such $T$-simplicial object
determines an actual simplicial object in the Kleisli category $\ce_T$
of $T$, but the $T$-simplicial object retains more information about
the $T$-category; furthermore, if we wish to retain the fully faithful
nature of the usual nerve construction, the simplicial maps in $\ce_T$
are not the right notion of morphism. With the natural notion of
morphism of $T$-simplicial objects we do obtain a fully faithful nerve
construction. If $T$ is an identity monad, then a $T$-simplicial object is just a simplicial object, and the nerve construction is the usual one for internal categories.

We study $T$-categories without assuming that $T$ is cartesian, but we
do in some places suppose that $\ce$ is locally finitely presentable (lfp)
and that $T$ is finitary; in this introduction, we shall refer to this
as the ``lfp case''. (All results obtained in this lfp case generalize easily to the case of locally $\alpha$-presentable categories and monads of rank $\alpha$.)

We begin in Section~\ref{sect:T-cat} by reviewing the definition of
$T$-category, and showing in Theorem~\ref{thm:lfp} that in the lfp case the category
$\Cat_T(\ce)$ of $T$-categories is locally finitely presentable.

In Section~\ref{sect:nerve}, we introduce our nerve construction for $T$-categories, giving a fully faithful embedding $\Cat_T(\ce)\to s_T\ce$ of the category
of $T$-categories in the category of $T$-simplicial objects, and we characterize the image in terms of a ``nerve'' or ``Segal''
condition. We also show in Theorem~\ref{thm:set-lfp} that in the lfp case the category $s_T\ce$ is
locally finitely presentable. 

In Section~\ref{sect:comonadicity}, we prove a comonadicity result.
The category $s\Set=[\Delta\op,\Set]$ of simplicial sets is of course a presheaf
category, while more generally the category $s\ce =[\Delta\op,\ce]$ of simplicial
objects in $\ce$ is a functor category. This is generally not the case for
$s_T\ce$ when $T$ is a non-identity monad, but we show in
Theorem~\ref{thm:comondicity} that $s_T\ce$ is comonadic over a functor
category provided that $\ce$ has finite products. In fact 
$s_T\ce$ can also be seen as a category of {\em enriched functors}: see Remark~\ref{rmk:F-cat}.

The remainder of the paper involves enrichment in various ways.

In Section~\ref{sec:simplicial-enrichment}, we show that $s_T\ce$
can be enriched over simplicial sets, generalizing the classical fact
that the category $s\ce$ of simplicial objects in $\ce$ can be so
enriched. We write $\underline{s_T\ce}$ for the resulting
$s\Set$-category.

Then in Section~\ref{sect:2cat} we shift our attention to
$\Cat_T(\ce)$. Since this is a full subcategory of $s_T\ce$, it inherits
the simplicial enrichment of $\underline{s_T\ce}$, but we show that
the $s\Set$-valued homs of this induced structure on $\Cat_T(\ce)$ are
in fact the nerves of categories, and so obtain a 2-category structure
$\underline{\Cat_T(\ce)}$ on $\Cat_T(\ce)$.

Turning to Section~\ref{sect:powers}, we show in Theorem~\ref{thm:power-by-Delta-1} that, whenever $\ce$ has pullbacks, $\underline{s_T\ce}$ has
powers by the representable simplicial set $\Delta[1]$, and so deduce in
Theorem~\ref{thm:T-cat-power-by-2} that the
2-category $\underline{\Cat_T(\ce)}$ has powers by the arrow category $\two$.

Finally in Section~\ref{sec:local-presentability} we study local
presentability of the enriched categories in the lfp case, proving in
Theorem~\ref{thm:s-Presentable} that $\underline{s_T\ce}$ is locally finitely presentable as an $s\Set$-category,
and in Theorem~\ref{thm:Cat-presentable}  that $\underline{\Cat_T(\ce)}$ is locally finitely presentable as a 2-category.

Appendix~\ref{apx:lfp} contains proofs of certain general facts about locally finitely presentable categories used in this paper.

\subsection*{Acknowledgments}
We thank Nathanael Arkor and anonymous referees for their helpful comments on an earlier version of this paper.
The first-named author acknowledges the support of JSPS Overseas Research Fellowships and ASPIRE Grant No.\ JPMJAP2301, JST.

\section{$T$-categories}\label{sect:T-cat}
Throughout this section, let $\ce$ be a category with pullbacks and $T=(T,m\colon T^2\to T,i\colon 1_\ce\to T)$ an arbitrary monad on $\ce$, unless otherwise stated. 
Note in particular that $T$ is \emph{not} assumed to be cartesian (in the sense of \cite[Definition~4.1.1]{Leinster-book}).
In this section, we recall the notion of \emph{$T$-category} \cite{Burroni-Multicategories} (also called \emph{$T$-multicategory} \cite{Leinster-book}) and give several examples.
In preparation for later sections, we adopt the ``simplicial'' notation (as was done in \cite[Section~6]{Bourn-TCats}).
We note that a detailed definition of $T$-categories can also be found in \cite[Section~2]{Tholen-Yeganeh}.

\begin{definition}[{\cite[I.1]{Burroni-Multicategories}}]\label{def:T-graph}
A {\em $T$-graph} $(X_0,X_1,d_0,d_1)$ consists
of objects $X_0,X_1\in\ce$ equipped with morphisms $d_0\colon X_1\to X_0$ and
$d_1\colon X_1\to TX_0$ in $\ce$. 

A \emph{morphism of $T$-graphs} $(X_0,X_1,d_0,d_1)\to (Y_0,Y_1,d_0,d_1)$ is a pair $(f_0\colon X_0\to Y_0,f_1\colon X_1\to Y_1)$ of morphisms in $\ce$ making the diagram 
\[
  \begin{tikzcd}
    TX_0 \ar[d,"Tf_0"' ] & X_1 \ar[l,"d_1"' ] \ar[d,"f_1"]
    \ar[r,"d_0"] & X_0 \ar[d,"f_0"] \\
    TY_0 & Y_1 \ar[l,"d_1"] \ar[r,"d_0"'] & Y_0 
  \end{tikzcd}
\]
commute. We write $\Gph_T(\ce)$ for the category of $T$-graphs and $\ob\colon \Gph_T(\ce)\to \ce$ for the functor mapping $(X_0,X_1,d_0,d_1)$ to $X_0$.
\end{definition}

\begin{definition}[{\cite[I.1]{Burroni-Multicategories}; see also Definition~\ref{def:T-Cat-official}}]\label{def:T-cat}
A \emph{$T$-category} $X$ consists of a $T$-graph $(X_0,X_1,d_0,d_1)$, called the \emph{underlying $T$-graph} of $X$, together with the following data:
\begin{itemize}
    \labeleditem{(CD1)}\label{TCat-D1} 
    a morphism $d_1\colon X_2\to X_1$, where $X_2$ is defined by the pullback
  \begin{equation}\label{eqn:X2-as-pullback}
  \begin{tikzcd}
    X_{2} \ar[r,"d_{2}" ] \ar[d,"d_0"' ] & TX_{1} \ar[d,"Td_0"]\\
    X_{{1}} \ar[r,"d_{1}"' ] & TX_{0}
  \end{tikzcd}
  \end{equation}
  in $\ce$, and 
    \labeleditem{(CD2)}\label{TCat-D2}
    a morphism $s_0\colon X_0\to X_1$, 
\end{itemize}
satisfying the following axioms:
\begin{itemize}
    \labeleditem{(CA1)}\label{TCat-A1}
    the diagram 
  \[
  \begin{tikzcd}
    T^2X_0 \ar[d,"mX_0"' ] & TX_1 \ar[l,"Td_1"' ] & X_2 \ar[l,"d_2"' ]
    \ar[d,"d_1"] \ar[r,"d_0"] & X_1 \ar[d,"d_0"] \\
    TX_0 && X_1 \ar[ll,"d_1"] \ar[r,"d_0"'] & X_0 
  \end{tikzcd}
    \] 
    commutes, 
    \labeleditem{(CA2)}\label{TCat-A2}
    the diagram 
    \[
  \begin{tikzcd}
    &    X_0 \ar[dr,"1"] \ar[d,"s_0"] \ar[dl,"iX_0"' ] \\
    TX_0 & X_1 \ar[l,"d_1"] \ar[r,"d_0"'] & X_0
  \end{tikzcd}
  \]
    commutes,
    \labeleditem{(CA3)}\label{TCat-A3}
    the \emph{associativity law}: the diagram
    \[ \begin{tikzcd}
   X_3 \ar[r,"d_1"] \ar[d,"d_2"' ] & X_{2} \ar[d,"d_{1}"] \\
   X_{2} \ar[r,"d_{1}"'] & X_{1}
  \end{tikzcd} \] 
  commutes, 
  where $X_3$ is defined by the pullback
   \begin{equation}\label{eqn:X3-as-pullback}
       \begin{tikzcd}
    X_{3} \ar[r,"d_{3}" ] \ar[d,"d_0"' ] & TX_{2} \ar[d,"Td_0"]\\
    X_{{2}} \ar[r,"d_{2}"' ] & TX_{1}
  \end{tikzcd}
   \end{equation}
  in $\ce$ and $d_1,d_2\colon X_3\to X_2$ are the unique morphisms (induced by the universality of the pullback \eqref{eqn:X2-as-pullback}) making the diagrams 
\[
  \begin{tikzcd}
    TX_2 \ar[d,"Td_1"' ] & X_3 \ar[l,"d_3"' ] \ar[d,"d_1"]
    \ar[r,"d_0"] & X_2 \ar[d,"d_0"] \\
    TX_1 & X_2 \ar[l,"d_2"] \ar[r,"d_0"'] & X_1 
  \end{tikzcd}\quad
  \begin{tikzcd}
    T^2X_1 \ar[d,"mX_1"' ] & TX_2 \ar[l,"Td_2"' ] & X_3 \ar[l,"d_3"' ]
    \ar[d,"d_2"] \ar[r,"d_0"] & X_2 \ar[d,"d_1"] \\
    TX_1 && X_2 \ar[ll,"d_2"] \ar[r,"d_0"' ] & X_1 
  \end{tikzcd}
\]
  commute, and
    \labeleditem{(CA4)}\label{TCat-A4}
    the \emph{unit laws}: the diagram
    \[\begin{tikzcd}
    X_{1} \ar[dr,"1"']  \ar[r,"s_0"] & X_2 \ar[d,"d_1"] & X_{1} \ar[l,"s_1"' ] \ar[dl,"1"] \\
    & X_{1} &  
  \end{tikzcd}\]
    commutes, 
    where $s_0,s_1\colon X_1\to X_2$ are the unique morphisms making the diagrams 
        \[\begin{tikzcd}
    TX_{0} \ar[d,"Ts_{0}"'] & X_1 \ar[l,"d_1"' ] \ar[d,"s_0"]
    \ar[dr,"1"] \\
    TX_1 & X_{2} \ar[l,"d_{2}"] \ar[r,"d_{0}"'] & X_1 
  \end{tikzcd}\quad \begin{tikzcd}
    & X_1 \ar[r,"d_0"] \ar[d,"s_1"] \ar[dl,"iX_1"'] & X_{0}
    \ar[d,"s_{0}"] \\
    TX_1 & X_{2} \ar[l,"d_{2}"] \ar[r,"d_{0}"'] & X_1 
  \end{tikzcd}\]
    commute.
\end{itemize}

A \emph{$T$-functor} $f\colon X\to Y$ between $T$-categories $X$ and $Y$ is a morphism $(f_0,f_1)$ of underlying $T$-graphs such that the diagram
  \[\begin{tikzcd}
    X_0 \ar[d,"f_0"' ] \ar[r,"s_0"] & X_1 \ar[d,"f_1"]
    & X_2 \ar[l,"d_1"'] \ar[d,"f_2"] \\
    Y_0 \ar[r,"s_0"'] & Y_1 & Y_2 \ar[l,"d_1"] 
  \end{tikzcd}\]
commutes, where $f_2$ is the unique morphism making the diagram 
\[
  \begin{tikzcd}
    TX_1 \ar[d,"Tf_1"' ] & X_2 \ar[l,"d_2"' ] \ar[d,"f_2"]
    \ar[r,"d_0"] & X_1 \ar[d,"f_1"] \\
    TY_1 & Y_2 \ar[l,"d_2"] \ar[r,"d_0"'] & Y_1 
  \end{tikzcd}
\]
commute.
We denote the category of $T$-categories and $T$-functors between them by $\Cat_T(\ce)$.
\end{definition}

\begin{remark}\label{rmk:equality-of-T-cats}
    Strictly speaking, the above definition of $T$-category leaves ambiguous the criterion for equality between two $T$-categories. For instance, it is unclear whether the pullback diagram defining $X_2$ should be regarded as part of the structure. 
    While such questions are not mathematically significant, we will give our ``official'' definition of the category $\Cat_T(\ce)$ later (Definition~\ref{def:T-Cat-official}), as a suitable replete full subcategory of the category of \emph{$T$-simplicial objects} --- according to this latter definition, the pullback diagrams defining $X_2$ and $X_3$, as well as others, are regarded as part of the structure of a $T$-category. 
    In fact, Definition~\ref{def:T-Cat-official} will be given under completely general assumptions, without even requiring the base category $\ce$ to have all pullbacks. 
    Such base categories naturally occur as, e.g., the Kleisli category $\ce_T$ of the monad $T$; see Remark~\ref{rmk:T-cats-and-Kleisli}.
\end{remark}

\begin{remark}
    One can define the \emph{skew bicategory} $\mathbf{Span}_T(\ce)$ of \emph{$T$-spans} and identify $T$-categories with monads in $\mathbf{Span}_T(\ce)$. This is essentially observed in \cite[Proposition~II.3.15]{Burroni-Multicategories}; whereas Burroni's ``pseudo-cat\'egories'' are different from skew bicategories as one of the unitors goes in the opposite direction, that unitor is invertible in $\mathbf{Span}_T(\ce)$, and hence one can view it as a skew bicategory as well. 
    See also \cite[Proposition~5.1]{Zawadowski-lax}.
    The skew bicategory $\mathbf{Span}_T(\ce)$ is a bicategory if and only if the monad $T$ is cartesian \cite[Proposition~II.2.14]{Burroni-Multicategories}.
\end{remark}

Here are some examples of $T$-categories. 

We start with some degenerate cases of $T$-categories (Examples~\ref{ex:objects-as-T-cats} and \ref{ex:T-alg-as-T-cat}). 
For these, it is useful to make the following observation.  
In order to give a $T$-category structure on a $T$-graph $(X_0,X_1,d_0,d_1)$ with $d_0\colon X_1\to X_0$ and $d_1\colon X_1\to TX_0$ jointly monic, it suffices to show that it admits (necessarily unique) \ref{TCat-D1} and \ref{TCat-D2} satisfying \ref{TCat-A1} and \ref{TCat-A2}; the associativity and unit laws (\ref{TCat-A3} and \ref{TCat-A4}) are then automatic \cite[Proposition~I.2.2]{Burroni-Multicategories}.

\begin{example}[{\cite[Proposition~I.3.8]{Burroni-Multicategories}}]\label{ex:objects-as-T-cats}
    The functor $\ob\colon \Cat_T(\ce)\to \ce$ sending each $T$-category $X$ to $X_0$ has a left adjoint, mapping each object $E$ of $\ce$ to the (unique) $T$-category whose underlying $T$-graph is $(E,E,1_E,iE)$.
    We thus obtain a fully faithful functor $\ce\to \Cat_T(\ce)$, whose essential image consists of all $T$-categories $X$ with $d_0\colon X_1\to X_0$ invertible.

    If $\ce$ has binary products, then $\ob\colon \Cat_T(\ce)\to \ce$ has a right adjoint, mapping each $E\in\ce$ to the (unique) $T$-category whose underlying $T$-graph is the product diagram 
    \[
  \begin{tikzcd}
    TE & TE\times E \ar[l,"\pi_1"'] \ar[r,"\pi_2"] & E 
  \end{tikzcd}
    \]
    in $\ce$.
\end{example}

\begin{example}[{\cite[Proposition~I.2.3]{Burroni-Multicategories}}]\label{ex:T-alg-as-T-cat}
    Any pair $(A,a)$ of an object $A\in\ce$ and a morphism $a\colon TA\to A$ in $\ce$ gives rise to a $T$-graph $(A,TA,a,1_{TA})$, which admits a $T$-category structure if and only if $(A,a)$ is an (Eilenberg--Moore) $T$-algebra. Thus we obtain a fully faithful functor $\ce^T\to \Cat_T(\ce)$, whose essential image consists of all $T$-categories $X$ with $d_1\colon X_1\to TX_0$ invertible. 
\end{example}

\begin{example}\label{ex:internal-cats}
    When $T$ is the identity monad $1_\ce$ on $\ce$, the $1_\ce$-categories are the internal categories in $\ce$.  
\end{example}

\begin{example}[{\cite[Section~4]{Hermida-representable-multicats} and \cite[Example~4.2.7]{Leinster-book}}]\label{ex:Lambek-multicats}
    Let $T$ be the free monoid monad on $\Set$. Then $T$ is cartesian, and the $T$-categories are the ordinary multicategories in the sense of Lambek~\cite{Lambek-multicat}. 
\end{example}

\begin{example}[{\cite[III.3]{Burroni-Multicategories} and \cite[Chapter~5]{Leinster-book}}]\label{ex:vdbl}
    Let $\Gph$ be the category of (ordinary) graphs and $T$ be the free category monad on $\Gph$. Then $T$ is cartesian, and 
    the $T$-categories are the \emph{virtual double categories} (this term is due to \cite{CruttwellShulman}).
    Let $X$ be a $T$-category, with the corresponding virtual double category $\mathbb{X}$.
    Then the graph $X_0$ is the graph of \emph{objects} and \emph{horizontal morphisms} of $\mathbb{X}$, whereas the graph $X_1$ is the graph of \emph{vertical morphisms} and \emph{multicells} of $\mathbb{X}$. 
\end{example}

\begin{example}[{\cite[Remark~7.2]{Fujii-Lack-enrichment-families}}]\label{ex:vdbl-hu}
    Let $\RGph$ be the category of (ordinary) reflexive graphs and $S$ the free category monad on $\RGph$, induced from the (monadic) forgetful functor $\Cat\to \RGph$.
    In this case, the $S$-categories correspond to the \emph{unital virtual double categories} in the sense of \cite[Definition~7.1]{Fujii-Lack-enrichment-families}, i.e., 
    virtual double categories equipped with \emph{chosen} horizontal units on its objects. 
    If an $S$-category $Y$ corresponds to a unital virtual double category $\mathbb{Y}$, then the graph $Y_0$ is the graph of objects and horizontal morphisms of $\mathbb{Y}$, whereas the graph $Y_1$ is the graph of vertical morphisms and \emph{multicells having no chosen horizontal units in the domain} in $\mathbb{Y}$.
    Note that the monad $S$ is \emph{not} cartesian. 
    Indeed, the endofunctor $S\colon \RGph\to \RGph$ preserves the terminal object.
    However, in general, if the underlying endofunctor of a cartesian monad $T$ on a category $\ce$ with finite limits preserves the terminal object, then the unit of $T$ becomes a natural isomorphism, making $T$ isomorphic to the identity monad on $\ce$.
\end{example}

\begin{theorem}\label{thm:lfp}
If the category $\ce$ is locally finitely presentable and the functor
$T\colon\ce\to \ce$ is finitary, then the categories $\Gph_T(\ce)$ and
$\Cat_T(\ce)$ are also locally finitely presentable, and the forgetful
functors
$\Cat_T(\ce)\to\Gph_T(\ce)\xrightarrow{\ob}\ce$
are finitary right adjoints.
\end{theorem}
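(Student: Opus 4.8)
The plan is to realize both $\Gph_T(\ce)$ and $\Cat_T(\ce)$ as iterated \emph{PIE-limits} --- limits built from products, inserters, and equifiers --- of $\ce$, carried out in the 2-category whose objects are lfp categories and whose morphisms are finitary right adjoints. The two facts I would isolate in Appendix~\ref{apx:lfp} are: (i) this 2-category is closed under products, inserters and equifiers of finitary right adjoints, with the projection (resp.\ inclusion) legs again finitary right adjoints that moreover create filtered colimits; and (ii) in any lfp category finite limits commute with filtered colimits (both being computed pointwise from $\Set$ under the identification $\ce\simeq\mathrm{Lex}(\ce_{\mathrm{fp}}^{\mathrm{op}},\Set)$). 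Fact (ii), combined with the hypothesis that $T$ is finitary, is what will turn the pullback-forming operations below into \emph{finitary} functors.

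For $\Gph_T(\ce)$: I would start from $\ce\times\ce$, whose object I write $(X_0,X_1)$; this is lfp and its first projection to $\ce$ is a finitary right adjoint. I then adjoin $d_0\colon X_1\to X_0$ as the inserter of the two (finitary) projection functors $(X_0,X_1)\mapsto X_1$ and $(X_0,X_1)\mapsto X_0$, and separately adjoin $d_1\colon X_1\to TX_0$ as the inserter of $X_1$ and of the composite $X_0\mapsto TX_0$. It is important to treat $d_0$ and $d_1$ by two successive inserters rather than by a single map $X_1\to X_0\times TX_0$, since the functor $(X_0,X_1)\mapsto X_0\times TX_0$ need not be finitary. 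As both legs are finitary in each case, each inserter is lfp with finitary-right-adjoint projection; composing, $\Gph_T(\ce)$ is lfp and $\ob\colon\Gph_T(\ce)\to\ce$ is a finitary right adjoint.

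For $\Cat_T(\ce)$: working over $\Gph_T(\ce)$, I first note that the assignment $X\mapsto X_2$ determined by the pullback \eqref{eqn:X2-as-pullback} (and likewise $X\mapsto X_3$ from \eqref{eqn:X3-as-pullback}) is a finitary functor to $\ce$, by fact (ii) and finitariness of $T$. I then adjoin the structure maps \ref{TCat-D1} ($d_1\colon X_2\to X_1$) and \ref{TCat-D2} ($s_0\colon X_0\to X_1$) by two further inserters. The comparison maps occurring in the axioms --- the $d_1,d_2\colon X_3\to X_2$ of \ref{TCat-A3} and the $s_0,s_1\colon X_1\to X_2$ of \ref{TCat-A4} --- are not new structure: each is produced by the universal property of the relevant pullback from already-available finitary natural transformations, and is therefore itself a finitary natural transformation. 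Finally I would impose each of \ref{TCat-A1}--\ref{TCat-A4} as an equifier of a parallel pair of such finitary natural transformations. Every stage is an inserter or equifier of finitary functors between lfp categories, so by fact (i) the total category $\Cat_T(\ce)$ is lfp and the forgetful functor $\Cat_T(\ce)\to\Gph_T(\ce)$, being a composite of inserter-projections and equifier-inclusions, is a finitary right adjoint. One also checks that the morphism condition built into the inserters for $d_1\colon X_2\to X_1$ and for $s_0$ reproduces exactly the $T$-functor condition of Definition~\ref{def:T-cat} (the functorial action of $X\mapsto X_2$ on a $T$-graph morphism being precisely the comparison map $f_2$ there), so the category produced has the correct morphisms; equifiers, being full inclusions, do not disturb this.

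The main obstacle is the finitariness of the pullback functors $X\mapsto X_2$ and $X\mapsto X_3$: this is where fact (ii) is essential, and one must check that filtered colimits in $\Gph_T(\ce)$ are computed on the underlying objects $X_0,X_1$ in $\ce$ (which holds because the inserter projections create them), so that forming $X_2$ commutes with them. A secondary point requiring care is the verification that the comparison maps in \ref{TCat-A3} and \ref{TCat-A4}, defined via universal properties, are genuinely natural and finitary, so that the axioms may legitimately be imposed as equifiers, together with the identification of the resulting morphisms with $T$-functors. The left adjoints needed for the ``right adjoint'' half of the statement are supplied uniformly by fact (i); as a consistency check, the composite left adjoint to the whole chain $\Cat_T(\ce)\to\ce$ recovers the discrete-$T$-category functor $E\mapsto(E,E,1_E,iE)$ of Example~\ref{ex:objects-as-T-cats}.
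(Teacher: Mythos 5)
Your overall strategy---realizing $\Gph_T(\ce)$ and $\Cat_T(\ce)$ by iterated inserters and equifiers, with lfp-ness and adjointness propagated by appendix lemmas---is in the same spirit as the paper's proof, but there is a genuine gap at every step whose \emph{codomain} involves $T$. The inserter and equifier lemmas that can actually be proved (Propositions~\ref{prop:ins} and~\ref{prop:eq}) are asymmetric: the domain functor $F$ need only be finitary, but the codomain functor $G$ must be a right adjoint, since the proofs convert $\Ins(F,G)$ into algebras for the endofunctor $LF$, where $L$ is the left adjoint of $G$ (and similarly for equifiers). Your fact (i), demanding that \emph{both} legs be finitary right adjoints, does not even cover your own good steps (the pullback functor $X\mapsto X_2$ is finitary but not a right adjoint, which is harmless on the domain side); more seriously, no version of these lemmas covers the steps where the codomain involves $T$: the inserter adjoining $d_1\colon X_1\to TX_0$ has codomain functor $(X_0,X_1)\mapsto TX_0$, and the equifiers imposing the $T$-valued parts of \ref{TCat-A1} and \ref{TCat-A2} (namely $d_1.d_1=mX_0.Td_1.d_2\colon X_2\to TX_0$ and $d_1.s_0=iX_0\colon X_0\to TX_0$) have codomain $T$ composed with a projection. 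Since $T$ is only assumed finitary, not continuous (the free monoid monad on $\Set$ does not even preserve the terminal object), these functors are not right adjoints, and the cited machinery simply does not apply to them.

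Moreover, your stated reason for splitting $d_0$ and $d_1$ into two successive inserters---that $(X_0,X_1)\mapsto X_0\times TX_0$ ``need not be finitary''---is false and contradicts your own fact (ii): a finite limit of finitary functors between lfp categories is finitary, which is the opening observation of the paper's proof. Ironically, the product functor $T'A=TA\times A$ is exactly what repairs the argument, but used with a \emph{comma category} rather than an inserter: $\Gph_T(\ce)\cong\ce/T'$, and Proposition~\ref{prop:comma} shows that $\cb/F$ is lfp for $F$ merely finitary; a comma category tolerates a non-continuous functor on the receiving side precisely because the two objects involved are independent, which is not true of an inserter. For the axioms, the paper avoids $T$-valued equifiers by a second device: it packages \ref{TCat-D1} together with \ref{TCat-A1} as a \emph{vertex-trivial morphism of $T$-graphs} $P_2X\to X$, so that all compatibility with the $T$-valued face maps is carried by the hom-sets of $\Gph_T(\ce)$ rather than imposed as equations; this is built as an inserter $\Ins(P_2,1)$ followed by an equifier whose parallel pair lands in $\ob.i$, a right adjoint (and similarly with $P_3$, $P_0$ for \ref{TCat-A3}, \ref{TCat-A2}, \ref{TCat-A4}). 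Without this restructuring, the steps of your proof with $T$-valued codomains are unsupported.
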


\begin{proof}
Since in a locally finitely presentable category finite limits commute
with filtered colimits, any finite limit of finitary functors between
locally finitely presentable categories is itself finitary. In
particular, the functor $T'\colon\ce\to\ce$ sending $A$ to $TA\x A$ is
also finitary. The category $\Gph_T(\ce)$ of $T$-graphs is just the
comma category $\ce/T'$, and is therefore locally finitely presentable
with the functor $\ob\colon\Gph_T(\ce)\to\ce$ a finitary right
adjoint: see Proposition~\ref{prop:comma} for example.

We now repeatedly use the following facts. Given finitary functors
$F,G\colon\ck\to\cl$ between locally finitely presentable categories,
if $G$ is a right adjoint (or equivalently just continuous) then the
inserter $\Ins(F,G)$ is locally finitely presentable and the
projection $\Ins(F,G)\to\ck$ is a finitary right adjoint. Similarly if
$F,G\colon\ck\to\cl$ are as above, with $\alpha,\beta\colon F\to G$
then the equifier $\Eq(\alpha,\beta)$ is locally finitely presentable
and the projection $\Eq(\alpha,\beta)\to\ck$ is a finitary right
adjoint. We prove them in Propositions~\ref{prop:ins}
and~\ref{prop:eq}, closely following the arguments of \cite{BirdPhD},
where $F$ was also assumed to be a right adjoint.

Let us first construct the category $\Mgmd_T(\ce)$ of \emph{$T$-magmoids} (also discussed in Subsection~\ref{subsec:T-magmoids}), i.e., $T$-graphs equipped with \ref{TCat-D1} satisfying \ref{TCat-A1}.
To this end, let $P_2\colon \Gph_T(\ce)\to \Gph_T(\ce)$ be the functor sending each $T$-graph $(X_0,X_1,d_0,d_1)$ to
the $T$-graph
\[\begin{tikzcd}
    TX_{0} & T^2X_0 \ar[l,"mX_0"'] & TX_1 \ar[l,"Td_1"'] & X_{2} \ar[l,"d_2"'] \ar[r,"d_0"] & X_1 \ar[r,"d_0"] & X_0,
  \end{tikzcd}\]
where $X_2$ is defined by the pullback \eqref{eqn:X2-as-pullback}.
This functor is finitary because $T$ is finitary and pullbacks and filtered colimits commute in $\ce$.
We say that a morphism $(f_0,f_1)\colon (X_0,X_1,d_0,d_1)\to (Y_0,Y_1,d_0,d_1)$ of $T$-graphs is \emph{vertex-trivial} if $X_0=Y_0$ and $f_0$ is the identity on that object, or equivalently, if the functor $\ob\colon \Gph_T(\ce)\to \ce$ sends $(f_0,f_1)$ to an identity morphism. 
Then a $T$-magmoid is just a $T$-graph $X$ equipped with a vertex-trivial morphism $P_2X\to X$.
Thus the category $\Mgmd_T(\ce)$ of $T$-magmoids can be obtained by first forming the inserter
\[
  \begin{tikzcd}
    \Ins(P_2,1) \ar[r,"i"] & \Gph_T(\ce) \ar[r,shift left,"P_2"] \ar[r,shift right,"1"' ]  &
    \Gph_T(\ce) 
  \end{tikzcd}
\] 
of $P_2$ and $1$, 
with structure map say $\alpha\colon P_2.i\to i$, and then the
equifier of ${\ob}.\alpha\colon {\ob}.P_2.i\to {\ob}.i$ and the identity natural transformation on ${\ob}.i$.
Hence $\Mgmd_T(\ce)$ is locally finitely presentable
and the forgetful functor $j\colon \Mgmd_T(\ce)\to\Gph_T(\ce)$ is a finitary right adjoint. 
Note that we have a natural transformation $\beta\colon P_2.j\to j$ obtained by restricting $\alpha$. The component of $\beta$ at a $T$-magmoid $X$ is the vertex-trivial morphism $P_2 jX\to jX$ given by $d_1\colon X_2\to X_1$ as in \ref{TCat-D1}.

The category $\SemiCat_T(\ce)$ of \emph{$T$-semicategories} (also discussed in Subsection~\ref{subsec:T-semicat}), i.e., $T$-magmoids satisfying \ref{TCat-A3}, can then be constructed as follows. 
Let $P_3\colon \Gph_T(\ce)\to\Gph_T(\ce)$ be the (finitary) functor sending each $T$-graph $X=(X_0,X_1,d_0,d_1)$ to the $T$-graph 
\[
  \begin{tikzcd}
    TX_{0} & T^2X_0 \ar[l,"mX_0"'] & TX_2 \ar[l,"Ts"'] & X_{3} \ar[l,"d_3"'] \ar[r,"d_0"] & X_2 \ar[r,"t"] & X_0,
  \end{tikzcd}
\]
where $(X_0,X_2,s,t)=P_2X$ and $X_3$ is defined by the pullback \eqref{eqn:X3-as-pullback}. We have two natural transformations $\beta_1,\beta_2\colon P_3.j\to P_2.j$ whose components at a $T$-magmoid $X$ are the vertex-trivial morphisms $P_3 jX\to P_2 jX$ given by $d_1,d_2\colon X_3\to X_2$ as in \ref{TCat-A3}. 
Therefore the category $\SemiCat_T(\ce)$ of $T$-semicategories is obtained as the equifier of 
$P_3.j\xrightarrow{\beta_1}P_2.j\xrightarrow{\beta}j$ and $P_3.j\xrightarrow{\beta_2}P_2.j\xrightarrow{\beta}j$.
It follows that $\SemiCat_T(\ce)$ is locally finitely presentable and the forgetful functor $\SemiCat_T(\ce)\to \Gph_T(\ce)$ is a finitary right adjoint.

Incorporating units (\ref{TCat-D2}, \ref{TCat-A2}, and \ref{TCat-A4}) is similar; to that end, we use the finitary endofunctor $P_0$ on $\Gph_T(\ce)$ sending each $T$-graph $(X_0,X_1,d_0,d_1)$ to $(X_0,X_0,1_{X_0},iX_0)$. Thus we conclude that the category $\Cat_T(\ce)$ of $T$-categories is locally finitely presentable, and the forgetful functor $\Cat_T(\ce)\to \Gph_T(\ce)$ is a finitary right adjoint.
\end{proof}

We conclude this section with a few remarks.

\begin{remark}
    If $\ce$ is complete and $T$ is an arbitrary monad on $\ce$, then the categories $\Gph_T(\ce)$ and $\Cat_T(\ce)$ are complete.
    This can be proved as in Theorem~\ref{thm:lfp}, or see \cite[Proposition~I.3.5]{Burroni-Multicategories} or \cite[Theorem~4.9]{Tholen-Yeganeh}.
\end{remark}

\begin{remark}\label{rmk:monadicity}
  We have stated Theorem~\ref{thm:lfp} in terms of locally finitely
  presentable categories, which is the most important case. But
  exactly the same argument shows that if $\ce$ is locally
  $\alpha$-presentable for some regular cardinal $\alpha$ and $T$
  preserves $\alpha$-filtered colimits then $\Gph_T(\ce)$ and $\Cat_T(\ce)$ will be locally $\alpha$-presentable, and
  the forgetful functor $\Cat_T(\ce)\to \Gph_T(\ce)$ will be a right adjoint preserving
  $\alpha$-filtered colimits.
  It is also worth noting that the forgetful functor
  $\Cat_T(\ce)\to\Gph_T(\ce)$
  is monadic; 
  using the techniques developed in \cite{Dubuc-Kelly,Kelly-Power} one can give a presentation for a monad on $\Gph_T(\ce)$ whose category of algebras is $\Cat_T(\ce)$.
\end{remark}

\begin{remark}
    Theorem~\ref{thm:lfp} and Remark~\ref{rmk:monadicity} are similar to \cite[Theorem~6.5.2]{Leinster-book}, where it is shown that for any ``suitable'' category $\ce$ and ``suitable'' monad $T$ on $\ce$, the category $\Gph_T(\ce)$ and the free $T$-category monad (called $T^+$ in \cite{Leinster-book}) thereon are again ``suitable''.
    In particular, for any pair $(\ce,T)$ of a locally finitely presentable category $\ce$ and a finitary monad $T$ on $\ce$, the pair $(\Gph_T(\ce),T^+)$ again satisfies the same conditions, and hence we can iterate this construction.
\end{remark}

\section{Nerves}\label{sect:nerve}
Throughout this section, let $\ce$ be a category with pullbacks and $T=(T,m,i)$ an arbitrary monad on $\ce$, unless otherwise stated. 

The aim of this section is to show that each $T$-category gives rise to a simplicial-object-like structure, which we call a \emph{$T$-simplicial object}.
The resulting $T$-simplicial object is called the \emph{nerve} of the $T$-category, although we tend to identify a $T$-category with its nerve (see Definition~\ref{def:T-Cat-official}).
We start with an explicit definition of $T$-simplicial object; see Subsection~\ref{subsec:T-simp-characterizations} for equivalent but more abstract formulations. 

\begin{definition}\label{def:TSimp-explicitly}
Let $\ce$ be a category and $T=(T,m,i)$ a monad on $\ce$.
A \emph{$T$-simplicial object} $X$ consists of 
\begin{itemize}
    \labeleditem{(SD1)}\label{TSimp-D1} an object $X_n$ of $\ce$ for each $n\geq 0$,
    \labeleditem{(SD2)}\label{TSimp-D2} a morphism $d_i\colon X_n\to X_{n-1}$ in $\ce$ for each $n>0$ and each $0\leq i< n$ (notice the strict inequality in the upper bound of $i$),  
    \labeleditem{(SD3)}\label{TSimp-D3} a morphism $d_n\colon X_n\to TX_{n-1}$ in $\ce$ for each $n>0$, and 
    \labeleditem{(SD4)}\label{TSimp-D4} a morphism $s_i\colon X_n\to X_{n+1}$ in $\ce$ for each $n\geq 0$ and each $0\leq i\leq n$,
\end{itemize}
such that the following diagrams (in $\ce$) commute:
\begin{itemize}
    \labeleditem{(SA1)}\label{TSimp-A1} \begin{tikzcd}
    X_n \ar[r,"d_i"] \ar[d,"d_j"'] & X_{n-1} \ar[d,"d_{j-1}"] \\
    X_{n-1} \ar[r,"d_i"'] & X_{n-2} 
  \end{tikzcd}
  (for each $n\geq 2$ and $0\leq i<j< n$)
    \labeleditem{(SA2)}\label{TSimp-A2} \begin{tikzcd}
    X_n \ar[r,"d_i"] \ar[d,"d_n"'] & X_{n-1} \ar[d,"d_{n-1}"] \\
    TX_{n-1} \ar[r,"Td_i"'] & TX_{n-2} 
  \end{tikzcd}
  (for each $n\geq 2$ and $0\leq i\leq n-2$)
    \labeleditem{(SA3)}\label{TSimp-A3} \begin{tikzcd}
    X_n \ar[rr,"d_{n-1}"] \ar[d,"d_n"'] & & X_{n-1} \ar[d,"d_{n-1}"] \\
    TX_{n-1} \ar[r,"Td_{n-1}"'] & T^2X_{n-2} \ar[r,"mX_{n-2}"'] & TX_{n-2} 
  \end{tikzcd}
  (for each $n\geq 2$)
    \labeleditem{(SA4)}\label{TSimp-A4}  \begin{tikzcd}
    X_n \ar[r,"s_i"] \ar[d,"s_j"'] & X_{n+1} \ar[d,"s_{j+1}"] \\
    X_{n+1} \ar[r,"s_i"'] & X_{n+2} 
  \end{tikzcd} 
  (for each $n\geq 0$ and $0\leq i\leq j\leq n$)
    \labeleditem{(SA5)}\label{TSimp-A5}  \begin{tikzcd}
    X_n \ar[r,"d_i"] \ar[d,"s_j"'] & X_{n-1} \ar[d,"s_{j-1}"] \\
    X_{n+1} \ar[r,"d_i"'] & X_{n} 
  \end{tikzcd}
  (for each $n\geq 1$, $0< j\leq n$, and $0\leq i<j$)
    \labeleditem{(SA6)}\label{TSimp-A6}  \begin{tikzcd}
    X_n \ar[dr,"1"] \ar[d,"s_j"'] & \\
    X_{n+1} \ar[r,"d_i"'] & X_{n} 
  \end{tikzcd}
    (for each $n\geq 0$, $0\leq j\leq n$, and $j\leq i\leq j+1$ with $i\neq n+1$)
    \labeleditem{(SA7)}\label{TSimp-A7}  \begin{tikzcd}
    X_n \ar[dr,"iX_n"] \ar[d,"s_n"'] & \\
    X_{n+1} \ar[r,"d_{n+1}"'] & TX_{n} 
  \end{tikzcd}
    (for each $n\geq 0$)
    \labeleditem{(SA8)}\label{TSimp-A8}  \begin{tikzcd}
    X_n \ar[r,"d_{i-1}"] \ar[d,"s_j"'] & X_{n-1} \ar[d,"s_{j}"] \\
    X_{n+1} \ar[r,"d_i"'] & X_{n} 
  \end{tikzcd} 
    (for each $n\geq 2$, $0\leq j \leq n-2$, and $j+1< i< n+1$)
    \labeleditem{(SA9)}\label{TSimp-A9}  \begin{tikzcd}
    X_n \ar[r,"d_{n}"] \ar[d,"s_j"'] & TX_{n-1} \ar[d,"Ts_{j}"] \\
    X_{n+1} \ar[r,"d_{n+1}"'] & TX_{n} 
  \end{tikzcd} 
    (for each $n\geq 1$ and $0\leq j < n$)
\end{itemize}
(These are essentially the usual simplicial identities, suitably adapted to the type of the data \ref{TSimp-D3}.)

Given two $T$-simplicial objects $X$ and $Y$, a \emph{morphism of $T$-simplicial objects} $f\colon X\to Y$ is a family $(f_n\colon X_n\to Y_n)_{n\geq 0}$ of morphisms in $\ce$ making the following diagrams (in $\ce$) commute:
\begin{itemize}
    \item \begin{tikzcd}
    X_n \ar[r,"f_n"] \ar[d,"d_i"'] & Y_{n} \ar[d,"d_{i}"] \\
    X_{n-1} \ar[r,"f_{n-1}"'] & Y_{n-1} 
  \end{tikzcd}
  (for each $n\geq 1$ and $0\leq i< n$)
    \item \begin{tikzcd}
    X_n \ar[r,"f_n"] \ar[d,"d_n"'] & Y_{n} \ar[d,"d_{n}"] \\
    TX_{n-1} \ar[r,"Tf_{n-1}"'] & TY_{n-1} 
  \end{tikzcd}
  (for each $n\geq 1$)
    \item \begin{tikzcd}
    X_n \ar[r,"f_n"] \ar[d,"s_i"'] & Y_{n} \ar[d,"s_{i}"] \\
    X_{n+1} \ar[r,"f_{n+1}"'] & Y_{n+1} 
  \end{tikzcd}
  (for each $n\geq 0$ and $0\leq i\leq n$)
\end{itemize}

We denote the category of $T$-simplicial objects by $s_T\ce$.
\end{definition}

We perform the construction of a $T$-simplicial object from a $T$-category step-by-step, considering various intermediate structures between $T$-graphs and $T$-categories. 
The following diagram summarizes the structures discussed in this section, where the labels on arrows indicate the data or axiom (listed in Definition~\ref{def:T-cat}) additionally imposed.
\begin{equation}\label{eqn:structures-overview}
 \begin{tikzcd}[column sep=35pt,every arrow/.append style={rightsquigarrow}]
    \text{$T$-graph} \ar[d,"\text{\ref{TCat-D1} and \ref{TCat-A1}}"'] \ar[r,"\text{\ref{TCat-D2} and \ref{TCat-A2}}"] & \text{reflexive $T$-graph}  \ar[d,"\text{\ref{TCat-D1} and \ref{TCat-A1}}"]\\
    \text{$T$-magmoid}  \ar[d,"\text{\ref{TCat-A3}}"' ] \ar[r,"\text{\ref{TCat-D2} and \ref{TCat-A2}}"] & \text{reflexive $T$-magmoid}\ar[d,"\text{\ref{TCat-A3}}"] \ar[r,"\text{\ref{TCat-A4}}"]  & \text{unital $T$-magmoid} \ar[d,"\text{\ref{TCat-A3}}"] \\
    \text{$T$-semicategory} \ar[r,"\text{\ref{TCat-D2} and \ref{TCat-A2}}"] & \text{reflexive $T$-semicategory} \ar[r,"\text{\ref{TCat-A4}}"]  & \text{$T$-category}
  \end{tikzcd}
\end{equation}
We note that \cite[Section~6]{Bourn-TCats} essentially contains the construction of a 3-truncated $T$-simplicial object from a $T$-category.
Also, \cite[Section~3]{Haderi-Unlu-simplicial-lists} defines a nerve construction for (ordinary) multicategories (see Example~\ref{ex:Lambek-multicats}), which turns out to be a special case of our nerve construction for $T$-categories. 
In particular, \emph{operadic simplicial lists} of \cite{Haderi-Unlu-simplicial-lists} coincide with $T$-simplicial objects when $T$ is the free monoid monad on $\Set$.

\subsection{$T$-graphs}
$T$-graphs are defined in Definition~\ref{def:T-graph}.
Given a $T$-graph
$(X_0,X_1,d_0,d_1)$, we can inductively construct pullbacks
\begin{equation}\label{eqn:nerve-condition}
  \begin{tikzcd}
    X_{n} \ar[r,"d_{n}" ] \ar[d,"d_0"' ] & TX_{n-1} \ar[d,"Td_0"]
    \\
    X_{{n-1}} \ar[r,"d_{n-1}"' ] & TX_{n-2}
  \end{tikzcd}
\end{equation} 
in $\ce$ for each $n\geq 2$.
The cases where $n=2$ and $3$ appear in Definition~\ref{def:T-cat}.
Pullbacks of the form \eqref{eqn:nerve-condition} play a central role in this paper; for example, they appear in the \emph{nerve condition} characterizing $T$-categories among $T$-simplicial objects.
Thus any $T$-graph $X$ induces \ref{TSimp-D1}, \ref{TSimp-D2} with $i=0$, and \ref{TSimp-D3} in Definition~\ref{def:TSimp-explicitly}, which satisfy \ref{TSimp-A2} with $i=0$ by construction.

Note that if $(f_0,f_1)\colon (X_0,X_1,d_0,d_1)\to (Y_0,Y_1,d_0,d_1)$ is a morphism of $T$-graphs, then we have a morphism $f_n\colon X_n\to Y_n$ for each $n\geq 2$, defined inductively by the commutativity of the following diagram.
\begin{equation}\label{eqn:fn}
  \begin{tikzcd}
    TX_{n-1} \ar[d,"Tf_{n-1}"' ] & X_n \ar[l,"d_n"' ] \ar[d,"f_n"]
    \ar[r,"d_0"] & X_{n-1} \ar[d,"f_{n-1}"] \\
    TY_{n-1} & Y_n \ar[l,"d_n"] \ar[r,"d_0"'] & Y_{n-1} 
  \end{tikzcd}
\end{equation}
Again, the case $n=2$ appears in Definition~\ref{def:T-cat}.

\subsection{$T$-magmoids}\label{subsec:T-magmoids}
A {\em $T$-magmoid} $X$ is a $T$-graph $(X_0,X_1,d_0,d_1)$ equipped with 
a morphism $d_1\colon X_2\to X_1$ (\ref{TCat-D1}) making the diagram
\[
  \begin{tikzcd}
    T^2X_0 \ar[d,"mX_0"' ] & TX_1 \ar[l,"Td_1"' ] & X_2 \ar[l,"d_2"' ]
    \ar[d,"d_1"] \ar[r,"d_0"] & X_1 \ar[d,"d_0"] \\
    TX_0 && X_1 \ar[ll,"d_1"] \ar[r,"d_0"'] & X_0 
  \end{tikzcd}
\] 
commute (\ref{TCat-A1}).
We can then inductively define the morphism $d_i\colon X_{n+1}\to X_n$ in $\ce$ for each $n\geq 2$ and $1\leq i\leq n$ as follows.
\begin{itemize}
    \item For $1\leq i\leq n-1$, define $d_i\colon X_{n+1}\to X_n$ as the unique morphism in $\ce$ making the following diagram commute.
    \[
    \begin{tikzcd}
    TX_n \ar[d,"Td_i"' ] & X_{n+1} \ar[l,"d_{n+1}"' ] \ar[d,"d_i"] \ar[r,"d_0"] & X_n \ar[d,"d_{i-1}"] \\
    TX_{n-1} & X_n \ar[l,"d_n"] \ar[r,"d_0"'] & X_{n-1} 
    \end{tikzcd}
    \]
    \item Define $d_n\colon X_{n+1}\to X_n$ as the unique morphism in $\ce$ making the following diagram commute. 
    \[
    \begin{tikzcd}
    T^2X_{n-1} \ar[d,"mX_{n-1}"' ] & TX_n \ar[l,"Td_n"' ] & X_{n+1} \ar[l,"d_{n+1}"' ]
    \ar[d,"d_n"] \ar[r,"d_0"] & X_n \ar[d,"d_{n-1}"] \\
    TX_{n-1} && X_n \ar[ll,"d_n"] \ar[r,"d_0"' ] & X_{n-1} 
    \end{tikzcd}
    \]
\end{itemize}
Thus we obtain all \emph{face maps} (\ref{TSimp-D2} and \ref{TSimp-D3} in Definition~\ref{def:TSimp-explicitly}), which satisfy some of the \emph{$T$-simplicial identities} involving them (\ref{TSimp-A1} with $i=0$, \ref{TSimp-A2}, and \ref{TSimp-A3}) by construction. 
Some of the remaining identities (namely, \ref{TSimp-A1} with $i>0$ and $j-i\geq 2$) follow.

\begin{proposition}\label{prop:SA1-far}
  Let $X$ be a $T$-magmoid. Then the diagram
  \[ \begin{tikzcd} X_{n+2} \ar[r,"d_p"] \ar[d,"d_q"' ] & X_{n+1}
      \ar[d,"d_{q-1}"] \\
      X_{n+1} \ar[r,"d_{p}"' ] & X_n
    \end{tikzcd} \] 
  commutes for all $n\geq 2$ and $0< p<q<n+2$ with $q-p\geq 2$. 
\end{proposition}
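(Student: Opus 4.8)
The plan is to prove the asserted identity $d_p d_q = d_{q-1}d_p\colon X_{n+2}\to X_n$ by induction on $n\ge 2$. The key structural fact is that for $n\ge 2$ the object $X_n$ is the pullback \eqref{eqn:nerve-condition}, so its two projections $d_0\colon X_n\to X_{n-1}$ and $d_n\colon X_n\to TX_{n-1}$ are jointly monic; it therefore suffices to show that $d_p d_q$ and $d_{q-1}d_p$ become equal after composing with $d_0$ and, separately, with $d_n$. Before beginning, I note that the hypotheses $0<p<q<n+2$ and $q-p\ge 2$ force $q\le n+1$ and hence $p\le n-1$; this is exactly what makes the relevant instances of \ref{TSimp-A2} available. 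Recall that \ref{TSimp-A1} with $i=0$, \ref{TSimp-A2}, and \ref{TSimp-A3} all hold by construction.

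For the $d_0$-leg I would use \ref{TSimp-A1} with $i=0$ repeatedly to slide $d_0$ inward, rewriting $d_0 d_p d_q$ as $d_{p-1}d_{q-1}d_0$ and $d_0 d_{q-1}d_p$ as $d_{q-2}d_{p-1}d_0$. The two agree provided $d_{p-1}d_{q-1}=d_{q-2}d_{p-1}$ as maps $X_{n+1}\to X_{n-1}$. When $p=1$ this is once more \ref{TSimp-A1} with $i=0$; when $p\ge 2$ it is precisely the present statement one dimension lower, so it is covered by the inductive hypothesis (and $p\ge 2$ forces $n\ge 3$, so the lower instance is legitimate). The gap $q-p$ is unchanged by this reduction, so the lower instance still satisfies the hypothesis $q-p\ge 2$.

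For the $d_n$-leg the argument splits according to whether $q\le n$ or $q=n+1$. In both cases one first uses \ref{TSimp-A2} to move $d_n$ past $d_p$ (legitimate as $p\le n-1$). If $q\le n$, a further application of \ref{TSimp-A2} together with functoriality of $T$ reduces the desired equality to $d_p d_q=d_{q-1}d_p\colon X_{n+1}\to X_{n-1}$, again an instance of the proposition one dimension down with the gap preserved (and $n\ge 3$ automatic since $q\ge 3$). If $q=n+1$, then \ref{TSimp-A3} and the naturality of $m\colon T^2\to T$ come into play: both composites rewrite to $mX_{n-1}\circ T(d_n d_p)\circ d_{n+2}$, so they coincide outright, with no appeal to the inductive hypothesis. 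Combining the two legs yields the inductive step; the base case $n=2$ (which forces $(p,q)=(1,3)$) is already handled, since there the $d_0$-leg is \ref{TSimp-A1} with $i=0$ and the $d_n$-leg falls under the case $q=n+1$.

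I expect the only real difficulty to be the index bookkeeping: at each rewriting one must verify that the face map invoked lies in the range where it is defined and that the governing identity (\ref{TSimp-A1} with $i=0$, \ref{TSimp-A2}, or \ref{TSimp-A3}) genuinely applies. The decisive point is that every reduction lands on an instance again satisfying $q-p\ge 2$; it is this hypothesis that prevents the reductions from producing the adjacent case $q=p+1$, which requires a separate treatment and is deliberately excluded from this proposition.
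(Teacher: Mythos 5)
Your proof is correct and follows essentially the same route as the paper's: induction on $n$, reduction via the jointly monic pullback projections $d_0$ and $d_n$, the $d_0$-leg handled by \ref{TSimp-A1} with $i=0$ plus the inductive hypothesis (or \ref{TSimp-A1} again when $p=1$), and the $d_n$-leg split into the cases $q\le n$ (inductive hypothesis under $T$) and $q=n+1$ (\ref{TSimp-A3} and naturality of $m$, both sides collapsing to $mX_{n-1}.T(d_nd_p).d_{n+2}$). The index bookkeeping you flag is also handled exactly as in the paper.
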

\begin{proof}
    We argue by induction on $n$. 
    Since $n\geq 2$, by the pullback \eqref{eqn:nerve-condition}, it suffices to show that we have $d_0.d_p.d_q=d_0.d_{q-1}.d_p$ and $d_n.d_p.d_q=d_n.d_{q-1}.d_p$.

    We have 
    \begin{align*}
        d_0.d_p.d_q
        &= d_{p-1}.d_0.d_q \tag*{\text{(by \ref{TSimp-A1} with $i=0$)}}\\
        &=d_{p-1}.d_{q-1}.d_0 \tag*{\text{(by \ref{TSimp-A1} with $i=0$)}}\\
        &=d_{q-2}.d_{p-1}.d_0 \tag*{\text{(by inductive hypothesis if $p\geq 2$; by \ref{TSimp-A1} with $i=0$ if $p=1$)}}\\
        &= d_{q-2}.d_0.d_p \tag*{\text{(by \ref{TSimp-A1} with $i=0$)}}\\
        &=d_0.d_{q-1}.d_p. \tag*{\text{(by \ref{TSimp-A1} with $i=0$)}}
    \end{align*}
    If $q\leq n$, then we have 
    \begin{align*}
        d_n.d_p.d_q
        &= Td_{p}.d_{n+1}.d_q \tag*{\text{(by \ref{TSimp-A2})}}\\
        &=Td_{p}.Td_q.d_{n+2} \tag*{\text{(by \ref{TSimp-A2})}}\\
        &=Td_{q-1}.Td_p.d_{n+2} \tag*{\text{(by inductive hypothesis)}}\\
        &= Td_{q-1}.d_{n+1}.d_p \tag*{\text{(by \ref{TSimp-A2})}}\\
        &=d_n.d_{q-1}.d_p, \tag*{\text{(by \ref{TSimp-A2})}}
    \end{align*}
    while if $q=n+1$, then we have 
    \begin{align*}
        d_n.d_p.d_q
        &= Td_{p}.d_{n+1}.d_q \tag*{\text{(by \ref{TSimp-A2})}}\\
        &=Td_{p}.mX_{n}.Td_q.d_{n+2} \tag*{\text{(by \ref{TSimp-A3})}}\\
        &=mX_{n-1}.T^2d_{p}.Td_q.d_{n+2} \tag*{\text{(by naturality of $m$)}}\\
        &=mX_{n-1}.Td_{q-1}.Td_p.d_{n+2} \tag*{\text{(by \ref{TSimp-A2})}}\\
        &= mX_{n-1}.Td_{q-1}.d_{n+1}.d_{p} \tag*{\text{(by \ref{TSimp-A2})}}\\
        &=d_n.d_{q-1}.d_p. \tag*{\text{(by \ref{TSimp-A3})}\qedhere} 
    \end{align*} 
\end{proof}

Given $T$-magmoids $X$ and $Y$, a \emph{morphism} from $X$ to $Y$ is a morphism $(f_0,f_1)$ of the underlying $T$-graphs making the diagram 
\[
  \begin{tikzcd}
    X_2 \ar[d,"f_2"'] \ar[r,"d_1"] & X_{1} \ar[d,"f_{1}"] \\
    Y_2 \ar[r,"d_1"'] & Y_{1} 
  \end{tikzcd}
\]
commute, where $f_2$ is defined by \eqref{eqn:fn}. It then follows that $(f_n\colon X_n\to Y_n)_{n\geq 0}$ commutes with all face maps, i.e., the following diagrams commute.
\begin{itemize}
    \item \begin{tikzcd}
    X_n \ar[d,"f_n"'] \ar[r,"d_i"] & X_{n-1} \ar[d,"f_{n-1}"] \\
    Y_{n} \ar[r,"d_{i}"'] & Y_{n-1} 
  \end{tikzcd}
  (for each $n\geq 1$ and $0\leq i< n$)
    \item \begin{tikzcd}
    X_n \ar[d,"f_n"'] \ar[r,"d_n"] & TX_{n-1} \ar[d,"Tf_{n-1}"] \\
    Y_{n} \ar[r,"d_n"'] & TY_{n-1} 
  \end{tikzcd}
  (for each $n\geq 1$)
\end{itemize}
This can be proved as in Proposition~\ref{prop:SA1-far}.

\subsection{$T$-semicategories}\label{subsec:T-semicat}
To obtain the remaining $T$-simplicial identities involving face maps (\ref{TSimp-A1} with $i>0$ and $j=i+1$), we have to impose the \emph{associativity law} (\ref{TCat-A3}) asserting the commutativity of the following diagram.
\[ \begin{tikzcd}
   X_3 \ar[r,"d_1"] \ar[d,"d_2"' ] & X_{2} \ar[d,"d_{1}"] \\
   X_{2} \ar[r,"d_{1}"'] & X_{1}
  \end{tikzcd} \] 
A $T$-magmoid satisfying \ref{TCat-A3} is called a \emph{$T$-semicategory}.
All remaining identities in \ref{TSimp-A1} follow from \ref{TCat-A3}.

\begin{proposition}
  Let $X$ be a $T$-semicategory. Then the diagram
  \[ \begin{tikzcd}
     X_{n+2} \ar[r,"d_{p}"] \ar[d,"d_{p+1}"'] & X_{n+1} \ar[d,"d_{p}"] \\
     X_{n+1} \ar[r,"d_{p}"'] & X_{n}
    \end{tikzcd} \] 
  commutes for all $n\ge 1$ and $1\le p\leq n$.
\end{proposition}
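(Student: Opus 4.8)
The plan is to argue by induction on $n$, in the same style as Proposition~\ref{prop:SA1-far}. The base case $n=1$ forces $p=1$, and the required equation $d_1.d_2=d_1.d_1$ on $X_3$ is exactly the associativity law \ref{TCat-A3} that distinguishes a $T$-semicategory from a $T$-magmoid. For the inductive step I would take $n\geq 2$, so that the target $X_n$ is the pullback \eqref{eqn:nerve-condition} with legs $d_0\colon X_n\to X_{n-1}$ and $d_n\colon X_n\to TX_{n-1}$. Since these two legs are jointly monic, it suffices to prove $d_p.d_{p+1}=d_p.d_p$ after post-composing separately with $d_0$ and with $d_n$.

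First I would treat the $d_0$-leg. Repeatedly applying \ref{TSimp-A1} with $i=0$ (valid by construction) to commute $d_0$ past the higher faces rewrites $d_0.d_p.d_{p+1}$ as $d_{p-1}.d_p.d_0$ and $d_0.d_p.d_p$ as $d_{p-1}.d_{p-1}.d_0$; since both end in the common right factor $d_0$, it suffices to establish $d_{p-1}.d_p=d_{p-1}.d_{p-1}$ one dimension lower. For $p\geq 2$ this is precisely the present statement at level $n-1$ with index $p-1$, i.e.\ the inductive hypothesis, while for $p=1$ it is again an instance of \ref{TSimp-A1} with $i=0$.

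Next I would treat the $d_n$-leg, splitting on $p$. If $p\leq n-1$, then two applications of \ref{TSimp-A2} on each side push both composites through $T$, yielding $d_n.d_p.d_{p+1}=T(d_p.d_{p+1}).d_{n+2}$ and $d_n.d_p.d_p=T(d_p.d_p).d_{n+2}$; these agree by applying $T$ to the inductive hypothesis $d_p.d_{p+1}=d_p.d_p$ at level $n-1$. The remaining case $p=n$ is where the multiplication of $T$ enters: here one must use \ref{TSimp-A3} (twice on each side) to turn the repeated top faces into occurrences of $m$, reducing $d_n.d_n.d_{n+1}$ to $mX_{n-1}.m_{TX_{n-1}}.T^2d_n.Td_{n+1}.d_{n+2}$ (using naturality of $m$ to commute $Td_n$ past $mX_n$) and $d_n.d_n.d_n$ to $mX_{n-1}.TmX_{n-1}.T^2d_n.Td_{n+1}.d_{n+2}$, whereupon the two coincide by the monad associativity law $mX_{n-1}.m_{TX_{n-1}}=mX_{n-1}.TmX_{n-1}$.

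I expect the $p=n$ subcase of the $d_n$-leg to be the main obstacle: it is the only point at which the monad structure genuinely intervenes, and the difficulty lies in the bookkeeping --- tracking which instance of \ref{TSimp-A2} and \ref{TSimp-A3} applies at each dimension, and arranging the $m$-manipulations so that naturality and associativity close the gap. Everything else is a routine transcription of the simplicial-identity manipulations already performed in Proposition~\ref{prop:SA1-far}.
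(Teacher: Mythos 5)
Your proof is correct and is precisely the argument the paper intends: its proof of this proposition is simply ``Similar to Proposition~\ref{prop:SA1-far}'', i.e.\ the same induction on $n$ via the jointly monic pair $(d_0,d_n)$ furnished by the pullback \eqref{eqn:nerve-condition}, and you have executed that induction correctly, with the $d_0$-leg and the $p\le n-1$ part of the $d_n$-leg reducing to the inductive hypothesis exactly as in that proposition. You have also correctly identified the two ingredients that the adjacent-face case needs beyond Proposition~\ref{prop:SA1-far}: the associativity law \ref{TCat-A3} enters only as the base case $n=1$, and the monad associativity $mX_{n-1}.m_{TX_{n-1}}=mX_{n-1}.TmX_{n-1}$ closes the $p=n$ subcase.
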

\begin{proof}
    Similar to Proposition~\ref{prop:SA1-far}.
\end{proof}

Thus every $T$-semicategory induces a \emph{$T$-semisimplicial object}, i.e., the data as in \ref{TSimp-D1}--\ref{TSimp-D3} satisfying the axioms \ref{TSimp-A1}--\ref{TSimp-A3}.
This defines a fully faithful functor from the category of $T$-semicategories to that of $T$-semisimplicial objects, whose essential image consists of all $T$-semisimplicial objects $X$ such that each square of the form \eqref{eqn:nerve-condition} is a pullback in $\ce$. 

\subsection{Reflexive $T$-graphs}\label{subsec:refl-T-graph}
We now move to the second column in \eqref{eqn:structures-overview}. 
A \emph{reflexive $T$-graph} is a $T$-graph $(X_0,X_1,d_0,d_1)$ equipped with a
morphism $s_0\colon X_0\to X_1$ (\ref{TCat-D2}) making the diagram
\[
  \begin{tikzcd}
&    X_0 \ar[dr,"1"] \ar[d,"s_0"] \ar[dl,"iX_0"' ] \\
    TX_0 & X_1 \ar[l,"d_1"] \ar[r,"d_0"'] & X_0
  \end{tikzcd}
  \]
commute (\ref{TCat-A2}).
We can then inductively define morphisms
$s_i\colon X_{n}\to X_{n+1}$ for each $n>0$ and for all $0\le i\le
n$ as follows.
\begin{itemize}
    \item Define $s_0\colon X_n\to X_{n+1}$ as the unique morphism making the following diagram commute. 
    \[\begin{tikzcd}
    TX_{n-1} \ar[d,"Ts_{0}"'] & X_n \ar[l,"d_n"' ] \ar[d,"s_0"]
    \ar[dr,"1"] \\
    TX_n & X_{n+1} \ar[l,"d_{n+1}"] \ar[r,"d_{0}"'] & X_n 
  \end{tikzcd}\]
    \item When $0<i<n$, define $s_i\colon X_n\to X_{n+1}$ as the unique morphism making the following diagram commute. 
  \[\begin{tikzcd}
    TX_{n-1} \ar[d,"Ts_{i}"'] & X_n \ar[l,"d_n"' ] \ar[r,"d_0"]
    \ar[d,"s_i"] & X_{n-1} \ar[d,"s_{i-1}"] \\
    TX_n & X_{n+1} \ar[l,"d_{n+1}"] \ar[r,"d_{0}"'] & X_n 
  \end{tikzcd}\]
    \item Define $s_n\colon X_n\to X_{n+1}$ as the unique morphism making the following diagram commute. 
\[\begin{tikzcd}
    & X_n \ar[r,"d_0"] \ar[d,"s_n"] \ar[dl,"iX_n"'] & X_{n-1}
    \ar[d,"s_{n-1}"] \\
    TX_n & X_{n+1} \ar[l,"d_{n+1}"] \ar[r,"d_{0}"'] & X_n 
  \end{tikzcd}\]
\end{itemize}

These \emph{degeneracy maps} (\ref{TSimp-D4}) satisfy \ref{TSimp-A5} with $i=0$, \ref{TSimp-A6} with $i=j=0$, \ref{TSimp-A7}, and \ref{TSimp-A9} by construction. 
They moreover satisfy \ref{TSimp-A4}.
\begin{proposition}\label{prop:s-s-id}
  Let $X$ be a reflexive $T$-graph. Then for each $n\geq 0$ and $0\leq i\leq j\leq n$, the diagram
\[\begin{tikzcd}
    X_n \ar[r,"s_i"] \ar[d,"s_j"'] & X_{n+1} \ar[d,"s_{j+1}"] \\
    X_{n+1} \ar[r,"s_i"'] & X_{n+2} 
  \end{tikzcd}\]
  commutes.
\end{proposition}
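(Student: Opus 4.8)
The plan is to argue by induction on $n$, in exactly the style of Proposition~\ref{prop:SA1-far}. Since $X_{n+2}$ is defined by the pullback \eqref{eqn:nerve-condition} (with $n+2$ in place of $n$), its two projections $d_0\colon X_{n+2}\to X_{n+1}$ and $d_{n+2}\colon X_{n+2}\to TX_{n+1}$ are jointly monic, so to prove $s_{j+1}.s_i=s_i.s_j$ it suffices to verify the two equations obtained by post-composing with $d_0$ and with $d_{n+2}$. This reduces a statement about degeneracies to statements that can be rewritten using the already-established face/degeneracy interactions \ref{TSimp-A5}, \ref{TSimp-A6}, \ref{TSimp-A7}, \ref{TSimp-A9} and the inductive hypothesis at level $n-1$.

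First I would dispose of the $d_0$ projection, which needs only \ref{TSimp-A5} with $i=0$ (giving $d_0.s_{j+1}=s_j.d_0$ and, for $i\geq 1$, $d_0.s_i=s_{i-1}.d_0$) together with \ref{TSimp-A6} with $i=j=0$ (giving $d_0.s_0=1$). When $i=0$ both composites collapse to $s_j$; when $i\geq 1$ they become $s_j.s_{i-1}.d_0$ and $s_{i-1}.s_{j-1}.d_0$ respectively, and these agree because the inductive hypothesis applied to the indices $i-1\leq j-1$ at level $n-1$ gives $s_j.s_{i-1}=s_{i-1}.s_{j-1}$.

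For the $d_{n+2}$ projection I would split on whether $j=n$. If $j<n$ then $i,j\leq n-1$, and applying \ref{TSimp-A9} at levels $n+1$ and then $n$ turns the two composites into $T(s_{j+1}.s_i).d_n$ and $T(s_i.s_j).d_n$; these agree after applying the inductive hypothesis (indices $i\leq j$ at level $n-1$) inside $T$. If $j=n$ the top degeneracy $s_{n+1}$ at level $n+1$ appears, and using \ref{TSimp-A7}, naturality of $i$, and \ref{TSimp-A9}, both composites collapse to $Ts_i.iX_n$ with no appeal to the inductive hypothesis; this branch also covers the base case $n=0$, where $i=j=0$ is forced.

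The only genuine subtlety, and the step I would treat most carefully, is the index bookkeeping: one must confirm that every appeal to the inductive hypothesis lands in the admissible range $0\leq a\leq b\leq n-1$ for the statement at level $n-1$ (the pair $(i-1,j-1)$ arising on the $d_0$ side and $(i,j)$ on the $d_{n+2}$ side), and that the side conditions in \ref{TSimp-A5}, \ref{TSimp-A7}, and \ref{TSimp-A9} hold at the levels where those identities are invoked. Once the case split is set up correctly, each of the four branches is a short diagram chase, and no new ideas beyond those in Proposition~\ref{prop:SA1-far} are required.
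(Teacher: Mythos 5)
Your proof is correct: every identity you invoke (\ref{TSimp-A5} with face index $0$, \ref{TSimp-A6} with $i=j=0$, \ref{TSimp-A7}, \ref{TSimp-A9}, and naturality of the unit) is among those holding by construction for a reflexive $T$-graph, the index bookkeeping in all four branches checks out, and the branches not appealing to the inductive hypothesis ($i=0$ on the $d_0$ side, $j=n$ on the $d_{n+2}$ side) cover the base case $n=0$, so the induction is well-founded. The paper states this proposition without proof, but your argument is exactly the intended one: the same induction combined with the jointly monic pullback projections $d_0$ and $d_{n+2}$ that the paper uses in its proof of Proposition~\ref{prop:SA1-far}.
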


If $X$ and $Y$ are reflexive $T$-graphs, a 
\emph{morphism of reflexive $T$-graphs} $X\to Y$ is a morphism $(f_0,f_1)$ of underlying $T$-graphs making the diagram 
\[
  \begin{tikzcd}
    X_0 \ar[d,"f_0"']
    \ar[r,"s_0"] & X_1 \ar[d,"f_1"] \\
    Y_0 \ar[r,"s_0"'] & Y_1 
  \end{tikzcd}
\]
commute. Then it is easy to see that the family $(f_n\colon X_n\to Y_n)_{n\geq 0}$ defined in  \eqref{eqn:fn} makes
the diagram 
\[
  \begin{tikzcd}
    X_n \ar[d,"f_n"']
    \ar[r,"s_i"] & X_{n+1} \ar[d,"f_{n+1}"] \\
    Y_n \ar[r,"s_i"'] & Y_{n+1} 
  \end{tikzcd}
\]
commute for each $n> 0$ and $0\leq i\leq n$.

\subsection{Reflexive $T$-magmoids}\label{subsec:refl-T-magmoid}
A \emph{reflexive $T$-magmoid} is a $T$-graph equipped with a structure of a $T$-magmoid and a reflexive $T$-graph.
Thus a reflexive $T$-magmoid induces all face and degeneracy maps, i.e., the data \ref{TSimp-D1}--\ref{TSimp-D4} in Definition~\ref{def:TSimp-explicitly}.
In addition to the axioms satisfied by the nerve of a $T$-magmoid or that of a reflexive $T$-graph, it satisfies \ref{TSimp-A5} with $i>0$ and \ref{TSimp-A8}.
\begin{proposition}
    Let $X$ be a reflexive $T$-magmoid.
    If $0<i<j$ then the diagram on the left commutes
  while if $j+1<i<n+2$ then the diagram on the right commutes.
\[ \begin{tikzcd} X_{n+1} \ar[r,"s_j"] \ar[d,"d_i"' ] & X_{n+2}
    \ar[d,"d_i"] \\
    X_n \ar[r,"s_{j-1}"' ] & X_{n+1} \end{tikzcd}
 \qquad
  \begin{tikzcd}
    X_{n+1} \ar[r,"s_j"] \ar[d,"d_{i-1}"' ] & X_{n+2} \ar[d,"d_i"] \\
    X_n \ar[r,"s_j"' ] & X_{n+1} 
  \end{tikzcd}
\] 
\end{proposition}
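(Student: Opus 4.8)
The plan is to prove both identities by induction on $n$, exactly mirroring the structure of the proof of Proposition~\ref{prop:SA1-far}. The two squares assert, in simplicial shorthand, that $d_i.s_j = s_{j-1}.d_i$ (left, for $0<i<j$) and $d_i.s_j = s_j.d_{i-1}$ (right, for $j+1<i<n+2$). These are precisely \ref{TSimp-A5} with $i>0$ and \ref{TSimp-A8}, the two families not yet obtained by construction in the reflexive $T$-magmoid. Since both faces and degeneracies on $X_{n+1}$ were \emph{defined} inductively via the universal property of the pullback \eqref{eqn:nerve-condition}, the natural strategy is: to verify that two morphisms into $X_{n+1}$ agree, it suffices to check that they agree after postcomposition with the two legs $d_0\colon X_{n+1}\to X_n$ and $d_{n+1}\colon X_{n+1}\to TX_n$ of that pullback.

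First I would reduce each square to its two projections. Postcomposing with $d_0$ turns the identity involving $s_j$ into one involving a lower degeneracy (via \ref{TSimp-A5}/\ref{TSimp-A6} with $i=0$ and the $d_0$-compatibilities established in the reflexive $T$-graph and $T$-magmoid subsections), and I would close the resulting chain of equalities using the inductive hypothesis together with the already-constructed identities \ref{TSimp-A1} (with $i=0$), \ref{TSimp-A5} with $i=0$, \ref{TSimp-A6} with $i=j=0$, and \ref{TSimp-A7} or \ref{TSimp-A9}. Postcomposing with $d_{n+1}$ is the more delicate leg: here the definition of $d_n$ brings in the multiplication $m$, so the calculation will, as in the last display of Proposition~\ref{prop:SA1-far}, require invoking naturality of $m$ alongside \ref{TSimp-A9} and the $T$-images (under the functor $T$) of the lower-dimensional degeneracy identities. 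The base cases $n$ small are exactly the defining diagrams for $s_i$ and $d_i$ given in Subsections~\ref{subsec:T-magmoids} and~\ref{subsec:refl-T-graph}, or follow from \ref{TCat-A1} together with the reflexivity axiom.

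The left square ($d_i.s_j=s_{j-1}.d_i$, $0<i<j$) and the right square ($d_i.s_j=s_j.d_{i-1}$, $j+1<i<n+2$) are handled by the same template but diverge in bookkeeping: in the left case the index $i$ stays fixed while $j$ drops, so the $d_0$-projection lands in the regime $0<i<j$ one dimension down and the inductive hypothesis applies directly; in the right case $i$ drops under $d_0$ and one must track the constraint $j+1<i$ carefully to remain in the hypothesis's range, with the boundary subcase $i=n+1$ (where $d_n$ and the $m$-twisted formula intervene) treated separately via \ref{TSimp-A3} and \ref{TSimp-A9} as in Proposition~\ref{prop:SA1-far}. I expect the main obstacle to be precisely this $d_{n+1}$-projection in the right square when $i$ is near the top, since that is where the monad multiplication and the non-cartesianness of $T$ force one to work with $T^2$, $mX_\bullet$, and naturality rather than with plain pullback-pasting; everything else is routine diagram-chasing of the same flavour already exhibited.
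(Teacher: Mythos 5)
Your proposal is correct and takes essentially the same approach the paper intends: the paper omits the proof of this proposition precisely because it is the same pullback-projection induction used for Proposition~\ref{prop:SA1-far}, namely testing both sides against the legs $d_0$ and $d_{n+1}$ of the pullback \eqref{eqn:nerve-condition}, with the $i=n+1$ boundary subcase handled via \ref{TSimp-A3}, \ref{TSimp-A9} and naturality of $m$ (and the $j=n+1$ subcase of the left square via \ref{TSimp-A7} and naturality of the unit), exactly as you outline. One minor bookkeeping correction: under the $d_0$-projection of the left square \emph{both} indices drop (one obtains $d_{i-1}.s_{j-1}$ after applying \ref{TSimp-A1} with $i=0$ and \ref{TSimp-A5} with $i=0$), not just $j$; this is harmless, since the shifted pair still lies in the inductive hypothesis's range, with the case $i-1=0$ covered by the identities already holding by construction.
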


Among the axioms involving degeneracy maps (\ref{TSimp-A4}--\ref{TSimp-A9}), the missing ones are \ref{TSimp-A6} with $(i,j)\neq (0,0)$.

\subsection{Unital $T$-magmoids}
We skip ``reflexive $T$-semicategory'' in the second column of \eqref{eqn:structures-overview} as we do not have much to say about it. 
To derive \ref{TSimp-A6} with $(i,j)\neq (0,0)$, 
we impose the \emph{unit laws} (\ref{TCat-A4}) asserting the commutativity of the following diagram.
    \[\begin{tikzcd}
    X_{1} \ar[dr,"1"']  \ar[r,"s_0"] & X_2 \ar[d,"d_1"] & X_{1} \ar[l,"s_1"' ] \ar[dl,"1"] \\
    & X_{1} &  
  \end{tikzcd}\]
A reflexive $T$-magmoid satisfying the unit laws is called a \emph{unital $T$-magmoid}.

\begin{proposition}
    Let $X$ be a unital $T$-magmoid. Then for each $n\geq 0$, $0\leq j\leq n$, and $j\leq i\leq j+1$ with $i\neq n+1$, the diagram 
\[
\begin{tikzcd}
    X_n \ar[dr,"1"] \ar[d,"s_j"'] & \\
    X_{n+1} \ar[r,"d_i"'] & X_{n} 
  \end{tikzcd}
\]
    commutes. 
\end{proposition}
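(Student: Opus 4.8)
The plan is to prove the identity $d_i s_j = 1_{X_n}$ by induction on $n$, following the template of Proposition~\ref{prop:SA1-far}. The admissible index pairs are exactly $i=j$ and $i=j+1$, the constraint $i\neq n+1$ serving only to forbid $i=j+1$ when $j=n$. For the base cases: when $n=0$ the unique instance is $d_0 s_0=1$, which is the defining axiom of a reflexive $T$-graph; when $n=1$ the instance $d_0 s_0=1$ holds by the construction of $s_0$ (the right-hand triangle in the diagram defining $s_0$), while $d_1 s_0=1$ and $d_1 s_1=1$ are precisely the unit laws~\ref{TCat-A4}.

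For the inductive step ($n\geq 2$) the key device is that $X_n$ is the pullback~\eqref{eqn:nerve-condition}, so its projections $d_0\colon X_n\to X_{n-1}$ and $d_n\colon X_n\to TX_{n-1}$ are jointly monic; hence it suffices to check $d_0\cdot d_i s_j=d_0$ and $d_n\cdot d_i s_j=d_n$. Post-composing with $d_0$ and commuting it inward via \ref{TSimp-A1} with $i=0$ and \ref{TSimp-A5} with $i=0$ reduces both cases, when $j\geq 1$, to the instance $d_{i-1}s_{j-1}=1$ at level $n-1$, which is the inductive hypothesis; the pairs with $j=0$ (namely $d_0 s_0=1$, already known by construction, and $d_1 s_0=1$, where \ref{TSimp-A1} turns $d_0 d_1$ into $d_0 d_0$ and the construction gives $d_0 s_0=1$) are handled directly. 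Post-composing with the top face $d_n$, whenever $i\leq n-1$ one commutes $d_n$ past the lower face using \ref{TSimp-A2}, then across $s_j$ using \ref{TSimp-A9} (valid since then $j<n$), obtaining $T(d_i s_j)\cdot d_n$, which collapses to $d_n$ by the inductive hypothesis applied under $T$.

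The remaining, genuinely $T$-twisted, cases are the two with $i=n$: namely $d_n s_n$ (where $i=j=n$) and $d_n s_{n-1}$ (where $i=j+1$, $j=n-1$). Here post-composition with the top face cannot be handled by \ref{TSimp-A2}; instead I invoke the associativity-type identity \ref{TSimp-A3} at level $n+1$ to rewrite $d_n\cdot d_n$ as $mX_{n-1}\cdot Td_n\cdot d_{n+1}$, and then apply \ref{TSimp-A7} (giving $d_{n+1}s_n=iX_n$) in the first case, or \ref{TSimp-A9} followed by \ref{TSimp-A7} at level $n-1$ in the second. In both cases the computation terminates by naturality of the unit $i$ together with the monad unit laws $m\cdot iT=1$ and $m\cdot Ti=1$. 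This interplay between the top face, the unit, and the multiplication is the only point where more than routine simplicial bookkeeping is required, and is where I expect the main difficulty to lie. Combining the two verifications and invoking joint monicity then closes the induction.
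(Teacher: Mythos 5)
Your proof is correct and follows exactly the approach the paper intends: the paper states this proposition without proof, implicitly deferring to the inductive method of Proposition~\ref{prop:SA1-far}, and your argument is precisely that method carried out in full --- base cases $n=0,1$ from \ref{TCat-A2}, the construction of $s_0$, and the unit laws \ref{TCat-A4}, then induction for $n\geq 2$ via joint monicity of the pullback projections $d_0,d_n$ of \eqref{eqn:nerve-condition}. Your identification of the only delicate point --- the cases $i=n$ with $j\in\{n-1,n\}$, handled via \ref{TSimp-A3}, \ref{TSimp-A7}, \ref{TSimp-A9}, naturality of the unit, and the monad laws $m.iT=1$ and $m.Ti=1$ --- is also accurate.
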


\subsection{$T$-categories}
It follows from our discussion so far that any $T$-category $X$ gives rise to a $T$-simplicial object, i.e., \ref{TSimp-D1}--\ref{TSimp-D4} satisfying \ref{TSimp-A1}--\ref{TSimp-A9}.
The following is now immediate:
\begin{theorem}\label{nerve-theorem}
    The nerve construction described so far gives rise to a fully faithful functor $\Cat_T(\ce)\to s_T\ce$, whose essential image consists of all $T$-simplicial objects $X$ such that each square of the form \eqref{eqn:nerve-condition} is a pullback in $\ce$.
\end{theorem}
In fact, we will \emph{identify} $T$-categories with 
their nerves, and 
give the following definition of $T$-category, which does not rely on the assumption that $\ce$ has pullbacks.

\begin{definition}\label{def:T-Cat-official}
    Let $\ce$ be a category and $T$ a monad on $\ce$. (Note that $\ce$ is not assumed to have all pullbacks.) 
    A \emph{$T$-category} is a $T$-simplicial object $X$ such that each square of the form \eqref{eqn:nerve-condition} is a pullback in $\ce$. We define the category $\Cat_T(\ce)$ of $T$-categories as the full subcategory of the category $s_T\ce$ of $T$-simplicial objects consisting of all $T$-categories. 

    When $T$ is the identity monad $1_\ce$ on $\ce$, we write $s_T\ce$ and $\Cat_T(\ce)$ as $s\ce$ and $\Cat(\ce)$,\footnote{An alternative way to define internal categories in a category $\ce$ not necessarily having pullbacks is to do so representably; see e.g.\ \cite[(0.18.1)]{Duskin-simplicial}. 
    (We note that the object $A_0$ in \cite[second paragraph of (0.18.1)]{Duskin-simplicial} can be obtained by splitting the idempotent $s_0d_0$ on $A_1$.)} respectively. Note that $s\ce$ is the functor category $[\Delta\op,\ce]$.
\end{definition}

\begin{example}
    Let $\ce$ be a category, $T$ a monad on $\ce$, and $(A,a)$ a $T$-algebra.
    As Example~\ref{ex:T-alg-as-T-cat} does not rely on the existence of pullbacks in $\ce$, we obtain a $T$-category $X$ which, as a $T$-simplicial object, is given by $X_n=T^nA$, with $d_0=T^{n-1}a\colon T^nA\to T^{n-1}A$, $d_j=T^{n-1-j}mT^{j-1}A\colon T^nA\to T^{n-1}A $ ($1\leq j\leq n-1$), $d_n=1_{T^nA}\colon T^nA\to TT^{n-1}A$, and $s_j= T^{n-j}iT^{j}A\colon T^{n}A\to T^{n+1}A$.
    Note that it contains the \emph{bar resolution} of $(A,a)$ (see e.g.\ \cite[Construction~9.6]{May-iterated-loop-sp}).
\end{example}

\subsection{Equivalent formulations of $T$-simplicial objects}\label{subsec:T-simp-characterizations}
Here we give a few characterizations of $T$-simplicial objects.
From now on, we often denote a $T$-simplicial object by a bold-face letter, such as $\ts X$; 
this will make it easier to distinguish a $T$-simplicial object and certain ``underlying'' data, to be introduced shortly.

\begin{notation}\label{notation:simplicial-1}
    We denote by $\Delta$ the usual category of nonempty finite ordinals and monotone maps, and by $\Dtop$ its wide subcategory consisting of all top-preserving (equivalently, right adjoint) monotone maps.
We also write $n$ for $[n]=\{0<1<\cdots<n\}$ as an object of $\Delta$ (or of $\Dtop$). 
The inclusion $\Dtop\to\Delta$ has a left adjoint, which adjoins
a new top element to an ordinal. This in turn induces a comonad $-+1$
on $\Dtop$. It sends each $\delta_i\colon n\to n+1$ to
$\delta_i\colon n+1\to n+2$, and each $\sigma_i\colon n+1\to n$
to $\sigma_i\colon n+2\to n+1$. The components of the counit are the
$\sigma_n\colon n+1\to n$ and the components of the comultiplication
are the $\delta_{n+1}\colon n+1\to n+2$.
This comonad can be seen as a monad $R$ on $\Dtopop$, and the
Kleisli category of this monad is the inclusion
$\Dtopop\to\Delta\op$.
We thus call this inclusion $F_R\colon \Dtopop\to\Delta\op$.
\end{notation}

Let $\ts X$ be a $T$-simplicial object. 
The data \ref{TSimp-D1}, \ref{TSimp-D2}, and \ref{TSimp-D4} (i.e., those {\em not} including the $d_n\colon X_n\to
TX_{n-1}$) together define a functor $\res{X}\colon\Dtopop\to\ce$.
We can think of $d_n\colon X_n\to TX_{n-1}$ as being a map $X_n\to
X_{n-1}$ in the Kleisli category $\ce_T$ of $T$. The simplicial-like identities
involving these maps $d_n\colon X_n\to TX_{n-1}$ are just the usual
simplicial identities, but now interpreted in the Kleisli
category. Thus the functor $\res{X}\colon\Dtopop\to\ce$ extends to a functor $\und{X}\colon\Delta\op\to\ce_T$.
Hence a $T$-simplicial object $\ts X$ can be identified with a pair
of functors $\res{X},\und{X}$ making the diagram
\[ \begin{tikzcd}
   \Dtopop \ar[r,"F_R"] \ar[d,"\res{X}"'] & \Delta\op \ar[d,"\und{X}"] \\
    \ce \ar[r,"F_T"'] & \ce_T 
  \end{tikzcd} \] 
commute.
A morphism of $T$-simplicial objects $\ts X\to \ts Y$ is just a compatible pair of natural
transformations, $\res{X}\to \res{Y}$ and $\und{X}\to \und{Y}$.
We note that the fact that any $T$-category induces a (truncated) simplicial object in $\ce_T$ is observed in \cite[Section~6]{Bourn-TCats}.
The above discussion proves the following.
\begin{proposition}\label{prop:T-simp-via-pullback}
    The category $s_T\ce$ of $T$-simplicial objects is the pullback
    \[ \begin{tikzcd}
   s_T\ce \ar[r,"\und{(-)}"] \ar[d,"U"'] & {[\Delta^\mathrm{op},\ce_T]} \ar[d,"{[F_R,\ce_T]}"] \\
    {[\Dtopop,\ce]} \ar[r,"{[\Dtopop,F_T]}"'] & {[\Dtopop,\ce_T],}
  \end{tikzcd} \] 
  where $U\ts X=X$ for each $\ts X\in s_T\ce$.
\end{proposition}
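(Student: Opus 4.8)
The plan is to verify the universal property of the pullback directly by unwinding both sides into concrete data and matching them. A cone over the cospan consists of a category (or object) $\cc$ together with functors $P\colon \cc\to [\Dtopop,\ce]$ and $Q\colon \cc\to [\Delta\op,\ce_T]$ such that $[\Dtopop,F_T]\circ P = [F_R,\ce_T]\circ Q$. First I would observe that giving such a pair amounts to giving, for each object $c$, a functor $Pc\colon\Dtopop\to\ce$ and a functor $Qc\colon\Delta\op\to\ce_T$ whose restriction along $F_R$ agrees with the image of $Pc$ under $F_T$; this is exactly the commuting-square data $(\res X,\und X)$ identified in the paragraph preceding the statement. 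The claim is that this data is the same as a $T$-simplicial object, and that the induced map $\cc\to s_T\ce$ is unique.

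The substance of the proof is therefore the bijection, already sketched in the text, between $T$-simplicial objects and commuting squares $F_T\circ\res X = \und X\circ F_R$. I would carry this out in the order the data appears in Definition~\ref{def:TSimp-explicitly}. A functor $\res X\colon\Dtopop\to\ce$ is determined by its values on objects (giving \ref{TSimp-D1}) and on the generating morphisms $\delta_i$ with $0\le i<n$ and $\sigma_i$ of $\Dtop$ (giving \ref{TSimp-D2} and \ref{TSimp-D4}), subject to the relations of $\Dtop$, which are precisely the $T$-simplicial identities \emph{not} involving $d_n$ --- namely \ref{TSimp-A1}, \ref{TSimp-A4}, \ref{TSimp-A5}, \ref{TSimp-A6}, and \ref{TSimp-A8}. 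Extending to $\und X\colon\Delta\op\to\ce_T$ along the Kleisli inclusion $F_R$ adjoins exactly the remaining generating map, the top-raising $\delta_n\colon n\to n-1$ of $\Delta$, viewed as a Kleisli arrow $X_n\to X_{n-1}$, i.e.\ an $\ce$-morphism $d_n\colon X_n\to TX_{n-1}$ (this is \ref{TSimp-D3}); the functoriality of $\und X$ in $\ce_T$ then forces precisely the identities that do involve $d_n$, namely \ref{TSimp-A2}, \ref{TSimp-A3}, \ref{TSimp-A7}, and \ref{TSimp-A9}. The commutativity of the square asserts that $\und X$ and $\res X$ agree on the arrows coming from $\Dtopop$, which is automatic from how $\und X$ extends $\res X$. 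I would check the morphism half the same way: a compatible pair of natural transformations $\res X\to\res Y$ and $\und X\to\und Y$ is, componentwise, a family $(f_n)$ commuting with all face and degeneracy maps, matching the three families of squares in the morphism part of Definition~\ref{def:TSimp-explicitly}.

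The main obstacle --- and the step warranting genuine care rather than mechanical bookkeeping --- is the precise presentation of the comonad $R$ on $\Dtopop$ and the verification that its Kleisli category is $\Delta\op$, together with the correct matching of \emph{which} generators and relations land on each side. I would lean on Notation~\ref{notation:simplicial-1}, which already records that the counit components are the $\sigma_n\colon n+1\to n$ and the comultiplication components are the $\delta_{n+1}\colon n+1\to n+2$, so that the Kleisli maps $m\to n$ are the morphisms $m+1\to n$ of $\Dtop$; dualizing, a Kleisli arrow in the monad on $\Dtopop$ that is \emph{not} in the image of $F_R$ is exactly the class containing each top-raising $\delta_n$. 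The delicate point is that the simplicial identities split cleanly into a ``$\Dtop$-part'' and a ``Kleisli-part'' in exactly the way the indexing conditions in \ref{TSimp-A1}--\ref{TSimp-A9} dictate, and that no identity is double-counted or omitted. Once this correspondence of generators-and-relations is pinned down, the bijection on objects and morphisms is formal, and the universal property follows because any cone factors uniquely through $s_T\ce$ by sending $c$ to the $T$-simplicial object assembled from the pair $(Pc,Qc)$.
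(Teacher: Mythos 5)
Your proposal is correct and is essentially the paper's own argument: the paper also proves the proposition by observing that the data and identities of a $T$-simplicial object not involving the maps $d_n\colon X_n\to TX_{n-1}$ assemble into a functor $\Dtopop\to\ce$, while the $d_n$ and the identities involving them amount to an extension to a functor $\Delta\op\to\ce_T$ along the Kleisli inclusion $F_R$, so that $T$-simplicial objects and their morphisms are precisely the objects and morphisms of the pullback. Your explicit sorting of the identities between the two sides (\ref{TSimp-A1}, \ref{TSimp-A4}--\ref{TSimp-A6}, \ref{TSimp-A8} for $\Dtop$ versus \ref{TSimp-A2}, \ref{TSimp-A3}, \ref{TSimp-A7}, \ref{TSimp-A9} for the Kleisli extension) and the universal-property framing only spell out what the paper's preceding discussion leaves implicit.
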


By the universal property of the Kleisli object, to give an extension
$\und{X}$ of $F_T\res{X}$ along $F_R$ is equivalent to giving a natural
transformation $\xi$ as in
\begin{equation}\label{eqn:opmorphism}
  \begin{tikzcd}[row sep=small]
    \Dtopop \ar[r,"R"] \ar[dd,"\res{X}"' ] & \Dtopop
    \ar[dd,"\res{X}"] \\
    \twocell{r}{\xi} & {}\\
    \ce \ar[r,"T"' ] & \ce 
  \end{tikzcd}
\end{equation}
which makes $\res{X}$ into an opmorphism of monads \cite{ftm}. Concretely, the component of $\xi$ at
$n\in\Dtopop$ is just $d_{n+1}\colon X_{n+1}\to TX_n$, and the two conditions for $(\res X,\xi)$ to be an opmorphism of monads (i.e., compatibility with the multiplications and units) are \ref{TSimp-A3} and \ref{TSimp-A7}.

\begin{theorem}\label{thm:opmorphisms}
    A $T$-simplicial object $\ts X$ is a $T$-category if and only if $\xi$ as in \eqref{eqn:opmorphism} is a cartesian natural transformation.
\end{theorem}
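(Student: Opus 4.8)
The plan is to recast cartesianness of $\xi$ as a statement about a class of morphisms in $\Dtopop$ and then verify it on generators. Write $P=\res{X}\circ R$ and $Q=T\circ\res{X}$, so that $\xi\colon P\to Q$ (its component at $n$ being $d_{n+1}\colon X_{n+1}\to TX_n$), and call a morphism $\alpha$ of $\Dtopop$ \emph{$\xi$-cartesian} when the naturality square of $\xi$ at $\alpha$ is a pullback in $\ce$; let $\mathcal{C}$ denote the class of such morphisms. The starting observation is that the nerve-condition square \eqref{eqn:nerve-condition} for a given $n\geq 2$ is exactly the naturality square of $\xi$ at the bottom face $d_0\colon X_{n-1}\to X_{n-2}$: its horizontal edges are $\xi_{n-1}=d_n$ and $\xi_{n-2}=d_{n-1}$, and its vertical edges are $d_0$ and $Td_0$. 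Thus $\ts X$ is a $T$-category precisely when every bottom face $d_0$ belongs to $\mathcal{C}$. This already settles the forward implication, since if $\xi$ is cartesian then all its naturality squares, in particular these, are pullbacks.

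For the converse I would use the standard closure properties of $\mathcal{C}$ supplied by the pasting lemma for pullbacks, which hold in any category and need no global existence of pullbacks: $\mathcal{C}$ contains all identities, is closed under composition (vertical pasting of pullback squares), and is \emph{left-cancellative}, meaning that if $g$ and $g\circ f$ lie in $\mathcal{C}$ then so does $f$ (the naturality square at $g\circ f$ is the vertical composite of those at $f$ and $g$, so the cancellation half of the pasting lemma applies). Since every morphism of $\Dtopop$ is a composite of the generating face maps $d_i$ and degeneracy maps $s_i$, it then suffices to show that all of these generators lie in $\mathcal{C}$, knowing that the bottom faces $d_0$ do.

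I would treat the face maps first, by induction on $i$. The base case $i=0$ is the $T$-category hypothesis. For $i\geq 1$, the identity \ref{TSimp-A1} reads $d_0\circ d_i=d_{i-1}\circ d_0$ as a single morphism of $\Dtopop$; here $d_{i-1}\in\mathcal{C}$ by the inductive hypothesis and the inner $d_0\in\mathcal{C}$ by the base case, so the composite $d_{i-1}\circ d_0=d_0\circ d_i$ lies in $\mathcal{C}$, and left-cancelling the outer $d_0$ places $d_i$ in $\mathcal{C}$. This accounts for every face map $d_i$ with $0\le i\le n-1$ at each level. For the degeneracies I would invoke \ref{TSimp-A6} in the form $d_i\circ s_i=1$: as $d_i\in\mathcal{C}$ (just established) and identities lie in $\mathcal{C}$, left-cancellation gives $s_i\in\mathcal{C}$. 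Since faces and degeneracies generate $\Dtopop$, it follows that $\mathcal{C}$ is the whole category, i.e.\ $\xi$ is cartesian.

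The forward direction is immediate and all the substance is in the converse, where the main obstacle is bookkeeping rather than any isolated hard computation: one must correctly match \eqref{eqn:nerve-condition} with the naturality square at $d_0$, keep careful track of levels and index ranges throughout the induction, and invoke \ref{TSimp-A1} and \ref{TSimp-A6} only at parameter values where they genuinely hold. Beyond these simplicial identities, the only structural inputs are the pasting lemma and the fact that faces and degeneracies generate $\Dtopop$; notably no completeness hypothesis on $\ce$ is required, in keeping with the generality intended in Definition~\ref{def:T-Cat-official}.
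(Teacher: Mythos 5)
Your proof is correct and follows essentially the same route as the paper's: both identify the nerve condition with cartesianness of $\xi$ at the maps $\delta_0$, then propagate to all of $\Dtopop$ by pasting/cancellation of pullbacks, using \ref{TSimp-A1} for the faces and \ref{TSimp-A6} (in the form $d_i s_i = 1$) for the degeneracies. The only cosmetic difference is which instance of \ref{TSimp-A1} drives the induction (you use $d_0 d_i = d_{i-1} d_0$, the paper uses $d_i d_{i+1} = d_i d_i$), which is immaterial.
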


\begin{proof}
  Given the comments before the theorem, we know that a $T$-simplicial object $\ts X$ is a $T$-category if and only if $\xi$ as in \eqref{eqn:opmorphism} is cartesian with respect to maps of the
  form $\delta_0\colon n\to n+1$. 
  But by the usual pasting and cancellation properties of
  pullbacks, if $\xi$ is cartesian with respect to
  $\delta_i\colon n \to n+1$ it will also be cartesian with respect
  to $\delta_{i+1}\colon n\to n+1$ (use \ref{TSimp-A1} with $j=i+1$) and $\sigma_i\colon n+1\to n$ (use \ref{TSimp-A6} with $j=i$);
  thus by an easy induction $\xi$ will be cartesian with respect to
  all maps in $\Dtopop$.
\end{proof}

\begin{remark}\label{rmk:T-cats-and-Kleisli}
    Suppose that the category $\ce$ has pullbacks and the monad $T$ is cartesian. 
    In this case, results of \cite{Bourn-TCats} imply that there is a close relation between $T$-simplicial objects (resp.\ $T$-categories) and simplicial objects in $\ce_T$ (resp.\ internal categories in $\ce_T$). 
    By \cite[Propositions~2.4 and 3.2]{Bourn-TCats}, the bijective-on-objects functor $F_T\colon \ce\to \ce_T$ is faithful, and hence we can identify $\ce$ with a wide subcategory of $\ce_T$, which is left cancellable by \cite[Proposition~3.6]{Bourn-TCats}.
    Then a $T$-simplicial object can be identified with a simplicial object $\und{X}$ in $\ce_T$ such that each $d_0\colon X_{n+1}\to X_n$ is in $\ce$,
    as can be seen by an argument much like the proof of Theorem~\ref{thm:opmorphisms}.
    Thus we have a pullback 
    \[
    \begin{tikzcd}
      s_T\ce \rar["\und{(-)}"] \dar & {[\Delta\op,\ce_T]} \dar \\
      {[\omega\op,\ce]} \rar["{[\omega\op,F_T]}"'] & {[\omega\op,\ce_T],}
    \end{tikzcd}
  \]
    where the left (resp.\ right) vertical functor sends a $T$-simplicial object $\ts X$ (resp.\ simplicial object $\ts X$ in $\ce_T$) to the sequence $X_0\xleftarrow{d_0}X_1\xleftarrow{d_0}X_2\xleftarrow{d_0}\cdots$ in $\ce$ (resp.\ in $\ce_T)$. 
    Moreover, the pullback-stability of $\ce$ in $\ce_T$ \cite[Proposition~3.12]{Bourn-TCats} implies that we can identify $T$-categories with certain internal categories in $\ce_T$:
    we have a pullback
  \[
    \begin{tikzcd}
      \Cat_T(\ce) \rar \dar & \Cat(\ce_T) \dar \\
      \ce^{\two} \rar["F^{\two}_T"'] & \ce^{\two}_T,
    \end{tikzcd}
  \]
    where the vertical functors send $\ts X$ to the morphism $d_0\colon X_1\to X_0$ \cite[Theorem~7.5]{Bourn-TCats}.
    (Since the Kleisli category $\ce_T$ does not have pullbacks in general, the category $\Cat(\ce_T)$ is defined as a full subcategory of $[\Delta\op,\ce_T]$ as in Definition~\ref{def:T-Cat-official}.)

    Since the right adjoint functor $U_T\colon \ce_T\to \ce$, being a functor of descent type \cite{Kelly-Power}, preserves and reflects all limits, one can also identify a $T$-category with a simplicial object $\widehat X\colon \Delta\op\to \ce_T$ in $\ce_T$ which comes from a $T$-simplicial object and such that $\Delta\op\xrightarrow{\widehat X}\ce_T\xrightarrow{U_T}\ce$ is (the nerve of) an internal category in $\ce$.
    This generalizes the nerve theorem of \cite[Theorem~3.1]{Haderi-Unlu-simplicial-lists} from ordinary multicategories to $T$-categories for any cartesian $T$.
\end{remark}

\subsection{Local presentability}

We conclude this section by showing that if $\ce$ is locally finitely presentable and $T$ is finitary (which we assume throughout this subsection) then $s_T\ce$ is locally finitely presentable. 
We shall prove this by gradually building up various truncated
versions of $s_T\ce$.

For each $n$, write $s^n_T\ce$ for the category of $n$-truncated
$T$-simplicial objects, consisting of objects $X_0,\ldots,X_n$ of
$\ce$, together with all the $T$-simplicial operators between them. Write $s^{n+}_T\ce$ for the category of
$n$-truncated $T$-simplicial objects, further equipped with $X_{n+1}$
and its various face maps, but not the degeneracy maps $X_n\to
X_{n+1}$.

Thus $s^0_T\ce$ is just $\ce$, while $s^{0+}_T\ce$ is the category of
$T$-graphs, $s^1_T\ce$ is the category of reflexive $T$-graphs, and so on.

There are evident forgetful functors $U_n\colon s^{n+}_T\ce\to
s^n_T\ce$ and $V_n\colon s^{n+1}_T\ce\to s^{n+}_T\ce$ for each $n$.
We shall show inductively that all of these categories $s^n_T\ce$
and $s^{n+}_T\ce$ are locally finitely presentable, and that the $U_n$
and $V_n$ are finitary right adjoints.

\begin{proposition}\label{prop:Cn_Dn}
  The composite $U_nV_n\colon s^{n+1}_T\ce\to s^n_T\ce$ has both a
  left adjoint $D_n$ with identity unit and a right adjoint $C_n$ with
  identity counit.
\end{proposition}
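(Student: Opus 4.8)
The plan is to construct both adjoints explicitly by adjoining a single new level $n+1$ to an $n$-truncated $T$-simplicial object, leaving the objects $X_0,\dots,X_n$ and all operators among them untouched; this makes the identity (co)unit condition immediate, since $U_nV_n$ simply forgets everything in degree $n+1$. The left adjoint $D_n$ will be the ``one-step skeleton'', defined by a latching object, and the right adjoint $C_n$ the ``one-step coskeleton'', defined by a matching object. Both are built entirely from finite limits and colimits in $\ce$, which exist because $\ce$ is locally finitely presentable; the Kleisli category $\ce_T$ never needs to be (co)complete, because the only Kleisli-flavoured operator, $d_{n+1}$, is still an honest morphism of $\ce$ whose codomain happens to be $TX_n$.

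For $D_n$ I would set $(D_nX)_{n+1}$ equal to the latching object $L_{n+1}X$, namely the finite colimit in $\ce$ of the diagram encoding the degeneracies $s_0,\dots,s_n\colon X_n\to X_{n+1}$ subject to the relations \ref{TSimp-A4}. The degeneracies are the coprojections, while each face $d_i\colon L_{n+1}X\to X_n$ (for $i\le n$) and $d_{n+1}\colon L_{n+1}X\to TX_n$ is defined on the coprojection indexed by $s_j$ by the value of $d_is_j$ prescribed in \ref{TSimp-A5}--\ref{TSimp-A9}. First I would check that these faces are well defined on the colimit and satisfy \ref{TSimp-A1}--\ref{TSimp-A3}. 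Then, for $Y\in s^{n+1}_T\ce$, the top component of any morphism $g\colon D_nX\to Y$ is forced by naturality with respect to the $s_i$ to be the unique map $L_{n+1}X\to Y_{n+1}$ restricting along the coprojection $s_j$ to $s_j\circ g_n$; since $Y$ satisfies \ref{TSimp-A4} these restrictions are compatible, so the universal property of the colimit produces exactly one such map and yields the bijection $s^{n+1}_T\ce(D_nX,Y)\cong s^n_T\ce(X,U_nV_nY)$, natural and with identity unit.

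Dually, for $C_n$ I would set $(C_nX)_{n+1}$ equal to the matching object $M_{n+1}X$, the finite limit in $\ce$ of the diagram of the face operators $d_0,\dots,d_n\colon X_{n+1}\to X_n$ together with $d_{n+1}\colon X_{n+1}\to TX_n$, glued along \ref{TSimp-A1}, \ref{TSimp-A2} and \ref{TSimp-A3}. The faces are the limit projections, and each degeneracy $s_j\colon X_n\to M_{n+1}X$ is the unique map into the limit whose composites with the projections are the values prescribed by \ref{TSimp-A5}--\ref{TSimp-A9}. The top component of a morphism $h\colon Y\to C_nX$ is then the unique map $Y_{n+1}\to M_{n+1}X$ with projections $h_n\circ d_i$ and $Th_n\circ d_{n+1}$, compatibility being exactly \ref{TSimp-A1}--\ref{TSimp-A3} for $Y$; this gives $s^{n+1}_T\ce(Y,C_nX)\cong s^n_T\ce(U_nV_nY,X)$ with identity counit.

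The real work is not the adjunction bookkeeping but getting the two exceptional operators right. For $C_n$ the matching diagram departs from the classical one: the projection $d_{n+1}$ lands in $TX_n$ rather than in $X_n$, and the identities \ref{TSimp-A2} and especially \ref{TSimp-A3} (which brings in $Td_n$ and the multiplication $mX_{n-1}$) force the cone to involve $X_{n-1}$, $TX_{n-1}$ and $T^2X_{n-1}$ simultaneously; pinning down this finite diagram precisely, and then verifying that the induced degeneracies satisfy \ref{TSimp-A4}--\ref{TSimp-A9}, is the delicate step. The analogous point for $D_n$ is checking that the face $d_{n+1}$ out of the latching object is well defined and compatible with \ref{TSimp-A7} and \ref{TSimp-A9}. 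Everything else reduces to a routine, if lengthy, check of the $T$-simplicial identities on the single adjoined level.
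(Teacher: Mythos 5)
Your proposal is correct and is essentially the paper's own proof: $D_n$ is the one-step skeleton given by the latching colimit over the degeneracies, $C_n$ is the one-step coskeleton given by a $T$-twisted matching limit, and the identity unit/counit comes for free because degrees $\le n$ are left untouched. The one ingredient the paper supplies and that you explicitly defer (``pinning down this finite diagram precisely'') is the recipe for the twisted matching object: take the limit over the poset of proper subobjects $\phi\colon[m]\to[n+1]$ in $\Delta$ (those with $m=n$ and $m=n-1$ suffice), assigning $X_m$ when $\phi$ preserves the top element and $TX_m$ otherwise, with the arrows of the diagram built from the face maps of $X$ and the monad multiplication (e.g.\ $mX_{n-1}.Td_n\colon TX_n\to TX_{n-1}$), which is exactly how \ref{TSimp-A2} and \ref{TSimp-A3} get absorbed into the cone conditions --- in particular no vertex $T^2X_{n-1}$ is needed, since $T^2$ appears only inside the arrows of the diagram.
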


  Without the $T$, these would just be given by left Kan and right Kan
  extension and are well-known.

We don't need the full strength of this in what follows. What we do use is the
  functor $L_n\colon s^n_T\ce\to\ce$ sending $X$ to $(C_nX)_{n+1}$,
  and the composite functor $K_n=V_nD_n$, and the fact $U_nK_n=1$. We
  also need $L_n$ to be finitary, which is clear from the construction
  sketched below, given that $T$ is finitary and finite limits commute
  with filtered colimits in $\ce$.

\begin{proof}[Proof of Proposition~\ref{prop:Cn_Dn}]
For example, $C_0X$ has $(C_0X)_0=X_0$ and $(C_0X)_1=X_0\x TX_0$ with
face maps the two projections. And $C_1X$ has $(C_1X)_i=X_i$ for
$i=0,1$, with $(C_1X)_2$ the limit of a diagram
\[
  \begin{tikzcd}
    X_1 \ar[d] \ar[dr] & X_1 \ar[dl] \ar[dr] & TX_1 \ar[dl] \ar[d] \\
    X_0 & TX_0 & TX_0.
  \end{tikzcd}
\]
More generally,  $(C_nX)_i=X_i$ for $i\le n$, while $(C_nX)_{n+1}$ is
constructed as follows. Let $\cp_{n+1}$ be the poset of all proper subobjects of
$[n+1]$ in $\Delta$. There is a functor $\cp\op_{n+1}\to\ce$
sending $\phi\colon[m]\to[n+1]$ to $X_m$ if $\phi m=n+1$, and to
$TX_m$ otherwise, and defined on morphisms using the $T$-simplicial face maps of $X$. Then $(C_nX)_{n+1}$ is the limit of this diagram.
(It suffices to use those subobjects $\phi\colon
[m]\to[n+1]$ with $m=n$ and $m=n-1$.)

On the other hand $D_0X$ has $(D_0X)_i=X_0$ for $i=0,1$. And $D_1X$
has $(D_1X)_i=X_i$ for $i=0,1$ and $(D_1X)_2$ the colimit of the
diagram
\[
  \begin{tikzcd}
    X_0 \ar[r,"s_0"] \ar[d,"s_0"] & X_1 \\
    X_1.
  \end{tikzcd}
\]
More generally, $(D_nX)_i=X_i$ for $i\le n$, while $(D_nX)_{n+1}$ is
constructed using a colimit. Let $\cq_{n+1}$ be the poset of all proper
quotients of $[n+1]$. There is a functor $\cq\op_{n+1}\to\ce$ sending
$\phi\colon [n+1]\to [m]$ to $X_m$ and defined on morphisms using the
$T$-simplicial degeneracy maps. Then $(D_nX)_{n+1}$ is the colimit of this diagram; again, it suffices to consider quotient objects $\phi\colon [n+1]\to[m]$ with $m=n$ or $m=n-1$.
\end{proof}

\begin{theorem}\label{thm:set-lfp}
  If $\ce$ is locally finitely presentable and $T$ is finitary, then
  $s_T\ce$ is locally finitely presentable as an ordinary category.
\end{theorem}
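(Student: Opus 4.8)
The plan is to prove the theorem by induction on $n$, showing simultaneously that each $s^n_T\ce$ and each $s^{n+}_T\ce$ is locally finitely presentable and that the forgetful functors $U_n$ and $V_n$ are finitary right adjoints, and then to exhibit $s_T\ce$ as the inverse limit of the resulting tower. The base case is immediate since $s^0_T\ce=\ce$. Throughout I will use the three constructions of Proposition~\ref{prop:comma}, Proposition~\ref{prop:ins}, and Proposition~\ref{prop:eq} (comma categories, inserters, and equifiers), each of which produces a locally finitely presentable category whose projection is a finitary right adjoint, together with the functors $L_n$ and $K_n=V_nD_n$ supplied by Proposition~\ref{prop:Cn_Dn}.

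\emph{Adding the new face maps.} The first half of the inductive step passes from $s^n_T\ce$ to $s^{n+}_T\ce$ by adjoining the object $X_{n+1}$ and its face maps subject to the face--face identities \ref{TSimp-A1}, \ref{TSimp-A2}, \ref{TSimp-A3}. By the universal property of $L_nX=(C_nX)_{n+1}$, giving such a coherent family of faces out of an object $E=X_{n+1}$ is the same as giving a single morphism $E\to L_nX$; hence $s^{n+}_T\ce$ is precisely the comma category $\ce/L_n$ (objects $E\to L_nX$) of the finitary functor $L_n\colon s^n_T\ce\to\ce$. Proposition~\ref{prop:comma} then shows that $s^{n+}_T\ce$ is locally finitely presentable and that $U_n$, being the projection to $s^n_T\ce$, is a finitary right adjoint. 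This is the exact analogue of the identification $\Gph_T(\ce)=\ce/T'$ used in Theorem~\ref{thm:lfp}.

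\emph{Adding the degeneracies.} The second half passes from $s^{n+}_T\ce$ to $s^{n+1}_T\ce$ by adjoining the degeneracies $s_0,\dots,s_n\colon X_n\to X_{n+1}$ subject to \ref{TSimp-A4}--\ref{TSimp-A9}. The naive move of an inserter recording a map into $X_{n+1}$ is unavailable: the degeneracies point \emph{into} the top object $X_{n+1}$, and the evaluation $Y\mapsto Y_{n+1}$ on $s^{n+}_T\ce$ is not continuous (already $X\mapsto X_1$ on $\Gph_T(\ce)$ need not preserve products when $T$ does not), so it cannot serve as the right-hand functor of an inserter. This is the main obstacle. I resolve it exactly as the $T$-magmoid construction resolves the corresponding problem in Theorem~\ref{thm:lfp}: set $W=K_nU_n\colon s^{n+}_T\ce\to s^{n+}_T\ce$, a finitary endofunctor (its value is built from finite colimits of the $X_i$ and the finitary functor $T$), and observe that a degeneracy structure on $Y$ is the same as a morphism $\alpha_Y\colon WY\to Y$ in $s^{n+}_T\ce$ lying over the identity of $U_nY$ --- for $(WY)_{n+1}=(D_nU_nY)_{n+1}$ is exactly the colimit corepresenting coherent degeneracies, so such an $\alpha_Y$ unwinds to degeneracy maps satisfying all of \ref{TSimp-A4}--\ref{TSimp-A9}. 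Now form the inserter $\Ins(W,1)$, whose right-hand functor is the identity and hence continuous; by Proposition~\ref{prop:ins} it is locally finitely presentable with finitary-right-adjoint projection. Finally, since $U_nK_n=1$ gives $U_nW=U_n$ strictly, the $U_n$-triviality of the structure map (i.e.\ $U_n\alpha_Y=\mathrm{id}$) is cut out by the equifier of $U_n\alpha$ and the identity, so Proposition~\ref{prop:eq} identifies $s^{n+1}_T\ce$ with this equifier and shows $V_n$ is a finitary right adjoint.

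\emph{Assembling the limit.} A $T$-simplicial object is exactly a compatible family of its truncations, so $s_T\ce$ is the inverse limit of the tower
\[
\cdots \xrightarrow{V_1} s^{1+}_T\ce \xrightarrow{U_1} s^1_T\ce \xrightarrow{V_0} s^{0+}_T\ce \xrightarrow{U_0} s^0_T\ce=\ce
\]
of locally finitely presentable categories and finitary right adjoints. Such countable inverse limits are again locally finitely presentable, with the projections finitary right adjoints (a general fact about lfp categories; see Appendix~\ref{apx:lfp}), and this yields the claim. The only genuinely delicate point is the degeneracy step above; the face step and the final assembly are formal once the three lfp construction lemmas and the limit fact are in hand.
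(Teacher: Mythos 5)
Your construction is, step for step, the paper's own proof: the same tower of truncations $s^n_T\ce$ and $s^{n+}_T\ce$, the same identification of $s^{n+}_T\ce$ with the comma category $\ce/L_n$ via Proposition~\ref{prop:comma}, the same treatment of degeneracies via the inserter $\Ins(K_nU_n,1)$ followed by the equifier forcing the structure map to lie over an identity (Propositions~\ref{prop:ins} and~\ref{prop:eq}), and the same presentation of $s_T\ce$ as the limit of the resulting chain. All of these steps, including your justification for why a naive inserter with evaluation at degree $n+1$ is unavailable, are correct and match the paper.

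The one genuine gap is in the assembly step. The ``general fact'' you invoke --- that a countable inverse limit of locally finitely presentable categories along finitary right adjoints is again locally finitely presentable --- is false as stated, and it is not in Appendix~\ref{apx:lfp}, which contains only the comma, inserter, and equifier propositions. For a counterexample, let $B$ be the indiscrete category on the set $\mathbb{N}$ (all hom-sets singletons); it is equivalent to the terminal category and hence locally finitely presentable, and the shift functor $G\colon B\to B$, $n\mapsto n+1$, is an equivalence and hence a finitary right adjoint. But the strict limit of the chain $\cdots\xrightarrow{G}B\xrightarrow{G}B$ is the empty category, which is not locally finitely presentable. What rescues the argument here is that each connecting functor $U_nV_n$ is an \emph{isofibration} --- this is precisely the observation the paper makes before concluding --- so the strict limit of the chain agrees, up to equivalence, with the pseudolimit, and pseudolimits of locally finitely presentable categories along finitary right adjoints are locally finitely presentable (this is the genuinely applicable general fact, going back to Bird's thesis). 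So you need to note the isofibration property of the $U_nV_n$ (clear, since the forgotten structure can be transported along isomorphisms) before the limit step can be cited; with that one sentence added, your proof is complete.
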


\begin{proof}
  We first prove by induction that $s^n_T\ce$ is locally finitely
  presentable for each $n$; in fact we show that also the
  $s^{n+}_T\ce$ are locally finitely presentable and the functors
  $U_n$ and $V_n$ are finitary. For the base case $s^0_T\ce=\ce$, which is locally
  finitely presentable by assumption.

  Suppose that $s^n_T\ce$ is locally finitely presentable.  Since
  $L_n\colon s^n_T\ce\to\ce$ is constructed using the finitary $T$ and
  finite limits, it is also finitary. The comma category $\ce/L_n$ is
  $s^{n+}_T\ce$ and the projection $\ce/L_n\to s^n_T\ce$ is $U_n\colon
  s^{n+}_T\ce\to s^n_T\ce$. Thus $s^{n+}_T\ce$ is locally finitely
  presentable and $U_n$ is a finitary right adjoint by 
  Proposition~\ref{prop:comma}.

  Next we construct $s^{n+1}_T\ce$ from $s^{n+}_T\ce$. In order to
  provide $X\in s^{n+}_T\ce$ with the  structure needed to make
  it into an object of $s^{n+1}_T\ce$ we should give maps $s_i\colon
  X_n\to X_{n+1}$ for $0\le i\le n$, subject to conditions
  \ref{TSimp-A4}--\ref{TSimp-A9}. The universal property of the colimit
  defining $(D_nU_nX)_{n+1}$ is such that to give $(D_nU_nX)_{n+1}\to
  X_{n+1}$ is precisely to give such $s_i$ satisfying
  \ref{TSimp-A4}. Notice also that $(D_nU_nX)_{n+1}$ can also be
  written as $(K_nU_nX)_{n+1}$. Now the remaining conditions
  \ref{TSimp-A5}--\ref{TSimp-A9} say that this map $(K_nU_nX)_{n+1}\to
  X_{n+1}$ is the degree $n+1$ part of a map $s\colon K_nU_nX\to X$
  which is the identity in degrees $i<n+1$; in other words, such that
  $U_ns$ is the identity on $U_nX$.

  For example, if $X\in s^{1+}_T\ce$, then the map $s$ will have the
form
\[
  \begin{tikzcd}
    X_1+_{X_0}X_1 \ar[r,shift left=2] \ar[r] \ar[r,"\ssl" marking,shift
    right=2] \ar[d,"{(s_0,s_1)}"]& X_1 \ar[r,shift left=2] \ar[r,"\ssl" marking,,shift right=2] \ar[d,"1"]&
    X_0 \ar[l] \ar[d,"1"] \\
    X_2 \ar[r,shift left=2] \ar[r] \ar[r,"\ssl" marking,shift
    right=2] & X_1 \ar[r,shift left=2] \ar[r,"\ssl" marking,,shift right=2] &
    X_0 \ar[l] 
  \end{tikzcd}
\]

  Thus we can construct $s^{n+1}_T\ce$ as follows. 
First construct the inserter
$P_n\colon\mathrm{Ins}(K_nU_n,1)\to s^{n+}_T\ce$ of $K_nU_n$ and
$1$, with structure map say $p_n\colon K_nU_nP_n\to P_n$.
An object of $\mathrm{Ins}(K_nU_n,1)$ consists of $X\in s^{n+}_T\ce$
equipped with a map $s\colon K_nU_nX\to X$, but not yet required to
have $U_ns$ an identity. This $\mathrm{Ins}(K_nU_n,1)$ will be locally finitely
presentable and $P_n$ will be a finitary right adjoint by
Proposition~\ref{prop:ins}.

In order to impose the condition that $U_ns$ be an identity, we replace $\mathrm{Ins}(K_nU_n,1)$ by
the equifier
$Q_n\colon\mathrm{Eq}(U_np_n,1)\to \mathrm{Ins}(K_nU_n,1)$ of the maps
$U_np_n,1\colon U_nP_n\to U_nP_n$. Since $U_nP_n$ is a finitary right
adjoint, $\mathrm{Eq}(U_np_n,1)$ will be locally finitely presentable
and $Q_n$ a finitary right adjoint by Proposition~\ref{prop:eq}, and now $\mathrm{Eq}(U_np_n,1)$ is $s^{n+1}_T\ce$ and $V_n=P_nQ_n$.

In particular, each $s^n_T\ce$ is locally finitely presentable and
each $U_nV_n\colon s^{n+1}_T\ce\to s^n_T\ce$ is a finitary right
adjoint.

Furthermore each $U_nV_n$ is clearly an isofibration. It follows that
the limit of the chain consisting of the $s^n_T\ce$ and the $U_nV_n$
is itself locally finitely presentable. But this limit is precisely $s_T\ce$.
\end{proof}

\section{Comonadicity of $s_T\ce$}\label{sect:comonadicity}

We have assumed in most of the paper that $\ce$ has pullbacks since that allows us to construct objects of composable pairs and so on, but for the notion of $T$-simplicial object no limits in $\ce$ are needed.
In Section~\ref{sec:local-presentability} below we shall need to know that the forgetful functor $U\colon s_T\ce\to[\Dtopop,\ce]$ has a right adjoint. 
In this section we show that in fact it is comonadic
when $\ce$ has finite products.

Before constructing the comonad on $[\Dtopop,\ce]$, we first construct a comonad on the category $[\bbn,\ce]$ of sequences of objects of
$\ce$, where the set $\bbn$ is seen as a discrete category. We then lift this comonad through the forgetful $V\colon [\Dtopop,\ce]\to[\bbn,\ce]$ to obtain the desired comonad.

There is an endofunctor $K_0$ of $[\bbn,\ce]$ defined by $K_0(X)_{n+1}=TX_n$ and
$K_0(X)_0=1$. A $K_0$-coalgebra is a sequence $X$ equipped with maps
$X_{n+1}\to TX_n$ for each $n$. The cofree comonad $K$ over $K_0$ exists when $\ce$ has finite products, and can be defined
recursively with $K(X)_0=X_0$ and $K(X)_{n+1}=X_{n+1}\x TK(X)_n$. Thus
\[ K(X)_n = X_n\x T(X_{n-1}\x T(\ldots\x TX_0)) \]
while the comonad structure is given as follows.
\begin{itemize}
    \item The counit $\varepsilon\colon K(X)\to X$ is given by $\varepsilon_0\colon K(X)_0=X_0\xrightarrow{1} X_0$ and  $\varepsilon_{n+1}\colon K(X)_{n+1}=X_{n+1}\times TK(X)_{n}\xrightarrow{\pi_1} X_{n+1}$.
    \item Letting $d_{n+1}\colon K(X)_{n+1}=X_{n+1}\times TK(X)_n\xrightarrow{\pi_2} TK(X)_n$ for each $n\geq 0$, the comultiplication $\delta\colon K(X)\to KK(X)$ is given by $\delta_0\colon K(X)_0=X_0\xrightarrow{1} X_0=KK(X)_0$ and $\delta_{n+1}\colon K(X)_{n+1}\xrightarrow{\binom{1}{T\delta_n.d_{n+1}}} K(X)_{n+1}\times TKK(X)_n=KK(X)_{n+1}$.
\end{itemize}

\begin{proposition}
  If $\ce$ has finite products, the cofree comonad $K$ lifts to
  $[\Dtopop,\ce]$.
\end{proposition}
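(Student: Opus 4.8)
The plan is to produce the lift by equipping, for each $\Dtopop$-functor $X$, the sequence $K(VX)$ with a $\Dtopop$-functor structure $\tilde K X$ (so that $V\tilde K=KV$ strictly), naturally in $X$, and then lifting the counit and comultiplication. I would stress at the outset \emph{why} this cannot be done by first lifting the generating endofunctor $K_0$ (nor the functors $Y\mapsto X\times K_0Y$ whose cofree coalgebra is $K$): the obstruction sits in degree $0$, where $(K_0Y)_0=1$ would force a degeneracy $1\to TY_0$ for which there is no choice natural in $Y$. The lift exists only at the final coalgebra, since there $K(X)_0=X_0$ and the missing degeneracy $X_0\to TX_0$ can be taken to be the unit $iX_0$. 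Hence the $\Dtopop$-structure must be built directly on $K(X)$, by recursion on degree, mirroring the recursion $K(X)_{n+1}=X_{n+1}\times TK(X)_n$ that defines $K$.

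Concretely I would define each face and degeneracy on $K(X)_{n+1}=X_{n+1}\times TK(X)_n$ by prescribing its two components: the $X_{n+1}$-component uses the corresponding operation of $X$ (precomposed with the projection $\varepsilon$), while the $TK(X)_n$-component applies $T$ to the operation already defined on $K(X)$ in lower degree, precomposed with $\pi_2\colon K(X)_{n+1}\to TK(X)_n$; the multiplication $m$ enters for the top standard face $d_n$, and the unit $i$ enters for the top degeneracy $s_n$ (of which the base case $s_0=\langle s_0,iX_0\rangle\colon X_0\to X_1\times TX_0$ is the first instance). This is exactly the prescription making $K(X)$, together with the extra faces $d_{n+1}=\pi_2$, into a \emph{$T$-simplicial object} built from the $\Dtopop$-functor $X$ and the monad data of $T$ alone. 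I would therefore verify the full list of $T$-simplicial identities \ref{TSimp-A1}--\ref{TSimp-A9} for $K(X)$ by induction on degree, exactly in the style of Proposition~\ref{prop:SA1-far}; the desired $\tilde K X$ is then its underlying $\Dtopop$-functor, and a parallel, easier induction gives naturality in $X$, so that $\tilde K\colon[\Dtopop,\ce]\to[\Dtopop,\ce]$ is a functor with $V\tilde K=KV$.

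It then remains to lift $\varepsilon$ and $\delta$. Their components $\varepsilon_{VX}\colon KVX\to VX$ and $\delta_{VX}\colon KVX\to KKVX$ are the projections and diagonal-type maps recalled before the proposition, and I would check, again by induction on degree, that each commutes with every $d_i$ $(i<n)$ and $s_i$, i.e.\ is a morphism of $\Dtopop$-functors $\tilde KX\to X$ and $\tilde KX\to\tilde K\tilde KX$; this yields $\tilde\varepsilon,\tilde\delta$ in $[\Dtopop,\ce]$ with $V\tilde\varepsilon=\varepsilon V$ and $V\tilde\delta=\delta V$. The comonad axioms for $(\tilde K,\tilde\varepsilon,\tilde\delta)$ are then equalities of natural transformations between iterates of $\tilde K$; post-composing with $V$ and using $V\tilde K^{\,j}=K^{\,j}V$ turns each into the corresponding comonad axiom for $K$ whiskered by $V$, which holds. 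Since $V$ is faithful --- two $\Dtopop$-natural transformations agreeing in every degree are equal --- these equalities lift back, proving $\tilde K$ is a comonad lifting $K$. The main obstacle is the inductive verification of \ref{TSimp-A1}--\ref{TSimp-A9} for $\tilde KX$: the interaction of the twisted face $\pi_2$ with degeneracies and top faces through $m$ and $i$ must be tracked carefully, and it is precisely the observation that these are the simplicial identities of the cofree $T$-simplicial object that keeps the bookkeeping under control.
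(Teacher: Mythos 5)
Your construction is correct and is essentially the paper's own proof: the componentwise formulas you prescribe (the $X$-component obtained by applying the corresponding operation of $X$ after $\varepsilon$, the $TK(X)_n$-component obtained by applying $T$ to the lower-degree operation after $\pi_2$, with $m$ entering for the top face and $i$ for the top degeneracy) are exactly the maps the paper writes down for $d_{n+1}$, $s_n$ and $(\phi+1)^*$, and your remaining steps --- the degree-by-degree induction for the identities and naturality, the lifting of $\varepsilon$ and $\delta$, and deducing the comonad axioms from faithfulness of $V$ --- are precisely the verifications the paper leaves implicit. Your two framing observations are likewise consistent with the paper: the failure of $K_0$ to lift is its Remark following the proposition, and the fact that the lifted structure is forced by making $K(X)$, with $d_{n+1}=\pi_2$, into the cofree $T$-simplicial object on $X$ reappears as Theorem~\ref{thm:comondicity}.
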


\begin{proof}
 First we define the last face map in each degree. We define $d_0\colon K(X)_1\to K(X)_0$ to be
\[ 
\begin{tikzcd}
    X_1\x TX_0 \ar[r,"{\pi_1}"] & X_1 \ar[r,"d_0"] & X_0
\end{tikzcd}
\]
and $d_{n+1}\colon K(X)_{n+2}\to K(X)_{n+1}$ to be
\[ \begin{tikzcd}[column sep=large]
   X_{n+2}\x T(X_{n+1}\x TK(X)_{n}) \ar[r,"d_{n+1}\x T{\pi_2}"] &
    X_{n+1}\x T^2K(X)_n \ar[r,"1\x mK(X)_n"] & X_{n+1}\x
    TK(X)_n.  
\end{tikzcd}\]
Next we define the last degeneracy map in each degree. We define
$s_n\colon K(X)_n\to K(X)_{n+1}$ to be
\[ \begin{tikzcd}
    K(X)_n \ar[r,"\binom{\epsilon}{1}"] & X_n\x K(X)_n \ar[rr,"s_n\x iK(X)_n"] && X_{n+1}\x TK(X)_n
\end{tikzcd}\]
Finally, for a morphism $\phi\colon[m]\to[n]$ in $\Delta_r$, we define
$(\phi+1)^*\colon K(X)_{n+1}\to K(X)_{m+1}$ by
\[\begin{tikzcd}
    X_{n+1}\x TK(X)_n \ar[rr,"{(\phi+1)^*\x T\phi^*}"] && X_{m+1}\x TK(X)_m.
\end{tikzcd} \qedhere\]
\end{proof}

\begin{remark}
  The endofunctor $K_0$ does not lift to $[\Dtopop,\ce]$. For
  example, there seems to be no way to define $d_1\colon TX_1=K_0(X)_2\to
  K_0(X)_1=TX_0$ or $s_0\colon 1=K_0(X)_0\to K_0(X)_1=TX_0$. And the
  lifted comonad does not seem to be cofree.
\end{remark}

\begin{theorem}\label{thm:comondicity}
  If $\ce$ has finite products and $T=(T,m,i)$ is an arbitrary monad on
  $\ce$, then the forgetful functor $U\colon s_T\ce\to[\Dtopop,\ce]$
  is comonadic, with the functor part $K$ of the comonad satisfying
  \[ K(X)_n = X_n\x T(X_{n-1}\x T(\ldots\x TX_0)). \]
\end{theorem}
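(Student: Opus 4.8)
The plan is to identify $s_T\ce$ with the category of coalgebras for the comonad $K$ on $[\Dtopop,\ce]$ constructed above, over the forgetful functor. Once this identification is established, the theorem is immediate: the forgetful functor from the category of coalgebras of any comonad is comonadic, with that comonad as its functor part, and the displayed formula for $K(X)_n$ is exactly the one already recorded. So the whole content lies in showing that, naturally in $X\in[\Dtopop,\ce]$, a $K$-coalgebra structure on $X$ is the same thing as an extension of $X$ to a $T$-simplicial object; that is, the same as a choice of the maps $d_n\colon X_n\to TX_{n-1}$ of \ref{TSimp-D3} subject to the identities \ref{TSimp-A2}, \ref{TSimp-A3}, \ref{TSimp-A7} and \ref{TSimp-A9} that involve them.

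First I would pin down the shape of a coalgebra. Because the counit $\varepsilon\colon KX\to X$ is a projection in each positive degree, the counit law forces any coalgebra structure $\gamma\colon X\to KX$ to satisfy $\gamma_0=1$ and $\gamma_{n+1}=\binom{1}{\xi_n}\colon X_{n+1}\to X_{n+1}\x TK(X)_n$ for some $\xi_n\colon X_{n+1}\to TK(X)_n$. Postcomposing the second factor with $T\varepsilon_n$ and defining $d_{n+1}=T\varepsilon_n\cdot\xi_n\colon X_{n+1}\to TX_n$ produces the data \ref{TSimp-D3}. Conversely, cofreeness of $K$ over $[\bbn,\ce]$ shows that $\gamma$ is reconstructed from these maps by the recursion $\xi_n=T\gamma_n\cdot d_{n+1}$ (compare the comultiplication, which is the special case $X=KX$, $d_{n+1}=\pi_2$). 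Hence a coalgebra structure on $X$ is no more than a choice of the family $d_{n+1}$.

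The only genuine constraint is then that this recursively defined $\gamma$ be a morphism in $[\Dtopop,\ce]$, i.e.\ that it commute with every face and degeneracy operator; the coalgebra axioms follow for free, since the forgetful $V\colon[\Dtopop,\ce]\to[\bbn,\ce]$ is faithful and preserves the comonad while $V\gamma$ is automatically a coalgebra by cofreeness. I would then match the compatibilities one family at a time, using the explicit description of the operators on $KX$: commuting with the non-top faces and non-top degeneracies of $KX$ (those built from $(\varphi+1)^*\x T\varphi^*$, in which neither $m$ nor $i$ appears) reproduces \ref{TSimp-A2} and \ref{TSimp-A9}; commuting with the top face (built using $mK(X)_n$) reproduces \ref{TSimp-A3}; and commuting with the top degeneracy (built using $iK(X)_n$) reproduces \ref{TSimp-A7}. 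In each case the compatibility as literally stated is an equation between maps into the iterated product $K(X)_n$, which one peels back to the single corresponding identity on $d_n$ by feeding in the recursion $\xi_n=T\gamma_n\cdot d_{n+1}$ and inducting on degree, exactly as in Proposition~\ref{prop:SA1-far}.

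Assembling these verifications gives mutually inverse assignments between $T$-simplicial structures on $X$ and $K$-coalgebra structures on $X$, both natural in $X$ and both commuting with the forgetful functors to $[\Dtopop,\ce]$; this is the desired isomorphism of categories over $[\Dtopop,\ce]$ carrying $U$ to the coalgebra forgetful functor, and comonadicity with functor part $K$ follows. I expect the main obstacle to be precisely the inductive matching in the third step: the coalgebra conditions are phrased through the compound maps $\xi_n$ valued in the nested products $K(X)_n$, and reducing them to the primitive simplicial identities on the $d_n$ --- compatibly with the monad data $m$ and $i$ in the top face and top degeneracy --- requires the same degree-by-degree pullback bookkeeping already used throughout this section.
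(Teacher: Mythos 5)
Your proposal is correct and takes essentially the same route as the paper's proof: the paper likewise uses faithfulness of $V\colon[\Dtopop,\ce]\to[\bbn,\ce]$ together with the fact that a coalgebra for the cofree comonad $K$ on $[\bbn,\ce]$ is just a $K_0$-coalgebra (a choice of maps $d_{n+1}\colon X_{n+1}\to TX_n$), and then reduces the lifted-coalgebra condition to naturality of the recursively built structure map. Its case analysis matches yours exactly: naturality against the top faces (built with $m$) gives \ref{TSimp-A3}, against the top degeneracies (built with $i$) gives \ref{TSimp-A7}, and against the $(\phi+1)^{*}$ operators gives \ref{TSimp-A2} and \ref{TSimp-A9}, with the remaining verification done inductively in the degree.
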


\begin{proof}
Here we use the fact that the forgetful $V\colon[\Dtopop,\ce]\to [\bbn,\ce]$ is faithful and the comonad $K$ on $[\bbn,\ce]$ lifts to a comonad, which we here call $\widehat{K}$, on $[\Dtopop,\ce]$. In this situation, a $\widehat{K}$-coalgebra structure on $X\in[\Dtopop,\ce]$ amounts to a $K$-coalgebra structure $\zeta\colon VX\to KVX$ which has the form $\zeta=V\widehat\zeta$ for some $\widehat\zeta\colon X\to \widehat{K}X$.

Now a coalgebra for the comonad $K$ on
$[\bbn,\ce]$ is the same as a coalgebra for the endofunctor $K_0$, and
consists of giving a map $d_{n+1}\colon X_{n+1}\to TX_n$ for each
$n$. The corresponding $\zeta$ can be constructed recursively: for
$n=0$ it is the identity and for $n+1$ it is 
\[ \begin{tikzcd}
    X_{n+1} \ar[r,"\binom{1}{d_{n+1}}"] & X_{n+1}\x TX_n
    \ar[r,"1\x T\zeta_n"] & X_{n+1}\x TK(X)_n \ar[r,equals] & K(X)_{n+1}.
\end{tikzcd}\]
We just need to spell out what it means for these maps to be natural.
Naturality with respect to $\delta_0\colon [0]\to [1]$ says that the
diagram
\[ \begin{tikzcd}
    X_1 \ar[r,"\binom{1}{d_1}"] \ar[d,"d_0"] & X_1\x TX_0
    \ar[r,"{\pi_1}"] & X_1 \ar[d,"d_0"] \\
    X_0 \ar[rr,"1"] && X_0
\end{tikzcd}\]
commutes, which is clearly just true. Naturality with respect to
$\delta_{n+1}\colon [n+1]\to [n+2]$ says that the exterior of 
\[ \begin{tikzcd}
    X_{n+2} \ar[r,"\binom{1}{d_{n+2}}"] \ar[dd,"d_{n+1}"] & X_{n+2}\x TX_{n+1}
    \ar[r,"1\x T\zeta"] \ar[d,"1\x Td_{n+1}"] & X_{n+2} \x TK(X)_{n+1} \ar[r,equals] &
    X_{n+2}\x T(X_{n+1}\x TK(X)_n) \ar[d,"1\x T{\pi_2}"] \\
    & X_{n+2}\x T^2X_n \ar[rr,"1\x T^2\zeta"] \ar[d,"d_{n+1}\x mX_n"]
    && X_{n+2}\x T^2K(X)_n \ar[d,"d_{n+1}\x mK(X)_n"] \\
    X_{n+1} \ar[r,"\binom{1}{d_{n+1}}"] & X_{n+1}\x TX_n \ar[rr,"1\x T\zeta"] && X_{n+1}\x TK(X)_n
\end{tikzcd}\]
commutes. The upper right region commutes by the recursive
construction of $\zeta$, and the lower right region also commutes. The
exterior will therefore commute if and only if the diagram
\[ \begin{tikzcd}
    X_{n+2} \ar[r,"d_{n+2}"] \ar[d,"d_{n+1}"] & TX_{n+1}
    \ar[r,"Td_{n+1}"] & T^2X_n \ar[d,"mX_{n}"] \\
    X_{n+1} \ar[rr,"d_{n+1}"] && TX_n 
\end{tikzcd}\] 
does so; for the ``only if'' direction we use the fact that $\zeta$ is a split monomorphism.

Naturality with respect to $\sigma_n\colon[n+1]\to[n]$ says that the
exterior of the diagram on the left in 
\[ \begin{tikzcd}
    X_n \ar[rr,"\zeta_n"] \ar[drr,"\binom{1}{\zeta_n}"]
    \ar[ddr,"\binom{s_n}{iX_n}"] \ar[dd,"s_n"] &&
    K(X)_n \ar[d,"\binom{\epsilon}{1}"] \\
    && X_n\x K(X)_n \ar[d,"s_n\x iK(X)_n"] \\
    X_{n+1} \ar[r,"\binom{1}{d_{n+1}}"] & X_{n+1}\x TX_n \ar[r,"1\x T\zeta_n"] & X_{n+1}\x TK(X)_n
\end{tikzcd} \quad 
\begin{tikzcd}
    X_n \ar[d,"s_n"] \ar[dr,"iX_n"] \\
  X_{n+1} \ar[r,"d_{n+1}"] & TX_n
\end{tikzcd}
\]
commutes, but the upper and middle regions always commute, so this is
equivalent to the region on the left commuting, and so to the diagram
on the right commuting. 

Finally naturality with respect to $\phi+1\colon [m+1]\to [n+1]$ says
that the exterior of the diagram on the left in 
\[ \begin{tikzcd}
    X_{n+1} \ar[r,"\binom{1}{d_{n+1}}"] \ar[d,"(\phi+1)^*"] &
    X_{n+1}\x TX_n \ar[r,"1\x T\zeta"] \ar[d,"(\phi+1)^*\x
      T\phi^*"] &
    X_{n+1}\x TK(X)_n \ar[d,"(\phi+1)^*\x T\phi^*"] \\
    X_{m+1} \ar[r,"\binom{1}{d_{m+1}}"] & X_{m+1}\x TX_m
    \ar[r,"1\x T\zeta"] & X_{m+1}\x TK(X)_m
\end{tikzcd}\quad
\begin{tikzcd}
    X_{n+1} \ar[r,"d_{n+1}"] \ar[d,"(\phi+1)^*"] & TX_n
  \ar[d,"T\phi^*"] \\
  X_{m+1} \ar[r,"d_{m+1}"] & TX_m
\end{tikzcd}\]
commutes. Commutativity of the right region will hold inductively, so
it suffices to have commutativity of the left region, which in turn is
equivalent to commutativity of the diagram on the right.
\end{proof}

\section{Simplicial enrichment}\label{sec:simplicial-enrichment}
Throughout this section, let $\ce$ be a locally small category and $T=(T,m,i)$ a monad on $\ce$. We define an enrichment of the category $s_T\ce$ of $T$-simplicial objects over the category $s\Set=[\Delta\op,\Set]$ of simplicial sets (with respect to the cartesian monoidal structure on $s\Set$).
The remainder of this paper deals with aspects of this $s\Set$-enrichment of $s_T\ce$.

\begin{construction}\label{const:enrichment-by-presheaf-cat}
In general, if $\cb$ is a small category and $\cc$ is a locally small category, then the (locally small) category $[\cb,\cc]$ of all functors $\cb\to\cc$ admits an enrichment over the cartesian monoidal category $[\cb,\Set]$.
More precisely, we have a $[\cb,\Set]$-category 
$\underline{[\cb,\cc]}$ whose objects are the functors $\cb\to \cc$ and, for $X,Y\colon \cb\to \cc$, the hom-object $\underline{[\cb,\cc]}(X,Y)\in[\cb,\Set]$ is defined by mapping each $b\in\cb$ to the set $\underline{[\cb,\cc]}(X,Y)_b$ of all natural transformations from $b/\cb\xrightarrow{\text{forgetful}}\cb\xrightarrow{X}\cc$ to $b/\cb\xrightarrow{\text{forgetful}}\cb\xrightarrow{Y}\cc$;
the action of $\underline{[\cb,\cc]}(X,Y)$ on a morphism $f\colon b\to b'$ of $\cb$ is induced by the functor $f/\cb\colon b'/\cb\to b/\cb$ given by precomposing $f$.
More generally, for any $B\colon\cb\to \Set$, to give a natural transformation $B\to \underline{[\cb,\cc]}(X,Y)$ is equivalent to giving a natural transformation from $\Elt B\xrightarrow{\text{forgetful}}\cb\xrightarrow{X}\cc$ to $\Elt B\xrightarrow{\text{forgetful}}\cb\xrightarrow{Y}\cc$, where $\Elt B$ is the category of elements of $B$. 
With the evident structure, $\underline{[\cb,\cc]}$ becomes a $[\cb,\Set]$-category. 
We remark that the set $\underline{[\cb,\cc]}(X,Y)_b$ admits an end formula 
\[\underline{[\cb,\cc]}(X,Y)_b\cong \int_{b'\in \cb}[\cb(b,b'),\cc(X_{b'},Y_{b'})],\]
and more generally, for any $B\colon\cb\to\Set$, we have 
\[
[\cb,\Set]\bigl(B,\underline{[\cb,\cc]}(X,Y)\bigr)\cong \int_{b\in\cb}[B_b,\cc(X_b,Y_b)].
\]
The underlying category $\underline{[\cb,\cc]}_0$ of $\underline{[\cb,\cc]}$ is canonically isomorphic to the ordinary functor category $[\cb,\cc]$.
\end{construction}

Copowers and powers in these enriched categories are given as follows.

\begin{proposition}\label{prop:copowers-and-powers-in-BC}
    Suppose that $\cb$ is a small category and $\cc$ a locally small category.
    Let $B\colon \cb\to \Set$ and $X\colon \cb\to\cc$. We denote by $\Elt B$ the category of elements of $B$, with $P\colon \Elt B\to \cb$ the forgetful functor. 
    \begin{itemize}
        \item[(1)] If the pointwise left Kan extension 
        \begin{equation}\label{eqn:Lan-for-copower}
\begin{tikzpicture}[baseline=-\the\dimexpr\fontdimen22\textfont2\relax ]
      \node(00) at (0,0.75) {$\Elt B$};
      \node(01) at (1.5,0.75) {$\cb$};
      \node(02) at (3,0.75) {$\cc$};
      \node(10) at (0,-0.75) {$\cb$};
      \draw [->] (00) to node[auto,labelsize] {$P$} (01);
      \draw [->] (01) to node[auto,labelsize] {$X$} (02);
      \draw [->] (00) to node[swap,auto,labelsize] {$P$} (10);
      \draw [->] (10) to node[auto,swap,labelsize] {$\Lan_PXP$} (02);
      \draw [2cell] (1,0.75) to node[auto,swap,labelsize] {$\eta$} (1,-0.4);
\end{tikzpicture}
\end{equation}
        exists, then $\Lan_PXP$ is the copower $B\cdot X$ of $X$ by $B$ in $\underline{[\cb,\cc]}$, and $\eta$ corresponds to the universal element of $[\cb,\Set]\bigl(B,\underline{[\cb,\cc]}(X,B\cdot X)\bigr)$.
        For each $b\in\cb$, the object $(\Lan_PXP)_b\in\cc$ is given by the copower $B_b\cdot X_b$ of $X_b$ by $B_b$ in $\cc$, so $\underline{[\cb,\cc]}$ has copowers (as a $[\cb,\Set]$-category) whenever $\cc$ has copowers (as a $\Set$-category).
        \item[(2)] If the pointwise right Kan extension 
        \begin{equation}\label{eqn:Ran-for-power}
\begin{tikzpicture}[baseline=-\the\dimexpr\fontdimen22\textfont2\relax ]
      \node(00) at (0,0.75) {$\Elt B$};
      \node(01) at (1.5,0.75) {$\cb$};
      \node(02) at (3,0.75) {$\cc$};
      \node(10) at (0,-0.75) {$\cb$};
      \draw [->] (00) to node[auto,labelsize] {$P$} (01);
      \draw [->] (01) to node[auto,labelsize] {$X$} (02);
      \draw [->] (00) to node[swap,auto,labelsize] {$P$} (10);
      \draw [->] (10) to node[auto,swap,labelsize] {$\Ran_PXP$} (02);
      \draw [2cell] (1,-0.4) to node[auto,labelsize] {$\epsilon$} (1,0.75);
\end{tikzpicture}
\end{equation}
        exists, then $\Ran_PXP$ is the power $B\pitchfork X$ of $X$ by $B$ in $\underline{[\cb,\cc]}$, and $\epsilon$ corresponds to the universal element of $[\cb,\Set]\bigl(B,\underline{[\cb,\cc]}(B\pitchfork X, X)\bigr)$.
        For each $b\in \cb$, the object $(\Ran_PXP)_b\in \cc$ is given by the weighted limit $\{ B\times \cb(b,-),X\}$ of $X\colon \cb\to \cc$ with weight $B\times \cb(b,-)\colon \cb\to \Set$.
        Moreover, for any $Y\colon \cb\to \cc$, if the morphisms $u\colon B\to \underline{[\cb,\cc]}(Y,X)$ in $[\cb,\Set]$ and $\hat u\colon Y\to B\pitchfork X$ in $[\cb,\cc]$ correspond to each other, then for each $b,b'\in \cb$, $\beta\in B_{b'}$, and $f\in \cb(b,b')$, the diagram 
   \begin{equation}\label{eqn:power-morphism-correspondence}
    \begin{tikzpicture}[baseline=-\the\dimexpr\fontdimen22\textfont2\relax ]
      \node(1) at (0,0.75) {$Y_b$};
      \node(2) at (4,0.75) {$(B\pitchfork X)_b$};
      \node(4) at (0,-0.75) {$Y_{b'}$};
      \node(5) at (4,-0.75) {$X_{b'}$};

      \draw [->] (1) to node[auto, labelsize] {$\hat u_b$}  (2);  
      \draw [->] (2) to node[auto, labelsize] {$\pi_{b',\beta,f}$}  (5);
      \draw [->] (1) to node[auto, swap, labelsize] {$Y_{f}$}  (4);
      \draw [->] (4) to node[auto, swap, labelsize] {$u_{b',\beta}$}  (5);
\end{tikzpicture}
    \end{equation}
    commutes, where $\pi_{b',\beta,f}$ is the $(b',\beta,f)$-th projection associated with the weighted limit $\{ B\times \cb(b,-),X\}=(B\pitchfork X)_b$.
    \end{itemize}
\end{proposition}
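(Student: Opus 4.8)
The plan is to verify, in each case, the defining universal property of the enriched copower (respectively power), reducing everything to two inputs: the bijection recorded in Construction~\ref{const:enrichment-by-presheaf-cat} between morphisms out of a weight $B$ in $[\cb,\Set]$ and natural transformations over the category of elements $\Elt B$, and the universal property of pointwise Kan extensions. Since copowers and powers are the simplest weighted (co)limits, it suffices to produce the representing isomorphism at the level of underlying hom-sets, natural in the varying argument; the full $[\cb,\Set]$-enriched universal property then follows from the standard theory of weighted (co)limits, or, if one prefers, can be checked objectwise using the end formulas recorded in Construction~\ref{const:enrichment-by-presheaf-cat}. In both parts the Kan extension is assumed to exist, so only the identification is at stake.

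For part~(1), write $P\colon\Elt B\to\cb$ for the forgetful functor. The universal property of the left Kan extension in \eqref{eqn:Lan-for-copower} gives a bijection $[\cb,\cc]\bigl(\Lan_PXP,Y\bigr)\cong[\Elt B,\cc]\bigl(XP,YP\bigr)$, natural in $Y$ and mediated by $\eta$, while the ``more generally'' clause of Construction~\ref{const:enrichment-by-presheaf-cat} gives $[\Elt B,\cc]\bigl(XP,YP\bigr)\cong[\cb,\Set]\bigl(B,\underline{[\cb,\cc]}(X,Y)\bigr)$, also natural in $Y$. Composing these exhibits $\Lan_PXP$ as the copower $B\cdot X$, and tracking the identity of $\Lan_PXP$ through the chain shows that $\eta$ is sent to the universal element, as claimed. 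For the pointwise formula I would note that $P$ is a discrete opfibration, so that the discrete subcategory of the comma category $P\downarrow b$ spanned by the objects $((b,\beta),\mathrm{id}_b)$ with $\beta\in B_b$ is final; finality collapses the colimit computing $(\Lan_PXP)_b$ to $\coprod_{\beta\in B_b}X_b=B_b\cdot X_b$. In particular the pointwise left Kan extension exists whenever $\cc$ has copowers, which yields the closing clause of~(1).

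Part~(2) is formally dual. The universal property of the right Kan extension in \eqref{eqn:Ran-for-power} yields $[\cb,\cc]\bigl(Y,\Ran_PXP\bigr)\cong[\Elt B,\cc]\bigl(YP,XP\bigr)$, which I compose with the Construction's bijection $[\Elt B,\cc]\bigl(YP,XP\bigr)\cong[\cb,\Set]\bigl(B,\underline{[\cb,\cc]}(Y,X)\bigr)$ to present $\Ran_PXP$ as the power $B\pitchfork X$, with $\epsilon$ the universal counit element. For the pointwise description I would identify the comma category $b\downarrow P$ computing $(\Ran_PXP)_b$ with the category of elements $\Elt\bigl(B\times\cb(b,-)\bigr)$ --- an object of each being a triple $(c,\beta,f)$ with $\beta\in B_c$ and $f\colon b\to c$ --- and then invoke that a weighted limit is the conical limit of $X$ over the category of elements of its weight, giving $(\Ran_PXP)_b\cong\{B\times\cb(b,-),X\}$ and identifying the counit component $\epsilon_{(c,\gamma)}$ with the projection $\pi_{c,\gamma,\mathrm{id}_c}$.

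It remains to establish the square \eqref{eqn:power-morphism-correspondence}, which makes the correspondence $u\leftrightarrow\hat u$ explicit. Unwinding the chain of bijections, $\hat u$ corresponds to the transformation $YP\to XP$ whose component at $(c,\gamma)$ is $\epsilon_{(c,\gamma)}\circ\hat u_c=\pi_{c,\gamma,\mathrm{id}_c}\circ\hat u_c$; matching this against the Construction's correspondence identifies it with the vertex value $u_{c,\gamma}$, which is exactly \eqref{eqn:power-morphism-correspondence} in the case $f=\mathrm{id}_c$. The general case then follows from naturality of $\hat u$ in $\cb$ together with the compatibility of the weighted-limit projections with precomposition in the hom-weight $\cb(b,-)$: for $f\colon b\to b'$ and $\beta\in B_{b'}$ one computes $u_{b',\beta}\circ Y_f=\pi_{b',\beta,\mathrm{id}_{b'}}\circ\hat u_{b'}\circ Y_f=\pi_{b',\beta,\mathrm{id}_{b'}}\circ(B\pitchfork X)_f\circ\hat u_b=\pi_{b',\beta,f}\circ\hat u_b$. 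I expect this last bookkeeping to be the main obstacle: one must keep straight the three indices labelling a projection, distinguish a natural transformation in $\underline{[\cb,\cc]}(Y,X)_{b'}$ from its vertex value $Y_{b'}\to X_{b'}$, and track the variance of $f$, so that the cone condition for $\{B\times\cb(b,-),X\}$ reproduces \eqref{eqn:power-morphism-correspondence} on the nose rather than up to a discrepancy in variance.
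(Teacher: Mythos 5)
Your pointwise computations match the paper's: the finality argument giving $(\Lan_PXP)_b\cong B_b\cdot X_b$, the identification of $b/P$ with $\Elt\bigl(B\times\cb(b,-)\bigr)$, and the verification of the square \eqref{eqn:power-morphism-correspondence} are all essentially what the paper does. The gap is at the central step, the identification of $\Lan_PXP$ as the \emph{enriched} copower. What you actually prove is the one-dimensional universal property: a bijection $[\cb,\cc](\Lan_PXP,Y)\cong[\cb,\Set]\bigl(B,\underline{[\cb,\cc]}(X,Y)\bigr)$, ordinary-natural in $Y$. You then assert that the full $[\cb,\Set]$-enriched universal property ``follows from the standard theory of weighted (co)limits.'' It does not: the standard results upgrading a one-dimensional universal property to an enriched one require the enriched category to be cotensored (to upgrade colimits, such as copowers) or tensored (to upgrade limits, such as powers). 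Here $\cc$ is an arbitrary locally small category, so $\underline{[\cb,\cc]}$ is not known to admit either, and the implication fails in general. The paper itself is sensitive to exactly this point: Proposition~\ref{prop:strengthening-univ-prop} needs finite copowers, or an embedding into a category having them, to perform this upgrade for $\underline{s_T\ce}$. Concretely, your bijection tests the universal property only against the unit weight, whereas the enriched property must hold against every weight $A\colon\cb\to\Set$ (equivalently, by Yoneda, against all representables $\cb(b,-)$). A symptom that something is wrong: your copower identification uses only the weak (non-pointwise) universal property of the Kan extension, so if the argument were valid it would apply verbatim to a non-pointwise Kan extension, whereas pointwiseness is an essential hypothesis of the proposition.

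What is missing is the paper's key lemma: for every $A\colon\cb\to\Set$ with discrete opfibration $Q\colon\Elt A\to\cb$, the pointwise Kan extension \eqref{eqn:Lan-for-copower} is preserved by pasting with the pullback square \eqref{eqn:pb-for-Elt-A-times-B}, i.e.\ $\eta Q'$ exhibits $(\Lan_PXP)Q$ as $\Lan_{P'}(XPQ')$; this holds because the pullback $P'$ of $P$ along $Q$ is again a discrete opfibration, namely the one corresponding to $B\circ Q$, so the pointwise description transfers. With this, a map $\phi\colon A\times B\to\underline{[\cb,\cc]}(X,Y)$, viewed as a natural transformation over $\Elt(A\times B)$, factors uniquely through $\eta Q'$, which is precisely the enriched universal property at the weight $A$; part (2) is handled by the dual argument, using $(b,\alpha)/\Elt A\cong b/\cb$. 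Your fallback remark that the enriched property ``can be checked objectwise using the end formulas'' does gesture at a workable alternative (the pointwise formula makes both sides of the desired isomorphism computable as ends), but you do not carry it out, and the substantive point there --- that the resulting isomorphism is the \emph{canonical} map induced by $\eta$, not merely an abstract isomorphism of presheaves --- is exactly the content that needs proof.
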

\begin{proof}
    {[(1)]}
    We first show that for each $b\in\cb$, $(\Lan_PXP)_b$ is given by the copower $B_b\cdot X_b$. 
    Since we are dealing with a \emph{pointwise} left Kan extension, 
    $(\Lan_PXP)_b$ is given by the colimit of $P/b\xrightarrow{\text{projection}} \Elt B\xrightarrow{P}\cb\xrightarrow{X}\cc$. 
    Now, the inclusion $B_b\to P/b$ mapping each $\beta\in B_b$ to $\bigl((b,\beta), b\xrightarrow{1}b\bigr)$ is final, where we regard $B_b$ as a discrete category. 
    Thus we have $(\Lan_PXP)_b=B_b\cdot X_b$. 

    To show that $\Lan_PXP$ is the copower $B\cdot X$, 
    we verify that for each $A\colon \cb\to \Set$, with the corresponding discrete opfibration $Q\colon \Elt A\to \cb$, the left Kan extension \eqref{eqn:Lan-for-copower} is preserved by pasting the pullback diagram 
        \begin{equation}\label{eqn:pb-for-Elt-A-times-B}
\begin{tikzpicture}[baseline=-\the\dimexpr\fontdimen22\textfont2\relax ]
      \node(00) at (0,0.75) {$\Elt (A\times B)$};
      \node(01) at (3,0.75) {$\Elt B$};
      \node(10) at (0,-0.75) {$\Elt A$};
      \node(11) at (3,-0.75) {$\cb$};
      \draw [->] (00) to node[auto,labelsize] {$Q'$} (01);
      \draw [->] (01) to node[auto,labelsize] {$P$} (11);
      \draw [->] (00) to node[swap,auto,labelsize] {$P'$} (10);
      \draw [->] (10) to node[auto,swap,labelsize] {$Q$} (11);
\end{tikzpicture}
\end{equation}
    on the left; that is, the natural transformation $\eta Q'$ exhibits $(\Lan_PXP)Q$ as the (pointwise) left Kan extension of $XPQ'$ along $P'$.
    This follows from the previous discussion, because $P'$ is the discrete opfibration corresponding to the functor $\Elt A\xrightarrow{Q}\cb\xrightarrow{B}\Set$.

    Now suppose that we are given $A\colon \cb\to \Set$, $Y\colon \cb\to \cc$, and a natural transformation $\phi\colon A\times B\to \underline{[\cb,\cc]}(X,Y)$. We have to show that there exists a unique natural transformation $\hat \phi\colon A\to \underline{[\cb,\cc]}(\Lan_PXP,Y)$ making the following triangle commute.
      \begin{equation*}
    \begin{tikzcd}
      A\times B \ar[r,"\widehat{\phi}\times \eta"] \ar[dr,"\phi"'] & \underline{[\cb,\cc]}(\Lan_PXP,Y)\times \underline{[\cb,\cc]}(X,\Lan_PXP) \ar[d,"\text{composition}"]  \\
      & \underline{[\cb,\cc]}(X,Y)
    \end{tikzcd}
  \end{equation*}
    The natural transformation $\phi$ corresponds to a natural transformation
    \[
    \begin{tikzpicture}[baseline=-\the\dimexpr\fontdimen22\textfont2\relax ]
        \node(09) at (-3,0.75) {$\Elt (A\times B)$};
      \node(00) at (0,0.75) {$\Elt B$};
      \node(01) at (1.5,0.75) {$\cb$};
      \node(02) at (3,0.75) {$\cc$.};
        \node(19) at (-3,-0.75) {$\Elt A$};
      \node(10) at (0,-0.75) {$\cb$};
        \draw [->] (09) to node[auto,labelsize] {$Q'$} (00);
      \draw [->] (00) to node[auto,labelsize] {$P$} (01);
      \draw [->] (01) to node[auto,labelsize] {$X$} (02);
        \draw [->] (09) to node[auto,swap,labelsize] {$P'$} (19);
        \draw [->] (19) to node[auto,swap,labelsize] {$Q$} (10);
      \draw [->] (10) to node[auto,swap,labelsize] {$Y$} (02);
      \draw [2cell] (0,0.65) to node[auto,swap,labelsize] {$\phi'$} (0,-0.5);
\end{tikzpicture}
    \]
    Since $\eta Q'$ exhibits $(\Lan_PXP)Q$ as $\Lan_{P'}XPQ'$, there exists a unique natural transformation $\hat\phi'\colon (\Lan_PXP)Q\to YQ$ with $\phi'=\hat\phi'.(\eta Q')$, which corresponds to the required $\hat \phi$.

    {[(2)]}
    For each $b\in\cb$, $(\Ran_PXP)_b$ is given by the limit of $b/P\xrightarrow{\text{projection}} \Elt B\xrightarrow{P}\cb\xrightarrow{X}\cc$. Since the comma category 
    \begin{equation*}
\begin{tikzpicture}[baseline=-\the\dimexpr\fontdimen22\textfont2\relax ]
      \node(00) at (0,0.75) {$b/P$};
      \node(01) at (3,0.75) {$\Elt B$};
      \node(10) at (0,-0.75) {$1$};
      \node(11) at (3,-0.75) {$\cb$};
      \draw [->] (00) to node[auto,labelsize] {projection} (01);
      \draw [->] (01) to node[auto,labelsize] {$P$} (11);
      \draw [->] (00) to node[swap,auto,labelsize] {} (10);
      \draw [->] (10) to node[auto,swap,labelsize] {$b$} (11);
      \draw [2cell] (1.5,-0.55) to node[auto,labelsize] {} (1.5,0.55);
\end{tikzpicture}
\end{equation*}
    can also be given by the pullback 
    \begin{equation*}
\begin{tikzpicture}[baseline=-\the\dimexpr\fontdimen22\textfont2\relax ]
      \node(00) at (0,0.75) {$b/P$};
      \node(01) at (3,0.75) {$\Elt B$};
      \node(10) at (0,-0.75) {$b/\cb$};
      \node(11) at (3,-0.75) {$\cb$,};
      \draw [->] (00) to node[auto,labelsize] {projection} (01);
      \draw [->] (01) to node[auto,labelsize] {$P$} (11);
      \draw [->] (00) to node[swap,auto,labelsize] {} (10);
      \draw [->] (10) to node[auto,swap,labelsize] {cod} (11);
\end{tikzpicture}
\end{equation*}
    where the functor $\text{cod}\colon b/\cb\to \cb$ is the discrete opfibration corresponding to the representable functor $\cb(b,-)\colon \cb\to \Set$, 
    the composite $b/P\xrightarrow{\text{projection}} \Elt B\xrightarrow{P}\cb$ is the discrete opfibration corresponding to the functor $B\times \cb(b,-)\colon\cb\to \Set$. Thus by \cite[(3.33)]{Kelly-book}, for example, we see that $(\Ran_PXP)_b$ is the limit of $X$ weighted by $B\times \cb(b,-)$.

    To prove that $\Ran_PXP$ is the power $B\pitchfork X$, we first note that 
    for each $A\colon \cb\to \Set$, with the corresponding discrete opfibration $Q\colon\Elt A\to \cb$, the right Kan extension \eqref{eqn:Ran-for-power} is preserved by pasting the pullback diagram \eqref{eqn:pb-for-Elt-A-times-B} on the left, as we have $(b,\alpha)/\Elt A\cong b/\cb$ for each $(b,\alpha)\in \Elt A$.
    The rest of the proof is similar to that for (1).
    The final statement follows from the fact that $\varepsilon_{b',\beta}=\pi_{b',\beta,1_{b'}}\colon (B\pitchfork X)_{b'}\to X_{b'}$.
\end{proof}

\begin{construction}\label{const:enrichment-presheaf-base-change}
Construction~\ref{const:enrichment-by-presheaf-cat} has the following generalization. 
Assume that, in the situation of Construction~\ref{const:enrichment-by-presheaf-cat}, we are given another small category $\ca$ and a functor $F\colon \ca\to \cb$. 
Then, as above, we have the $[\ca,\Set]$-category $\underline{[\ca,\cc]}$ of all functors $\ca\to\cc$. 
On the other hand, the functor $F$ induces the (product-preserving) functor $\Ran_F\colon [\ca,\Set]\to [\cb,\Set]$, and hence we obtain the $[\cb,\Set]$-category $(\Ran_F)_\ast\underline{[\ca,\cc]}$ via change of base along $\Ran_F$. We will denote this $[\cb,\Set]$-category $(\Ran_F)_\ast\underline{[\ca,\cc]}$ by $F_\ast \underline{[\ca,\cc]}$ for short. 
Let $U,V\colon \ca\to \cc$. Then for any $B\colon \cb\to \Set$, we have a canonical bijection 
\[
[\cb,\Set]\bigl(B,F_\ast\underline{[\ca,\cc]}(U,V)\bigr)\cong [\ca,\Set]\bigl(BF, \underline{[\ca,\cc]}(U,V)\bigr)
\]
by $[F,\Set]\dashv \Ran_F$. 
It follows that to give a natural transformation $B\to F_\ast\underline{[\ca,\cc]}(U,V)$ is equivalent to giving a natural transformation from $\Elt(BF)\xrightarrow{\text{forgetful}}\ca\xrightarrow{U}\cc$ to $\Elt(BF)\xrightarrow{\text{forgetful}}\ca\xrightarrow{V}\cc$, and we have the end formula 
\[
[\cb,\Set]\bigl(B,F_\ast \underline{[\ca,\cc]}(U,V)\bigr)\cong \int_{a\in \ca}[B_{Fa},\cc(U_a,V_a)]. 
\]
In particular, for any $b\in \cb$, the set $F_\ast \underline{[\ca,\cc]}(U,V)_b$ is the set of all natural transformations from $b/F\xrightarrow{\text{forgetful}}\ca\xrightarrow{U}\cc$ to $b/F\xrightarrow{\text{forgetful}}\ca\xrightarrow{V}\cc$; it is straightforward to describe the $[\cb,\Set]$-category structure of $F_\ast \underline{[\ca,\cc]}$ in these terms. 
(As this latter description suggests, whenever we have a functor $\Phi\colon\cb\op\to \Cat/\ca$, we can turn $[\ca,\cc]$ into a $[\cb,\Set]$-category;
the above construction is a special case where $\Phi=(-)/F$.)
\end{construction}

Now recall Proposition~\ref{prop:T-simp-via-pullback}. 
We define the $s\Set$-category $\underline{s_T\ce}$ as the following pullback in $s\Set\text{-}\mathbf{CAT}$:
    \[ \begin{tikzcd}
   \underline{s_T\ce} \ar[r,"\und{(-)}"] \ar[d,"U"'] & {\underline{[\Delta^\mathrm{op},\ce_T]}} \ar[d,"{[F_R,\ce_T]}"] \\
    {(F_R)_\ast\underline{[\Dtopop,\ce]}} \ar[r,"{[\Dtopop,F_T]}"'] & {(F_R)_\ast\underline{[\Dtopop,\ce_T]}.}
  \end{tikzcd} \] 
Here, $[F_R,\ce_T]$ and $[\Dtopop,F_T]$ are the evident $s\Set$-functors.
Explicitly, this means the following.
Let $\ts X,\ts Y$ be $T$-simplicial objects, corresponding to the diagrams 
\[ \begin{tikzcd}
   \Dtopop \ar[r,"F_R"] \ar[d,"\res{X}"'] & \Delta\op \ar[d,"\und{X} "] \\
    \ce \ar[r,"F_T"'] & \ce_T 
  \end{tikzcd} \qquad\text{and}\qquad
  \begin{tikzcd}
   \Dtopop \ar[r,"F_R"] \ar[d,"\res{Y}"'] & \Delta\op \ar[d,"\und{Y} "] \\
    \ce \ar[r,"F_T"'] & \ce_T 
  \end{tikzcd} 
\]
respectively. 
Then, for any simplicial set $B\colon \Delta\op\to \Set$, to give a simplicial map $\theta\colon B\to \underline{s_T\ce}(\ts X,\ts Y)$ is equivalent to giving 
a pair of natural transformations 
\[
\begin{tikzpicture}[baseline=-\the\dimexpr\fontdimen22\textfont2\relax ]
      \node(1) at (0,0) {$\Elt(BF_R)$};
      \node(2) at (2,1) {$\Dtopop$};
      \node(3) at (2,-1) {$\Dtopop$};
      \node(4) at (4,0) {$\ce$};
      \draw [->] (1) to node[auto, labelsize] {forgetful}  (2);  
      \draw [->] (2) to node[auto, labelsize] {$\res{X}$}  (4);
      \draw [->] (1) to node[auto, swap, labelsize] {forgetful}  (3);
      \draw [->] (3) to node[auto, swap, labelsize] {$\res{Y}$}  (4);
      \draw[2cell] (2,0.7) to node[auto, labelsize] {$\res{\theta}$} (2,-0.7);
\end{tikzpicture}\qquad\text{and}\qquad
\begin{tikzpicture}[baseline=-\the\dimexpr\fontdimen22\textfont2\relax ]
      \node(1) at (0,0) {$\Elt B$};
      \node(2) at (2,1) {$\Delta\op$};
      \node(3) at (2,-1) {$\Delta\op$};
      \node(4) at (4,0) {$\ce_T$};
      \draw [->] (1) to node[auto, labelsize] {forgetful}  (2);  
      \draw [->] (2) to node[auto, labelsize] {$\und{X}$}  (4);
      \draw [->] (1) to node[auto, swap, labelsize] {forgetful}  (3);
      \draw [->] (3) to node[auto, swap, labelsize] {$\und{Y}$}  (4);
      \draw[2cell] (2,0.7) to node[auto, labelsize] {$\und{\theta}$} (2,-0.7);
\end{tikzpicture}
\]
such that $F_T.\res{\theta}=\und{\theta}.F'_R$, where $F'_R\colon \Elt(BF_R)\to \Elt B$ is defined by the pullback
\[
\begin{tikzcd}
   \Elt(BF_R) \ar[r,"F'_R"] \ar[d,"\text{forgetful}"'] & \Elt B \ar[d,"\text{forgetful} "] \\
    \Dtopop \ar[r,"F_R"'] & \Delta\op.
  \end{tikzcd}
\]
Note that $F'_R$ is bijective on objects since $F_R$ is. 
Hence a simplicial map $\theta\colon B\to \underline{s_T\ce}(\ts X,\ts Y)$ corresponds to the data 
\begin{itemize}
    \item a morphism $\theta_\beta\colon X_m\to Y_m$ in $\ce$ for each $m\geq 0$ and $\beta\in B_m$
\end{itemize}
satisfying the conditions
\begin{itemize}
    \item for any $\psi\colon p\to m$ in $\Dtop$ and $\beta\in B_m$,
   the diagram 
    \[
    \begin{tikzcd}
      X_m \ar[r,"\theta_\beta"] \ar[d,"\psi^\ast"'] & Y_m \dar["\psi^\ast"] \\
      X_p \rar["\theta_{\beta\psi}"'] & Y_p
    \end{tikzcd}
  \]
  (in $\ce$) commutes, where $\beta\psi\in B_p$ is the image of $\beta$ under $\psi^\ast\colon B_m\to B_p$, and 
  \item
  for any $\psi\colon
    p\to m$ in $\Delta$ which is not in $\Dtop$ and $\beta\in B_m$, the diagram 
        \[
    \begin{tikzcd}
      X_m \ar[r,"\theta_\beta"] \ar[d,"\psi^\ast"'] & Y_m \dar["\psi^\ast"] \\
      TX_p \rar["T\theta_{\beta\psi}"'] & TY_p
    \end{tikzcd}
  \]
  (in $\ce$) commutes; 
  it suffices just to do the case $\psi=\delta_m\colon m-1\to m$, so that the condition becomes $d_m.\theta_{\beta}=T\theta_{\beta\delta_m}.d_m$.
\end{itemize}

In particular, for each $n\geq 0$, an $n$-simplex
$x\in \underline{s_T\ce}(\ts X,\ts Y)_n$ consists of an assignment, for each $m\geq 0$ and $\phi\colon m\to n$ in $\Delta$, of a morphism $x_{\phi}\colon Y_m\to X_m$
in $\ce$, subject to two conditions:
\begin{itemize}
\item for any $\psi\colon p\to m$ in $\Dtop$ and $\phi\colon m\to n$ in $\Delta$,
   the diagram 
    \[
    \begin{tikzcd}
      X_m \ar[r,"x_\phi"] \ar[d,"\psi^\ast"'] & Y_m \dar["\psi^\ast"] \\
      X_p \rar["x_{\phi\psi}"'] & Y_p
    \end{tikzcd}
  \]
  (in $\ce$) commutes, and 
  \item 
  for any $m>0$ and $\phi\colon m\to n$ in $\Delta$, the diagram 
        \[
    \begin{tikzcd}
      X_m \ar[r,"x_\phi"] \ar[d,"d_m"'] & Y_m \dar["d_m"] \\
      TX_{m-1} \rar["Tx_{\phi\delta_m}"'] & TY_{m-1}
    \end{tikzcd}
  \]
  (in $\ce$) commutes.
  \end{itemize}

We follow the common practice of writing $\Delta[n]$ for the 
representable $\Delta(-,n)$, and likewise write $\Dtop[n]$ for $\Dtop(-,n)$.

\begin{remark}
    When $\ce=\Set$ and $T=1_\Set$,
    the resulting $s\Set$-category $\underline{s\Set}$ is the one 
    induced by the fact that $s\Set$ is cartesian closed: $\underline{s\Set}(\ts X,\ts Y)_n\cong s\Set(\Delta [n]\times \ts X ,\ts Y)$. 
    More generally, when $\ce$ is an arbitrary category but $T=1_\ce$,
    then $\underline{s\ce}$ coincides with the standard $s\Set$-enrichment of the category $s\ce=[\Delta\op,\ce]$ of simplicial objects in $\ce$, described in e.g.\ \cite[Definition~2.1]{Kan-css-cat} (using the copower in $\underline{s_T\ce}$; see Proposition~\ref{prop:copower-in-T-Simp} and Remark~\ref{rmk:enrichment-from-action}).
\end{remark}

\begin{remark}\label{rmk:RanFR}
    In the definition of $\underline{s_T\ce}$, we used $\Ran_{F_R}\colon [\Dtopop,\Set]\to [\Delta\op,\Set]$.
    Here we note that it is possible to give an explicit description of this. 
    Namely, for any category $\ca$ with finite products, functor $A\colon \Dtopop\to \ca$, and $n\geq 0$, we have 
    \begin{equation}\label{eqn:RanFR}
    (\Ran_{F_R}A)_n\cong A_0\times A_{1}\times\dots\times A_n.
    \end{equation}
    This follows from the fact that the functor $F_R\colon \Dtopop\to \Delta\op$ is, in addition to being a left adjoint, also a right multi-adjoint. 
    Indeed, we have 
    \begin{equation}\label{eqn:FR-right-multi-adj}
    \Delta(F_R(-),n)\cong
    \Dtop(-,0)+\Dtop(-,1)+\dots + \Dtop(-,n)
    \end{equation}
    as functors $\Dtopop\to\Set$,
    which simply says that every morphism $m\to n$ in $\Delta$ has a unique factorization of the form $m\xrightarrow{\psi}k\xrightarrow{\iota} n$, where $0\leq k\leq n$, $\psi$ is top-preserving (i.e., is in $\Dtop$), and $\iota= \delta_{n}\delta_{n-1}\dots\delta_{k+1}$.
    (To deduce \eqref{eqn:RanFR} from \eqref{eqn:FR-right-multi-adj}, first recall that $(\Ran_{F_R} A)_n$ is the weighted limit of $A$ with weight $\Delta(F_R(-),n)$. Thus the coproduct decomposition \eqref{eqn:FR-right-multi-adj} of the weight induces a product decomposition of the weighted limit $(\Ran_{F_R} A)_n$, each of whose factors is the weighted limit of $A$ with a representable weight $\Dtop(-,k)=\Dtop[k]$, which is just $A_k$.) 
\end{remark}

We shall study copowers and powers in $\underline{s_T\ce}$, and the full sub-$s\Set$-category $\underline{\Cat_T(\ce)}$ of $\underline{s_T\ce}$ consisting of all $T$-categories.
More specifically, we show the following. 
\begin{itemize}
    \item If $\ce$ has copowers (as a $\Set$-category), then so does $\underline{s_T\ce}$ (as an $s\Set$-category). See Proposition~\ref{prop:copower-in-T-Simp}. 
    \item For any $T$-category $\ts X$ and any $T$-simplicial object $\ts Y$, the simplicial set $\underline{s_T\ce}(\ts Y,\ts X)\in s\Set$ is the nerve of a category, i.e., 
    we have $\underline{s_T\ce}(\ts Y,\ts X)\in \Cat$ (Proposition~\ref{prop:homs-are-cats}).
    This implies that the $s\Set$-category $\underline{\Cat_T(\ce)}$ is in fact a $2$-category. 
    We show that the $2$-cells in $\underline{\Cat_T(\ce)}$ are given by the $T$-natural transformations (defined in \cite[IV.3]{Burroni-Multicategories} and \cite[Section~5.3]{Leinster-book} when $T$ is cartesian). See Theorem~\ref{thm:2cat-bis}.
    \item If $\ce$ has pullbacks, then $\underline{s_T\ce}$ has powers by $\Delta[1]\in s\Set$ (Theorem~\ref{thm:power-by-Delta-1}) and $T$-categories are closed under powers by $\Delta[1]$ (Proposition~\ref{prop:T-cat-power}).
    This implies that the $2$-category $\underline{\Cat_T(\ce)}$ has powers by $\mathbf{2}$ (Theorem~\ref{thm:T-cat-power-by-2}).
    \item If $\ce$ is locally finitely presentable and $T$ is finitary, then $\underline{s_T\ce}$ is locally finitely presentable as an $s\Set$-category and $\underline{\Cat_T(\ce)}$ is so as a $2$-category. See Theorems~\ref{thm:s-Presentable} and \ref{thm:Cat-presentable}.
\end{itemize}

\begin{proposition}\label{prop:copower-in-T-Simp}
    Let $\ce$ be a locally small category with copowers and $T$ an arbitrary monad on $\ce$. Then the $s\Set$-category $\underline{s_T\ce}$ has copowers. 
\end{proposition}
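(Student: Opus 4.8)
The plan is to compute copowers componentwise, exploiting the presentation of $\underline{s_T\ce}$ as a pullback in $s\Set\text{-}\CAT$ (the pullback defining $\underline{s_T\ce}$) together with the pointwise formula for copowers in Proposition~\ref{prop:copowers-and-powers-in-BC}(1). I expect the copower of a $T$-simplicial object $\ts{X}$ by a simplicial set $B$ to be the $T$-simplicial object $B\cdot\ts{X}$ with $(B\cdot\ts{X})_n = B_n\cdot X_n$, the copower in $\ce$, with $T$-simplicial operators induced from those of $B$ and $\ts{X}$.

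First I would record a general gluing lemma for enriched pullbacks: if $\ck=\cl\times_\cm\cn$ is a pullback in $\cv\text{-}\CAT$ in which $\cl$, $\cm$, $\cn$ all have copowers and both legs preserve them, then $\ck$ has copowers, formed as compatible pairs, and the projections preserve them. This is routine: the hom-object of the pullback is the pullback of the hom-objects in $\cv$, and the internal hom $[B,-]$ of $\cv$ preserves pullbacks, being a right adjoint; so the defining adjunction of the copower for a pair follows from those in the three corners.

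Next I would verify that the three corners have copowers. For $\underline{[\Delta\op,\ce_T]}$ this is immediate from Proposition~\ref{prop:copowers-and-powers-in-BC}(1), once one notes that $\ce_T$ has copowers: since $\ce$ has copowers and $F_T\colon\ce\to\ce_T$ is a left adjoint, $S\cdot_{\ce_T}A=F_T(S\cdot_\ce A)$, and hence $(B\cdot\und{X})_n=B_n\cdot_{\ce_T}X_n=F_T(B_n\cdot_\ce X_n)$. The corners $(F_R)_\ast\underline{[\Dtopop,\ce]}$ and $(F_R)_\ast\underline{[\Dtopop,\ce_T]}$ are the delicate ones, since their $s\Set$-enrichment is obtained by change of base along $\Ran_{F_R}$. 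The key observation here is that the left adjoint $[F_R,\Set]\dashv\Ran_{F_R}$, namely precomposition with $F_R$, is strong monoidal for the cartesian structures (precomposition preserves pointwise products). For a change of base $\Phi_\ast\ck$ along a $\Phi$ whose left adjoint $\Psi$ is strong monoidal, one has a natural isomorphism $\Phi([\Psi w,V])\cong[w,\Phi V]$ (proved by chasing the adjunctions $\Psi\dashv\Phi$ and $\Psi$ strong monoidal), so the copower in $\Phi_\ast\ck$ by $w$ is the copower in $\ck$ by $\Psi w$. Applying this with $\Psi=[F_R,\Set]$, and Proposition~\ref{prop:copowers-and-powers-in-BC}(1) for $\underline{[\Dtopop,\ce]}$, the copower of $\res{X}$ by $B$ is the copower by $B F_R$, which pointwise is $(B F_R)_n\cdot X_n=B_n\cdot X_n$ since $F_R$ is the identity on objects.

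Finally I would check that the two legs $[\Dtopop,F_T]$ and $[F_R,\ce_T]$ preserve these copowers; this reduces to the pointwise facts that $F_T$ preserves copowers and that restriction along $F_R$ is computed pointwise, together with the common value $F_T(B_n\cdot_\ce X_n)$ on the shared corner. The gluing lemma then delivers copowers in $\underline{s_T\ce}$ with $(B\cdot\ts{X})_n=B_n\cdot X_n$. The main obstacle is exactly the change-of-base step: verifying that $\Ran_{F_R}$ carries the pointwise copowers of $\underline{[\Dtopop,\ce]}$ to genuine $s\Set$-copowers, which rests entirely on the strong monoidality of its left adjoint. (Alternatively, one could bypass the pullback and define $B\cdot\ts{X}$ directly by $(B\cdot\ts{X})_n=B_n\cdot X_n$, with operators induced from those of $B$ and $\ts{X}$—taking care to land $d_n$ in $T(B_{n-1}\cdot X_{n-1})$ through $T$ of the coprojection—and then check the universal property against the explicit description of $\underline{s_T\ce}(-,-)$; this is more elementary but requires grinding through the $T$-simplicial identities.)
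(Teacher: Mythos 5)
Your proposal is correct, but it takes a genuinely different route from the paper. The paper's proof is exactly the ``alternative'' you relegate to your final parenthesis: it defines $\ts Z$ directly by $Z_n=A_n\cdot Y_n$, with the $\Dtop$-operators and the last face maps $d_{n+1}\colon Z_{n+1}\to TZ_n$ induced on coprojections (the latter landing in $TZ_n$ via $Ti_{d_{n+1}a}$, just as you indicate), and then checks the universal property against the explicit description of $\underline{s_T\ce}(-,-)$ --- the grinding is left as ``straightforward.'' Your main argument instead works structurally from the defining pullback of $\underline{s_T\ce}$ in $s\Set\text{-}\mathbf{CAT}$, and its two ingredients are both sound: the gluing lemma for copowers across a pullback of $\cv$-categories (using that $[B,-]$ preserves pullbacks of hom-objects), and the change-of-base lemma that for an adjunction $\Psi\dashv\Phi$ with $\Psi$ strong monoidal, copowers by $w$ in $\Phi_\ast\ck$ are copowers by $\Psi w$ in $\ck$; the observation that $[F_R,\Set]$ is strong monoidal for the cartesian structures is the right key, and the Yoneda computation $\Phi[\Psi w,V]\cong[w,\Phi V]$ goes through (it is essentially the adjunction isomorphism already recorded in Construction~\ref{const:enrichment-presheaf-base-change}). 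The one point to handle with care is that the pullback is a \emph{strict} pullback of $s\Set$-categories, so ``both legs preserve copowers'' up to isomorphism is not quite enough: you must choose the copowers in the three corners compatibly so that the pair $(B\cdot\res{X},\,B\cdot\und{X})$ has literally equal images in $(F_R)_\ast\underline{[\Dtopop,\ce_T]}$, cylinders included; your remark about the common value $F_T(B_n\cdot_\ce X_n)$ shows this is achievable (choose $\ce_T$-copowers as $F_T$ of $\ce$-copowers), but the gluing lemma should be stated in those terms. As for what each approach buys: the paper's construction is elementary, self-contained, and produces the explicit object that is reused later (e.g.\ the strong action of $s\Set$ on $s_T\ce$ in Remark~\ref{rmk:enrichment-from-action}); yours is modular, isolates two reusable general lemmas, and explains conceptually \emph{why} the copower is computed pointwise, at the cost of the strictness bookkeeping and the enriched-naturality verifications. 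Both yield the same answer $(B\cdot\ts X)_n=B_n\cdot X_n$.
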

\begin{proof}
    Let $\ts Y$ be a $T$-simplicial object and $A$ a simplicial set. We define a new $T$-simplicial object
    $\ts Z$.
    For each $n\geq 0$, we set $Z_n=A_n\cdot Y_n$; for $a\in A_n$, we write
    $i_{a}\colon Y_n\to Z_n$ for the inclusion of the $a$-component.

    Given 
    $\phi\colon m\to n$ in $\Dtop$, define $\phi^{*}\colon Z_n\to Z_m$ so that the diagram 
\[
  \begin{tikzcd}
    Y_{n} \rar["i_{a}"] \dar["\phi^\ast"'] & Z_{n}
    \dar["\phi^\ast"] \\
    Y_m \rar["i_{\phi^\ast a}"'] & Z_m
  \end{tikzcd}
\]
    commutes for each $a\in A_n$.
    Similarly, for each $n\geq 0$, define $d_{n+1}\colon Z_{n+1}\to TZ_n$ so that the diagram
\[
  \begin{tikzcd}
    Y_{n+1} \rar["i_{a}"] \dar["d_{n+1}"'] & Z_{n+1}
    \dar["d_{n+1}"] \\
    TY_n \rar["Ti_{d_{n+1}a}"'] & TZ_n
  \end{tikzcd}
\]
    commutes for each $a\in A_{n+1}$. This now defines a $T$-simplicial object $\ts Z$. (When $T=1_\ce$, this construction appears in \cite[Definition~3.5]{Kan-homotopy-rel}.)
    It is straightforward to check that $\ts Z$ is the copower $A\cdot \ts Y$ in $\underline{s_T\ce}$.
     \end{proof}

\begin{corollary}\label{cor:finite-copower-T-Simp}
    Let $\ce$ be a category with finite copowers and $T$ an arbitrary monad on $\ce$, Then the $s\Set$-category $\underline{s_T\ce}$ has copowers by representable simplicial sets $\Delta[k]$.
\end{corollary}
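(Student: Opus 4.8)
The plan is to reuse the degreewise construction of Proposition~\ref{prop:copower-in-T-Simp} unchanged, after observing that it invokes only those copowers in $\ce$ that are genuinely needed. In that construction the copower $A\cdot\ts Y$ is built by setting $(A\cdot\ts Y)_n=A_n\cdot Y_n$ for each $n$, and both the structure maps $\phi^\ast$ and $d_{n+1}$ and the verification of the universal property are expressed purely in terms of these degreewise copowers $A_n\cdot Y_n$. Thus the hypothesis ``$\ce$ has copowers'' is used there only to guarantee the existence of $A_n\cdot Y_n$ for all $n$.

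First I would specialize to $A=\Delta[k]=\Delta(-,k)$ and record that $A_n=\Delta([n],[k])$ is the set of monotone maps $[n]\to[k]$, which is finite for every $n$ (of cardinality $\binom{n+k+1}{k}$). Hence each $A_n\cdot Y_n$ is a \emph{finite} copower, which exists by hypothesis. Consequently the $T$-simplicial object $\ts Z$ of Proposition~\ref{prop:copower-in-T-Simp} is well defined for $A=\Delta[k]$, and the very same argument exhibits it as the copower $\Delta[k]\cdot\ts Y$ in $\underline{s_T\ce}$. As $\ts Y$ and $k$ were arbitrary, $\underline{s_T\ce}$ has copowers by all representable simplicial sets.

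I expect no real obstacle here: the entire content is the elementary fact that $\Delta[k]$ is degreewise finite, so that Proposition~\ref{prop:copower-in-T-Simp} applies under the weaker assumption of finite copowers. The only point requiring care is to confirm that no infinite copower is hidden either in the definition of $\phi^\ast$ and $d_{n+1}$ or in the universal-property check; but since everything there is given degree by degree in terms of the finite sets $A_n=\Delta([n],[k])$, this is immediate.
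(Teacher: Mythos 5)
Your proposal is correct and is essentially the paper's own proof: the paper likewise observes that the construction in Proposition~\ref{prop:copower-in-T-Simp} uses only the degreewise copowers $A_n\cdot Y_n$, and hence applies to any simplicial set $A$ with each $A_n$ finite, in particular to $A=\Delta[k]$.
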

\begin{proof}
    More generally, the proof of Proposition~\ref{prop:copower-in-T-Simp} shows that $\underline{s_T\ce}$ has copowers by any simplicial set $A$ such that each $A_n$ is finite. 
\end{proof}

\begin{remark}\label{rmk:enrichment-from-action}
    Suppose that the category $\ce$ has copowers.
    The construction in the proof of Proposition~\ref{prop:copower-in-T-Simp} defines a strong action ${\cdot}\colon s\Set\times s_T\ce\to s_T\ce$, such that for each $\ts Y\in s_T\ce$, the functor $(-)\cdot \ts Y\colon s\Set\to s_T\ce$ has a right adjoint $\underline{s_T\ce}(\ts Y,-)\colon s_T\ce\to s\Set$ (note that we assume that $\ce$ is locally small). 
    This then defines the $s\Set$-category $\underline{s_T\ce}$ by abstract reasons, giving an alternative way to define the $s\Set$-enrichment.
    (Even when $\ce$ does not have copowers, one can embed $\ce$ into a locally small category $\overline{\ce}$ with copowers so that the monad $T$ on $\ce$ extends to a monad on $\overline{\ce}$, and argue as above; for example, $\overline{\ce}=[\ce\op,\Set]$ would work when $\ce$ is small, whereas $\overline{\ce}=\Fam(\ce)$ and $\mathcal{P}\ce$ \cite{Day-Lack-small} work for an arbitrary $\ce$.)  
\end{remark}

\begin{remark}\label{rmk:F-cat} 
We conclude this section by sketching an alternative approach to the simplicial enrichment of $\underline{s_T\ce}$ just described.

    To give a functor $F\colon \cd_t\to\cd_\ell$ which acts as the identity
on objects is equivalent to giving a category $\bbd$ enriched over the
cartesian closed category $\Set^\two$ \cite[Example~1]{Power-premonoidal}. In slightly more detail, for
objects $A$ and $B$, we have the two hom-sets $\cd_t(A,B)$ and
$\cd_\ell(A,B)$ and a function between them; this is the
$\Set^\two$-valued hom $\bbd(A,B)$.

From this point of view, a $\Set^\two$-enriched functor is just a
commutative square of ordinary functors involving the two
identity-on-objects functors \cite[Example~1]{Power-premonoidal}.

Thus we can think of the Kleisli functors 
$F_R\colon\Dtopop\to\Delta\op$ and $F_T\colon\ce\to\ce_T$ as $\Set^\two$-enriched
categories $\bbd\op$ and $\bbe$, and so identify the category $s_T\ce$
with the category of $\Set^\two$-enriched functors from $\bbd\op$ to
$\bbe$.

It is true in general that for a complete cartesian closed category $\bbs$ and
$\bbs$-categories $\bbd$ and $\bbe$ (with $\bbd$ small)  the category
of $\bbs$-enriched
functors $\bbd\op\to\bbe$ can be enriched not just over $\bbs$ but over
the cartesian closed $\bbs$-category $[\bbd\op,\bbs]$. It is also true
in general that there is no real difference between categories
enriched over $[\bbd\op,\bbs]$ and categories enriched over the
underlying ordinary (cartesian closed) category of $\bbs$-functors
from $\bbd\op$ to $\bbs$.

This can be used to give an enrichment of $s_T\ce$ over the cartesian closed $\Set^\two$-category $[\bbd\op,\Set^\two]$, and so over its underlying cartesian closed ordinary category $s\Set$.
\end{remark}

\section{The $2$-category of $T$-categories}\label{sect:2cat}

Throughout this section, let $\ce$ be a locally small category and $T=(T,m,i)$ an arbitrary monad on $\ce$. 

\begin{proposition}\label{prop:homs-are-cats}
  Let $\ts B$ be a $T$-category and $\ts A$ a $T$-simplicial object. 
  Then the simplicial set $\underline{s_T\ce}(\ts A,\ts B)$ is
  (the nerve of) a category. 
\end{proposition}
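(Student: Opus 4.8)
The plan is to verify the classical \emph{Segal condition}: a simplicial set $S$ is (the nerve of) a category if and only if, for every $n\ge 2$, the Segal map
\[
S_n\longrightarrow \underbrace{S_1\times_{S_0}\dots\times_{S_0}S_1}_{n}
\]
induced by the spine inclusion is a bijection. Applying this to $H:=\underline{s_T\ce}(\ts A,\ts B)$, the first task is to read off the low-dimensional data from the explicit description of the simplicial hom. A $0$-simplex of $H$ is a family $x_m\colon A_m\to B_m$, and its two compatibility conditions say exactly that $(x_m)_m$ is a morphism of $T$-simplicial objects; so $H_0=s_T\ce(\ts A,\ts B)$. A $1$-simplex is then a ``$T$-homotopy'' between two such morphisms, with $d_1$ and $d_0$ picking out its source and target. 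Thus the category we are trying to recognise has the $T$-simplicial morphisms $\ts A\to\ts B$ as its objects, and the entire content reduces to the bijectivity of the Segal maps.

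To organise the argument I would pass to copowers. By Remark~\ref{rmk:enrichment-from-action} we may assume $\ce$ has copowers, replacing $\ce$ by a cocompletion into which $T$ extends; one arranges that the pullbacks \eqref{eqn:nerve-condition} are preserved (the embeddings there preserve connected limits), so that $\ts B$ remains a $T$-category and, since $H$ depends only on the hom-sets $\ce(A_m,B_m)$, the simplicial set $H$ is unchanged. Then the copower adjunction (Proposition~\ref{prop:copower-in-T-Simp}, Corollary~\ref{cor:finite-copower-T-Simp}) gives $H_n\cong s_T\ce(\Delta[n]\cdot\ts A,\ts B)$. Because copowers preserve colimits in the simplicial-set variable and the spine $\mathrm{Sp}[n]$ is the iterated pushout $\Delta[1]\cup_{\Delta[0]}\dots\cup_{\Delta[0]}\Delta[1]$, the fibre product $H_1\times_{H_0}\dots\times_{H_0}H_1$ is $s_T\ce(\mathrm{Sp}[n]\cdot\ts A,\ts B)$, and the Segal map is precisely $s_T\ce(j_n,\ts B)$ for the copowered spine inclusion $j_n\colon \mathrm{Sp}[n]\cdot\ts A\to\Delta[n]\cdot\ts A$. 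It therefore suffices to show that every $T$-simplicial morphism $\mathrm{Sp}[n]\cdot\ts A\to\ts B$ extends uniquely along $j_n$. (The same statement can be checked directly from the explicit family-of-components description, avoiding copowers altogether if one prefers.)

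The crux is this unique extension, and it is exactly where the hypothesis that $\ts B$ is a $T$-category enters. Since each square \eqref{eqn:nerve-condition} is a pullback (Theorem~\ref{thm:opmorphisms}, Definition~\ref{def:T-Cat-official}), a morphism into $\ts B$ is rigid in each degree: a component $W_m\to B_m$ is determined by its composites with $d_0\colon B_m\to B_{m-1}$ and $d_m\colon B_m\to TB_{m-1}$, and naturality forces these to agree with the data already built in degree $m-1$. I would run a double induction, on the simplicial degree $m$ and on $n$, constructing the extension $\Delta[n]\cdot\ts A\to\ts B$ component by component: the components indexed by $\phi\colon m\to n$ whose image omits an interior vertex or is all of $[n]$ are produced from the isomorphism $B_m\cong B_{m-1}\times_{TB_{m-2}}TB_{m-1}$, with the genuinely new ``composition'' components assembled from the spine data by means of the multiplication $d_1\colon B_2\to B_1$ that the pullback makes available.

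The main obstacle is not the existence or uniqueness of the individual components—these follow formally from the pullbacks—but the bookkeeping needed to verify that the reconstructed family is genuinely natural, i.e.\ really is a $T$-simplicial morphism and hence a simplex of $H$. This is precisely where the remaining $T$-simplicial identities satisfied by $\ts B$ (the associativity identity \ref{TSimp-A1} and the unit identities) are invoked, and it is the exact analogue, for the $T$-nerve, of checking that composition in a generalized multicategory is well defined, unital, and associative. Once the extension along $j_n$ is shown to be well defined and unique for all $n\ge 2$, the Segal condition holds and $H$ is the nerve of a category.
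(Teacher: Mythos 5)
Your reduction is correct as far as it goes: passing to a cocompletion in which copowers exist (and checking that the pullbacks \eqref{eqn:nerve-condition} survive, so that $\ts B$ stays a $T$-category and $H=\underline{s_T\ce}(\ts A,\ts B)$ is unchanged), identifying $H_n\cong s_T\ce(\Delta[n]\cdot\ts A,\ts B)$, and rewriting the Segal maps as restriction along the copowered spine inclusion $j_n$ are all sound. The resulting unique-extension problem is equivalent to what the paper verifies directly at the level of component families, namely that $z\mapsto (d_{n+2}z,d_0z)$ is a bijection from $H_{n+2}$ onto the set of pairs $(x,y)\in H_{n+1}\times H_{n+1}$ with $d_0x=d_{n+1}y$; so up to this reorganization you are pursuing the same strategy as the paper, with the same key input (the pullback property of $\ts B$).

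The gap is in your claim that ``the existence or uniqueness of the individual components \ldots follow formally from the pullbacks,'' leaving only naturality bookkeeping. That is false for exactly the components that carry the content of the proposition: those indexed by maps $\phi\colon[1]\to[n]$ whose image is a pair of non-adjacent vertices, i.e.\ the composites. The nerve condition presents $B_m$ as a pullback only for $m\ge 2$; there is no pullback presentation of $B_1$, so these components cannot be produced by the pullback argument at all. They must instead be defined by choosing an auxiliary interior vertex $j$, building the associated higher component by the pullback, and then descending along the split epimorphism $d_i\colon A_{m+1}\to A_m$ (a degeneracy supplying the section); one must then prove that the outcome is independent of the choice of $j$ and that the completed family satisfies every naturality constraint, including those relating different composites to each other. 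This is precisely Case~4 and the ``Remaining verifications'' in the paper's proof of Proposition~\ref{prop:homs-are-cats} --- the argument with the maps $\phi^j$, $\phi^{j,j'}$ and the split coequalizers --- and it constitutes most of the proof; it is where well-definedness, associativity and unitality of the composition being constructed are actually established, using the $T$-simplicial identities of both $\ts A$ and $\ts B$. Your proposal stops exactly at this point (and your double induction ``on $m$ and $n$'' is under-specified: the long edges must be built by induction on their length before the pullback induction on $m$ can even begin), so as it stands it is a correct setup together with a statement of what remains to be proved, not a proof.
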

\begin{proof}
It suffices to show that the simplicial set $\underline{s_T\ce}(\ts A,\ts B)$ satisfies the nerve condition, i.e., that for each $n\geq 0$, the square 
\[
  \begin{tikzcd}
    \underline{s_T\ce}(\ts A,\ts B)_{n+2} \ar[r,"d_{n+2}" ] \ar[d,"d_0"' ] & \underline{s_T\ce}(\ts A,\ts B)_{n+1} \ar[d,"d_0"]
    \\
    \underline{s_T\ce}(\ts A,\ts B)_{n+1} \ar[r,"d_{n+1}"' ] & \underline{s_T\ce}(\ts A,\ts B)_n
  \end{tikzcd}
\]
is a pullback of sets.
Let $n\geq 0$, and suppose that we are given $x,y\in \underline{s_T\ce}(\ts A,\ts B)_{n+1}$ with
$d_0x=d_{n+1}y$; this means that we have $x_{\delta_0\phi''}=y_{\delta_{n+1}\phi''}$ for all
$\phi''\colon [m]\to [n]$ in $\Delta$. (In this proof, we write an object $n$ of $\Delta$ as $[n]$ because we will also refer to elements of $[n]=\{0,1,\dots, n\}$.) We are to show that there is a unique
$z\in \underline{s_T\ce}(\ts A,\ts B)_{n+2}$ with $d_0z=y$ and $d_{n+2}z=x$; in
other words, $z_{\delta_0\phi'}=y_{\phi'}$ and
$z_{\delta_{n+2}\phi'}=x_{\phi'}$ for all $\phi'\colon [m]\to [n+1]$ in $\Delta$.

Thus we define a morphism $z_\phi\colon A_m\to B_m$ in $\ce$ for each $\phi\colon [m]\to [n+2]$ in $\Delta$.
We first do so for all $\phi\colon [m]\to [n+2]$ with $\phi 0>0$ (Case~1) or with $\phi m < n+2$ (Case~2). 
Using these cases and induction, we then treat the case where $\im(\phi)$ contains $\{0,n+2\}$ as a proper subset (Case~3).
Finally, using this last case, we cover the remaining case where $\im(\phi)=\{0,n+2\}$ (Case~4). 

\subsection*{Case 1: $\phi 0>0$}
This is equivalent to $\phi$ having the form $\phi=\delta_0\phi'$ for some $\phi'\colon [m]\to [n+1]$, necessarily equal to
$\sigma_0\phi$; then in order that $d_0z=y$ we must define
$z_{\phi}=z_{\delta_0\phi'}=(d_0z)_{\phi'}=y_{\phi'}$.
Furthermore, if $\psi\colon [p]\to [m]$ is
top-preserving then also $\phi\psi0>0$ and
\[ \psi^{*}z_{\phi} =
  \psi^{*}y_{\phi'}= y_{\phi'\psi}\psi^{*} =
  z_{\delta_0\phi'\psi}\psi^{*}=z_{\phi\psi}\psi^{*} \]
while if $m>0$ then 
\[ d_m.z_{\phi} = d_m.y_{\phi'}=Ty_{\phi'\delta_m}.d_m =
  Tz_{\delta_0\phi'\delta_m}.d_m = Tz_{\phi\delta_m}.d_m. \]

\subsection*{Case 2: $\phi m<n+2$}
This is equivalent to $\phi$ having the form $\phi=\delta_{n+2}\phi'$
for some $\phi'\colon [m]\to [n+1]$, necessarily equal to $\sigma_{n+1}\phi$. Similarly
to the previous case, we
must define $z_{\phi}=x_{\phi'}$, and if $\psi$ is top-preserving then
$\psi^{*}z_{\phi}=z_{\phi\psi} \psi^{*}$, while if $m>0$ then $d_mz_{\phi}=Tz_{\phi\delta_m}.d_m$.

Note also that if $\phi0>0$ and $\phi m<n+2$ then in fact
$\phi=\delta_{n+2}\delta_0\phi''$ for some $\phi''\colon [m]\to [n]$, necessarily
equal to $\sigma_0\sigma_{n+1}\phi$. Then
$x_{\delta_0\phi''}=(d_0x)_{\phi''}=(d_{n+1}y)_{\phi''}=y_{\delta_{n+1}\phi''}$
and so the two definitions are consistent.

That leaves the case where $\phi0=0$ and $\phi m=n+2$ (which rules out $m=0$).

\subsection*{Case 3: $\mathrm{im}(\phi)$ contains $\{0,n+2\}$ as a proper subset}

In this case $m\ge 2$. 
If $\phi$ is of this type, then $\phi\delta_0$ is either also of this
type or is covered by Case~1. Likewise
$\phi\delta_m$ is either of this type or
is covered by Case~2.

We now prove by induction on $m$ that there is a
unique $z_{\phi}$ with $d_0z_{\phi}=z_{\phi\delta_0}d_0$ and
$d_mz_{\phi}=Tz_{\phi\delta_m}.d_m$; we
already have a uniquely determined $z_{\phi\delta_0}$ and $z_{\phi\delta_m}$.
The inductive step follows by
\[ Td_0.Tz_{\phi\delta_m}.d_m = Tz_{\phi\delta_m\delta_0}.Td_0.d_m =
  Tz_{\phi\delta_0\delta_{m-1}}.d_{m-1}.d_0 =
  d_{m-1}.z_{\phi\delta_0}.d_0 \]
and the fact that $B_m$ can be written as a pullback as in \eqref{eqn:nerve-condition}.

We further show that this definition of $z_{\phi}$ is compatible with
all those $\delta_h\colon [m-1]\to [m]$ for which $z_{\phi\delta_h}$ has
so far been 
defined and with all $\sigma_h\colon [m+1]\to [m]$. Compatibility with
$\delta_0,\delta_m\colon [m-1]\to [m]$ holds by definition of
$z_{\phi}$. If $0<h<m$ then $z_{\phi\delta_h}$ will have been defined
just when $\phi\delta_h$ is of Case~3, while $\phi\sigma_h$ is always
of Case~3. Compatibility follows using the pullback property again. 

\subsection*{Case 4: $\mathrm{im}(\phi)=\{0,n+2\}$}

Let $i=i(\phi)$ be the least element of $[m]=\{0<1<\dots <m\}$ with $\phi i=n+2$.
For $1\le j\le n+1$, there is a unique map $\phi^j\colon[m+1]\to[n+2]$
with $\phi^j\delta_i=\phi$ and
$\mathrm{im}(\phi^j)=\{0,j,n+2\}$. Note also that the $i$ is also
determined by the facts that $\phi^j\delta_i=\phi$ and $\mathrm{im}(\phi^j)=\{0,j,n+2\}$.

Now $\phi^j$ and $\phi^j\sigma_i$ are both covered by Case~3, and so
we have maps $z_{\phi^j}$ and $z_{\phi^j\sigma_i}$ making the
solid part of the diagram
\[
  \begin{tikzcd}
    A_{m+2} \rar["z_{\phi^j\sigma_i}"] \dar[shift left=2,"d_{i+1}"]
    \dar[shift right=2,"d_i" '] & B_{m+2} \dar[shift left=2,"d_{i+1}"]
    \dar[shift right=2,"d_i" '] \\
    A_{m+1} \rar["z_{\phi^j}"] \dar["d_i"] & B_{m+1} \dar["d_i"] \\
    A_m \rar[dashed,"z_{\phi}"] & B_m 
  \end{tikzcd}
\]
commute, but the vertical forks are (split) coequalizers, so there is
a unique induced $z_{\phi}$ making the lower part commute.

On the
face of it, this definition of $z_{\phi}$ depends on $j$, but in
fact that is not the case. For given $1\le j<j'\le n+1$, there is a
unique $\phi^{j,j'}\colon[m+2]\to[n+2]$ with
$\phi^{j,j'}\delta_i=\phi^{j'}$ and
$\phi^{j,j'}\delta_{i+1}=\phi^j$. Now
\begin{gather*}
  d_i.z_{\phi^{j'}} = d_i.z_{\phi^{j,j'}\delta_i}.d_is_i =
  d_i.d_i.z_{\phi^{j,j'}}.s_i
  = d_id_{i+1}.z_{\phi^{j,j'}}.s_i \\
  =
  d_i.z_{\phi^{j,j'}\delta_{i+1}}.d_{i+1}s_i =
  d_i.z_{\phi^j}
\end{gather*}
and so $d_i.z_{\phi^j}$ is independent of $j$. Observe also that if
$\phi$ falls under Case 3, say with $\mathrm{im}(\phi)=\{0,j,n+2\}$
but $\phi\delta_h$ falls under Case 4, then $i(\phi\delta_h)=h$ and
$(\phi\delta_h)^j=\phi$, and now
$d_h.z_\phi=d_h.z_{(\phi\delta_h)^j}=z_{\phi\delta_h}.d_h$, which
completes the verification of compatibility for all $\phi$ in Case~3.

\subsection*{Remaining verifications}
We have now shown that $z$ is unique if it exists, and how to define
$z_{\phi}$ for all $\phi$. We have also checked the compatibility
conditions for $z_{\phi}$ whenever $\phi$ comes under Cases~1--3.
It remains to check the compatibility conditions for $\phi$ in Case~4.

Suppose then that $\phi\colon[m]\to[n+2]$ has image $\{0,n+2\}$ and
that $\psi\colon[p]\to[m]$ is either $\delta_h\colon[m-1]\to[m]$ for
$0<h<m$, or $\sigma_h\colon[m+1]\to[m]$ for $0\le h\le m$, or possibly
$\delta_0$ in the case that $\phi\delta_00=0$. Then
$\phi\psi$ also has image $\{0,n+2\}$. Let $i=i(\phi)$ and $i'=i(\phi\psi)$. There is a unique
$\theta\colon[p+1]\to[m+1]$ such that $\theta\delta_{i'}=\delta_i\psi$
and $\theta i'=i$. Then $(\phi\psi)^1=\phi^1\theta$, and so
\[ \psi^{*} z_{\phi} d_i = \psi^{*}d_i
  z_{\phi^1}=d_{i'}\theta^{*}z_{\phi^1} = d_{i'}
  z_{\phi^1\theta}\theta^{*}
= z_{\phi\psi}d_{i'}\theta^{*}= z_{\phi\psi}\psi^{*}d_i\]
and $\psi^{*}z_{\phi}=z_{\phi\psi}\psi^{*}$.

Next consider $\psi=\delta_0$ where $\phi\delta_00>0$. Then
$\phi\delta_0=\delta_0\tau$ for some $\tau\colon[m-1]\to[n+1]$, and
$\phi=\phi^1\delta_1$. Then $\phi^1$ is in Case~3 and $\phi^1\delta_0$
is in Case~1. Thus
\[ d_0z_{\phi}d_1 = d_0d_1z_{\phi^1}=d_0d_0
  z_{\phi^1}=d_0z_{\phi^1\delta_0}d_0
  = z_{\phi^1\delta_0\delta_0}d_0d_0= z_{\phi^1\delta_1\delta_0}d_0d_1
  = z_{\phi\delta_0}d_0d_1\]
and so $d_0z_{\phi}=z_{\phi\delta_0}d_0$.

That leaves the case of $\psi=\delta_m\colon [m-1]\to[m]$.
If $\phi(m-1)=n+2$ then $\phi\delta_m$ is still in Case~4, and $i=i(\phi)<m$
and 
$(\phi\delta_m)^1=\phi^1\delta_{m+1}$, and so
\begin{align*}
  d_m z_{\phi} d_i &= d_m d_i z_{\phi^1} = Td_i. d_{m+1} z_{\phi^1} =
                     Td_i.Tz_{\phi^1\delta_{m+1}}.d_{m+1} \\
  &=
  Tz_{\phi\delta_m}.Td_i.d_{m+1} =
  Tz_{\phi\delta_m}.d_m.d_i
\end{align*}
and $d_mz_{\phi}=Tz_{\phi\delta_m}.d_m$.

If on the other hand $\phi(m-1)<n+2$ then in fact $\phi(m-1)=0$,
$i=m$, and
$\phi\delta_m$ is in Case~2. Then
\begin{align*}
  d_m z_{\phi} d_m &= d_m d_m z_{\phi^1} =
                     mB_{m-1}.Td_m.d_{m+1}.z_{\phi^1} =
                     mB_{m-1}.Td_m.Tz_{\phi^1\delta_{m+1}}.d_{m+1} \\
                   &= mB_{m-1}.T^2z_{\phi^1\delta_{m+1}\delta_m}.Td_m.d_{m+1}
                     = Tz_{\phi^1\delta_m\delta_m}.mA_{m-1}.Td_m.d_{m+1}
  \\
  &= Tz_{\phi\delta_m}.d_m.d_m
\end{align*}
and so $d_m z_{\phi} = Tz_{\phi\delta_m}.d_m$. 
\end{proof}

Therefore the full sub-$s\Set$-category $\underline{\Cat_T(\ce)}$ of $\underline{s_T\ce}$ consisting of all $T$-categories is a $2$-category. 
The $2$-cells of $\underline{\Cat_T(\ce)}$ are the elements of $\underline{\Cat_T(\ce)}(\ts A,\ts B)_1$. 
We will give a more explicit description of these in Theorem~\ref{thm:2cat}. 
An alternative way to define a $2$-category of $T$-categories (without assuming that $T$ is cartesian) is to take the vertical $2$-category of the unital virtual double category $\mathbb{M}\mathsf{od}\bigl(\mathbb{H}\text{-}\mathsf{Kl}(\mathbb{S}\mathsf{pan}(\ce),T)\bigr)$ defined in \cite[B.11--13]{CruttwellShulman}.

\begin{proposition}\label{prop:n-cells}
   Let $\ts B$ be a $T$-category and $\ts A$ a $T$-simplicial object. Then to give an element $x$ of
   $\underline{s_T\ce}(\ts A,\ts B)_n$ we need only give $x_\phi\colon A_m\to B_m$ for
   $\phi\colon [m]\to [n]$ in $\Delta$ with $m\le 1$, and need only check
   the compatibility condition of the induced family $(x_\phi\colon A_m\to B_m)_{m\geq 0,\phi\in\Delta(m,n)}$ with maps 
   $\delta_1\colon [1]\to [2]$, $\delta_0\colon [0]\to [1]$, $\delta_1\colon [0]\to [1]$, and 
   $\sigma_0\colon [1]\to[0]$.
   \end{proposition}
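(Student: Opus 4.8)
The plan is to show that the restriction sending a full $n$-simplex $(x_\phi\colon A_m\to B_m)_{\phi\colon[m]\to[n]}$ to its components in degrees $m\le 2$ is a bijection onto the families satisfying the stated low-degree compatibilities. Recall that an $n$-simplex of $\underline{s_T\ce}(\ts A,\ts B)$ consists of maps $x_\phi\colon A_m\to B_m$, one for each $\phi\colon[m]\to[n]$ in $\Delta$, subject to (i)~$\psi^\ast x_\phi=x_{\phi\psi}\psi^\ast$ for every $\psi\in\Dtop$ and (ii)~$d_m x_\phi=Tx_{\phi\delta_m}d_m$ for $m>0$. Since $\ts B$ is a $T$-category, each $B_m$ with $m\ge 2$ is the pullback \eqref{eqn:nerve-condition}, with projections $d_0\colon B_m\to B_{m-1}$ and $d_m\colon B_m\to TB_{m-1}$; in particular two maps into $B_m$ agree as soon as they agree after $d_0$ and $d_m$. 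This joint monicity is the engine of the whole argument.

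First I would prove uniqueness. For $m\ge 3$, condition~(i) with $\psi=\delta_0$ and condition~(ii) force $d_0 x_\phi=x_{\phi\delta_0}d_0$ and $d_m x_\phi=Tx_{\phi\delta_m}d_m$, and these two equations determine $x_\phi$ from the degree-$(m-1)$ data by the pullback property. Hence, by downward induction, every $x_\phi$ with $m\ge 3$ is determined by the data in degrees $\le 2$: the restriction is injective, and no independent data or checks survive in higher degrees.

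For existence, given a family in degrees $\le 2$ satisfying the low-degree compatibilities, I would define $x_\phi$ for $m\ge 3$ by induction on $m$, taking $x_\phi$ to be the unique map into $B_m$ with $d_0 x_\phi=x_{\phi\delta_0}d_0$ and $d_m x_\phi=Tx_{\phi\delta_m}d_m$. To invoke the universal property I must verify the cocycle compatibility $d_{m-1}\cdot x_{\phi\delta_0}\cdot d_0=Td_0\cdot Tx_{\phi\delta_m}\cdot d_m$ in $TB_{m-2}$. Applying (ii) to $\phi\delta_0$ and (i) (with $\psi=\delta_0$) to $\phi\delta_m$ in degree $m-1$, together with the cosimplicial identity $\delta_0\delta_{m-1}=\delta_m\delta_0$, both sides rewrite as $Tx_{\phi\delta_0\delta_{m-1}}$ precomposed with $d_{m-1}d_0$, respectively with $Td_0\,d_m$; these coincide by the $T$-simplicial identity \ref{TSimp-A2} (the case $i=0$) holding in $\ts A$. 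The degree-$(m-1)$ instances used here are exactly the low-degree checks when $m=3$ and are supplied by the induction when $m\ge 4$, so the extension is well defined.

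It then remains to verify that the extended family is a genuine $n$-simplex, i.e.\ that it satisfies (i) for all top-preserving $\psi$ and (ii) for all $m$. Condition~(ii) and the instance of (i) with $\psi=\delta_0$ hold for $m\ge 3$ by construction, so one is left with (i) for the remaining top-preserving generators $\delta_i$ ($0<i\le m-1$) and $\sigma_i$. Each such identity is an equation between maps into some $B_p$ with $p\ge 2$, so by the pullback \eqref{eqn:nerve-condition} it suffices to check it after post-composing with $d_0$ and $d_p$, reducing it to identities in degree $p-1$ and to the $T$-simplicial identities (\ref{TSimp-A1}, \ref{TSimp-A2}, \ref{TSimp-A3}) of $\ts A$; an induction on degree then closes the argument, exactly along the lines of Proposition~\ref{prop:SA1-far} and Case~3 of Proposition~\ref{prop:homs-are-cats}. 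I expect this last bookkeeping — running the case analysis over the generating maps while correctly threading the $T$-twist of (ii) and the multiplication appearing in \ref{TSimp-A3}, and reducing the degree at each step — to be the main obstacle; the cocycle computation above is the only genuinely new input.
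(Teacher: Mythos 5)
Your proposal is correct and follows essentially the same route as the paper's proof: uniqueness and existence of the $x_\phi$ in degrees $m\ge 3$ by recursion through the nerve-condition pullbacks \eqref{eqn:nerve-condition} defining $B_m$ (using the components $x_{\phi\delta_0}d_0$ and $Tx_{\phi\delta_m}d_m$), followed by verification of the remaining face and degeneracy compatibilities via the same pullback property and induction on degree. If anything, you supply more detail than the paper at one point, namely the explicit cocycle check $d_{m-1}.x_{\phi\delta_0}.d_0 = Td_0.Tx_{\phi\delta_m}.d_m$ needed to invoke the universal property of the pullback, which the paper's proof leaves implicit.
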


  \begin{proof}
    First observe that the definition of $x_\phi\colon A_m\to B_m$ is determined
    by the cases with $m\le 1$. This follows by an easy
    induction: if $m\geq 2$ then use the fact that $B_m$ is a pullback to
    construct $x_\phi$ from $x_{\phi\delta_0}$ and
    $x_{\phi\delta_m}$.

    Suppose then we have data as in the proposition. Then using the
    same pullback property, we can recursively construct $x_\phi$ for all
    $\phi\colon [m]\to [n]$ in such a way that
    $d_0.x_\phi=x_{\phi\delta_0}.d_0$ and
    $d_m.x_\phi=Tx_{\phi\delta_m}.d_m$ holds. Similarly,
    we can use the pullback property to prove inductively the compatibility conditions.
  \end{proof}

We now deduce:

\begin{theorem}\label{thm:2cat}
For an arbitrary monad $T=(T,m,i)$ on a category $\ce$,
the category $\Cat_T(\ce)$ of $T$-categories can be made into a $2$-category $\underline{\Cat_T(\ce)}$,
with $2$-cells between $f,g\colon \ts A\to \ts B$ consisting
of a morphism $\hat\alpha\colon A_1\to B_1$ such that the diagrams
\begin{equation}\label{eqn:T-nat-tr-axioms}
\begin{tikzcd}
    TA_0 \ar[d,"Tf_0"] & A_1 \ar[l,"d_1" '] \ar[r,"d_0"]
    \ar[d,"\hat\alpha"] & A_0 \ar[d,"g_0"] \\
    TB_0 & B_1 \ar[l,"d_1" '] \ar[r,"d_0"] & B_0
\end{tikzcd}
     \quad
     \begin{tikzcd}
        A_2 \ar[r,"\hat\alpha'"] \ar[d,"d_1"] & B_2 \ar[d,"d_1"]
    \\
    A_1 \ar[r,"\hat\alpha"] & B_1  
     \end{tikzcd}\quad
     \begin{tikzcd}
         A_2 \ar[r,"\hat\alpha''"] \ar[d,"d_1"] & B_2 \ar[d,"d_1"]
    \\
    A_1 \ar[r,"\hat\alpha"] & B_1 
     \end{tikzcd}
\end{equation}
commute, where $\hat\alpha'$ and $\hat\alpha''$ are defined by
commutativity of
\[
\begin{tikzcd}
    TA_1 \ar[d,"Tf_1"] & A_2 \ar[l,"d_2"'] \ar[r,"d_0"]
    \ar[d,"\hat\alpha'"] & A_1 \ar[d,"\hat\alpha"] \\
    TB_1 & B_2 \ar[l,"d_2"'] \ar[r,"d_0"] & B_1 
\end{tikzcd}\quad
\begin{tikzcd}
    TA_1 \ar[d,"T\hat\alpha"] & A_2 \ar[l,"d_2"'] \ar[r,"d_0"]
    \ar[d,"\hat\alpha''"] & A_1 \ar[d,"g_1"] \\
    TB_1 & B_2 \ar[l,"d_2"'] \ar[r,"d_0"] & B_1.
\end{tikzcd}
\]
\end{theorem}

\begin{proof}
  In the notation of Proposition~\ref{prop:n-cells}, $\hat\alpha$ is
  given by $x_1$ for the identity $1\colon[1]\to[1]$.
  The maps $x_{\phi}$ with
  $0\notin\im(\phi)$ determine $g$, and the maps $x_{\phi}$
  with $1\notin\im(\phi)$ determine $f$.
  The definitions of $\hat \alpha'$ and $\hat\alpha''$ agree with those of $x_{\sigma_0}$ and $x_{\sigma_1}$, respectively, in the proof of Proposition~\ref{prop:n-cells}.
  
  Commutativity of the first two squares in \eqref{eqn:T-nat-tr-axioms} is equivalent to the conditions on
  $x_{1}$ involving $\delta_1,\delta_0\colon[0]\to[1]$
  respectively. Commutativity of the next two squares in \eqref{eqn:T-nat-tr-axioms} is equivalent to the
  conditions on $x_{\sigma_0}$ and $x_{\sigma_1}$ involving
  $\delta_1\colon[1]\to[2]$. 
  Other compatibility conditions (such as those with $\sigma_0\colon [1]\to[0]$) follow from the assumption that $f$ and $g$ are $T$-functors.
\end{proof}

\begin{definition}\label{def:T-nat-tr}
    Let $\ts A$ and $\ts B$ be $T$-categories and $f,g\colon \ts A\to \ts B$ be $T$-functors. A \emph{$T$-natural transformation} $\alpha\colon f\to g$ is a morphism $\alpha\colon A_0\to B_1$ in $\ce$ making the diagrams 
    \[ \begin{tikzcd}
        B_0 \ar[d,"iB_0"] & A_0 \ar[l,"f_0"'] \ar[dr,"g_0"]
    \ar[d,"\alpha"] \\
    TB_0 & B_1 \ar[l,"d_1"'] \ar[r,"d_0"] & B_0
    \end{tikzcd}\quad
    \begin{tikzcd}
        A_1 \ar[r,"\alpha'"] \ar[d,"\alpha''"] & B_2 \ar[d,"d_1"]
    \\
    B_2 \ar[r,"d_1"] & B_1 
    \end{tikzcd}
\]
commute, where $\alpha'$ and $\alpha''$ are defined by
commutativity of
\[  \begin{tikzcd}
    B_1 \ar[d,"iB_1"] & A_1 \ar[l,"f_1"'] \ar[r,"d_0"]
    \ar[d,"\alpha'"] & A_0 \ar[d,"\alpha"] \\
    TB_1 & B_2 \ar[l,"d_2"'] \ar[r,"d_0"] & B_1 
\end{tikzcd} \quad
\begin{tikzcd}
    TA_0 \ar[d,"T\alpha"] & A_1 \ar[l,"d_1"'] \ar[dr,"g_1"]
    \ar[d,"\alpha''"]
    \\
    TB_1 & B_2 \ar[l,"d_2"'] \ar[r,"d_0"] & B_1.
\end{tikzcd}
\]
\end{definition}

\begin{theorem}\label{thm:2cat-bis}
  A $2$-cell $\hat\alpha$ as in Theorem~\ref{thm:2cat} is completely determined by the morphism
$\alpha\colon A_0\to B_1$ given by the composite
$\hat\alpha.s_0$.
This gives rise to a bijective correspondence between the set of all $2$-cells $f\to g$ and the set of all $T$-natural transformations $f\to g$. 
\end{theorem}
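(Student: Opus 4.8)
The plan is to exhibit explicit, mutually inverse assignments between the $2$-cells $\hat\alpha\colon A_1\to B_1$ of Theorem~\ref{thm:2cat} and the $T$-natural transformations $\alpha\colon A_0\to B_1$ of Definition~\ref{def:T-nat-tr}, namely $\alpha=\hat\alpha.s_0$ in one direction and $\hat\alpha=d_1.\alpha'$ (equivalently $d_1.\alpha''$) in the other, where $\alpha',\alpha''$ are the auxiliary maps of Definition~\ref{def:T-nat-tr}. The guiding picture is the classical one: $\hat\alpha$ sends a ``morphism'' $u$ of $\ts A$ to the common diagonal of its naturality square, while $\alpha$ records the components $\hat\alpha$ produces on degeneracies; the map $\alpha'$ (resp.\ $\alpha''$) presents the composable pair corresponding to $\alpha_{\mathrm{tgt}}\circ f(u)$ (resp.\ $g(u)\circ\alpha_{\mathrm{src}}$) as an element of the pullback $B_2$, and $T$-naturality is precisely the equation $d_1.\alpha'=d_1.\alpha''$ asserting that the two composites agree.

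First I would start from a $2$-cell $\hat\alpha$, set $\alpha=\hat\alpha.s_0$, and verify it is $T$-natural. The first diagram of Definition~\ref{def:T-nat-tr} follows at once from the first diagram of Theorem~\ref{thm:2cat} together with the $T$-simplicial identities $d_0.s_0=1$ and $d_1.s_0=iA_0$ (\ref{TSimp-A6}, \ref{TSimp-A7}) and naturality of $i$. The technical linchpin is the pair of identifications
\[ \alpha'=\hat\alpha'.s_1,\qquad \alpha''=\hat\alpha''.s_0, \]
each proved by comparing the $d_0$- and $d_2$-components into the pullback $B_2$ and invoking $d_0.s_1=s_0.d_0$, $d_2.s_1=iA_1$, $d_0.s_0=1$, $d_2.s_0=Ts_0.d_1$ (\ref{TSimp-A5}, \ref{TSimp-A7}, \ref{TSimp-A6}, \ref{TSimp-A9}) and the $T$-functoriality $f_1.s_0=s_0.f_0$. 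Granting these, the second $T$-naturality square reduces to
\[ d_1.\alpha'=d_1.\hat\alpha'.s_1=\hat\alpha.d_1.s_1=\hat\alpha=\hat\alpha.d_1.s_0=d_1.\hat\alpha''.s_0=d_1.\alpha'', \]
using the $2$-cell conditions $d_1.\hat\alpha'=\hat\alpha.d_1=d_1.\hat\alpha''$ and $d_1.s_1=d_1.s_0=1$. The very same computation yields $\hat\alpha=d_1.\alpha'$, so $\hat\alpha$ is recovered from $\alpha$; this establishes the ``completely determined'' claim and hence injectivity of $\hat\alpha\mapsto\alpha$.

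For surjectivity I would, conversely, begin with a $T$-natural $\alpha$, form $\alpha',\alpha''$ as in Definition~\ref{def:T-nat-tr}, set $\hat\alpha=d_1.\alpha'$ (well defined since $d_1.\alpha'=d_1.\alpha''$), and check both that $\hat\alpha$ is a genuine $2$-cell and that $\hat\alpha.s_0=\alpha$. The round-trip identity follows from the clean observation that $\alpha'.s_0=s_0.\alpha$ is a degenerate $2$-simplex of $\ts B$ --- again verified by comparing components into $B_2$, now using $d_2.s_0=Ts_0.d_1$, the first $T$-naturality diagram $d_1.\alpha=iB_0.f_0$, and naturality of $i$ --- whence $\hat\alpha.s_0=d_1.s_0.\alpha=\alpha$ by the unit law $d_1.s_0=1$. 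That $\hat\alpha$ is a $2$-cell can then be obtained either by running the dictionary above in reverse through the diagrams of Theorem~\ref{thm:2cat}, or more economically by using Proposition~\ref{prop:n-cells} to present $\hat\alpha$ as an element of $\underline{s_T\ce}(\ts A,\ts B)_1$ and checking only the finitely many compatibility conditions with $\delta_i$ and $\sigma_0$ directly from the defining equations of $\alpha,\alpha',\alpha''$.

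The main obstacle is bookkeeping rather than conceptual difficulty: each step depends on selecting the correct identity among \ref{TSimp-A1}--\ref{TSimp-A9} and on manipulating the universal property of the pullback \eqref{eqn:X2-as-pullback} defining $B_2$, all while tracking which faces are $\ce$-valued ($d_i$ for $i<n$) and which are $\ce_T$-valued ($d_n$, carrying the decorations $i$ and $m$). The identifications $\alpha'=\hat\alpha'.s_1$ and $\alpha''=\hat\alpha''.s_0$ are where the argument turns, and matching the $T$-decorated components correctly there --- reconciling $iB_1.f_1$ with $Tf_1.d_2.s_1$, and $T\alpha.d_1$ with $T\hat\alpha.d_2.s_0$ --- is the one place a careless slip is easy; once they are in hand, everything else is formal.
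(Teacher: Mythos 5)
Your proposal is correct and takes essentially the same approach as the paper: the same assignments $\alpha=\hat\alpha.s_0$ and $\hat\alpha=d_1.\alpha'=d_1.\alpha''$, hinging on the same key identifications $\alpha'=\hat\alpha'.s_1$ and $\alpha''=\hat\alpha''.s_0$ verified via the pullback defining $B_2$. In fact you supply more detail than the paper's two-sentence proof (e.g.\ the round-trip identity $\alpha'.s_0=s_0.\alpha$ and the explicit recovery $\hat\alpha=d_1.\alpha'$), and the computations you spell out are correct.
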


\begin{proof}
  To see that we may take $\alpha$ to be $\hat\alpha.s_0$, simply take
  $\alpha'=\hat\alpha'.s_1$ and $\alpha''=\hat\alpha''.s_0$.
  The various conditions in Theorem~\ref{thm:2cat} then easily imply the
  conditions above.

  Conversely, given $\alpha$ (along with $\alpha'$ and $\alpha''$), we
  take $\hat\alpha$ to be the common value $d_1\alpha'=d_1\alpha''$.
\end{proof}

Thus the $2$-cells in the $2$-category $\underline{\Cat_T(\ce)}$ are the straightforward analogue of internal natural transformations, and coincide with the ones considered in \cite[IV.3]{Burroni-Multicategories} and \cite[Section~5.3]{Leinster-book} (under the additional assumptions that the category $\ce$ has pullbacks and the monad $T$ is cartesian).

\section{Powers by $\Delta[1]$}\label{sect:powers}
Throughout this section, let $\ce$ be a locally small category with pullbacks and $T$ an arbitrary monad on $\ce$. 
Recall that we write $\Delta[n]$ for the 
the representable $\Delta(-,n)\colon \Delta\op\to\Set$, and $\Dtop[n]$ for $\Dtop(-,n)\colon\Dtopop\to\Set$.
In this section, we show that the $s\Set$-category $\underline{s_T\ce}$ of $T$-simplicial objects has powers by $\Delta[1]\in s\Set$. 
We then observe that the full sub-$s\Set$-category $\underline{\Cat_T(\ce)}$ of $\underline{s_T\ce}$ consisting of all $T$-categories is closed under powers by $\Delta[1]$; it follows that the $2$-category $\underline{\Cat_T(\ce)}$ has powers by $\mathbf{2}$.

\subsection{Powers by $\Dtop[1]$ in $\underline{[\Dtopop,\ce]}$}\label{subsec:power-by-Dtop-1}
We first construct powers by $\Dtop[1]\in[\Dtopop,\Set]$ in $\underline{[\Dtopop,\ce]}$, which is a $[\Dtopop,\Set]$-category by Construction~\ref{const:enrichment-by-presheaf-cat}.

\begin{notation}\label{notation:chi-G}
We write $\chi_j$ (or $\chi^m_j$ when we wish to record its domain) for the map $m\to 1$ sending $i$ to $0$ if $i<j$,
and to $1$ otherwise. This defines a map in $\Delta$ for $0\le j\le
m+1$, and in $\Dtop$ if in fact $j\le m$. Every map in $\Delta$ into $1$ has
this form. 
These satisfy $\chi_h\delta_j=\chi_h$ if $h\le j$, and
$\chi_h\delta_j=\chi_{h-1}$ otherwise. In particular, $\chi_{j+1}\delta_j=\chi_j=\chi_j\delta_j$.
We also write $G$ for $\Dtop[1]$.
\end{notation}

As shown in Proposition~\ref{prop:copowers-and-powers-in-BC}(2), the power $G\pitchfork X$ of $X\in [\Dtopop,\ce]$ by $G$ is given by setting $(G\pitchfork X)_n$ to be the weighted limit of $X$ with weight $G\times \Dtop[n]=\Dtop[1]\times\Dtop[n]$.
Now observe that $\Dtop[1]\times\Dtop[n]$ can be expressed as the following colimit of representables in $[\Dtopop,\Set]$.
\begin{equation}\label{eqn:cylinder-decomposition}
\begin{tikzpicture}[baseline=-\the\dimexpr\fontdimen22\textfont2\relax ]
      \node(0)  at (6,2.25) {$\Dtop[1]\times\Dtop[n]$};
      \node(00) at (0,-0.75) {$\Dtop[n+1]$};
      \node(01) at (3,-0.75) {$\Dtop[n+1]$};
      \node(02) at (6,-0.75) {$\cdots$};
      \node(03) at (9,-0.75) {$\Dtop[n+1]$};
      \node(04) at (12,-0.75) {$\Dtop[n+1]$};
      \node(10) at (1.5,-2.25) {$\Dtop[n]$};
      \node(11) at (4.5,-2.25) {$\Dtop[n]$};
      \node(12) at (6,-2.25) {$\cdots$};
      \node(13) at (7.5,-2.25) {$\Dtop[n]$};
      \node(14) at (10.5,-2.25) {$\Dtop[n]$};
      \draw [->] (10) to node[auto,labelsize] {$\delta_n$} (00);   
      \draw [->] (10) to node[swap,auto,labelsize] {$\delta_n$} (01);
      \draw [->] (11) to node[auto,labelsize] {$\delta_{n-1}$} (01);  
      \draw [->] (13) to node[auto,swap,labelsize] {$\delta_{2}$} (03);  
      \draw [->] (14) to node[auto,labelsize] {$\delta_{1}$} (03); 
      \draw [->] (14) to node[auto,swap,labelsize] {$\delta_{1}$} (04);  
      \draw [->] (00) to node[auto,labelsize] {$(\chi_{n+1},\sigma_n)$} (0);
      \draw [->] (01) to node[near start,fill=white,labelsize] {$(\chi_{n},\sigma_{n-1})$} (0);
      \draw [->] (03) to node[near start,fill=white,labelsize] {$(\chi_{2},\sigma_{1})$} (0);
      \draw [->] (04) to node[auto,swap,labelsize] {$(\chi_{1},\sigma_{0})$} (0);
\end{tikzpicture}
\end{equation}
This is an analogue of the well-known fact for simplicial sets (see e.g.\ \cite[II.5.5]{Gabriel-Zisman}), and we omit its straightforward verification by means of a combinatorial argument.

Since the operation of taking weighted limits of $X$ turns colimits of weights (in $[\Dtopop,\Set]$) into limits in $\ce$, and since weighted limits with respect to representable weights are given by evaluations, we see that $(G\pitchfork X)_n$ is the following limit in $\ce$.
\begin{equation}\label{eqn:power-by-G-as-iterated-pullback}
\begin{tikzpicture}[baseline=-\the\dimexpr\fontdimen22\textfont2\relax ]
      \node(0)  at (6,2.25) {$(G\pitchfork X)_n$};
      \node(00) at (0,-0.75) {$X_{n+1}$};
      \node(01) at (3,-0.75) {$X_{n+1}$};
      \node(02) at (6,-0.75) {$\cdots$};
      \node(03) at (9,-0.75) {$X_{n+1}$};
      \node(04) at (12,-0.75) {$X_{n+1}$};
      \node(10) at (1.5,-2.25) {$X_n$};
      \node(11) at (4.5,-2.25) {$X_n$};
      \node(12) at (6,-2.25) {$\cdots$};
      \node(13) at (7.5,-2.25) {$X_n$};
      \node(14) at (10.5,-2.25) {$X_n$};
      \draw [<-] (10) to node[auto,labelsize] {$d_n$} (00);   
      \draw [<-] (10) to node[swap,auto,labelsize] {$d_n$} (01);
      \draw [<-] (11) to node[auto,labelsize] {$d_{n-1}$} (01);  
      \draw [<-] (13) to node[auto,swap,labelsize] {$d_{2}$} (03);  
      \draw [<-] (14) to node[auto,labelsize] {$d_{1}$} (03); 
      \draw [<-] (14) to node[auto,swap,labelsize] {$d_{1}$} (04);  
      \draw [<-] (00) to node[auto,labelsize] {$\pi_n$} (0);
      \draw [<-] (01) to node[auto,near start,labelsize] {$\pi_{n-1}$} (0);
      \draw [<-] (03) to node[auto,swap, near start,labelsize] {$\pi_1$} (0);
      \draw [<-] (04) to node[auto,swap,labelsize] {$\pi_0$} (0);
\end{tikzpicture}
\end{equation}
In particular, this limit exists in $\ce$ because it can be constructed as an iterated pullback in $\ce$. 
We note that the face and degeneracy maps for $G\pitchfork X$ are determined as follows.
\begin{itemize}
    \item For each $n\geq 0$ and $0\leq i<n+1$, $d_i\colon (G\pitchfork X)_{n+1}\to (G\pitchfork X)_n$ is the unique morphism in $\ce$ making the following diagrams commute: 
    \begin{itemize}
        \item 
        \begin{tikzcd}
	{(G\pitchfork X)_{n+1}} & {X_{n+2}} \\
	{(G\pitchfork X)_n} & {X_{n+1}}
	\arrow["{\pi_j}", from=1-1, to=1-2]
	\arrow["{d_i}"', from=1-1, to=2-1]
	\arrow["{d_{i+1}}", from=1-2, to=2-2]
	\arrow["{\pi_j}"', from=2-1, to=2-2]
\end{tikzcd} (for each $0\leq j\leq i-1$)
        \item 
        \begin{tikzcd}
	{(G\pitchfork X)_{n+1}} & {X_{n+2}} \\
	{(G\pitchfork X)_n} & {X_{n+1}}
	\arrow["{\pi_{j+1}}", from=1-1, to=1-2]
	\arrow["{d_i}"', from=1-1, to=2-1]
	\arrow["{d_{i}}", from=1-2, to=2-2]
	\arrow["{\pi_j}"', from=2-1, to=2-2]
\end{tikzcd} (for each $i\leq j\leq n$)
    \end{itemize}
    \item For each $n\geq 1$ and $0\leq i\leq n-1$, $s_i\colon (G\pitchfork X)_{n-1}\to (G\pitchfork X)_n$ is the unique morphism in $\ce$ making the following diagrams commute: 
    \begin{itemize}
        \item 
        \begin{tikzcd}
	{(G\pitchfork X)_{n-1}} & {X_{n}} \\
	{(G\pitchfork X)_n} & {X_{n+1}}
	\arrow["{\pi_j}", from=1-1, to=1-2]
	\arrow["{s_i}"', from=1-1, to=2-1]
	\arrow["{s_{i+1}}", from=1-2, to=2-2]
	\arrow["{\pi_j}"', from=2-1, to=2-2]
\end{tikzcd} (for each $0\leq j\leq i$)
        \item 
        \begin{tikzcd}
	{(G\pitchfork X)_{n-1}} & {X_{n}} \\
	{(G\pitchfork X)_n} & {X_{n+1}}
	\arrow["{\pi_{j-1}}", from=1-1, to=1-2]
	\arrow["{s_i}"', from=1-1, to=2-1]
	\arrow["{s_{i}}", from=1-2, to=2-2]
	\arrow["{\pi_j}"', from=2-1, to=2-2]
\end{tikzcd} (for each $i+1\leq j\leq n$)
    \end{itemize}
\end{itemize}
Note in particular that the family $\bigl(\pi_n\colon (G\pitchfork X)_n\to X_{n+1}\bigr)_{n\geq 0}$ defines a natural transformation $g\colon G\pitchfork X\to XR$. 
We also have a natural transformation $t\colon G\pitchfork X\to X$ induced by $\delta_0\colon \Dtop[0]\to \Dtop[1]=G$; note that $\Dtop[0]\pitchfork X\cong X$.
Explicitly, we have $t=\bigl((G\pitchfork X)_n\xrightarrow{\pi_0}X_{n+1}\xrightarrow{d_0} X_n\bigr)_{n\geq 0}$.

The universal property of $G\pitchfork X$ says that we have 
\[
[\Dtopop,\ce](Y,G\pitchfork X)\cong \underline{[\Dtopop,\ce]}(Y,X)_1
\]
for each $Y\in [\Dtopop,\ce]$.
For future reference, we spell out the details of this bijective correspondence. 

\begin{notation}\label{notation:u-chi}
An element $u$ of $\underline{[\Dtopop,\ce]}(Y,X)_1$ is a family $(u_\phi\colon Y_m\to X_m)_{m\geq 0,\,\phi\in\Dtop(m,1)}$, where every morphism $\phi\colon m\to 1$ in $\Dtop$ is of the form $\chi^m_k$ for some $0\leq k\leq m$ (see Notation~\ref{notation:chi-G}). 
We write $u_{\chi^m_k}\colon Y_m\to X_m$ as $u_{m,k}$, and hence $u$ is a family of the form $(u_{m,k}\colon Y_m\to X_m)_{m\geq 0,\,0\leq k\leq m}$.
\end{notation}

\begin{proposition}\label{prop:power-by-Dtop-1-correspondence-detail}
    Let $X,Y\in [\Dtopop,\ce]$. 
    \begin{itemize}
        \item[(1)] Given $u=(u_{m,k}\colon Y_m\to X_m)_{m\geq 0,\,0\leq k\leq m}\in \underline{[\Dtopop,\ce]}(Y,X)_1$, the corresponding morphism
        $\hat u = \bigl(\hat u_m\colon Y_m\to (G\pitchfork X)_m \bigr)_{m\geq 0} \colon Y\to G\pitchfork X$ in $[\Dtopop,\ce]$ is determined by the commutativity of
    \[
        \begin{tikzcd}
	{Y_m} & {(G\pitchfork X)_m} \\
	{Y_{m+1}} & {X_{m+1}}
	\arrow["{\hat u_m}", from=1-1, to=1-2]
	\arrow["{s_k}"', from=1-1, to=2-1]
	\arrow["{\pi_{k}}", from=1-2, to=2-2]
	\arrow["{u_{m+1,k+1}}"', from=2-1, to=2-2]
        \end{tikzcd}
    \]
        for each $0\leq k\leq m$.
        \item[(2)] Given a morphism $\hat u = \bigl(\hat u_m\colon Y_m\to (G\pitchfork X)_m \bigr)_{m\geq 0} \colon Y\to G\pitchfork X$ in $[\Dtopop,\ce]$, the corresponding element $u=(u_{m,k}\colon Y_m\to X_m)_{m\geq 0,\,0\leq k\leq m}\in \underline{[\Dtopop,\ce]}(Y,X)_1$ is obtained by setting $u_{m,k}$ to be the composite 
        $Y_m\xrightarrow{\hat u_m}(G\pitchfork X)_m\xrightarrow{\pi_k} X_{m+1}\xrightarrow{d_k} X_m$.
    \end{itemize}
\end{proposition}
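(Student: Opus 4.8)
The plan is to derive both parts as direct specializations of the general correspondence \eqref{eqn:power-morphism-correspondence} established in Proposition~\ref{prop:copowers-and-powers-in-BC}(2), applied with $\cb=\Dtopop$ and weight $B=G=\Dtop[1]$. First I would record that, by Yoneda, an element $u\in\underline{[\Dtopop,\ce]}(Y,X)_1$ is the same thing as a natural transformation $u\colon G\to\underline{[\Dtopop,\ce]}(Y,X)$ in $[\Dtopop,\Set]$, and that its components $u_{b',\beta}$ (in the sense of \eqref{eqn:power-morphism-correspondence}) at $b'=m$ and $\beta=\chi^m_k$ are precisely the morphisms $u_{m,k}$ of Notation~\ref{notation:u-chi}. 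Thus the abstract bijection $[\Dtopop,\ce](Y,G\pitchfork X)\cong\underline{[\Dtopop,\ce]}(Y,X)_1$ is exactly the one in the statement, and everything reduces to identifying the weighted-limit projections $\pi_{b',\beta,f}$ with the concrete iterated-pullback projections $\pi_k$ of \eqref{eqn:power-by-G-as-iterated-pullback}.

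The key combinatorial step is this identification. Reading off the colimit decomposition \eqref{eqn:cylinder-decomposition}, the top generating cell carrying $\pi_k\colon(G\pitchfork X)_m\to X_{m+1}$ corresponds to the element $(\chi^{m+1}_{k+1},\sigma_k)$ of the weight $G\times\Dtop[m]$ in degree $m+1$; that is, $\pi_k=\pi_{m+1,\,\chi^{m+1}_{k+1},\,\sigma_k}$. I would then use naturality of the weighted-limit projections together with the identities $\sigma_k\delta_k=1_m$ and $\chi_{k+1}\delta_k=\chi_k$ (the latter from Notation~\ref{notation:chi-G}) to compute $\pi_{m,\,\chi^m_k,\,1_m}=d_k\circ\pi_k$, where $d_k\colon X_{m+1}\to X_m$ is the face map induced by $\delta_k$.

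With these identifications in hand, part (2) is immediate: setting $b=b'=m$, $\beta=\chi^m_k$, $f=1_m$ in \eqref{eqn:power-morphism-correspondence} gives $u_{m,k}=\pi_{m,\chi^m_k,1_m}\circ\hat u_m=d_k\circ\pi_k\circ\hat u_m$, which is the asserted composite. For part (1), I would instead take $b=m$, $b'=m+1$, $\beta=\chi^{m+1}_{k+1}$ and $f=\sigma_k$; since $f\in\Dtopop(m,m+1)$ corresponds to the codegeneracy $\sigma_k$, its action is $Y_f=s_k\colon Y_m\to Y_{m+1}$, and $u_{b',\beta}=u_{m+1,k+1}$, so \eqref{eqn:power-morphism-correspondence} becomes exactly the defining square $\pi_k\circ\hat u_m=u_{m+1,k+1}\circ s_k$. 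Finally, that these squares (for all $0\le k\le m$) determine $\hat u_m$ uniquely follows because $(G\pitchfork X)_m$ is the iterated pullback \eqref{eqn:power-by-G-as-iterated-pullback}, whose bottom legs factor through the top projections via the $d_i$, so the top projections $\pi_0,\dots,\pi_m$ are jointly monic.

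The main obstacle I anticipate is purely the bookkeeping in the middle paragraph: getting the variance right (morphisms of $\Dtopop$ act on $X$ and $Y$ as face/degeneracy operators of the opposite variance, so $\delta_k$ yields a face map and $\sigma_k$ a degeneracy), and correctly matching the indexing of the top cells in \eqref{eqn:cylinder-decomposition} with the projections $\pi_k$ of \eqref{eqn:power-by-G-as-iterated-pullback}, so that the shift from $\chi_k$ to $\chi_{k+1}$ and the index $k+1$ in $u_{m+1,k+1}$ come out correctly. Once the single identity $\pi_{m,\chi^m_k,1_m}=d_k\circ\pi_k$ is pinned down, both parts drop out with no further computation.
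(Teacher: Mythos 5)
Your proof is correct and follows essentially the same route as the paper's: both specialize \eqref{eqn:power-morphism-correspondence} with exactly the same choices of $(b,b',\beta,f)$ --- namely $(m,\,m+1,\,\chi^{m+1}_{k+1},\,\sigma_k)$ for (1) and $(m,\,m,\,\chi^m_k,\,1_m)$ for (2) --- and both rest on identifying the pair $(\chi^{m+1}_{k+1},\sigma_k)$ in \eqref{eqn:cylinder-decomposition} with the projection $\pi_k$ in \eqref{eqn:power-by-G-as-iterated-pullback}. Your additional remarks (the naturality computation giving $\pi_{m,\chi^m_k,1_m}=d_k\circ\pi_k$ via $\chi_{k+1}\delta_k=\chi_k$ and $\sigma_k\delta_k=1_m$, and the joint monicity of the $\pi_k$ for the uniqueness claim in (1)) only spell out what the paper's proof leaves implicit.
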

\begin{proof}
    Both (1) and (2) follow from the commutativity of \eqref{eqn:power-morphism-correspondence} and the correspondence between \eqref{eqn:cylinder-decomposition} and \eqref{eqn:power-by-G-as-iterated-pullback}.
    For (1), let $b= m$, $b'=m+1$, $\beta=\chi^{m+1}_{k+1}$, and $f=\sigma_k$ in \eqref{eqn:power-morphism-correspondence} and observe that the projection $\pi_{b',\beta,f}$ there coincides with $\pi_k\colon (G\pitchfork X)_m\to X_{m+1}$, as $(\beta,f)=(\chi^{m+1}_{k+1},\sigma_k)$ in \eqref{eqn:cylinder-decomposition} corresponds to $\pi_k$ in \eqref{eqn:power-by-G-as-iterated-pullback}. 
    For (2), let $b=b'=m$, $\beta= \chi^m_{k}$, and $f=$ the identity on $m$ in \eqref{eqn:power-morphism-correspondence}, and observe that $\beta$ is the composite $m\xrightarrow{\delta_k}m+1\xrightarrow{\chi^{m+1}_{k+1}}1$ and $f$ is the composite $m\xrightarrow{\delta_k}m+1\xrightarrow{\sigma_k}m$, where the pair $(\chi^{m+1}_{k+1},\sigma_k)$ in \eqref{eqn:cylinder-decomposition} corresponds to $\pi_k$ in \eqref{eqn:power-by-G-as-iterated-pullback}.
\end{proof}

\begin{remark}
    We have essentially the same description of powers by $\Delta[1]$ in an $s\Set$-category of the form $\underline{[\Delta\op,\ce]}$. This is because replacing $\Dtop$ by $\Delta$ in \eqref{eqn:cylinder-decomposition} yields a colimit diagram in $s\Set$. Hence for $G=\Delta[1]$ and $X\colon \Delta\op\to \ce$, the power $G\pitchfork X$ in $\underline{[\Delta\op,\ce]}$ can be constructed by the limit as in \eqref{eqn:power-by-G-as-iterated-pullback}. 
    However, unlike the case of $\underline{[\Dtopop,\ce]}$, the family $\bigl(\pi_n\colon (G\pitchfork X)_n\to X_{n+1} \bigr)_{n\geq 0}$ does not form a natural transformation $G\pitchfork X\to X$ since it does not commute with the last face maps (induced by morphisms of the form $\delta_{n+1}\colon n\to n+1$ in $\Delta$). 
    Instead, we have a natural transformation $G\pitchfork X\to X$ induced by $\delta_1\colon \Delta[0]\to \Delta[1]$ and given concretely by 
    $\bigl((G\pitchfork X)_n \xrightarrow{\pi_n} X_{n+1}\xrightarrow{d_{n+1}} X_n \bigr)_{n\geq 0}$, which in turn is not available over $\Dtop$.
\end{remark}

\subsection{Simplices in hom simplicial sets}
\begin{notation}
    Recall from the discussion just after Proposition~\ref{prop:T-simp-via-pullback} that a $T$-simplicial object can be identified with a monad opmorphism from $(\Dtopop,R)$ to $(\ce,T)$.
    We regard a $T$-simplicial object $\ts X$ as a pair $(X,\xi)$ consisting of $X\colon \Dtopop\to \ce$ and $\xi\colon XR\to TX$ (see \eqref{eqn:opmorphism}) satisfying the suitable properties.
\end{notation}

Let $\ts X$ be a $T$-simplicial object. 
The power $\ts L$ of $\ts X$ by $\Delta[1]$ in $\underline{s_T\ce}$, if exists, is characterized by the existence of a family of isomorphisms of simplicial sets
\begin{equation}
\label{eqn:natural-iso-homs}
\underline{s_T\ce}(\ts Y,\ts L)\cong \underline{s\Set}\bigl(\Delta[1],\underline{s_T\ce}(\ts Y,\ts X)\bigr),\quad \text{$s\Set$-natural in $\ts Y\in \underline{s_T\ce}$.}
\end{equation}
The family of bijections between the sets of $0$-simplices induced by \eqref{eqn:natural-iso-homs} is
\begin{equation}
\label{eqn:natural-iso-homs-0-simp}
\underline{s_T\ce}(\ts Y,\ts L)_0\cong \underline{s_T\ce}(\ts Y,\ts X)_1,\quad \text{natural in $\ts Y\in {s_T\ce}$,}
\end{equation}
thanks to the Yoneda lemma. 
Since we have $\underline{s_T\ce}(\ts Y,\ts L)_0\cong s_T\ce(\ts Y,\ts L)$, \eqref{eqn:natural-iso-homs-0-simp} determines the $T$-simplicial object $\ts L$ up to isomorphism. 

Thus we first aim to construct a $T$-simplicial object $\ts L$ with \eqref{eqn:natural-iso-homs-0-simp}, i.e., a representation of the presheaf $\underline{s_T\ce}(-,\ts X)_1\colon (s_T\ce)\op\to\Set$.
It is not difficult to see that \eqref{eqn:natural-iso-homs-0-simp} in fact implies the seemingly stronger condition \eqref{eqn:natural-iso-homs} (see Proposition~\ref{prop:strengthening-univ-prop}).
We first analyze the right-hand side $\underline{s_T\ce}(\ts Y,\ts X)_1$ of \eqref{eqn:natural-iso-homs-0-simp}.
In the following proposition, $\underline{[\Dtopop,\ce]}$ denotes the $[\Dtopop,\Set]$-category defined in Construction~\ref{const:enrichment-by-presheaf-cat}.

\begin{proposition}\label{prop:n-simplex-inductively}
    Let $\ts X=(X,\xi^{\ts X})$ and $\ts Y=(Y,\xi^{\ts Y})$ be $T$-simplicial objects and $n> 0$. To give an element of $\underline{s_T\ce}(\ts Y,\ts X)_n$ is equivalent to giving $u\in \underline{[\Dtopop,\ce]}(Y,X)_n$ and $v\in \underline{s_T\ce}(\ts Y,\ts X)_{n-1}$ such that 
    \begin{itemize}
        \item[(a)] for each $m> 0$ and $\phi\colon m\to n$ in $\Dtop$ such that $\phi\delta_m\colon m-1\to n$ is a morphism in $\Dtop$, the diagram 
\[
  \begin{tikzcd}
    Y_{m} \rar["u_{\phi}"] \dar["d_{m}"'] & X_{m}
    \dar["d_{m}"] \\
    TY_{m-1} \rar["Tu_{\phi\delta_m}"] & TX_{m-1}
  \end{tikzcd}
\]
        commutes, and 
        \item[(b)] for each $m>0$ and $\phi\colon m\to n$ in $\Dtop$ such that $\phi\delta_m\colon m-1\to n$ is not a morphism in $\Dtop$ (equivalently, such that there exists a necessarily unique morphism $\tilde \phi\colon m-1\to n-1$ in $\Delta$ with $\phi\delta_m=\delta_n\tilde\phi$), the diagram 
\[
  \begin{tikzcd}
    Y_{m} \rar["u_{\phi}"] \dar["d_{m}"'] & X_{m}
    \dar["d_{m}"] \\
    TY_{m-1} \rar["Tv_{\tilde\phi}"] & TX_{m-1}
  \end{tikzcd}
\]
        commutes.
    \end{itemize}
\end{proposition}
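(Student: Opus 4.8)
The plan is to establish the equivalence as an explicit bijection, by partitioning the indexing maps and then matching the two defining conditions of an $n$-simplex against the listed conditions. Recall that an element $x\in\underline{s_T\ce}(\ts Y,\ts X)_n$ is a family $(x_\phi\colon Y_m\to X_m)_{\phi\colon m\to n\text{ in }\Delta}$ subject to (i) naturality under top-preserving maps, $\psi^\ast x_\phi=x_{\phi\psi}\psi^\ast$ for every $\psi\colon p\to m$ in $\Dtop$, and (ii) the last-face condition $d_m.x_\phi=Tx_{\phi\delta_m}.d_m$ for every $m>0$. The key combinatorial observation is that each $\phi\colon m\to n$ in $\Delta$ either lies in $\Dtop$ (when $\phi m=n$) or else factors uniquely as $\phi=\delta_n\tilde\phi$ with $\tilde\phi\colon m\to n-1$ in $\Delta$ (when $\phi m<n$); here $n>0$ is exactly what makes $\delta_n\colon n-1\to n$ and the degree $n-1$ meaningful. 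I would therefore define $u$ to be the restriction of $x$ to the top-preserving indices, $u_\phi:=x_\phi$ for $\phi\in\Dtop(m,n)$, and define $v$ by $v_{\tilde\phi}:=x_{\delta_n\tilde\phi}$ for $\tilde\phi\colon m\to n-1$; concretely $v$ is just the last simplicial face $d_nx$. Conversely $u$ and $v$ reassemble $x$ by the same formulas, so on the level of underlying families the correspondence is already a bijection, and the work is to check that the conditions match up.

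For the naturality condition (i), I would use that top-preservation is both preserved and reflected under right composition with a map $\psi$ of $\Dtop$ (since $\psi p=m$ forces $\phi\psi p=\phi m$). Thus (i) for top-preserving $\phi$ is exactly the naturality making $u$ an element of $\underline{[\Dtopop,\ce]}(Y,X)_n$, while (i) for $\phi=\delta_n\tilde\phi$ reads $\psi^\ast v_{\tilde\phi}=v_{\tilde\phi\psi}\psi^\ast$, i.e.\ the naturality of $v$ over $\Dtop$. For the last-face condition (ii) I would run a case analysis on whether $\phi$ and $\phi\delta_m$ lie in $\Dtop$. If $\phi\in\Dtop$ and $\phi\delta_m\in\Dtop$ (i.e.\ $\phi(m-1)=n$), then (ii) becomes $d_m.u_\phi=Tu_{\phi\delta_m}.d_m$, which is condition~(a). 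If $\phi\in\Dtop$ but $\phi\delta_m\notin\Dtop$, then writing $\phi\delta_m=\delta_n\tilde\phi$, condition (ii) becomes $d_m.u_\phi=Tv_{\tilde\phi}.d_m$, which is condition~(b). Finally, if $\phi\notin\Dtop$, say $\phi=\delta_n\hat\phi$, then $\phi\delta_m=\delta_n(\hat\phi\delta_m)$ and (ii) becomes $d_m.v_{\hat\phi}=Tv_{\hat\phi\delta_m}.d_m$, which is precisely the last-face condition for $v$ at $\hat\phi$. Together with the naturality already obtained for $v$, this says exactly that $v$ is a genuine element of $\underline{s_T\ce}(\ts Y,\ts X)_{n-1}$.

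Assembling the two paragraphs, the defining conditions on $x$ are equivalent to: $u\in\underline{[\Dtopop,\ce]}(Y,X)_n$, $v\in\underline{s_T\ce}(\ts Y,\ts X)_{n-1}$, and the two conditions (a) and (b). Since $x\mapsto(u,v)$ and $(u,v)\mapsto x$ are mutually inverse on underlying families, this yields the claimed bijection. The only real obstacle is the bookkeeping in the case analysis for (ii): one must verify that the four logical possibilities for the pair $(\phi\in\Dtop?,\ \phi\delta_m\in\Dtop?)$ fall into exactly the three buckets (a), (b), and ``last-face condition for $v$'', with no overlap and no omission --- in particular that $\phi\notin\Dtop$ forces $\phi\delta_m\notin\Dtop$, so that the remaining combination is vacuous. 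Everything else is a routine translation through the factorization $\phi=\delta_n\tilde\phi$.
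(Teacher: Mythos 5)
Your proof is correct and follows essentially the same route as the paper: both decompose $\Delta(m,n)\cong\Dtop(m,n)+\Delta(m,n-1)$ via the unique factorization $\phi=\delta_n\tilde\phi$, and then match the naturality condition (split over the two summands, giving the membership conditions for $u$ and part of that for $v$) and the last-face condition (split into the three cases $\phi,\phi\delta_m\in\Dtop$; $\phi\in\Dtop$, $\phi\delta_m\notin\Dtop$; $\phi\notin\Dtop$, giving (a), (b), and the remaining condition on $v$, with the fourth combination vacuous). The bookkeeping you flag as the only real obstacle is exactly the case analysis (1a)/(1b)/(2a)/(2b)/(2c) carried out in the paper, and your verification of it is sound.
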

\begin{proof}
    Since a morphism $\phi\colon m\to n$ in $\Delta$ (with $m\geq 0$) is either in $\Dtop$ or of the form $\delta_n\phi'$ for a unique morphism $\phi'\colon m\to n-1$ in $\Delta$, the function 
    \[
    \Dtop(m,n)+\Delta(m,n-1)\xrightarrow{[\text{inclusion},\,\delta_n(-)]}\Delta(m,n)
    \]
    is bijective (cf.\ Remark~\ref{rmk:RanFR}). 
    Therefore to give a family $x=(x_\phi\colon Y_m\to X_m)_{m\geq 0, \phi\in \Delta(m,n)}$ is equivalent to giving a pair of families 
    $u=(u_\phi\colon Y_m\to X_m)_{m\geq 0, \phi\in \Dtop(m,n)}$ and $v=(v_{\phi'}\colon Y_m\to X_m)_{m\geq 0, \phi'\in \Delta(m,n-1)}$, these being related by $u_\phi=x_\phi$ and $v_{\phi'}=x_{\delta_n\phi'}$.
    We show that, under this correspondence, $x$ is an element of $\underline{s_T\ce}(\ts Y,\ts X)_n$ if and only if 
    $u$ is an element of $\underline{[\Dtopop,\ce]}(Y,X)_n$, $v$ is an element of $\underline{s_T\ce}(\ts Y,\ts X)_{n-1}$, and $u$ and $v$ satisfy (a) and (b).

    The condition for $x$ to be an element of $\underline{s_T\ce}(\ts Y,\ts X)_{n}$ is 
    \begin{enumerate}
        \item for each $\ell,m\geq 0$, $\psi\colon \ell\to m$ in $\Dtop$, and $\phi\colon m\to n$ in $\Delta$, we have $\psi^\ast.x_\phi=x_{\phi\psi}.\psi^\ast$, and
        \item for each $m> 0$ and $\phi\colon m\to n$ in $\Delta$, we have $d_{m}.x_\phi=Tx_{\phi\delta_{m}}.d_{m}$.
    \end{enumerate}
    We can decompose the conditions (1) and (2) as follows:
    \begin{itemize}
        \item[(1a):] (1) holds for all $\phi$ in $\Dtop$,
        \item[(1b):] (1) holds for all $\phi$ not in $\Dtop$,
        \item[(2a):] (2) holds for all $\phi$ in $\Dtop$ such that  $\phi\delta_m$ is in $\Dtop$,
        \item[(2b):] (2) holds for all $\phi$ in $\Dtop$ such that $\phi\delta_m$ is not in $\Dtop$, and
        \item[(2c):] (2) holds for all $\phi$ not in $\Dtop$.
    \end{itemize}
    The condition (1a) is equivalent to the condition that $u$ is in $\underline{[\Dtopop,\ce]}(Y,X)_{n}$.
    The conjunction of (1b) and (2c) is equivalent to the condition that $v$ is in $\underline{s_T\ce}(\ts Y,\ts X)_{n-1}$. Finally, (2a) is equivalent to (a) and (2b) is equivalent to (b).
\end{proof}

We write the right adjoint of the inclusion $F_R\colon \Dtopop\to \Delta\op$ as $U_R\colon \Delta\op\to \Dtopop$. Explicitly, we have $U_R [n]= [n+1]$ on objects and, for each $\phi\colon [n]\to [m]$ in $\Delta$, the morphism $U_R\phi\colon [m+1]\to [n+1]$ in $\Dtop$ maps each $k\in [m+1]$ with $0\leq k\leq m$ to $\phi(k)$ and $m+1$ to $n+1$. 
(Thus we have $U_R(n\xrightarrow{\delta_i}n+1)=n+1\xrightarrow{\delta_i}n+2$ and $U_R(n\xrightarrow{\sigma_i}n-1)=n+1\xrightarrow{\sigma_i}n$.)
Also note that the endofunctor $R=(-)+1$ on $\Dtopop$ coincides with the composite $\Dtopop\xrightarrow{F_R}\Delta\op\xrightarrow{U_R}\Dtopop$. 
Observe that for each $n>0$ and $m\geq 0$, the function  
\begin{equation}\label{eqn:Dtop-m+1-n-decomposition}
\Dtop(m,n)+\Delta(m,n-1)\xrightarrow{[\sigma_n.R(-),\ U_R(-)]}\Dtop(m+1,n)
\end{equation}
is bijective. This is because 
each morphism $\psi\colon [m+1]\to [n]$ in $\Dtop$ satisfies either $\psi(m)=n$ or $\psi(m)\leq n-1$; in the former case $\psi$ is in the image of $\Dtop(m,n)\xrightarrow{\sigma_n.R(-)}\Dtop(m+1,n)$, whereas in the latter case it is in the image of $\Delta(m,n-1)\xrightarrow{U_R(-)}\Dtop(m+1,n)$. 
The coproduct decomposition \eqref{eqn:Dtop-m+1-n-decomposition} corresponds to the cases (a) and (b) below.

We use Notation~\ref{notation:u-chi} in the following.

\begin{corollary}\label{cor:1-simplex-in-hom-simplicial-set}
    Let $\ts X=(X,\xi^{\ts X})$ and $\ts Y=(Y,\xi^{\ts Y})$ be $T$-simplicial objects. To give an element of $\underline{s_T\ce}(\ts Y,\ts X)_1$ is equivalent to giving $u\in \underline{[\Dtopop,\ce]}(Y,X)_1$ and a morphism $v\colon \ts Y\to \ts X$ in $s_T\ce$ such that 
    \begin{itemize}
        \item[(a)] for each $m\geq 0$ and $0\leq k \leq m$, 
        the diagram 
\[
  \begin{tikzcd}
    Y_{m+1} \rar["u_{{m+1},k}"] \dar["d_{m+1}"'] & X_{m+1}
    \dar["d_{m+1}"] \\
    TY_{m} \rar["Tu_{m,k}"] & TX_{m}
  \end{tikzcd}
\]
        commutes, and 
        \item[(b)] for each $m\geq 0$, the diagram 
\[
  \begin{tikzcd}
    Y_{m+1} \rar["u_{{m+1},{m+1}}"] \dar["d_{m+1}"'] & X_{m+1}
    \dar["d_{m+1}"] \\
    TY_{m} \rar["Tv_{m}"] & TX_{m}
  \end{tikzcd}
\]
        commutes.
    \end{itemize}
\end{corollary}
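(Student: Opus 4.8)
The plan is to read this off from Proposition~\ref{prop:n-simplex-inductively} by setting $n=1$ and unwinding the indexing conventions. First I would note that $\underline{s_T\ce}(\ts Y,\ts X)_0\cong s_T\ce(\ts Y,\ts X)$, so the datum $v\in\underline{s_T\ce}(\ts Y,\ts X)_{n-1}=\underline{s_T\ce}(\ts Y,\ts X)_0$ occurring in the proposition is exactly a morphism $v\colon\ts Y\to\ts X$ of $T$-simplicial objects, its components being the maps $v_m\colon Y_m\to X_m$ indexed by the unique arrows $m\to 0$ of $\Delta$. The datum $u\in\underline{[\Dtopop,\ce]}(Y,X)_n$ likewise becomes simply $u\in\underline{[\Dtopop,\ce]}(Y,X)_1$, which by Notation~\ref{notation:u-chi} is a family $(u_{m,k}\colon Y_m\to X_m)_{m\ge 0,\,0\le k\le m}$.

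Next I would specialize the two families of conditions. For $n=1$ every $\phi\colon m\to 1$ in $\Dtop$ has the form $\chi^m_k$ with $0\le k\le m$; writing $m=M+1$ (so that the proposition's constraint $m>0$ reads $M\ge 0$), the relations of Notation~\ref{notation:chi-G} give $\chi^{M+1}_k\delta_{M+1}=\chi^M_k$, since $k\le M+1$ keeps the subscript fixed. This composite lies in $\Dtop$ precisely when $k\le M$, and for $k=M+1$ it is the bottom-constant map, which factors as $\delta_1\tilde\phi$ with $\tilde\phi\colon M\to 0$ the unique arrow and so does not lie in $\Dtop$. This is exactly the case split recorded by the bijection \eqref{eqn:Dtop-m+1-n-decomposition} at $n=1$: the arrows $\chi^{M+1}_k$ with $k\le M$ come from $\Dtop(M,1)$ via $\sigma_1.R(-)$ and feed condition~(a), while the single arrow $\chi^{M+1}_{M+1}$ comes from $\Delta(M,0)$ via $U_R$ and feeds condition~(b).

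Under this dictionary, condition~(a) of the proposition becomes condition~(a) of the corollary, with $u_\phi=u_{M+1,k}$ on top and $u_{\phi\delta_{M+1}}=u_{M,k}$ below for $0\le k\le M$; and condition~(b) becomes condition~(b) of the corollary, with $u_\phi=u_{M+1,M+1}$ on top and $v_{\tilde\phi}=v_M$ below. Renaming $M$ as $m$ reproduces the stated squares, and the equivalence of the corollary then follows at once from that of Proposition~\ref{prop:n-simplex-inductively}. The only real work is the combinatorial bookkeeping of the $\chi^m_k$ under precomposition with $\delta_{M+1}$ and the matching of the lone case $k=M+1$ to the component $v_M$ of the morphism $v$; I do not expect any genuine obstacle beyond careful index tracking.
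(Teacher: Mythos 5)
Your proposal is correct and follows exactly the paper's route: the paper states this corollary without separate proof, relying precisely on the specialization of Proposition~\ref{prop:n-simplex-inductively} to $n=1$, the identification $\underline{s_T\ce}(\ts Y,\ts X)_0\cong s_T\ce(\ts Y,\ts X)$, Notation~\ref{notation:u-chi}, and the case split $\chi^{M+1}_k\delta_{M+1}=\chi^M_k$ (in $\Dtop$ iff $k\le M$) encoded by the decomposition \eqref{eqn:Dtop-m+1-n-decomposition}. Your index bookkeeping, including matching the lone case $k=M+1$ to $v_M$ via the factorization $\phi\delta_{M+1}=\delta_1\tilde\phi$, is exactly the intended argument.
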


\begin{corollary}\label{cor:1-simplex-as-diagram-in-Dtopop-E}
    Let $\ts X=(X,\xi^{\ts X})$ and $\ts Y=(Y, \xi^{\ts Y})$ be $T$-simplicial objects. To give an element of $\underline{s_T\ce}(\ts Y,\ts X)_1$ is equivalent to giving a pair $\hat u\colon Y\to G\pitchfork X$ and $v \colon Y\to X$ of morphisms in $[\Dtopop,\ce]$ making the following diagrams (in $[\Dtopop,\ce]$) commute:
\begin{equation}\label{eqn:1-simplex-as-diagram-in-Dtopop-E}
  \begin{tikzcd}[column sep=0.5cm]
    YR \rar["vR"] \dar["\xi^{\ts Y}"'] & XR
    \dar["\xi^{\ts X}"] \\
    TY \rar["Tv"] & TX
  \end{tikzcd}\quad
  \begin{tikzcd}[column sep=0.5cm]
    YR \dar["\xi^{\ts Y}"'] \rar["\hat uR"] & (G\pitchfork X)R \rar["\text{can}"] &
    G\pitchfork XR \dar["G\pitchfork\xi^{\ts X}"] \\
    TY \rar["T\hat u"] & T(G\pitchfork X) \rar["\text{can}"] & G\pitchfork TX
  \end{tikzcd}\quad
  \begin{tikzcd}[column sep=0.5cm]
    Y \rar["\hat u"] \dar["v"'] & G\pitchfork X \rar["g"] & XR \dar["\xi^{\ts X}"] \\
    X \ar[rr,"iX"] && TX 
  \end{tikzcd}
\end{equation}
    where the morphisms labelled by ``can'' are the canonical comparison morphisms and the morphism $g$ is defined in Subsection~\ref{subsec:power-by-Dtop-1}.
\end{corollary}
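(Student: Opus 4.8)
The plan is to read this corollary as a mere translation of Corollary~\ref{cor:1-simplex-in-hom-simplicial-set} into the language of morphisms in $[\Dtopop,\ce]$. That corollary already exhibits an element of $\underline{s_T\ce}(\ts Y,\ts X)_1$ as a pair $(u,v)$ with $u\in\underline{[\Dtopop,\ce]}(Y,X)_1$, with $v\colon\ts Y\to\ts X$ a morphism in $s_T\ce$, and subject to conditions (a) and (b). I would set up two dictionary entries. First, by the universal property of the power (Proposition~\ref{prop:power-by-Dtop-1-correspondence-detail}), giving $u$ is the same as giving $\hat u\colon Y\to G\pitchfork X$ in $[\Dtopop,\ce]$, with $u_{m,k}=d_k\pi_k\hat u_m$. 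Second, by Proposition~\ref{prop:T-simp-via-pullback}, a morphism $v\colon\ts Y\to\ts X$ in $s_T\ce$ is precisely a natural transformation $v\colon Y\to X$ whose compatibility with the opmorphism structures $\xi^{\ts Y},\xi^{\ts X}$ is exactly the commuting of the first square of \eqref{eqn:1-simplex-as-diagram-in-Dtopop-E}. It then remains to match condition (a) with the second square and condition (b) with the third.

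For condition (a), I would first pin down the two ``can'' morphisms by their defining properties: the comparison $(G\pitchfork X)R\to G\pitchfork XR$ commutes with the projections $\pi_k$ (for $0\le k\le m$, forgetting the topmost one), while the comparison $T(G\pitchfork X)\to G\pitchfork TX$ is the canonical map out of $T$ of a limit and commutes with the $\pi_k$ after applying $T$. Both legs of the second square are then morphisms $YR\to G\pitchfork TX$, and the clean way to compare them is \emph{not} via raw projections (which land in $TX_{m+1}$ and lose information) but via the universal property of the power: two maps into $G\pitchfork TX$ agree iff their associated $1$-simplices in $\underline{[\Dtopop,\ce]}(YR,TX)_1$ agree, i.e.\ iff the families $w_{m,k}=Td_k\circ\pi_k\circ(-)$ (landing in $TX_m$) agree. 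A short computation then identifies the top leg's family as $d_{m+1}\cdot u_{m+1,k}$ --- here using \ref{TSimp-A2} to rewrite $Td_k\cdot d_{m+2}=d_{m+1}\cdot d_k$ --- and the bottom leg's family as $Tu_{m,k}\cdot d_{m+1}$. These coincide for all $m,k$ exactly when (a) holds, so the second square is equivalent to (a).

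For condition (b), both legs of the third square are morphisms $Y\to TX$, so equality is tested levelwise: at level $m$ it reads $d_{m+1}\cdot\pi_m\hat u_m=iX_m\cdot v_m$, and $\pi_m\hat u_m=u_{m+1,m+1}\cdot s_m$ by Proposition~\ref{prop:power-by-Dtop-1-correspondence-detail}(1). The implication (b)$\Rightarrow$(third) is then immediate on precomposing (b) with $s_m$ and using \ref{TSimp-A7} (namely $d_{m+1}s_m=iY_m$) together with naturality of $i$. The converse is the main obstacle, and is precisely where the first square is genuinely needed: writing $u_{m+1,m+1}=d_{m+1}(g\hat u)_{m+1}$, I would apply \ref{TSimp-A3} in the form $d_{m+1}\cdot d_{m+1}=mX_m\cdot Td_{m+1}\cdot d_{m+2}$, substitute the third square at level $m+1$, and then use naturality of $i$ together with the monad unit law $mX_m\cdot iTX_m=1$ to obtain $d_{m+1}\cdot u_{m+1,m+1}=d_{m+1}\cdot v_{m+1}$; the first square finally converts the right-hand side into $Tv_m\cdot d_{m+1}$, which is (b). Assembling the three matchings shows that the pairs $(\hat u,v)$ satisfying the three squares of \eqref{eqn:1-simplex-as-diagram-in-Dtopop-E} are exactly the pairs $(u,v)$ of Corollary~\ref{cor:1-simplex-in-hom-simplicial-set}, which proves the claim. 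The one point to emphasize is that the three squares are only \emph{jointly} equivalent to the three conditions there, since the $(b)$-direction of the argument consumes the first square.
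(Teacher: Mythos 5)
Your proof is correct and essentially identical to the paper's: both reduce to Corollary~\ref{cor:1-simplex-in-hom-simplicial-set} via Proposition~\ref{prop:power-by-Dtop-1-correspondence-detail}, both identify the middle square with condition (a) by testing maps into $G\pitchfork TX$ against the projections $Td_k.\pi_k$, and both identify the rightmost square with condition (b) by the same chain of identities --- \ref{TSimp-A7} and naturality of $i$ in one direction; \ref{TSimp-A3}, the level-$(m+1)$ instance of the rightmost square, naturality of $i$, the monad unit law, and the leftmost square in the other. Your closing observation that the leftmost square is genuinely consumed in the converse direction matches the paper's proof exactly, which invokes at that point that $v$ is a morphism of $T$-simplicial objects.
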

\begin{proof}
    Clearly a morphism $v$ making the leftmost diagram in \eqref{eqn:1-simplex-as-diagram-in-Dtopop-E} commute is a morphism $v\colon \ts Y\to \ts X$ in $s_T\ce$. Also, by the universal property of $G\pitchfork X$, the morphism $\hat u$ corresponds to an element $u\in \underline{[\Dtopop,\ce]}(Y,X)_1$, as spelled out in Proposition~\ref{prop:power-by-Dtop-1-correspondence-detail}.
    Thus it suffices to show that the commutativity of the middle diagram in \eqref{eqn:1-simplex-as-diagram-in-Dtopop-E} is equivalent to condition (a) of Corollary~\ref{cor:1-simplex-in-hom-simplicial-set}, and similarly, the commutativity of the rightmost diagram in \eqref{eqn:1-simplex-as-diagram-in-Dtopop-E} is equivalent to condition (b) of Corollary~\ref{cor:1-simplex-in-hom-simplicial-set}.

    The commutativity of the middle diagram in \eqref{eqn:1-simplex-as-diagram-in-Dtopop-E} asserts the equality of two morphisms of type $YR\to G\pitchfork TX$, or equivalently, the equality of two elements in $\underline{[\Dtopop,\ce]}(YR, TX)_1$. 
    By Proposition~\ref{prop:power-by-Dtop-1-correspondence-detail}(2), this is equivalent to saying that for any $m\geq 0$ and $0\leq k\leq m$, the composites 
    \begin{equation}\label{eqn:composite-1}
    YR_m\xrightarrow{\hat uR_m} (G\pitchfork X) R_m\xrightarrow{{can_m}} (G\pitchfork X R)_m\xrightarrow{(G\pitchfork \xi^{\ts X})_m} (G\pitchfork TX)_m\xrightarrow{\pi_k} TX_{m+1}\xrightarrow{Td_k}TX_m
    \end{equation}
    and 
    \begin{equation}\label{eqn:composite-2}
    YR_m\xrightarrow{\xi^{\ts Y}_m} TY_m \xrightarrow{T\hat u_m} T(G\pitchfork X)_m\xrightarrow{{can_m}} (G\pitchfork TX)_m\xrightarrow{\pi_k} TX_{m+1}\xrightarrow{Td_k}TX_m
    \end{equation}
    coincide.
    Now it is not difficult to see that \eqref{eqn:composite-1} is equal to the composite $Y_{m+1}\xrightarrow{u_{m+1,k}}X_{m+1}\xrightarrow{d_{m+1}} TX_m$, whereas \eqref{eqn:composite-2} is equal to the composite $Y_{m+1}\xrightarrow{d_{m+1}}TY_m\xrightarrow{Tu_{m,k}}TX_m$. 
    Hence the commutativity of the middle diagram in \eqref{eqn:1-simplex-as-diagram-in-Dtopop-E} is equivalent to condition (a) of Corollary~\ref{cor:1-simplex-in-hom-simplicial-set}.

    As for the equivalence of the commutativity of the rightmost diagram in \eqref{eqn:1-simplex-as-diagram-in-Dtopop-E} and condition (b) of Corollary~\ref{cor:1-simplex-in-hom-simplicial-set}, first observe that for each $m\geq 0$, the composite 
    \[
    Y_m\xrightarrow{\hat u_m} (G\pitchfork X)_m\xrightarrow{g_m}XR_m\xrightarrow{\xi^{\ts X}_m}TX_m
    \]
    is equal to the composite
    \[
    Y_m\xrightarrow{s_m} Y_{m+1}\xrightarrow{u_{m+1,m+1}} X_{m+1}\xrightarrow{d_{m+1}}TX_m,
    \]
    whereas the composite 
    \[
    Y_m\xrightarrow{v_m}X_m\xrightarrow{iX_m} TX_m
    \]
    is equal to the composite 
    \[
    Y_m\xrightarrow{s_m} Y_{m+1} \xrightarrow{d_{m+1}}TY_m\xrightarrow{Tv_m} TX_m
    \]
    by the naturality of $i$ and \ref{TSimp-A7}.
    Thus (b) of Corollary~\ref{cor:1-simplex-in-hom-simplicial-set} implies the commutativity of the rightmost diagram in \eqref{eqn:1-simplex-as-diagram-in-Dtopop-E}.
    For the converse, note that since $v$ is a morphism of $T$-simplicial objects, the square 
    \[
  \begin{tikzcd}
    Y_{m+1} \rar["v_{m+1}"] \dar["d_{m+1}"'] & X_{m+1}
    \dar["d_{m+1}"] \\
    TY_{m} \rar["Tv_{m}"] & TX_{m}
  \end{tikzcd}
\]
    commutes. 
    Hence (b) of Corollary~\ref{cor:1-simplex-in-hom-simplicial-set} is equivalent to the commutativity of the exterior of 
\[
  \begin{tikzcd}
    Y_{m+1} \ar[rrr,bend left=15, "u_{m+1,m+1}"] \rar["\hat u_{m+1}"'] \dar["v_{m+1}"'] & (G\pitchfork X)_{m+1}
    \rar["\pi_{m+1}"'] &
    X_{m+2} \rar["d_{m+1}"'] \dar["d_{m+2}"] & X_{m+1}
    \ar[dd,"d_{m+1}"] \\
    X_{m+1} \ar[rr,"iX_{m+1}"] \dar["d_{m+1}"'] && TX_{m+1}
    \dar["Td_{m+1}"] \\
    TX_m \ar[rr,"iTX_m"] \ar[rrr,bend right=15, "1"'] && T^2X_m \rar["mX_m"] & TX_m
  \end{tikzcd}
\]
    which follows from the commutativity of the rightmost diagram in \eqref{eqn:1-simplex-as-diagram-in-Dtopop-E}.
\end{proof}

\subsection{The recursive construction of $\ts L$}

The power $\ts L=(L,\xi^{\ts L})$ of the $T$-simplicial object $\ts X=(X,\xi^{\ts X})$ by $\Delta[1]$ comes equipped with a universal element in $\underline{s_T\ce}(\ts L,\ts X)_1$, which by Corollary~\ref{cor:1-simplex-as-diagram-in-Dtopop-E} 
corresponds to a pair $(q\colon L\to G\pitchfork X, p\colon L\to X)$ of morphisms in $[\Dtopop,\ce]$ making the diagrams 
\begin{equation}\label{eqn:universal-1-simplex-to-X-1}
  \begin{tikzcd}
    LR \rar["pR"] \dar["\xi^{\ts L}"'] & XR
    \dar["\xi^{\ts X}"] \\
    TL \rar["Tp"] & TX
  \end{tikzcd}
\end{equation}
\begin{equation}\label{eqn:universal-1-simplex-to-X-2}
  \begin{tikzcd}
    LR \dar["\xi^{\ts L}"'] \rar["qR"] & (G\pitchfork X)R \rar["can"] &
    G\pitchfork XR \dar["G\pitchfork\xi^{\ts X}"] \\
    T\res L \rar["Tq"] & T(G\pitchfork X) \rar["can"] & G\pitchfork TX
  \end{tikzcd}
\end{equation}
\begin{equation}\label{eqn:universal-1-simplex-to-X-3}
  \begin{tikzcd}
    L \rar["q"] \dar["p"'] & G\pitchfork X \rar["g"] & XR \dar["\xi^{\ts X}"] \\
    X \ar[rr,"iX"] && TX 
  \end{tikzcd}
\end{equation}
in $[\Dtopop,\ce]$
commute. 
The commutativity of \eqref{eqn:universal-1-simplex-to-X-1}--\eqref{eqn:universal-1-simplex-to-X-3} is equivalent to the commutativity of the following two diagrams, in $\ce$ and in $[\Dtopop,\ce]$ respectively:
\begin{equation}\label{eqn:L-recursively-1}
\begin{tikzcd}
    L_0 \rar["q_0"] \dar["p_0"'] & (G\pitchfork X)_0
    \dar["d_1.\pi_0"] \\
    X_0 \rar["iX_0"] & TX_0
\end{tikzcd}
\end{equation}
\begin{equation}\label{eqn:L-recursively-2}
\setlength{\R}{2.5cm}
\begin{tikzpicture}[baseline=-\the\dimexpr\fontdimen22\textfont2\relax ]
  \node (c) at (0,0) {$LR$};
  \node (circ1) at ( 60:\R) {$G\pitchfork T\res X$};
  \node (circ2) at (120:\R) {$TL$};
  \node (circ3) at (180:\R) {$TX$};
  \node (circ4) at (240:\R) {$XR$};
  \node (circ5) at (300:\R) {$TXR$};
  \node (circ6) at (360:\R) {$(G\pitchfork \res X)R$};
  \draw [<-] (circ2) to node[midway,fill=white,labelsize] {$\xi^{\ts L}$} (c);
  \draw [<-] (circ1) to node[auto,swap,labelsize] {$can.Tq$} (circ2);
  \draw [->] (circ2) to node[auto,swap,labelsize] {$Tp$} (circ3);
  \draw [<-] (circ3) to node[auto,swap,labelsize] {$\xi^{\ts X}$} (circ4);
  \draw [->] (circ4) to node[auto,swap,labelsize] {$iXR$} (circ5);
  \draw [<-] (circ5) to node[auto,swap,labelsize] {$\xi^{\ts X}R.gR$} (circ6);
  \draw [->] (circ6) to node[auto,swap,labelsize] {$G\pitchfork\xi^{\ts X}.can$} (circ1);
  \draw [<-] (circ4) to node[midway,fill=white,labelsize] {$pR$} (c);
  \draw [<-] (circ6) to node[auto,swap,labelsize] {$qR$} (c);
\end{tikzpicture}
\end{equation}
Indeed, \eqref{eqn:universal-1-simplex-to-X-1} and \eqref{eqn:universal-1-simplex-to-X-2} correspond to the squares in \eqref{eqn:L-recursively-2} with $T\res X$ and $G\pitchfork T\res X$ as the codomains, respectively; 
\eqref{eqn:universal-1-simplex-to-X-3} in dimension $0$ corresponds to \eqref{eqn:L-recursively-1}; and \eqref{eqn:universal-1-simplex-to-X-3} in positive dimensions corresponds to the square in \eqref{eqn:L-recursively-2} with $T\res XR$ as the codomain.

We will construct the tuple $(\res L,\xi^{\ts L},p,q)$ so that \emph{both diagrams \eqref{eqn:L-recursively-1} and \eqref{eqn:L-recursively-2} become limit diagrams} (in $\ce$ and in $[\Dtopop,\ce]$, respectively). 
The requirement that \eqref{eqn:L-recursively-1} should be a limit diagram simply means that we define $(L_0,p_0,q_0)$ as the pullback of $(G\pitchfork X)_0\xrightarrow{\pi_0} X_1\xrightarrow{d_1} TX_0$ and $iX_0\colon X_0\to TX_0$ in $\ce$ (or equivalently, since $\pi_0\colon (G\pitchfork X)_0\to X_1$ is an isomorphism, the pullback of $d_1\colon X_1\to TX_0$ and $iX_0$).
The requirement that \eqref{eqn:L-recursively-2} should be a limit diagram in $[\Dtopop,\ce]$ is less straightforward, since the tuple $(\res L,\xi^{\ts L},p,q)$ we are constructing appears not only in the limit cone $(\res LR, \xi^{\ts L},pR,qR)$ but also in the diagram over which the limit is taken. 
However, thanks to the dimension shift $R=(-)+1$, this requirement in effect gives a recursive construction of the data $(\res L,\xi^{\ts L},p,q)$. 
Namely, for each $n\geq 0$, we define the tuple $\bigl(L_{n+1},d_{n+1}\colon L_{n+1}\to TL_n, p_{n+1}\colon L_{n+1}\to X_{n+1},q_{n+1}\colon L_{n+1}\to (G\pitchfork \res X)_{n+1}\bigr)$ as the following limit in $\ce$:
\begin{equation}\label{eqn:L-recursively-in-E}
\setlength{\R}{2.5cm}
\begin{tikzpicture}[baseline=-\the\dimexpr\fontdimen22\textfont2\relax ]
  \node (c) at (0,0) {$L_{n+1}$};
  \node (circ1) at ( 60:\R) {$(G\pitchfork T\res X)_n$};
  \node (circ2) at (120:\R) {$TL_n$};
  \node (circ3) at (180:\R) {$TX_n$};
  \node (circ4) at (240:\R) {$X_{n+1}$};
  \node (circ5) at (300:\R) {$TX_{n+1}$};
  \node (circ6) at (360:\R) {$(G\pitchfork \res X)_{n+1}$};
  \draw [<-] (circ2) to node[midway,fill=white,labelsize] {$d_{n+1}$} (c);
  \draw [<-] (circ1) to node[auto,swap,labelsize] {$can_n.Tq_n$} (circ2);
  \draw [->] (circ2) to node[auto,swap,labelsize] {$Tp_n$} (circ3);
  \draw [<-] (circ3) to node[auto,swap,labelsize] {$d_{n+1}$} (circ4);
  \draw [->] (circ4) to node[auto,swap,labelsize] {$iX_{n+1}$} (circ5);
  \draw [<-] (circ5) to node[auto,swap,labelsize] {$d_{n+2}.\pi_{n+1}$} (circ6);
  \draw [->] (circ6) to node[auto,swap,labelsize] {$(G\pitchfork\xi^{\ts X}.can)_n$} (circ1);
  \draw [<-] (circ4) to node[midway,fill=white,labelsize] {$p_{n+1}$} (c);
  \draw [<-] (circ6) to node[auto,swap,labelsize] {$q_{n+1}$} (c);
\end{tikzpicture}
\end{equation}
Of course, we first have to show that this limit indeed exists in $\ce$. (Recall that we only assume the existence of pullbacks in $\ce$.)

To this end, observe that the diagram 
\[
\setlength{\R}{2.5cm}
\begin{tikzpicture}
  \node (circ1) at ( 60:\R) {$(G\pitchfork T\res X)_n$};
  \node (circ2) at (120:\R) {$TL_n$};
  \node (circ3) at (180:\R) {$TX_n$};
  \node (circ4) at (240:\R) {$X_{n+1}$};
  \node (circ5) at (300:\R) {$TX_{n+1}$};
  \node (circ6) at (360:\R) {$(G\pitchfork \res X)_{n+1}$};
  \draw [<-] (circ1) to node[auto,swap,labelsize] {$can_n.Tq_n$} (circ2);
  \draw [->] (circ2) to node[auto,swap,labelsize] {$Tp_n$} (circ3);
  \draw [<-] (circ3) to node[auto,swap,labelsize] {$d_{n+1}$} (circ4);
  \draw [->] (circ4) to node[auto,swap,labelsize] {$iX_{n+1}$} (circ5);
  \draw [<-] (circ5) to node[auto,swap,labelsize] {$d_{n+2}.\pi_{n+1}$} (circ6);
  \draw [->] (circ6) to node[auto,swap,labelsize] {$(G\pitchfork\xi^{\ts X}.can)_n$} (circ1);
\end{tikzpicture}
\]
over which the limit is taken can be extended to the commutative diagram 
\[
\setlength{\R}{2.5cm}
\begin{tikzpicture}
  \node (c) at (0,0) {$TX_n$};
  \node (circ1) at ( 60:\R) {$(G\pitchfork T\res X)_n$};
  \node (circ2) at (120:\R) {$TL_n$};
  \node (circ3) at (180:\R) {$TX_n$};
  \node (circ4) at (240:\R) {$X_{n+1}$};
  \node (circ5) at (300:\R) {$TX_{n+1}$};
  \node (circ6) at (360:\R) {$(G\pitchfork \res X)_{n+1}$};
  \draw [->] (circ1) to node[midway,fill=white,labelsize] {$mX_n.Td_{n+1}.\pi_n$} (c);
  \draw [<-] (circ1) to node[auto,swap,labelsize] {$can_n.Tq_n$} (circ2);
  \draw [->] (circ2) to node[auto,swap,labelsize] {$Tp_n$} (circ3);
  \draw [<-] (circ3) to node[auto,swap,labelsize] {$d_{n+1}$} (circ4);
  \draw [->] (circ4) to node[auto,swap,labelsize] {$iX_{n+1}$} (circ5);
  \draw [<-] (circ5) to node[auto,swap,labelsize] {$d_{n+2}.\pi_{n+1}$} (circ6);
  \draw [->] (circ6) to node[auto,swap,labelsize] {$(G\pitchfork\xi^{\ts X}.can)_n$} (circ1);
  \draw [->] (circ3) to node[midway,fill=white,labelsize] {$1$} (c);
  \draw [->] (circ5) to node[midway,fill=white,labelsize] {$mX_n.Td_{n+1}$} (c);
\end{tikzpicture}
\]
in $\ce$.
Since the inclusion functor
\[
\setlength{\R}{1cm}
\left(\begin{tikzpicture}[baseline=-\the\dimexpr\fontdimen22\textfont2\relax ]
  \node (circ1) at ( 60:\R) {$\bullet$};
  \node (circ2) at (120:\R) {$\bullet$};
  \node (circ3) at (180:\R) {$\bullet$};
  \node (circ4) at (240:\R) {$\bullet$};
  \node (circ5) at (300:\R) {$\bullet$};
  \node (circ6) at (360:\R) {$\bullet$};
  \draw [<-] (circ1) to (circ2);
  \draw [->] (circ2) to (circ3);
  \draw [<-] (circ3) to (circ4);
  \draw [->] (circ4) to (circ5);
  \draw [<-] (circ5) to (circ6);
  \draw [->] (circ6) to (circ1);
\end{tikzpicture}\right)
\quad\longrightarrow\quad
\left(\begin{tikzpicture}[baseline=-\the\dimexpr\fontdimen22\textfont2\relax ]
  \node (c) at (0,0) {$\bullet$};
  \node (circ1) at ( 60:\R) {$\bullet$};
  \node (circ2) at (120:\R) {$\bullet$};
  \node (circ3) at (180:\R) {$\bullet$};
  \node (circ4) at (240:\R) {$\bullet$};
  \node (circ5) at (300:\R) {$\bullet$};
  \node (circ6) at (360:\R) {$\bullet$};
  \draw [->] (circ1) to (c);
  \draw [<-] (circ1) to (circ2);
  \draw [->] (circ2) to (circ3);
  \draw [<-] (circ3) to (circ4);
  \draw [->] (circ4) to (circ5);
  \draw [<-] (circ5) to (circ6);
  \draw [->] (circ6) to (circ1);
  \draw [->] (circ3) to (c);
  \draw [->] (circ5) to (c);
\end{tikzpicture}\right)
\]
is initial, and since its codomain is simply-connected in the sense of \cite{Pare-simply-connected}, the limit \eqref{eqn:L-recursively-in-E} exists in $\ce$ by \cite[Theorem~2]{Pare-simply-connected}.
\begin{remark}
    Concretely, given any commutative diagram 
    \[
\setlength{\R}{1.8cm}
\begin{tikzpicture}
  \node (c) at (0,0) {$G$};
  \node (circ1) at ( 60:\R) {$A$};
  \node (circ2) at (120:\R) {$B$};
  \node (circ3) at (180:\R) {$C$};
  \node (circ4) at (240:\R) {$D$};
  \node (circ5) at (300:\R) {$E$};
  \node (circ6) at (360:\R) {$F$};
  \draw [->] (circ1) to node[midway,fill=white,labelsize] {$g$} (c);
  \draw [<-] (circ1) to node[auto,swap,labelsize] {$a$} (circ2);
  \draw [->] (circ2) to node[auto,swap,labelsize] {$b$} (circ3);
  \draw [<-] (circ3) to node[auto,swap,labelsize] {$c$} (circ4);
  \draw [->] (circ4) to node[auto,swap,labelsize] {$d$} (circ5);
  \draw [<-] (circ5) to node[auto,swap,labelsize] {$e$} (circ6);
  \draw [->] (circ6) to node[auto,swap,labelsize] {$f$} (circ1);
  \draw [->] (circ3) to node[midway,fill=white,labelsize] {$h$} (c);
  \draw [->] (circ5) to node[midway,fill=white,labelsize] {$i$} (c);
\end{tikzpicture}
    \]
    in a category with pullbacks, its limit can be constructed as follows. 
    Let $U,V,W$ be the following pullbacks.
    \[
    \begin{tikzcd}
    U \rar["u_1"] \dar["u_2"'] & B
    \dar["a"] \\
    F \rar["f"] & A
    \end{tikzcd}\quad
    \begin{tikzcd}
    V \rar["v_1"] \dar["v_2"'] & B
    \dar["b"] \\
    D \rar["c"] & C
    \end{tikzcd}\quad
    \begin{tikzcd}
    W \rar["w_1"] \dar["w_2"'] & B
    \dar["ga=hb"] \\
    E \rar["i"] & G
    \end{tikzcd}
    \]
    Let $x\colon U\to W$ be the unique morphism with $w_1x=u_1$ and $w_2x=eu_2$, and similarly, let $y\colon V\to W$ be the unique morphism with $w_1y=v_1$ and $w_2y=dv_2$. 
    Then the pullback of $x$ and $y$ (together with the evident projections) is the limit of the original diagram. 
\end{remark}

Now, there exists a unique object $L\in [\Dtopop,\ce]$ extending the sequence $(L_n)_{n\geq 0}$ of objects in $\ce$, such that 
\begin{itemize}
    \item $\xi^{\ts L}=(d_{n+1}\colon L_{n+1}\to TL_n)_{n\geq 0}\colon LR\to TL$, $p=(p_n\colon L_n\to X_n)_{n\geq 0}\colon L\to X$, and $q=\bigl(q_n\colon L_n\to (G\pitchfork X)_n\bigr)_{n\geq 0}\colon L\to G\pitchfork X$ all become morphisms in $[\Dtopop,\ce]$, and 
    \item $(L,\xi^{\ts L})$ becomes a $T$-simplicial object.
\end{itemize}
Indeed, the above conditions are just enough to specify how we should define the (non-last) face and degeneracy maps of $L$. 

Thus we obtain a $T$-simplicial object $\ts L=(L,\xi^{\ts L})$.
Thanks to the commutativity of \eqref{eqn:universal-1-simplex-to-X-1}--\eqref{eqn:universal-1-simplex-to-X-3}, the pair $(p,q)$ defines an element of $\underline{s_T\ce}(\ts L,\ts X)_1$.
It is now straightforward to show that this has the required universal property, and hence gives a representation of the presheaf $\underline{s_T\ce}(-,\ts X)_1\colon (s_T\ce)\op\to \Set$.

\subsection{The universal property}
\begin{proposition}\label{prop:strengthening-univ-prop}
    Let $\ts X$ be a $T$-simplicial object. If a $T$-simplicial object $\ts L$ admits a family of bijections \eqref{eqn:natural-iso-homs-0-simp} naturally in $\ts Y\in s_T\ce$, then $\ts L$ is the power of $\ts X$ by $\Delta[1]$. 
\end{proposition}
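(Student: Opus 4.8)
The plan is to exhibit the comparison map in \eqref{eqn:natural-iso-homs} explicitly and then verify that it is an isomorphism of simplicial sets degree by degree, using the hypothesis \eqref{eqn:natural-iso-homs-0-simp} only as an input about $0$-simplices, but applied to cleverly chosen arguments.

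First I would reinterpret the hypothesis via the ordinary Yoneda lemma. Since $\underline{s_T\ce}(\ts Y,\ts L)_0\cong s_T\ce(\ts Y,\ts L)$, the natural bijection \eqref{eqn:natural-iso-homs-0-simp} is represented by the $1$-simplex $p\in\underline{s_T\ce}(\ts L,\ts X)_1$ corresponding to $1_{\ts L}$ (this is precisely the universal element $(p,q)$ produced by the recursive construction). Viewing $p$ as a simplicial map $\Delta[1]\to\underline{s_T\ce}(\ts L,\ts X)$ and composing in the $s\Set$-category $\underline{s_T\ce}$ yields, for every $\ts Y$, an $s\Set$-natural map
\[
\Theta_{\ts Y}\colon \underline{s_T\ce}(\ts Y,\ts L)\longrightarrow \underline{s\Set}\bigl(\Delta[1],\underline{s_T\ce}(\ts Y,\ts X)\bigr),
\]
whose $0$-simplex component is exactly the bijection \eqref{eqn:natural-iso-homs-0-simp}. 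It therefore suffices to show that each $\Theta_{\ts Y}$ is a bijection on $n$-simplices for every $n\geq 0$.

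The key device is the copower by the representable $\Delta[n]$. As $\Delta[n]$ is finite in every degree, the copower $\Delta[n]\cdot\ts Y$ exists in $\underline{s_T\ce}$ by Corollary~\ref{cor:finite-copower-T-Simp} (or, when $\ce$ lacks finite copowers, after the embedding of $\ce$ into a category with copowers described in Remark~\ref{rmk:enrichment-from-action}, together with the degreewise-finite construction of Proposition~\ref{prop:copower-in-T-Simp}). The copower adjunction supplies, $s\Set$-naturally in the relevant variables, isomorphisms $\underline{s_T\ce}(\ts Y,\ts Z)_n\cong s_T\ce(\Delta[n]\cdot\ts Y,\ts Z)$ and $\underline{s\Set}\bigl(\Delta[n],\underline{s_T\ce}(\ts Y,\ts X)\bigr)\cong \underline{s_T\ce}(\Delta[n]\cdot\ts Y,\ts X)$. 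Feeding these into the definition of $\Theta$ and invoking its $s\Set$-naturality, the $n$-simplex component $(\Theta_{\ts Y})_n$ is identified with the $0$-simplex component $(\Theta_{\Delta[n]\cdot\ts Y})_0$, that is, with the instance of \eqref{eqn:natural-iso-homs-0-simp} at the object $\Delta[n]\cdot\ts Y$. The latter is a bijection by hypothesis, so $(\Theta_{\ts Y})_n$ is a bijection; since this holds $s\Set$-naturally in $\ts Y$, the map $\Theta_{\ts Y}$ is an $s\Set$-natural isomorphism, whence $\ts L$ satisfies \eqref{eqn:natural-iso-homs} and is the power $\Delta[1]\pitchfork\ts X$.

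I expect the main obstacle to be the bookkeeping in the last step: verifying that the chain of natural isomorphisms coming from the copower adjunction and the cartesian closed structure of $s\Set$ is genuinely compatible with $\Theta$, so that $(\Theta_{\ts Y})_n$ really is carried to $(\Theta_{\Delta[n]\cdot\ts Y})_0$ rather than to some unrelated map, and that these identifications respect all the simplicial operators, so that the degreewise bijections assemble into a simplicial isomorphism. This requires carefully unwinding the definition of composition in $\underline{s_T\ce}$ and the counit of the copower adjunction. A secondary point needing care is ensuring the copowers $\Delta[n]\cdot\ts Y$ remain available under the weak standing hypothesis that $\ce$ have only pullbacks; this is precisely what the degreewise-finite copower construction of Proposition~\ref{prop:copower-in-T-Simp} is designed to handle.
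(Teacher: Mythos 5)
Your proposal is correct and takes essentially the same route as the paper: the paper's own proof likewise upgrades the $0$-simplex hypothesis to the full enriched universal property by means of copowers by the representables $\Delta[k]$ (Corollary~\ref{cor:finite-copower-T-Simp}), falling back on an embedding of $\ce$ into a category with finite copowers when these do not exist. The one point where the paper is slightly more careful than you is that fallback case: it insists the embedding preserve existing finite limits, which is what lets the representability hypothesis be applied to the copowers $\Delta[n]\cdot\ts Y$, since those objects live only in the larger category where the hypothesis is not given directly.
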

\begin{proof}
  If $\ce$ has finite copowers
  then $\underline{s_T\ce}$ has copowers by the $\Delta[k]$ for each $k\geq 0$ by Corollary~\ref{cor:finite-copower-T-Simp}, which gives the
  general universal property. If not, then we can embed $\ce$ in some
  $\ce'$ which does have finite copowers (and extend $T$), in such a way that existing
  finite limits are preserved. 
\end{proof}

Our discussion so far proves the following. 
\begin{theorem}\label{thm:power-by-Delta-1}
    If $\ce$ has pullbacks and $T$ is an arbitrary monad on $\ce$, then the $s\Set$-category $\underline{s_T\ce}$ of $T$-simplicial objects has powers by $\Delta[1]$.
\end{theorem}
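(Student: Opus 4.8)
The plan is to assemble the recursive construction of $\ts L$ developed above into a proof that $\ts L$ is the power $\Delta[1]\pitchfork \ts X$. First I would invoke Proposition~\ref{prop:strengthening-univ-prop}, which reduces the claim to exhibiting a $T$-simplicial object $\ts L$ together with a bijection $s_T\ce(\ts Y,\ts L)\cong \underline{s_T\ce}(\ts Y,\ts X)_1$ natural in $\ts Y\in s_T\ce$; in other words, $\ts L$ should represent the presheaf $\underline{s_T\ce}(-,\ts X)_1$. By Corollary~\ref{cor:1-simplex-as-diagram-in-Dtopop-E}, an element of $\underline{s_T\ce}(\ts Y,\ts X)_1$ is precisely a pair $(\hat u\colon Y\to G\pitchfork X,\ v\colon Y\to X)$ in $[\Dtopop,\ce]$ satisfying the three conditions in \eqref{eqn:1-simplex-as-diagram-in-Dtopop-E}, so I must produce a universal such pair $(q,p)$ carried by $\ts L$.

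The heart of the argument is the recursive construction already carried out: I would build the tuple $\bigl(L,\xi^{\ts L},p,q\bigr)$ so that \emph{both} \eqref{eqn:L-recursively-1} and \eqref{eqn:L-recursively-2} become limit diagrams, exploiting the dimension shift $R=(-)+1$ to turn the limit requirement on \eqref{eqn:L-recursively-2} into the recursive specification \eqref{eqn:L-recursively-in-E} of $(L_{n+1},d_{n+1},p_{n+1},q_{n+1})$. The first genuine obstacle is existence: since $\ce$ is assumed only to have pullbacks, the hexagonal limit \eqref{eqn:L-recursively-in-E} need not exist for formal reasons. I would dispatch this exactly as above, by extending the hexagon to a simply-connected diagram (whose limit agrees, by initiality of the inclusion functor) and appealing to Paré's theorem~\cite[Theorem~2]{Pare-simply-connected}. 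Having the objects $L_n$ and the last face maps $d_{n+1}$, the remaining non-last face and degeneracy maps are then forced by the demand that $p$, $q$, and $\xi^{\ts L}$ be natural and that $(L,\xi^{\ts L})$ be a genuine $T$-simplicial object; one checks that this data is consistently and uniquely determined.

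Finally I would verify the universal property. By construction the pair $(q,p)$ satisfies \eqref{eqn:universal-1-simplex-to-X-1}--\eqref{eqn:universal-1-simplex-to-X-3}, and hence, via Corollary~\ref{cor:1-simplex-as-diagram-in-Dtopop-E}, defines an element of $\underline{s_T\ce}(\ts L,\ts X)_1$. Given an arbitrary $\ts Y$ and an element $(\hat u,v)$ of $\underline{s_T\ce}(\ts Y,\ts X)_1$, the limit universal properties of \eqref{eqn:L-recursively-1} and of each stage \eqref{eqn:L-recursively-in-E} produce, by induction on degree, a unique morphism $f\colon \ts Y\to \ts L$ of $T$-simplicial objects with $qf=\hat u$ and $pf=v$; naturality in $\ts Y$ is immediate from this uniqueness. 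I expect the main difficulty to lie not in any single calculation but in the bookkeeping: the object $\ts L$ being constructed appears simultaneously in the limiting cone and inside the diagram \eqref{eqn:L-recursively-in-E} over which the limit is taken, so care is needed to confirm that the recursion is well-founded and that the induced comparison map is both a morphism of $T$-simplicial objects and unique. Once this is in place, Proposition~\ref{prop:strengthening-univ-prop} upgrades the bijection on $0$-simplices to the full $s\Set$-natural isomorphism \eqref{eqn:natural-iso-homs}, completing the proof.
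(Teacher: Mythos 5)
Your proposal is correct and follows essentially the same route as the paper: the paper's proof of Theorem~\ref{thm:power-by-Delta-1} consists precisely of the recursive construction of $\ts L$ making \eqref{eqn:L-recursively-1} and \eqref{eqn:L-recursively-in-E} limit diagrams (with existence of the hexagonal limit secured by Par\'e's simply-connected limit theorem), the identification of $1$-simplices via Corollary~\ref{cor:1-simplex-as-diagram-in-Dtopop-E}, and the upgrade of the $0$-simplex bijection to the full enriched universal property via Proposition~\ref{prop:strengthening-univ-prop}. The only point the paper leaves as ``straightforward'' and you flag as bookkeeping --- the well-foundedness of the recursion and the induced unique comparison map --- is handled identically in both arguments.
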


  \subsection{$T$-categories}

    Recall from Definition~\ref{def:T-Cat-official} that we regard $T$-categories as $T$-simplicial objects satisfying the nerve condition.

  \begin{proposition}\label{prop:T-cat-power}
    If $\ts X$ is a $T$-category then so is its power $\ts L$ by $\Delta[1]$.
  \end{proposition}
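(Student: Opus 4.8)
The plan is to verify the nerve condition of Definition~\ref{def:T-Cat-official} directly for the $T$-simplicial object $\ts L=(\res L,\xi^{\ts L})$ we have constructed: we must show that for each $n\ge 1$ the square
\[
\begin{tikzcd}
  L_{n+1}\ar[r,"d_{n+1}"]\ar[d,"d_0"'] & TL_n\ar[d,"Td_0"]\\
  L_n\ar[r,"d_n"'] & TL_{n-1}
\end{tikzcd}
\]
is a pullback in $\ce$; equivalently, by Theorem~\ref{thm:opmorphisms}, that $\xi^{\ts L}$ is cartesian. The point is that the last face map $d_{n+1}\colon L_{n+1}\to TL_n$ is precisely one of the projections of the defining limit \eqref{eqn:L-recursively-in-E}, while $d_0\colon L_{n+1}\to L_n$ and $d_n\colon L_n\to TL_{n-1}$ are pinned down by the $\Dtopop$-functoriality of $\res L$ together with the naturality of $p$ and $q$ and the $T$-simplicial identities. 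Thus the whole square can be rewritten among the legs of the limits \eqref{eqn:L-recursively-in-E} at levels $n$ and $n+1$, and the problem becomes one about those hexagonal limits.

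I would then argue by induction on $n$, feeding in two facts about $\ts X$. First, since $\ts X$ is a $T$-category, $\xi^{\ts X}$ is cartesian, so every square \eqref{eqn:nerve-condition} for $\ts X$ is a pullback. Second, by \eqref{eqn:power-by-G-as-iterated-pullback} each object $(G\pitchfork \res X)_n$ is an iterated pullback of copies of $X_{n+1}$ over copies of $X_n$ along the face maps $d_1,\dots,d_n$; combined with the nerve condition on $\ts X$, this presents $G\pitchfork\res X$ and the comparison maps $g$ and $\pi_k$ as assembled entirely from pullback squares. The inductive step would show that the universal cone \eqref{eqn:L-recursively-in-E} defining $L_{n+1}$, once we substitute the inductive hypothesis that the displayed square holds at level $n$, exhibits $L_{n+1}$ as the pullback of $L_n\xrightarrow{d_n}TL_{n-1}\xleftarrow{Td_0}TL_n$. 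This is a ``limit-of-pullbacks is a pullback'' computation: each of the six edges of the hexagon is built from squares already known to be pullbacks (those of $\ts X$, those assembling $G\pitchfork\res X$, and the one carried by the inductive hypothesis), and one reassembles them by repeated pasting and cancellation into the single square displayed above.

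It is worth noting why no purely formal argument suffices. One might hope to say that $\Delta[1]\pitchfork(-)$ is a limit and that the nerve condition, being a condition on certain squares being pullbacks, is automatically preserved; but evaluation $(-)_n\colon s_T\ce\to\ce$ does \emph{not} commute with this power (indeed $L_n$ is the hexagonal limit \eqref{eqn:L-recursively-in-E} involving $L_{n-1}$, not $\Delta[1]\pitchfork X_n$), so ``power of an isomorphism is an isomorphism'' cannot be applied to the comparison maps $X_n\to X_{n-1}\times_{TX_{n-2}}TX_{n-1}$. This is exactly what forces the explicit engagement with the recursive construction.

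The main obstacle, and the place where care is genuinely needed, is that $T$ is not assumed to preserve pullbacks. Consequently one cannot simply transport the pullbacks of $\ts X$ through $T$; one must instead exploit the precise shape of the hexagon \eqref{eqn:L-recursively-in-E}, in particular the edge $mX_n\cdot Td_{n+1}$ and the map $G\pitchfork\xi^{\ts X}\cdot\mathrm{can}$, so that every occurrence of $T$ falls on a square that is genuinely $T$-preserved — typically because it is a naturality square of the \emph{cartesian} transformation $\xi^{\ts X}$, or is already split, rather than an arbitrary pullback. Concretely, I would track the simply-connected limit exactly as in the existence argument for \eqref{eqn:L-recursively-in-E} and verify that the induced comparison map into $L_n\times_{TL_{n-1}}TL_n$ is invertible by producing its inverse from the universal property of the hexagon. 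Organizing this bookkeeping, rather than overcoming any single conceptual difficulty, is the real content of the proof.
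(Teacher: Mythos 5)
Your reduction of the problem is sound, and your final paragraph even names the right strategy: the nerve condition for $\ts L$ concerns exactly the squares assembled from the projections of the hexagonal limits \eqref{eqn:L-recursively-in-E}, and the paper proves it precisely by using the universal property of that limit to invert the comparison map into $L_{n+1}\times_{TL_n}TL_{n+1}$. But what you offer in place of that construction is a plan whose central step is both missing and misjustified. The ``limit-of-pullbacks'' reassembly by ``repeated pasting and cancellation'' is never exhibited, and the principle you invoke to get past the failure of $T$ to preserve pullbacks --- that every occurrence of $T$ ``falls on a square that is genuinely $T$-preserved, typically because it is a naturality square of the cartesian transformation $\xi^{\ts X}$'' --- is false. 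Cartesianness of $\xi^{\ts X}$ says its naturality squares \emph{are} pullbacks; it says nothing about their images under $T$ remaining pullbacks, and for an arbitrary monad they do not. Any pasting argument that routes a pullback through $T$ (e.g.\ through the vertices $TL_{n+1}$ or $T(G\pitchfork \res X)_{n+1}$, or across the non-invertible comparison $\mathrm{can}\colon T(G\pitchfork \res X)_{n+1}\to (G\pitchfork T\res X)_{n+1}$) collapses at exactly this point.

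The way the paper's proof sidesteps this is worth internalizing: $T$ is never asked to preserve anything. Given a generalized element of the would-be pullback, i.e.\ $g\colon A\to TL_{n+1}$ and $f\colon A\to L_{n+1}$ with $d_{n+1}f=Td_0.g$ (equivalently, via \eqref{eqn:L-recursively-in-E}, the data $f'=p_{n+1}f$ and $f_j=\pi_jq_{n+1}f$ subject to four compatibility conditions), it constructs the unique filler $A\to L_{n+2}$ by inducing maps $h'\colon A\to X_{n+2}$ and $h_j\colon A\to X_{n+3}$ into the \emph{untransformed} pullbacks \eqref{eqn:nerve-condition} of $\ts X$; the monad $T$ appears only inside the cone legs ($Tp_{n+1}.g$, $T\pi_j.Tq_{n+1}.g$, $iX_{n+2}.h'$, etc.), never as a functor across which a pullback must be transported. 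Note also that this argument is uniform in $n$: no induction on the nerve squares of $\ts L$ is needed, and indeed your proposed induction is never given a base case, nor do you say where the inductive hypothesis would actually enter. So the fallback in your last paragraph is the correct route, but the ``bookkeeping'' you defer is the entire proof, and the shortcut you propose for carrying it out does not work.
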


  \begin{proof}
    Let $g\colon A\to TL_{n+1}$ be given. What is needed in order to
    give a map $f\colon A\to L_{n+1}$ with $d_{n+1}f=Td_0.g$?

    We should give a map $f'\colon A\to X_{n+1}$ and a map $f_j\colon
    A\to X_{n+2}$ for $0\le j\le n+1$, making the diagrams
    \[
      \begin{tikzcd}
        A \ar[rr,"f' "] \dar["g"] && X_{n+1} \dar["d_{n+1}"] \\
        TL_{n+1} \rar["Td_0"] & TL_n \rar["Tp_n"] & TX_n
      \end{tikzcd}\quad
      \begin{tikzcd}
        A \rar["f_{n+1}"] \dar["f' "] & X_{n+2} \dar["d_{n+2}"] \\
        X_{n+1} \rar["iX_{n+1}"] & TX_{n+1}
      \end{tikzcd}\quad
      \begin{tikzcd}
        A \rar["f_j"] \dar["f_{j+1}"] & X_{n+2} \dar["d_{j+1}"] \\
        X_{n+2} \rar["d_{j+1}"] & X_{n+1}
      \end{tikzcd}
    \]
    \[
      \begin{tikzcd}
        A \ar[rrr,"f_j"] \dar["g"] &&& X_{n+2} \ar[d,"d_{n+2}"] \\
        TL_{n+1} \rar["Tq_{n+1}"] &
        T(G\pitchfork X)_{n+1} \rar["T\pi_{j+1}"] & TX_{n+2}
        \rar["Td_0"] & TX_{n+1} 
      \end{tikzcd}
    \]
    commute for $0\le j\le n$.

    By the first of these, and the fact that
    $Tp_n.Td_0=Td_0.Tp_{n+1}$ there is a unique $h'\colon A\to
    X_{n+2}$ making the diagram
    \[
      \begin{tikzcd}
        & A \dar["h' "] \ar[dl,"f' " '] \rar["g"] & TL_{n+1}
        \dar["Tp_{n+1}"] \\
        X_{n+1} & X_{n+2} \ar[r,"d_{n+2}"] \lar["d_0"'] & TX_{n+1}
      \end{tikzcd}
  \]
  commute.

  Then by the second, and the fact that
$iX_{n+1}.f'=iX_{n+1}.d_0h'=Td_0.iX_{n+2}.h'$, there is a unique
$h_{n+2}\colon A\to X_{n+3}$ making the diagram
\[
  \begin{tikzcd}
    & A \dar["h_{n+2}"] \ar[dl,"f_{n+1}" '] \rar["h' "] & X_{n+2}
    \dar["iX_{n+2}"] \\
    X_{n+2} & X_{n+3} \lar["d_0"'] \ar[r,"d_{n+3}"] & TX_{n+2}
  \end{tikzcd}
\]
commute.

By the last, for $1\le j\le n+1$ there is a unique $h_j\colon
A\to X_{n+3}$ making the diagram
\[
  \begin{tikzcd}
    & A \dar["h_j"] \ar[dl,"f_{j-1}" '] \rar["g"] & TL_{n+1}
    \rar["Tq_{n+1}"] & T(G\pitchfork X)_{n+1} \dar["T\pi_j"] \\
    X_{n+2} & X_{n+3} \lar["d_0"'] \ar[rr,"d_{n+3}"] && TX_{n+2}
  \end{tikzcd}
\]
commute. 
By the commutativity of the above diagram with $j=1$,
there is a unique $h_0\colon
A\to X_{n+3}$ making the diagram
\[
  \begin{tikzcd}
    X_{n+3} \dar["d_1"] & A \dar["h_0"] \ar[l,"h_{1}" '] \rar["g"] & TL_{n+1}
    \rar["Tq_{n+1}"] & T(G\pitchfork X)_{n+1} \dar["T\pi_0"] \\
    X_{n+2} & X_{n+3} \lar["d_1"'] \ar[rr,"d_{n+3}"] && TX_{n+2}
  \end{tikzcd}
\]
commute.
By the remaining cases of the third, we have $d_{j+1}h_j=d_{j+1}h_{j+1}$ for $1\le
j\le n+1$.

There is now a unique $h\colon A\to L_{n+2}$ with $d_{n+2}h=g$, with
$p_{n+2}h=h'$, and with $\pi_jq_{n+2}h=h_j$.
  \end{proof}

  This now proves:

  \begin{theorem}\label{thm:T-cat-power-by-2}
    If $\ce$ has pullbacks then the $2$-category $\underline{\Cat_T(\ce)}$ of
    $T$-categories in $\ce$ has powers by $\two$ for any monad $T$ on $\ce$.
  \end{theorem}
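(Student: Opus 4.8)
The plan is to take the object witnessing the power by $\two$ to be precisely the power $\ts L=\Delta[1]\pitchfork\ts X$ of $\ts X$ by the representable $\Delta[1]$ in the $s\Set$-category $\underline{s_T\ce}$, which exists by Theorem~\ref{thm:power-by-Delta-1} since $\ce$ has pullbacks. By Proposition~\ref{prop:T-cat-power}, $\ts L$ is again a $T$-category, so it is an object of the full sub-$s\Set$-category $\underline{\Cat_T(\ce)}$. It then remains to reinterpret the $s\Set$-enriched universal property of $\ts L$ as a $2$-categorical (i.e.\ $\Cat$-enriched) one; this is legitimate precisely because, by Proposition~\ref{prop:homs-are-cats}, the relevant hom simplicial sets are nerves of categories, so that $\underline{\Cat_T(\ce)}$ is a genuine $2$-category.

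The bridge between the two notions of power is the nerve functor $N\colon\Cat\to s\Set$. This functor is fully faithful and preserves finite products (being a right adjoint), and it sends the arrow category $\two$ to $\Delta[1]$. From these two facts one obtains, for any category $C$, a natural isomorphism $N(C^{\two})\cong\Delta[1]\pitchfork NC$ in $s\Set$; concretely $(\Delta[1]\pitchfork NC)_n=s\Set(\Delta[n]\times\Delta[1],NC)\cong s\Set(N([n]\times\two),NC)\cong\Cat([n]\times\two,C)=N(C^{\two})_n$, where the middle step uses product-preservation and the next uses full faithfulness. Thus $N$ carries the cotensor by $\two$ in $\Cat$ to the cotensor by $\Delta[1]$ in $s\Set$.

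Now fix a $T$-category $\ts Y$. Since $\underline{\Cat_T(\ce)}$ is a \emph{full} sub-$s\Set$-category of $\underline{s_T\ce}$, its hom-objects agree with those of $\underline{s_T\ce}$; and because $\ts X$ and $\ts L$ are $T$-categories, both $\underline{s_T\ce}(\ts Y,\ts X)$ and $\underline{s_T\ce}(\ts Y,\ts L)$ are nerves of categories by Proposition~\ref{prop:homs-are-cats}. Writing $C=\underline{\Cat_T(\ce)}(\ts Y,\ts X)$, so that $\underline{s_T\ce}(\ts Y,\ts X)\cong NC$, the defining $s\Set$-natural isomorphism of Theorem~\ref{thm:power-by-Delta-1} gives
\[
\underline{s_T\ce}(\ts Y,\ts L)\;\cong\;\Delta[1]\pitchfork\underline{s_T\ce}(\ts Y,\ts X)\;\cong\;\Delta[1]\pitchfork NC\;\cong\;N\!\left(C^{\two}\right),
\]
the last step by the preceding paragraph. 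As $N$ is fully faithful, this is the nerve of an isomorphism of categories $\underline{\Cat_T(\ce)}(\ts Y,\ts L)\cong C^{\two}=\underline{\Cat_T(\ce)}(\ts Y,\ts X)^{\two}$, $\Cat$-naturally in $\ts Y$, which is exactly the universal property exhibiting $\ts L$ as the power $\ts X^{\two}$ in the $2$-category $\underline{\Cat_T(\ce)}$.

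The routine parts are the verification that $N$ preserves products and the bookkeeping of naturality. The main point requiring care — and the only place where the $2$-categorical structure genuinely enters — is the identification along $N$ of the cotensor by $\two$ in $\Cat$ with the cotensor by $\Delta[1]$ in $s\Set$, together with the observation that the $s\Set$-naturality of the isomorphism from Theorem~\ref{thm:power-by-Delta-1} upgrades to $\Cat$-naturality once all the homs in question are known to be nerves of categories; this is what allows the power computed in $\underline{s_T\ce}$ to serve also as the power in the full sub-$2$-category $\underline{\Cat_T(\ce)}$.
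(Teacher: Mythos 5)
Your proposal is correct and takes essentially the same route as the paper: the paper deduces the theorem immediately from Theorem~\ref{thm:power-by-Delta-1}, Proposition~\ref{prop:T-cat-power}, and Proposition~\ref{prop:homs-are-cats}, leaving implicit the translation between the two enrichments that you spell out via the fully faithful, product-preserving nerve $N$ with $N(\two)=\Delta[1]$ and $N(C^{\two})\cong\Delta[1]\pitchfork NC$. Your write-up just makes that implicit bridging step explicit; there is no gap.
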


  If we suppose that $\ce$ has not just pullbacks but finite limits, then we can do better:

  \begin{theorem}
  If $\ce$ is a category with finite limits and $T$ an arbitrary monad on $\ce$, then the $2$-category $\underline{\Cat_T(\ce)}$ has finite limits.
  \end{theorem}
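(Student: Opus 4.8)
The plan is to reduce the claim to limits already in hand. Recall the standard reduction of finite weighted $2$-limits: by the end formula (see \cite[Chapter~3]{Kelly-book}), any weighted limit is an equalizer of two maps between products of cotensors, so a \emph{finite} weight exhibits its limit as a finite conical limit (finite products and equalizers) of cotensors by finitely presentable categories; and since every finitely presentable category is a finite colimit in $\Cat$ of copies of $\mathbf 1$ and $\two$, and cotensoring turns colimits of weights into limits, each such cotensor is itself a finite conical limit of powers by $\two$. Hence a $2$-category admits all finite weighted limits as soon as it admits finite conical limits and powers by $\two$.

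Powers by $\two$ are supplied by Theorem~\ref{thm:T-cat-power-by-2}, a category with finite limits having in particular pullbacks. It therefore remains to produce finite conical $2$-limits in $\underline{\Cat_T(\ce)}$. (Equivalently, one may produce finite products together with inserters and equifiers: the inserter of $f,g\colon\ts A\to\ts B$ is the pullback of $(f,g)\colon\ts A\to\ts B\times\ts B$ along $\two\pitchfork\ts B\to\ts B\times\ts B$, and the equifier of two $2$-cells is the equalizer of the two $1$-cells $\ts A\to\two\pitchfork\ts B$ classifying them, so both are built from finite conical limits and powers by $\two$.) For the underlying ordinary category, $\Cat_T(\ce)$ has finite limits: this is the finite-limit analogue of the completeness statement recorded after Theorem~\ref{thm:lfp}, proved in the same way by building $\Cat_T(\ce)$ from $\ce$ via the comma-category, inserter, and equifier constructions carried out with finite limits in place of the local-presentability machinery, each forgetful functor preserving finite limits.

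To see that these conical limits are genuinely $2$-dimensional, i.e.\ are $s\Set$-enriched conical limits in $\underline{\Cat_T(\ce)}$, one checks the universal property on hom-simplicial sets. Writing $\ts L$ for the constructed limit of a finite diagram $(\ts A^{(i)})$, one must show $\underline{\Cat_T(\ce)}(\ts B,\ts L)\cong\lim_i\underline{\Cat_T(\ce)}(\ts B,\ts A^{(i)})$ in $s\Set$, naturally in $\ts B$. Since $\ts L$ is a $T$-category, Proposition~\ref{prop:n-cells} reduces an $n$-simplex on the left to the data $x_\phi$ with $m\le 2$ subject to compatibility with a few maps $\psi$, and Theorem~\ref{thm:2cat-bis} identifies the $2$-cells with $T$-natural transformations; using the universal cones of the construction in degrees $0,1,2$ and the nerve condition to propagate, such data is exactly a compatible family of simplices into the factors. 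Feeding this into the reduction of the first paragraph yields all finite weighted limits.

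The main obstacle is precisely this $2$-dimensional universal property, and the reason it is not automatic: unlike for simplicial objects, finite limits in $\Cat_T(\ce)$ are \emph{not} computed naively levelwise, nor as the corresponding limits in $s_T\ce$. Already the binary product records only the ``matched-arity'' pairs rather than all pairs, reflecting the fact that the comonadic functor $U\colon s_T\ce\to[\Dtopop,\ce]$ of Theorem~\ref{thm:comondicity} creates colimits, not limits, so that $\Cat_T(\ce)$ is not closed under limits in $s_T\ce$. One must therefore verify the enriched universal property against the genuine construction of the limit rather than against any levelwise formula; once the hom-simplicial-set calculation above is in place, the remaining verifications are routine.
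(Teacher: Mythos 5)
Your overall route coincides with the paper's (whose own proof is itself very terse): powers by $\two$ come from Theorem~\ref{thm:T-cat-power-by-2}, finite conical limits are obtained directly (the finite-limit analogue of the completeness remark following Theorem~\ref{thm:lfp}), and the standard reduction shows that finite conical limits together with powers by $\two$ yield all finite weighted $2$-limits. Your third paragraph's verification of the $2$-dimensional universal property is only a sketch, but it is the right kind of check: one tests the hom-categories against the explicit pullback-corrected construction of the limits, using Theorem~\ref{thm:2cat-bis} to identify $2$-cells with $T$-natural transformations; the paper leaves this step implicit as well.

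However, your final paragraph is wrong, and since you present it as the conceptual justification for the verification, it must be flagged. You assert that $\Cat_T(\ce)$ is \emph{not} closed under limits in $s_T\ce$, offering the matched-arity product as evidence and deriving this from the comonadicity of $U\colon s_T\ce\to[\Dtopop,\ce]$ (Theorem~\ref{thm:comondicity}). The paper asserts the exact opposite in the proof of Theorem~\ref{thm:Cat-presentable}, and the paper is correct. Your inference rests on the tacit assumption that limits in $s_T\ce$ are computed levelwise, so that the product of $\ts A$ and $\ts B$ would have $A_1\times B_1$ in degree $1$. But levelwise limits are not limits in $s_T\ce$: a levelwise product would require a structure map $A_1\times B_1\to T(A_0\times B_0)$, and there is no canonical map $TA_0\times TB_0\to T(A_0\times B_0)$. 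The actual product in $s_T\ce$ carries exactly the same pullback corrections as Burroni's product --- in degree $1$ it is $(A_1\times B_1)\times_{TA_0\times TB_0}T(A_0\times B_0)$, i.e.\ precisely the matched-arity pairs --- and when $\ts A$ and $\ts B$ satisfy the nerve condition, so does this object, as one checks by pasting pullbacks. So the matched-arity phenomenon is evidence \emph{for} closure, not against it; comonadicity of $U$ tells you only that limits in $s_T\ce$ are not computed in $[\Dtopop,\ce]$, which is a statement about $U$, not about the full subcategory $\Cat_T(\ce)$. Since the enriched universal property you verify quantifies only over $T$-categories $\ts B$, your construction survives deletion of this paragraph; but as written, the proof contains false assertions that contradict the paper.
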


  \begin{proof}
  If $\ce$ has pullbacks then $\underline{\Cat_T(\ce)}$ has pullbacks and powers by $\two$. If $\ce$ has a terminal object then so does $\underline{\Cat_T(\ce)}$. Thus $\underline{\Cat_T(\ce)}$ has finite conical limits and powers by $\two$ and so has all (2-categorical) finite limits.
  \end{proof}

  \begin{theorem}
  If $\ce$ has pullbacks and finite copowers, and $\Cat_T(\ce)$ is reflective in
  $s_T\ce$ (as a category) then $\underline{\Cat_T(\ce)}$ has copowers by $\two$.
  \end{theorem}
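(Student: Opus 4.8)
The plan is to define the copower of a $T$-category $\ts Y$ by $\two$ to be $r(\Delta[1]\cdot\ts Y)$, where $\Delta[1]\cdot\ts Y$ denotes the copower in $\underline{s_T\ce}$ (which exists because $\ce$ has finite copowers, by Corollary~\ref{cor:finite-copower-T-Simp}) and $r\colon s_T\ce\to\Cat_T(\ce)$ is the reflector, i.e.\ the left adjoint to the inclusion supplied by the hypothesis. To verify the universal property it suffices to produce, naturally in a $T$-category $\ts Z$, an isomorphism
\[
\underline{\Cat_T(\ce)}(r(\Delta[1]\cdot\ts Y),\ts Z)\cong[\Delta[1],\underline{\Cat_T(\ce)}(\ts Y,\ts Z)],
\]
since $\Delta[1]$ is the nerve of $\two$ and copowers by $\Delta[1]$ in the $2$-category $\underline{\Cat_T(\ce)}$ are copowers by $\two$, exactly as in the passage from $\Delta[1]$ to $\two$ in Theorem~\ref{thm:T-cat-power-by-2}. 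Because $\underline{\Cat_T(\ce)}$ is a full sub-$s\Set$-category of $\underline{s_T\ce}$ and $\Delta[1]\cdot\ts Y$ is a genuine copower in $\underline{s_T\ce}$, the right-hand side equals $[\Delta[1],\underline{s_T\ce}(\ts Y,\ts Z)]\cong\underline{s_T\ce}(\Delta[1]\cdot\ts Y,\ts Z)$. Everything thus reduces to an \emph{enriched reflection} statement: for every $\ts W\in s_T\ce$ and every $T$-category $\ts Z$, the unit $\eta_{\ts W}$ induces an isomorphism $\underline{s_T\ce}(r\ts W,\ts Z)\cong\underline{s_T\ce}(\ts W,\ts Z)$, which I would then apply to $\ts W=\Delta[1]\cdot\ts Y$.

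The heart of the argument is this lemma, and the point is that it can be checked cheaply, degree by degree, only ever using the \emph{ordinary} reflection. By Proposition~\ref{prop:homs-are-cats}, for a $T$-category $\ts Z$ both $\underline{s_T\ce}(r\ts W,\ts Z)$ and $\underline{s_T\ce}(\ts W,\ts Z)$ are nerves of categories; since the nerve functor is fully faithful, the simplicial map $\underline{s_T\ce}(\eta_{\ts W},\ts Z)$ is an isomorphism as soon as it is bijective on $0$- and $1$-simplices. On $0$-simplices it is precisely the ordinary reflection bijection $s_T\ce(r\ts W,\ts Z)\cong s_T\ce(\ts W,\ts Z)$. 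On $1$-simplices I would invoke powers by $\Delta[1]$, which exist by Theorem~\ref{thm:power-by-Delta-1} and, by Proposition~\ref{prop:T-cat-power}, carry the $T$-category $\ts Z$ to a $T$-category $\Delta[1]\pitchfork\ts Z$; then $\underline{s_T\ce}(-,\ts Z)_1\cong s_T\ce(-,\Delta[1]\pitchfork\ts Z)$, and $\eta_{\ts W}$ again induces a bijection, this time by the ordinary reflection into the $T$-category $\Delta[1]\pitchfork\ts Z$. Thus in both degrees the reflection is applied only to hom-sets whose codomain is an honest $T$-category, which is exactly where the reflectivity hypothesis is available.

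With the lemma in hand the chain
\[
\underline{\Cat_T(\ce)}(r(\Delta[1]\cdot\ts Y),\ts Z)=\underline{s_T\ce}(r(\Delta[1]\cdot\ts Y),\ts Z)\cong\underline{s_T\ce}(\Delta[1]\cdot\ts Y,\ts Z)\cong[\Delta[1],\underline{\Cat_T(\ce)}(\ts Y,\ts Z)]
\]
(using fullness, then the lemma, then the copower property in $\underline{s_T\ce}$ together with fullness) delivers the required universal property. The main obstacle I anticipate is precisely the enriched reflection lemma: a priori the reflector $r$ is only an ordinary left adjoint, so it need not be $s\Set$-enriched and need not commute with copowers, and one therefore cannot simply transport the copower through $r$ by a formal calculation. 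The device that circumvents this is the observation that hom-objects into a $T$-category are nerves (Proposition~\ref{prop:homs-are-cats}), which collapses the enriched statement to degrees $0$ and $1$, combined with the closure of $T$-categories under powers by $\Delta[1]$, which re-expresses degree $1$ as a hom-set into another $T$-category. The remaining work is only to confirm that these degreewise bijections are natural in $\ts Z$ and respect the simplicial operators, which is automatic since $\underline{s_T\ce}(\eta_{\ts W},\ts Z)$ is already a map of simplicial sets.
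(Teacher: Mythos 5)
Your proposal is correct and takes essentially the same route as the paper: the same candidate object (the reflection $L(\Delta[1]\cdot N\ts X)$ of the $s_T\ce$-copower, your $r(\Delta[1]\cdot\ts Y)$), with the enriched universal property reduced to the ordinary reflection by combining the copower in $\underline{s_T\ce}$, powers by $\Delta[1]$ (Theorem~\ref{thm:power-by-Delta-1}) together with closure of $T$-categories under them (Proposition~\ref{prop:T-cat-power}), and the fact that hom-objects into $T$-categories are nerves (Proposition~\ref{prop:homs-are-cats}). The only difference is organizational: you factor the verification through an explicit ``enriched reflection'' lemma checked on $0$- and $1$-simplices, whereas the paper performs the equivalent low-dimensional check in a single chain of isomorphisms on morphism sets $\Cat_0(\two,-)$, internalizing the $1$-cells via the power $\two\pitchfork\ts Y$ rather than via $\Delta[1]\pitchfork\ts Z$.
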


  \begin{proof}
   Write $N\colon\Cat_T(\ce)\to s_T\ce$ for the inclusion functor and $L$ for its left adjoint. We claim that $L(\Delta[1]\cdot N\ts X)$ gives the copower by $\two$ of the $T$-category $\ts X$. This will be the case provided that the canonical functor
   \[ \underline{\Cat_T(\ce)}(L(\Delta[1]\cdot N\ts X),\ts Y)\rightarrow [\two,\underline{\Cat_T(\ce)}(\ts X,\ts Y)] \]
   is invertible for each $T$-category $\ts Y$. This in turn will be the case provided that the induced function 
   \[ \Cat_0(\two,\underline{\Cat_T(\ce)}(L(\Delta[1]\cdot N\ts X),\ts Y)\rightarrow  \Cat_0(\two\times\two,\underline{\Cat_T(\ce)}(\ts X,\ts Y))    \]
   is bijective. Since $\ce$ has pullbacks, $\Cat_T(\ce)$ has powers by $\two$ by Theorem~\ref{thm:T-cat-power-by-2}, and we have 
   \begin{align*}
       \Cat_0(\two,\underline{\Cat_T(\ce)}(L(\Delta[1]\cdot N\ts X),\ts Y)) 
       &\cong \Cat_T(\ce)(L(\Delta[1]\cdot N\ts X),\two\pitchfork \ts Y) \\
       &\cong s_T(\ce)(\Delta[1]\cdot N\ts X,N(\two\pitchfork \ts Y)) \\
       &\cong s\Set(\Delta[1],\underline{s_T(\ce)}(N\ts X,N(\two\pitchfork \ts Y)))\\
       &\cong \Cat_0(\two,\underline{\Cat_T(\ce)}(\ts X,\two\pitchfork \ts Y)) \\
       &\cong \Cat_0(\two\x\two,\underline{\Cat_T}(\ce)(\ts X,\ts Y))
   \end{align*}
   as required.
  \end{proof}

\section{Local presentability}\label{sec:local-presentability}

Now suppose that $\ce$ is locally finitely presentable and that $T$ is finitary.
We have already seen in Theorem~\ref{thm:set-lfp} that the ordinary category $s_T\ce$ is locally finitely presentable; we now show that the $s\Set$-enriched category $\underline{s_T\ce}$ is also locally finitely presentable.

We know by Proposition~\ref{prop:copower-in-T-Simp} that $\underline{s_T\ce}$ has copowers, and by Theorem~\ref{thm:power-by-Delta-1} that it has powers by $\Delta[1]$. 

We shall show that it has powers by $\Delta[n]$ for all $n$; since any simplicial set is a colimit of these, and $s_T\ce$ has conical limits, it will then follow that $\underline{s_T\ce}$ has all powers, and so is complete and cocomplete. We then deduce that it is locally finitely presentable as an enriched category. First, however, we give a new description of the $n$-simplices of the hom-objects of $\underline{s_T\ce}$.

We write $U$ for the forgetful functor $s_T\ce\to[\Dtopop,\ce]$ as well as for $[\Delta\op,\Set]\to[\Dtopop,\Set]$.

\subsection{The $n$-simplices of $\underline{s_T\ce}$ revisited}

First we need some preliminary constructions. Observe that
the functors $T_{!},R^{*}\colon [\Dtopop,\ce]\to[\Dtopop,\ce]$ given
by postcomposing with $T$ and precomposing with $R$ are both enriched
over $[\Dtopop,\Set]$, and so there are canonical comparisons
expressing the extent to which they preserve powers by objects of
$[\Dtopop,\Set]$.

\begin{proposition}\label{prop:Lambda}
  There is a map $\Lambda\colon U\Delta[n-1]\to R^{*}\Dtop[n]$ which
in degree $m$ sends $\phi\colon m\to n-1$ in $\Delta$ to its
top-preserving extension $m+1\to n$.
\end{proposition}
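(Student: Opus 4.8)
The plan is to construct $\Lambda$ componentwise and then verify naturality, both of which are elementary once the two presheaves are described explicitly. Since $F_R\colon\Dtopop\to\Delta\op$ is the identity on objects and $R$ is the endofunctor $(-)+1$, unwinding the definitions gives $(U\Delta[n-1])_m=\Delta(m,n-1)$ and $(R^{*}\Dtop[n])_m=\Dtop(m+1,n)$, both regarded as functors $\Dtopop\to\Set$. For $\phi\colon m\to n-1$ in $\Delta$ I would define $\Lambda_m(\phi)=\overline{\phi}\colon m+1\to n$ to be the map agreeing with $\phi$ on $\{0,\dots,m\}$ and sending $m+1$ to $n$. First I would check this is legitimate, i.e.\ that $\overline{\phi}$ really lies in $\Dtop(m+1,n)$: it is monotone because $\phi(m)\le n-1<n=\overline{\phi}(m+1)$, and it is top-preserving by construction.

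The core of the argument is naturality. For a top-preserving map $\psi\colon p\to m$ (a morphism $m\to p$ in $\Dtopop$), $R$ sends $\psi$ to $\psi+1\colon p+1\to m+1$, which agrees with $\psi$ on $\{0,\dots,p\}$ and fixes the top. I would then check that $\overline{\phi}\circ(\psi+1)=\overline{\phi\circ\psi}$ as maps $p+1\to n$: on $\{0,\dots,p\}$ both equal $\phi\circ\psi$ (using $\psi(k)\le m$, so that $\overline{\phi}$ agrees with $\phi$ on the relevant values), while both send $p+1$ to $n$. This is precisely the commutativity of the naturality square relating $\Lambda_m$ and $\Lambda_p$, and completes the construction of $\Lambda$.

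Conceptually none of this is an accident: $\overline{\phi}$ is the adjoint transpose of $\phi$ under the top-element adjunction between $\Dtop$ and $\Delta$ (equivalently $F_R\dashv U_R$, with $U_R[n]=[n+1]$ and $R=U_R F_R$), which yields a natural isomorphism $\Delta(m,n)\cong\Dtop(m+1,n)$, i.e.\ $U\Delta[n]\cong R^{*}\Dtop[n]$. Under this identification $\Lambda$ is simply the image under $U$ of postcomposition with the top face $\delta_n\colon n-1\to n$, namely of $\Delta(-,\delta_n)\colon\Delta[n-1]\to\Delta[n]$, and naturality is inherited from the adjunction isomorphism together with functoriality of $U$. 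I would present the explicit componentwise construction as the main proof and record this transpose description as the reason it works. There is no real obstacle beyond keeping the variances straight and correctly using that $R$ acts on morphisms as $(-)+1$.
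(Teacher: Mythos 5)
Your proof is correct. The paper states Proposition~\ref{prop:Lambda} without any proof, so your componentwise construction of $\Lambda$ and the naturality check against the action of $R=(-)+1$ supply precisely the routine verification the paper leaves implicit, and both steps are carried out correctly (well-definedness of $\overline\phi$ in $\Dtop(m+1,n)$, and $\overline{\phi\psi}=\overline\phi\circ(\psi+1)$ for top-preserving $\psi$). Your concluding ``adjoint transpose'' description also matches the paper's own machinery: just before Corollary~\ref{cor:1-simplex-in-hom-simplicial-set} the paper introduces the right adjoint $U_R\colon\Delta\op\to\Dtopop$ of $F_R$, whose action on a morphism $\phi\colon m\to n-1$ of $\Delta$ is exactly your top-preserving extension $\overline\phi\colon m+1\to n$, with $R=U_RF_R$; so $\Lambda_m$ is just $U_R$ acting on hom-sets, naturality is functoriality of $U_R$, and this very component $U_R(-)$ is the one appearing in the paper's bijection \eqref{eqn:Dtop-m+1-n-decomposition}.
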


\begin{proposition}\label{prop:Pi}
  The forgetful $U\colon s_T\ce\to[\Dtopop,\ce]$ is enriched over
  $[\Dtopop,\Set]$, and so there are induced maps
  $\Pi\colon U(M\pitchfork \ts X)\to UM\pitchfork X$ for $M\in[\Delta^\mathrm{op},\Set]$
  and $\ts X\in s_T\ce$, whenever these powers exist.
\end{proposition}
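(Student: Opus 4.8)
The plan is to establish the two assertions in turn: that $U$ carries the structure of a functor enriched over $[\Dtopop,\Set]$, and that this enrichment then produces the comparison maps $\Pi$ by the generic recipe relating an enriched functor to powers on either side. The relevant base-change functor is $[F_R,\Set]\colon s\Set\to[\Dtopop,\Set]$, given by precomposition with $F_R\colon\Dtopop\to\Delta\op$; since precomposition is computed pointwise it preserves all limits and colimits, in particular finite products, so it is strong monoidal for the cartesian structures and may legitimately be used as a functor of enriching bases. I would describe the enriching hom-maps concretely, as this is what is needed for $\Pi$. Evaluated at an object $n\in\Dtopop$, the functor $[F_R,\Set]\,\underline{s_T\ce}(\ts X,\ts Y)$ has as elements the $n$-simplices of $\underline{s_T\ce}(\ts X,\ts Y)$, i.e.\ families $(x_\phi\colon X_m\to Y_m)$ indexed by all $\phi\colon m\to n$ in $\Delta$, whereas an element of $\underline{[\Dtopop,\ce]}(U\ts X,U\ts Y)_n$ is, by Construction~\ref{const:enrichment-by-presheaf-cat}, a family indexed only by the top-preserving $\psi\colon m\to n$. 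The hom-map simply discards the components indexed by non-top-preserving $\phi$ and forgets the last-face condition; the first compatibility condition in the description of $n$-simplices (the one for $\psi\in\Dtop$), read only for maps in $\Dtop$, is exactly the naturality required of an element on the right. Naturality in $n$ is precomposition with top-preserving maps, and compatibility with composition and units in the two enriched categories is a direct verification that I would record as routine.

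The same enrichment arises conceptually for free, which serves as a useful sanity check. Precomposition is the left adjoint of $\Ran_{F_R}$, and the resulting monoidal adjunction $[F_R,\Set]\dashv\Ran_{F_R}$ induces an adjunction of change-of-base $2$-functors $([F_R,\Set])_\ast\dashv(F_R)_\ast$ between $s\Set\text{-}\CAT$ and $[\Dtopop,\Set]\text{-}\CAT$. The projection $U\colon\underline{s_T\ce}\to(F_R)_\ast\underline{[\Dtopop,\ce]}$ coming from the pullback defining $\underline{s_T\ce}$ then transposes across this adjunction to a genuine $[\Dtopop,\Set]$-enriched functor $([F_R,\Set])_\ast\underline{s_T\ce}\to\underline{[\Dtopop,\ce]}$, which is precisely what it means for $U$ to be enriched over $[\Dtopop,\Set]$, and one checks it coincides with the concrete restriction map above.

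For the comparison maps I would invoke the universal recipe attached to an enriched functor together with a pair of powers. Assume $M\pitchfork\ts X$ exists in $s_T\ce$ and $UM\pitchfork X$ exists in $[\Dtopop,\ce]$, writing $UM=[F_R,\Set]M$ and $X=U\ts X$. Applying $[F_R,\Set]$ to the evaluation $M\to\underline{s_T\ce}(M\pitchfork\ts X,\ts X)$ that witnesses the power in $s_T\ce$, and then postcomposing with the enriching hom-map of $U$ from the first step, yields a map $UM\to\underline{[\Dtopop,\ce]}(U(M\pitchfork\ts X),U\ts X)$ in $[\Dtopop,\Set]$. Transposing this across the universal property of the power $UM\pitchfork X$ produces the desired $\Pi\colon U(M\pitchfork\ts X)\to UM\pitchfork X$.

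I do not anticipate any genuine obstacle: once the base change is in place, the argument is formal and the construction of $\Pi$ is the standard comparison. The only point demanding care is bookkeeping, namely confirming that the concrete restriction-to-top-preserving-indices hom-map really is an enriched functor, compatible with the composition inherited from the pullback presentation of $\underline{s_T\ce}$, and that it agrees with the transpose of the pullback projection. Neither is difficult, but both must be tracked carefully through $F_R$, since the $s\Set$-enrichment of $s_T\ce$ and the $[\Dtopop,\Set]$-enrichment of $[\Dtopop,\ce]$ are packaged differently.
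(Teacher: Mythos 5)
Your proof is correct and follows essentially the same route as the paper: the paper likewise obtains $\Pi$ by applying $U$ (restriction along $F_R$) to the unit $M\to\underline{s_T\ce}(M\pitchfork\ts X,\ts X)$, composing with the enriched structure map of $U$, and transposing through the universal property of $UM\pitchfork X$. The only difference is that you spell out the enrichment of $U$ (both concretely and via the change-of-base adjunction $([F_R,\Set])_\ast\dashv(\Ran_{F_R})_\ast$), which the paper leaves implicit.
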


\begin{proof}
  The unit $\eta\colon M\to \underline{s_T\ce}(M\pitchfork\ts X,\ts X)$ induces
  \[
    \begin{tikzcd}
      UM \rar["U\eta"] & U\underline{s_T\ce}(M\pitchfork\ts X,\ts X)
      \rar &  \underline{[\Dtopop,\ce]}(U(M\pitchfork\ts X),U\ts X)
    \end{tikzcd} \]
  which determines $U(M\pitchfork\ts X)\to UM\pitchfork U\ts
  X=UM\pitchfork X$ by the universal property.
\end{proof}

\begin{proposition}\label{prop:Gamma}
  There are maps $\Gamma\colon R^{*}(M\pitchfork
  X)\to R^{*}M\pitchfork R^{*}X$ for $M\in[\Dtopop,\Set]$ and
  $X\in[\Dtopop,\ce]$ whenever these powers exist.
\end{proposition}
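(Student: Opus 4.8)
The plan is to construct $\Gamma$ exactly as the comparison map $\Pi$ was constructed in Proposition~\ref{prop:Pi}, but now relative to the endofunctor $R^{*}$ of the \emph{base} $[\Dtopop,\Set]$.

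First I would record that precomposition with $R$ defines a functor $R^{*}\colon[\Dtopop,\Set]\to[\Dtopop,\Set]$ which preserves finite products (these being computed pointwise), and so is a strict monoidal endofunctor of the cartesian base $[\Dtopop,\Set]$. Next I would make precise the sense in which $R^{*}\colon[\Dtopop,\ce]\to[\Dtopop,\ce]$ is enriched, as asserted in the sentence preceding Proposition~\ref{prop:Lambda}: for $X,Y\in[\Dtopop,\ce]$ there is a natural map
\[
\rho_{X,Y}\colon R^{*}\,\underline{[\Dtopop,\ce]}(X,Y)\longrightarrow \underline{[\Dtopop,\ce]}(R^{*}X,R^{*}Y)
\]
in $[\Dtopop,\Set]$, whose component at $b\in\Dtopop$ is a map $\underline{[\Dtopop,\ce]}(X,Y)_{Rb}\to\underline{[\Dtopop,\ce]}(R^{*}X,R^{*}Y)_{b}$ given by whiskering along the functor $b/\Dtopop\to Rb/\Dtopop$ sending $(f\colon b\to c)$ to $(Rf\colon Rb\to Rc)$. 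Here one uses that the composite $R^{*}X\circ(\text{forgetful}\colon b/\Dtopop\to\Dtopop)$ equals $X\circ(\text{forgetful}\colon Rb/\Dtopop\to\Dtopop)$ precomposed with this functor. These $\rho_{X,Y}$ are compatible with composition and identities, so that $(R^{*},\rho)$ is an $R^{*}$-relative $[\Dtopop,\Set]$-enriched endofunctor.

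Then I would produce $\Gamma$ from the universal property, following the proof of Proposition~\ref{prop:Pi}. The power $M\pitchfork X$ carries its universal element $\eta\colon M\to\underline{[\Dtopop,\ce]}(M\pitchfork X,X)$ in $[\Dtopop,\Set]$. Applying $R^{*}$ and then $\rho$ yields
\[
R^{*}M\xrightarrow{R^{*}\eta}R^{*}\,\underline{[\Dtopop,\ce]}(M\pitchfork X,X)\xrightarrow{\ \rho\ }\underline{[\Dtopop,\ce]}\bigl(R^{*}(M\pitchfork X),R^{*}X\bigr),
\]
a map in $[\Dtopop,\Set]$. By the universal property of the power $R^{*}M\pitchfork R^{*}X$, in its underlying ordinary form exactly as used in Proposition~\ref{prop:Pi}, this corresponds to a unique morphism
\[
\Gamma\colon R^{*}(M\pitchfork X)\longrightarrow R^{*}M\pitchfork R^{*}X
\]
in $[\Dtopop,\ce]$, defined whenever both powers exist.

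The only genuinely delicate point is bookkeeping the base-change twist: unlike $T_{!}$, the functor $R^{*}$ is not a plain $[\Dtopop,\Set]$-functor but is enriched \emph{along} the monoidal endofunctor $R^{*}$ of the base, so the source of $\rho_{X,Y}$ is $R^{*}\,\underline{[\Dtopop,\ce]}(X,Y)$ rather than $\underline{[\Dtopop,\ce]}(X,Y)$, and $R^{*}M$ (not $M$) appears in the target power. Once $\rho$ is set up with this twist in place, the construction of $\Gamma$ and the verification that it is natural in $M$ and $X$ are routine, and since no property of $\Gamma$ beyond its existence is claimed here, nothing further is required.
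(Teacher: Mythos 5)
Your construction is correct, and it proves the stated proposition, but it takes a genuinely different route from the paper. You transport the method of Proposition~\ref{prop:Pi}: you equip $R^{*}$ with a base-change-twisted enrichment $\rho_{X,Y}\colon R^{*}\,\underline{[\Dtopop,\ce]}(X,Y)\to\underline{[\Dtopop,\ce]}(R^{*}X,R^{*}Y)$ (whiskering along the functor $b/\Dtopop\to Rb/\Dtopop$ induced by $R$), and then obtain $\Gamma$ from the ordinary universal property of the power $R^{*}M\pitchfork R^{*}X$ applied to $\rho\circ R^{*}\eta$. The paper instead argues pointwise: it computes $R^{*}(M\pitchfork X)_n=\{M\times\Dtop[n+1],X\}$ and, using $\Lan_R\dashv R^{*}$, rewrites $(R^{*}M\pitchfork R^{*}X)_n=\{\Lan_R(MR\times\Dtop[n]),X\}$, and then induces $\Gamma$ contravariantly from a map of weights $\Lan_R(MR\times\Dtop[n])\to\Lan_R(MR)\times\Lan_R(\Dtop[n])\to M\times\Dtop[n+1]$ built from the product comparison, the counit of $\Lan_R\dashv R^{*}$, and $\Lan_R(\Dtop[n])\cong\Dtop[n+1]$. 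Your insistence on the twist is exactly right and is the crux: the \emph{untwisted} enrichment of $R^{*}$ (which also exists, via the unit of the monad $R$) would only yield a map $R^{*}(M\pitchfork X)\to M\pitchfork R^{*}X$, with the wrong target. One can check by composing with the limit projections that your $\Gamma$ and the paper's coincide (both satisfy $\pi_{b',\beta,f}\circ\Gamma=\pi_{Rb',\beta,Rf}$), so nothing is lost; what the paper's concrete description buys is that the componentwise form of $\Gamma$ feeds directly into the degree-$m$ evaluations used in the proofs of Propositions~\ref{prop:u-u-condition-a} and~\ref{prop:u-v-cond-b}, whereas your argument is more uniform with Proposition~\ref{prop:Pi} and avoids computing the power's components altogether.
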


\begin{proof}
  First calculate the components of the powers
  \begin{align*}
    R^{*}(M\pitchfork X)_n &= (M\pitchfork X)_{n+1} \\
                           &= \{ M\x \Dtop[n+1],X\} \\
    (R^{*}M\pitchfork R^{*}X)_n &= \{ R^{*}M\x\Dtop[n], R^{*}X\} \\
    &= \{ \Lan_R(MR\x \Dtop[n]),X\} 
  \end{align*}
  and now the desired map may be constructed as the composite
  \[
    \begin{tikzcd}
      \Lan_R(MR\x \Dtop[n]) \rar &
      \Lan_R(MR)\x \Lan_R(\Dtop[n]) \rar &
      M\x \Dtop[n+1]
    \end{tikzcd}
  \]
  in which the first map is the canonical comparison for $\Lan_R$
  applied to a product, and the second comes from the counit
  $\Lan_R(MR)\to M$ and the fact that $\Lan_R$ sends a representable
  $\Dtop(-,n)$ to $\Dtop(-,Rn)$ and $Rn=n+1$.
\end{proof}

By Proposition~\ref{prop:n-simplex-inductively}, to give an $n$-simplex
in $\underline{s_T\ce}(\ts Y,\ts X)$ is to give:
\begin{enumerate}
\item an $n$-simplex in
$\underline{[\Dtopop,\ce]}(Y,X)$, or equivalently a map $u\colon
Y\to \Dtop[n]\pitchfork X$ in $[\Dtopop,\ce]$
\item an $n-1$-simplex in $\underline{s_T\ce}(\ts Y,\ts X)$, or
  equivalently a map $v\colon \ts Y\to \Delta[n-1]\pitchfork \ts X$
\item satisfying conditions (a) and (b) of the proposition.
\end{enumerate} 
We now look at how to express these conditions (a) and (b).

\begin{proposition}\label{prop:u-u-condition-a}
  Condition (a) holds if and only if the diagram
\[
  \begin{tikzcd}
    R^{*}Y \dar["\xi^{\ts Y}"] \rar["R^{*}u"] &
    R^{*}(\Dtop[n]\pitchfork X) \rar & \Dtop[n]\pitchfork R^{*}X
    \dar["\xi^{\ts X}"] \\
    T_{!}Y \rar["T_{!}u"] & T_{!}(\Dtop[n]\pitchfork X) \rar &
    \Dtop[n]\pitchfork T_{!}X
  \end{tikzcd}
\]
commutes, where the unnamed maps are these canonical comparisons. 
\end{proposition}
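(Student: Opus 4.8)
The plan is to verify the asserted commutativity componentwise, exploiting that the codomain $\Dtop[n]\pitchfork T_{!}X$ is a power by the \emph{representable} weight $\Dtop[n]=\Dtop(-,n)$. By the universal property of powers (Proposition~\ref{prop:copowers-and-powers-in-BC}(2)) together with the Yoneda lemma, a morphism $R^{*}Y\to \Dtop[n]\pitchfork T_{!}X$ in $[\Dtopop,\ce]$ is the same thing as an $n$-simplex of $\underline{[\Dtopop,\ce]}(R^{*}Y,T_{!}X)$, i.e.\ a natural family of morphisms $(R^{*}Y)_k=Y_{k+1}\to TX_k=(T_{!}X)_k$ indexed by the maps $\psi\colon k\to n$ of $\Dtop$. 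Thus the outer rectangle commutes if and only if, for every such $\psi$, the $\psi$-components of the two composites $R^{*}Y\to\Dtop[n]\pitchfork T_{!}X$ agree; renaming $m=k+1$, these are morphisms $Y_m\to TX_{m-1}$ indexed by $\psi\colon m-1\to n$ in $\Dtop$ (for all $m>0$).

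Next I would compute the two components explicitly. For the bottom-and-right composite, the degree-$(m-1)$ component of $\xi^{\ts Y}$ is $d_m\colon Y_m\to TY_{m-1}$; postcomposing with $T_{!}u$ and the canonical comparison $T_{!}(\Dtop[n]\pitchfork X)\to\Dtop[n]\pitchfork T_{!}X$ (which, since $T_{!}$ merely postcomposes with $T$ and leaves the weight $\Dtop[n]$ unchanged) simply applies $T$ to the $\psi$-component $u_\psi$ of $u$. Hence the $\psi$-component of this composite is $Tu_{\psi}\circ d_m$. For the top-and-right composite, the comparison $R^{*}(\Dtop[n]\pitchfork X)\to\Dtop[n]\pitchfork R^{*}X$ is the map $\Gamma$ of Proposition~\ref{prop:Gamma} followed by reindexing along the reparametrization of weights $\Dtop[n]\to R^{*}\Dtop[n]$ (the whiskered counit of the comonad $R$, cf.\ Proposition~\ref{prop:Lambda}); in degree $m-1$ this reparametrization carries $\psi\colon m-1\to n$ to its top-preserving extension $\hat\psi\colon m\to n$, given by $\hat\psi=\psi\sigma_{m-1}$. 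Combined with the right-hand vertical $\Dtop[n]\pitchfork\xi^{\ts X}$, which postcomposes each component with $\xi^{\ts X}=d_m$, the $\psi$-component of the top composite is $d_m\circ u_{\hat\psi}$.

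With these formulas the equivalence is bookkeeping. The componentwise equations read
\[
d_m\circ u_{\hat\psi}=Tu_{\psi}\circ d_m\qquad(\psi\colon m-1\to n\text{ in }\Dtop,\ m>0).
\]
Writing $\phi=\hat\psi$, one checks that the top-preserving extensions $\hat\psi$ are precisely those $\phi\colon m\to n$ in $\Dtop$ for which $\phi\delta_m$ lies again in $\Dtop$ (equivalently $\phi(m-1)=\phi(m)=n$), and that under this bijection $\psi=\hat\psi\delta_m=\phi\delta_m$. Hence the displayed family is exactly the family of squares constituting condition~(a) of Proposition~\ref{prop:n-simplex-inductively}, and the ``if and only if'' follows.

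The step I expect to be the main obstacle is the explicit identification of the two canonical comparison maps at the level of components, in particular verifying that the $R^{*}$-comparison reindexes $\psi$ to its top-preserving extension $\hat\psi$ while the $T_{!}$-comparison leaves the index untouched. This requires tracing the $[\Dtopop,\Set]$-enriched functoriality of $R^{*}$ and $T_{!}$ through the weighted-limit projections of Proposition~\ref{prop:copowers-and-powers-in-BC}(2), using that for $R^{*}$ the target weight is $R^{*}\Dtop[n]$ and that the factorization recorded in Remark~\ref{rmk:RanFR} pins down the extension map. Once these comparisons are made explicit, the matching of indices and the extraction of condition~(a) are routine.
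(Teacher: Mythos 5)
Your proposal is correct and takes essentially the same approach as the paper: both proofs evaluate the diagram degreewise at maps $\psi\colon m-1\to n$ in $\Dtop$ (the paper's $\theta\colon m\to n$), identify the $\psi$-components of the two composites as $d_m\circ u_{\hat\psi}$ and $Tu_\psi\circ d_m$ where $\hat\psi$ is the top-preserving extension, and conclude via $\hat\psi\delta_m=\psi$ that the resulting family of equations is precisely condition (a). The only difference is one of detail: you trace the canonical comparison maps explicitly through the enriched functoriality of $T_{!}$ and $R^{*}$, whereas the paper simply asserts the outcome of the evaluation.
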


\begin{proof}
  Look at degree $m$ of the diagram, and evaluate at a map
  $\theta\colon m\to n$ in $\Dtop$. This gives the diagram
\[
  \begin{tikzcd}
    Y_{m+1} \dar["d_{m+1}"] \rar["u_{\overline\theta}"] & X_{m+1} \dar["d_{m+1}"] \\
    TY_m \rar["Tu_{\theta}"] & TX_m
  \end{tikzcd}
\]
where $\overline\theta\colon m+1\to n$ is the top-preserving extension
of $\theta$. Then $\overline\theta\delta_{m+1}=\theta$ and so
commutativity is exactly condition (a).  
\end{proof}

\begin{proposition}\label{prop:u-v-cond-b}
  Condition (b) holds if and only if the diagram
  \[
    \begin{tikzcd}
      R^{*}Y \dar["\xi^{\ts Y}"] \rar["R^{*}u"] &
      R^{*}(\Dtop[n]\pitchfork X) \rar["\Gamma"] & 
      R^{*}\Dtop[n]\pitchfork R^{*}X \rar["\Lambda\pitchfork1"] &
      U\Delta[n-1]\pitchfork R^{*}X
      \dar["{U\Delta[n-1]\pitchfork\xi^{\ts X}}"]
      \\
      T_{!}Y \rar["TUv"] & T_{!}U(\Delta[n-1]\pitchfork\ts X) \rar["T\Pi"]
      &
       T_{!}(U\Delta[n-1]\pitchfork X) \rar &
       U\Delta[n-1]\pitchfork T_{!}X 
    \end{tikzcd}
  \]
  commutes in which once again the unnamed map is the canonical comparison.
\end{proposition}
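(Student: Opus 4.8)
The plan is to prove Proposition~\ref{prop:u-v-cond-b} in exactly the spirit of Proposition~\ref{prop:u-u-condition-a}. Both legs of the square are morphisms $R^{*}Y\to U\Delta[n-1]\pitchfork T_{!}X$ in $[\Dtopop,\ce]$, so it suffices to test their equality degreewise and then against the projections of the power on the right. By Proposition~\ref{prop:copowers-and-powers-in-BC}(2) the power $U\Delta[n-1]\pitchfork T_{!}X$ is a weighted limit whose universal projections $\varepsilon_{b',\beta}=\pi_{b',\beta,1}$ in degree $m$ are indexed by the elements $\tilde\phi\colon m\to n-1$ of $(U\Delta[n-1])_m=\Delta(m,n-1)$, and composing with all of these (as $m$ and $\tilde\phi$ vary) detects equality. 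So I would fix $m$ and $\tilde\phi\colon m\to n-1$ in $\Delta$, compose the whole diagram in degree $m$ with $\pi_{\tilde\phi}$, and compare the two resulting maps $Y_{m+1}\to TX_m$.

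The first step is to compute the top-and-right composite after $\pi_{\tilde\phi}$. Here I would use the explicit description of $\Lambda$ (Proposition~\ref{prop:Lambda}) and of $\Gamma$ (Proposition~\ref{prop:Gamma}): since powers are contravariant in the weight, $\Lambda\pitchfork1$ satisfies $\pi_{\tilde\phi}\circ(\Lambda\pitchfork1)=\pi_{\phi}$, where $\phi\colon m+1\to n$ is the top-preserving extension $\Lambda(\tilde\phi)$ of $\tilde\phi$; then $\Gamma$ and $R^{*}u$ feed in the component $u_{\phi}\colon Y_{m+1}\to X_{m+1}$ of the $n$-simplex $u$ (the comparison $\Gamma$ commutes with projections), and the final factor $U\Delta[n-1]\pitchfork\xi^{\ts X}$ post-composes with $\xi^{\ts X}_m=d_{m+1}$. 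Thus the top composite at $\pi_{\tilde\phi}$ equals $d_{m+1}.u_{\phi}$. For the bottom-and-left composite I would use the description of $\Pi$ (Proposition~\ref{prop:Pi}) together with the fact that $v$ is a map of $T$-simplicial objects, so that $Uv$ has components $v_{\tilde\phi}\colon Y_m\to X_m$; tracking $\xi^{\ts Y}_m=d_{m+1}$ and these components through $T_{!}(Uv)$ and $T\Pi$ shows the bottom composite is $Tv_{\tilde\phi}.d_{m+1}$. Since $\phi\delta_{m+1}=\delta_n\tilde\phi$, the equation $d_{m+1}.u_{\phi}=Tv_{\tilde\phi}.d_{m+1}$ is precisely condition (b) for $\phi$; and as $\tilde\phi$ ranges over $\Delta(m,n-1)$ its extensions $\phi$ range exactly over the maps in $\Dtop(m+1,n)$ with $\phi\delta_{m+1}\notin\Dtop$, so the quantifiers match on the nose.

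The main obstacle will be the bookkeeping in this first step: one must verify that the three canonical comparisons $\Gamma$, $\Lambda\pitchfork1$ and $\Pi$ interact with the projections $\pi_{\tilde\phi}$ exactly by the evident reindexing, i.e.\ that each is ``computed projectionwise'', and that the universal projections of Proposition~\ref{prop:copowers-and-powers-in-BC}(2) genuinely suffice to detect equality of the two cones. Once the identifications $\pi_{\tilde\phi}\circ(U\Delta[n-1]\pitchfork\xi^{\ts X})\circ(\Lambda\pitchfork1)\circ\Gamma\circ R^{*}u=d_{m+1}.u_{\phi}$ and its dual for the bottom row are established, the equivalence with condition (b) is immediate and the bijection $\tilde\phi\leftrightarrow\phi$ closes the argument.
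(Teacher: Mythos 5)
Your proposal is correct and follows essentially the same route as the paper's (much terser) proof: evaluate the square of natural transformations in each degree $m$ against the power projections indexed by $\tilde\phi\in\Delta(m,n-1)$, identify the two composites as $d_{m+1}.u_{\Lambda\tilde\phi}$ and $Tv_{\tilde\phi}.d_{m+1}$, and match these squares with condition (b) via the bijection $\tilde\phi\leftrightarrow\phi=\Lambda\tilde\phi$. Your explicit bookkeeping of how $\Gamma$, $\Lambda\pitchfork 1$, $\Pi$ and the canonical comparison interact with projections, and of why the counit projections suffice to detect equality of natural transformations, is exactly what the paper leaves implicit.
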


\begin{proof}
  Look at degree $m$ of the diagram and evaluate at a map
  $\theta\colon m\to n-1$ in $\Dtop$. This gives the map
  \[
    \begin{tikzcd}
      Y_{m+1} \dar["d_{m+1}"] \rar["u_{\Lambda\theta}"] & X_{m+1} \dar["d_{m+1}"] \\
      TY_m \rar["v_{\theta}"] & TX_m
    \end{tikzcd} \]
  and $\Lambda\theta.\delta_{m+1}=\delta_{m+1}.\theta$ and so the
  diagram commutes if and only if (b) holds.
\end{proof}

\subsection{Powers by representables}

\begin{proposition}\label{prop:powers}
  $\underline{s_T\ce}$ has powers by representable simplicial sets
  $\Delta[n]$. 
\end{proposition}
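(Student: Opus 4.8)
The plan is to induct on $n$. The case $n=0$ is immediate since $\Delta[0]\pitchfork\ts X\cong\ts X$, and the case $n=1$ is exactly Theorem~\ref{thm:power-by-Delta-1}; so I would assume $n\geq 2$ together with the inductive hypothesis that the power $\Delta[n-1]\pitchfork\ts X$ exists as a $T$-simplicial object. Just as for $\Delta[1]$, the argument of Proposition~\ref{prop:strengthening-univ-prop} applies verbatim with $\Delta[n]$ in place of $\Delta[1]$: using that $\underline{s_T\ce}$ has copowers by all $\Delta[k]$ (Corollary~\ref{cor:finite-copower-T-Simp}, after embedding $\ce$ into a category with finite copowers if necessary), it suffices to represent the presheaf $\underline{s_T\ce}(-,\ts X)_n\colon(s_T\ce)\op\to\Set$, i.e.\ to produce a $T$-simplicial object $\ts L$ with a bijection $s_T\ce(\ts Y,\ts L)\cong\underline{s_T\ce}(\ts Y,\ts X)_n$ natural in $\ts Y$, from which the full $s\Set$-enriched universal property of the power then follows.

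To identify the data such an $\ts L$ must carry, I would feed Proposition~\ref{prop:n-simplex-inductively} into this reduction: an $n$-simplex of $\underline{s_T\ce}(\ts Y,\ts X)$ is a pair consisting of a morphism $u\colon Y\to\Dtop[n]\pitchfork X$ in $[\Dtopop,\ce]$ and a $T$-simplicial morphism $v\colon\ts Y\to\Delta[n-1]\pitchfork\ts X$ (the latter representable by the inductive hypothesis), subject to conditions (a) and (b). Here the power $\Dtop[n]\pitchfork X$ in $\underline{[\Dtopop,\ce]}$ exists by the evident higher-dimensional analogue of the decomposition \eqref{eqn:cylinder-decomposition}, realizing it as an iterated pullback in $\ce$. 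Propositions~\ref{prop:u-u-condition-a} and~\ref{prop:u-v-cond-b} recast (a) and (b) as the commutativity of two diagrams in $[\Dtopop,\ce]$ assembled from the comparison maps $\Lambda$, $\Pi$, $\Gamma$ of Propositions~\ref{prop:Lambda}--\ref{prop:Gamma} and the structure maps $\xi^{\ts Y}$, $\xi^{\ts X}$. Accordingly I would build $\ts L=(L,\xi^{\ts L})$ equipped with a universal such pair: a morphism $q\colon L\to\Dtop[n]\pitchfork X$ and a $T$-simplicial morphism $p\colon\ts L\to\Delta[n-1]\pitchfork\ts X$ satisfying the reformulated diagrams universally.

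The construction of $\ts L$ then proceeds exactly as in the recursive construction of the $\Delta[1]$-power. Because both reformulated diagrams are expressed through the dimension shift $R=(-)+1$, they determine $\xi^{\ts L}$, $p$ and $q$ degree by degree: the object $L_{k+1}$, its last face map $d_{k+1}\colon L_{k+1}\to TL_k$, and the degree-$(k+1)$ parts of $p$ and $q$ are obtained as a limit in $\ce$ of a finite diagram built from $L_k$, $(\Dtop[n]\pitchfork X)_{k+1}$ and $(\Delta[n-1]\pitchfork\ts X)_{k+1}$. As in the $\Delta[1]$ case, the indexing shape of this diagram is \emph{simply-connected} in the sense of \cite{Pare-simply-connected}, so the limit exists under the sole assumption that $\ce$ has pullbacks, by \cite[Theorem~2]{Pare-simply-connected}; the remaining (non-last) face and degeneracy maps of $L$ are then forced by the requirements that $p$ and $q$ be morphisms in $[\Dtopop,\ce]$ and that $(L,\xi^{\ts L})$ be a $T$-simplicial object. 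Finally one checks that $(p,q)$ defines an element of $\underline{s_T\ce}(\ts L,\ts X)_n$ with the representing universal property, so that $\ts L=\Delta[n]\pitchfork\ts X$.

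The main obstacle I anticipate is combinatorial bookkeeping rather than any new idea: one must pin down the precise finite diagram whose limit defines $L_{k+1}$ and verify that its shape is simply-connected, and this diagram is heavier than the single hexagon of the $\Delta[1]$ construction because it must simultaneously encode condition (a) (living over $\Dtop[n]\pitchfork X$) and condition (b) (living over $\Delta[n-1]\pitchfork\ts X$), interacting through $\Lambda$, $\Pi$ and $\Gamma$. Once this diagram is isolated and shown to be simply-connected, both the existence of the limit and the verification of the universal property are routine degree-by-degree extensions of the $\Delta[1]$ argument.
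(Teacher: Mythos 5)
Your front end (reduction to the one-dimensional universal property via copowers, Proposition~\ref{prop:n-simplex-inductively}, and the reformulations of Propositions~\ref{prop:u-u-condition-a} and~\ref{prop:u-v-cond-b}) agrees with the paper's, but the recursion you then propose has a genuine gap, and it is not the ``combinatorial bookkeeping'' you anticipate. Every condition you feed into the recursion --- the $\xi$-compatibility of $v$, condition (a) in the form of Proposition~\ref{prop:u-u-condition-a}, and condition (b) in the form of Proposition~\ref{prop:u-v-cond-b} --- is a diagram with source $R^{*}Y$: in degree $k$ it constrains the degree-$(k+1)$ data $(q_{k+1},p_{k+1},d_{k+1}\colon L_{k+1}\to TL_k)$ against degree-$k$ data, and it imposes \emph{no condition whatsoever in degree $0$}. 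So your recursion has no base case other than the unconstrained object $L_0=(\Dtop[n]\pitchfork X)_0\times(\Delta[n-1]\pitchfork\ts X)_0$, and that is the wrong answer: already for $\ce=\Set$ and $T$ the identity monad, the power is the exponential $X^{\Delta[n]}$, whose degree-$0$ part is $s\Set(\Delta[n],X)\cong X_n$, whereas the product above is $X_n\times X_{n-1}$. Concretely, the failure shows up exactly where you say the remaining structure is ``forced'': the degeneracy $s_0\colon L_0\to L_1$ must be a cone over your degree-$0$ limit diagram, and the cone condition it must satisfy is precisely the missing degree-$0$ instance of condition~(b), which the product does not satisfy. The $\Delta[1]$ construction of Section~\ref{sect:powers} does not have this defect because it does \emph{not} use the shifted form of condition (b): it uses the same-degree reformulation of Corollary~\ref{cor:1-simplex-as-diagram-in-Dtopop-E} (the rightmost diagram of \eqref{eqn:1-simplex-as-diagram-in-Dtopop-E}, with source $Y$, built from $g$ and $iX$), whose dimension-$0$ instance is the pullback \eqref{eqn:L-recursively-1}, and whose equivalence with condition (b) requires a diagram chase using the monad laws. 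Generalizing that same-degree reformulation to arbitrary $n$ is where the real content of your approach would lie, and your proposal does not supply it.

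For contrast, the paper's proof avoids degree-by-degree construction entirely: it transposes $u$ along the comonadicity adjunction $U\dashv Q$ of Theorem~\ref{thm:comondicity}, transposes conditions (a) and (b) along $R^{*}\dashv R_{*}$, and thereby exhibits $\underline{s_T\ce}(-,\ts X)_n$ as a finite limit of representable functors, so that $\Delta[n]\pitchfork\ts X$ is a finite limit in $s_T\ce$ involving $Q(\Dtop[n]\pitchfork U\ts X)$, $QR_{*}(\cdots)$ and $\Delta[n-1]\pitchfork\ts X$; this limit exists because $s_T\ce$ is locally finitely presentable (Theorem~\ref{thm:set-lfp}). This also sidesteps your secondary claims, which are themselves shaky: for $n\geq 2$ the shuffle decomposition of $\Dtop[n]\times\Dtop[m]$ is no longer a gluing along a linear tree, so ``$\Dtop[n]\pitchfork X$ as an iterated pullback'' is unjustified; and your degree-$k$ shape is not simply connected (conditions (a) and (b) create a $4$-cycle through $TL_k$ and $(\Dtop[n]\pitchfork X)_{k+1}$), so Par\'e's theorem would require a nontrivial commuting extension analogous to the paper's extension of the hexagon, which you have not produced. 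Since Proposition~\ref{prop:powers} sits under the standing hypotheses that $\ce$ is locally finitely presentable and $T$ finitary, completeness is freely available and the extra pullbacks-only generality you aim for is not needed for the statement.
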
 

\begin{proof}
We have already seen that $\underline{s_T\ce}$ has copowers, so in
order to construct powers it suffices to check the 1-dimensional
universal property. In other words, we should show that for each $\ts
X\in s_T\ce$ the functor $(s_T\ce)\op\to\Set$ sending $\ts Y$ to
$\underline{s_T\ce}(\ts Y,\ts X)_n$ is representable.  We shall do this by
showing that it is a limit of representable functors; since $s_T\ce$
is locally finitely presentable and so in particular complete, the
result follows.

We do this using the analysis given in the previous section of the
$n$-simplices in $\underline{s_T\ce}(\ts Y,\ts X)$. We saw that to
give such an $n$-simplex we should give a map $v\colon \ts Y\to
\Delta[n-1]\pitchfork\ts X$ and a map $u\colon Y\to \Dtop[n]\pitchfork
X$ subject to two conditions.

Now $Y=U\ts Y$, and $U$ has a right adjoint $Q$ by Theorem~\ref{thm:comondicity}, thus to give
$u$ is equivalently to give $\widetilde{u}\colon \ts Y\to
Q(\Dtop[n]\pitchfork X)$.

Now the restriction map $R^{*}\colon [\Dtopop,\ce]\to[\Dtopop,\ce]$
has a right adjoint $R_{*}$ given by right Kan extension, and so we
can reformulate the condition in
Proposition~\ref{prop:u-u-condition-a} as saying that two composites
\[
  \begin{tikzcd}
    \ts Y \rar["\widetilde{u}"] & Q(\Dtop[n]\pitchfork U\ts X)
    \ar[r,shift left=1] \ar[r,shift right=1] &
    QR_{*}(\Dtop[n]\pitchfork T_{!}U\ts X)
  \end{tikzcd}
\]
are equal; here the unnmaed maps are natural in $\ts X$.

Similarly, we can reformulate the condition in
Proposition~\ref{prop:u-v-cond-b} as commutativity of a square
\[
  \begin{tikzcd}
    \ts Y \rar["\widetilde{u}"] \dar["v"] & Q(\Dtop[n]\pitchfork U\ts
    X) \dar \\
    \Delta[n-1]\pitchfork\ts X \rar & QR_{*}(U\Delta[n-1]\pitchfork
    T_{!}U\ts X)
  \end{tikzcd}
\]
where once again the unnamed maps are natural in $\ts X$.

It will follow that we can construct the power $\Delta[n]\pitchfork \ts X$
as a limit
\[
  \begin{tikzcd}
    \Delta[n]\pitchfork \ts X \rar \dar & Q(\Dtop[n]\pitchfork U\ts X)
    \dar \rar[shift left=1] \rar[shift right=1] &
    QR_{*}(\Dtop[n]\pitchfork T_{!}U\ts X) \\
    \Delta[n-1]\pitchfork\ts X \rar & QR_*(U\Delta[n-1]\pitchfork
    T_{!}U\ts X).
  \end{tikzcd}
\]
\end{proof}

\subsection{Enriched local presentability}

\begin{theorem}\label{thm:s-Presentable}
  If $\ce$ is locally finitely presentable and $T$ is finitary, then
  $s_T\ce$ is locally finitely presentable as a simplicially enriched
  category. 
\end{theorem}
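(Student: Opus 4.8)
The plan is to combine the completeness and cocompleteness of $\underline{s_T\ce}$ as an $s\Set$-category with a standard criterion for enriched local presentability, the only substantive work being to identify a strong generator of finitely presentable objects. Since $s\Set$ is locally finitely presentable as a cartesian closed category (its unit $\Delta[0]$ is finitely presentable, and finite products of finite simplicial sets are finite), the criterion I would invoke, going back to Kelly's theory of enriched structures defined by finite limits, is: a complete and cocomplete $s\Set$-category $\ck$ is locally finitely presentable as an $s\Set$-category provided its underlying ordinary category $\ck_0$ is locally finitely presentable and the finitely presentable objects of $\ck_0$ are finitely presentable in the enriched sense, i.e.\ their $s\Set$-representables preserve conical filtered colimits. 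Completeness and cocompleteness are already in hand: $\underline{s_T\ce}$ has powers by every $\Delta[n]$ (Proposition~\ref{prop:powers}), hence by every simplicial set since each is a colimit of these and $s_T\ce$ has conical limits, and dually it has all copowers (Proposition~\ref{prop:copower-in-T-Simp}) and conical colimits.

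For the generator I would take the (small, up to isomorphism) set of finitely presentable objects of the ordinary locally finitely presentable category $s_T\ce$ (Theorem~\ref{thm:set-lfp}); these strongly generate $s_T\ce=(\underline{s_T\ce})_0$. The point to verify is that each such $G$ is enriched-finitely-presentable. Using the copower adjunction one has $\underline{s_T\ce}(G,\ts Z)_n\cong s_T\ce(\Delta[n]\cdot G,\ts Z)$ naturally in $\ts Z$; since conical filtered colimits in $\underline{s_T\ce}$ are the ordinary ones in $s_T\ce$ and filtered colimits in $s\Set$ are degreewise, $\underline{s_T\ce}(G,-)$ preserves conical filtered colimits exactly when each copower $\Delta[n]\cdot G$ is finitely presentable in $s_T\ce$. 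As $\Delta[n]\cdot(-)$ is left adjoint to $\Delta[n]\pitchfork(-)$, we have $s_T\ce(\Delta[n]\cdot G,-)\cong s_T\ce(G,\Delta[n]\pitchfork-)$, so $\Delta[n]\cdot G$ will be finitely presentable once $\Delta[n]\pitchfork(-)\colon s_T\ce\to s_T\ce$ is finitary.

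The main obstacle, and the technical heart of the argument, is thus to show that the power functors $\Delta[n]\pitchfork(-)$ are finitary. I would prove this by induction on $n$ from the explicit recursive construction of Proposition~\ref{prop:powers}, which presents $\Delta[n]\pitchfork\ts X$ as a finite limit in $s_T\ce$ of the four functors $Q(\Dtop[n]\pitchfork U\ts X)$, $\Delta[n-1]\pitchfork\ts X$, $QR_\ast(\Dtop[n]\pitchfork T_{!}U\ts X)$, and $QR_\ast(U\Delta[n-1]\pitchfork T_{!}U\ts X)$. Because $s_T\ce$ is locally finitely presentable, a finite limit of finitary functors is finitary, so it is enough to see that each ingredient is finitary: $T_{!}$ because $T$ is finitary; $U$ because it is comonadic with finitary comonad $K$ (Theorem~\ref{thm:comondicity}), so it creates and in particular preserves filtered colimits; the auxiliary powers $\Dtop[n]\pitchfork(-)$ and $U\Delta[n-1]\pitchfork(-)$ on $[\Dtopop,\ce]$ because, by Proposition~\ref{prop:copowers-and-powers-in-BC}(2) together with \eqref{eqn:FR-right-multi-adj}, their weights are finite colimits of representables, so these powers are finite pointwise weighted limits; and $\Delta[n-1]\pitchfork(-)$ by the inductive hypothesis (with base case $\Delta[0]\pitchfork\ts X\cong\ts X$).

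Two of the ingredients need a short separate verification. For $R_\ast=\Ran_R$ I would use the factorization $R=U_RF_R$, giving $R_\ast\cong\Ran_{U_R}\circ\Ran_{F_R}$: here $\Ran_{F_R}$ is degreewise a finite product by Remark~\ref{rmk:RanFR}, hence finitary, while $\Ran_{U_R}$ is just precomposition with $F_R$ (since $F_R\dashv U_R$ makes each comma category $d/U_R$ have an initial object), hence finitary. For the cofree-coalgebra right adjoint $Q$ to $U$, I would show $Q$ preserves filtered colimits by applying the conservative, filtered-colimit-preserving functor $U$ to the canonical comparison map and using that $UQ=K$ is finitary. Assembling these, $\Delta[n]\pitchfork(-)$ is finitary for all $n$, so every finitely presentable $G\in s_T\ce$ is enriched-finitely-presentable; the finitely presentable objects then form a strong generator consisting of enriched-finitely-presentable objects, and the cited criterion yields that $\underline{s_T\ce}$ is locally finitely presentable as a simplicially enriched category.
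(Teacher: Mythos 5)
Your proposal is correct and takes essentially the same route as the paper: both arguments establish enriched completeness and cocompleteness from Propositions~\ref{prop:copower-in-T-Simp} and~\ref{prop:powers}, reduce enriched local presentability to the finitariness of the power functors $\Delta[n]\pitchfork(-)\colon s_T\ce\to s_T\ce$, and prove that finitariness by induction on $n$ using the finite-limit construction of $\Delta[n]\pitchfork\ts X$ from Proposition~\ref{prop:powers}. The only difference is one of packaging: where the paper simply cites \cite[Proposition~2.4]{BourkeLack-AccInftyCosmoi} for the reduction step, you re-derive that criterion by hand (ordinary finitely presentable objects form a strong generator, and the copower adjunction makes them enriched finitely presentable once the power functors are finitary), and you spell out the finitariness of the individual ingredients $T_{!}$, $U$, $Q$, $R_{*}$ and the powers over $[\Dtopop,\ce]$, which the paper compresses into the remark that they are all ``constructed from the finitary functor $T$ using various finite limits.''
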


\begin{proof}
 We saw in Theorem~\ref{thm:set-lfp} that the ordinary category $s_T\ce$ is locally finitely presentable. 
 We saw in Proposition~\ref{prop:copower-in-T-Simp} that the enriched category $\underline{s_T\ce}$ has copowers. 
 We saw in Proposition~\ref{prop:powers} that $\underline{s_T\ce}$ has powers by simplicial sets of the form $\Delta[n]$; but general simplicial sets are colimits of these, and so general powers can be constructed using powers by the $\Delta[n]$ and conical limits. 
 Thus $\underline{s_T\ce}$ has powers, and so is complete and cocomplete. 
 It will therefore be locally finitely presentable provided that the functors $\Delta[n]\pitchfork-\colon s_T\ce\to s_T\ce$ are finitary: see for example \cite[Proposition~2.4]{BourkeLack-AccInftyCosmoi}.
 This follows by induction on $n$ once again. On the one hand $\Delta[0]\pitchfork-$ is (isomorphic to) the identity, and so preserves all colimits. On the other hand,
  as in the proof of Theorem~\ref{prop:powers},
  $\Delta[n]\pitchfork-$ can be constructed as a finite limit of
  $\Delta[n-1]\pitchfork-$ and various other functors, so provided
  that these other functors are finitary, the result will follow since
  finite limits commute with filtered colimits. But these other
  functors are themselves all constructed from the finitary functor
  $T$ using various finite limits.
\end{proof}

\begin{theorem}\label{thm:Cat-presentable}
  If $\ce$ is locally finitely presentable and $T$ is finitary, then
  $\Cat_T(\ce)$ is reflective in $s_T\ce$ and the inclusion is finitary.
  It follows that $\Cat_T(\ce)$ is locally finitely presentable as a $2$-category.
\end{theorem}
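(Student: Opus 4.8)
The plan is to prove the two assertions in turn, treating the ordinary reflectivity statement first and then bootstrapping it to the enriched and $2$-categorical level. For the first assertion I would exhibit the inclusion $i\colon \Cat_T(\ce)\hookrightarrow s_T\ce$ as a continuous and finitary functor between locally finitely presentable categories, and then invoke the adjoint functor theorem for locally presentable categories (a limit-preserving accessible functor between such categories is a right adjoint) to produce the left adjoint, i.e.\ the reflector; full faithfulness of $i$ makes the adjunction a reflection, and finitariness of $i$ is exactly the claim that the inclusion is finitary. Both categories are indeed locally finitely presentable: $s_T\ce$ by Theorem~\ref{thm:set-lfp} and $\Cat_T(\ce)$ by Theorem~\ref{thm:lfp} (recall that an lfp $\ce$ has all finite limits). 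So everything reduces to two closure properties of $\Cat_T(\ce)$ inside $s_T\ce$: closure under filtered colimits and closure under limits.

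Closure under filtered colimits is routine. Filtered colimits in $s_T\ce$ are computed on underlying objects levelwise: the comonadic $U\colon s_T\ce\to[\Dtopop,\ce]$ of Theorem~\ref{thm:comondicity} creates colimits, and the further forgetful functor to $[\bbn,\ce]$ is finitary. Since $T$ is finitary and filtered colimits commute with the defining pullbacks~\eqref{eqn:nerve-condition} in the lfp category $\ce$, a filtered colimit of $T$-categories again satisfies the nerve condition. Hence $i$ preserves filtered colimits and is therefore finitary.

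Closure under limits is the crux, and I expect it to be the main obstacle. The difficulty is that limits in $s_T\ce$ are \emph{not} computed levelwise: $U$ is only a left adjoint, so it does not preserve limits, and the obstruction is already visible for binary products, since the presence of the last face maps $d_n\colon X_n\to TX_{n-1}$ together with the failure of $T$ to preserve products prevents one from putting a $T$-simplicial structure on the levelwise product. (Concretely, for the $T$-algebra $T$-categories of Example~\ref{ex:T-alg-as-T-cat} with $X_n=T^nA$, the product in $s_T\ce$ has $n$-th object $T^n(A\times B)$ rather than $T^nA\times T^nB$.) I would therefore not attempt to compute the limit directly, but instead argue through the cartesian-transformation description of Theorem~\ref{thm:opmorphisms}: a $T$-simplicial object is a $T$-category exactly when the structure transformation $\xi$ is cartesian, and the task becomes to show that the class of monad opmorphisms with cartesian $\xi$ is stable under the (conical) limits that exist in $s_T\ce$. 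Equivalently, one may construct the reflector explicitly as a ``Segalification'' forcing each comparison from $X_n$ to the pullback~\eqref{eqn:nerve-condition} to become invertible, by a recursion in the spirit of the power-by-$\Delta[1]$ construction of Section~\ref{sect:powers}; its existence gives reflectivity and, re-examined, the closure under limits. Either way, this is the delicate point.

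Finally, for the second assertion I would promote the ordinary reflection to the enriched setting. The $s\Set$-category $\underline{s_T\ce}$ has copowers (Proposition~\ref{prop:copower-in-T-Simp}) and powers by every $\Delta[n]$ (Proposition~\ref{prop:powers}), so it is tensored and cotensored; combined with the closure of $T$-categories under powers by $\Delta[1]$ (Proposition~\ref{prop:T-cat-power}) --- which I would extend to all $\Delta[n]$ --- this upgrades the ordinary reflection to an $s\Set$-enriched reflection with finitary reflector. Applying the enriched reflection theorem to the lfp $s\Set$-category $\underline{s_T\ce}$ of Theorem~\ref{thm:s-Presentable} then exhibits $\underline{\Cat_T(\ce)}$ as locally finitely presentable as an $s\Set$-category. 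To reach the stated $2$-categorical conclusion I would change base along the nerve embedding $\Cat\hookrightarrow s\Set$: since the hom-objects of $\underline{\Cat_T(\ce)}$ are nerves of categories (Proposition~\ref{prop:homs-are-cats}) and the nerve is fully faithful, preserves finite products, and is finitary, the enriched local presentability transports to local finite presentability of $\underline{\Cat_T(\ce)}$ as a $\Cat$-enriched category, that is, as a $2$-category.
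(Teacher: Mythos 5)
Your overall route is the same as the paper's: show that $\Cat_T(\ce)$ is closed in $s_T\ce$ under limits and filtered colimits, deduce finitary reflectivity from the reflection theorem for locally presentable categories, and then transfer local presentability from the $s\Set$-enrichment to the $2$-categorical one using Proposition~\ref{prop:homs-are-cats}. Your filtered-colimit argument is correct, and your change-of-base step along the nerve is in substance the paper's own final step (the paper phrases it as: $\Cat_T(\ce)$ is finitarily reflective in $[\cg\op,s\Set]$ with $\cg=(\Cat_T(\ce))_f$, hence, its homs being nerves of categories, finitarily reflective in $[\cg\op,\Cat]$, hence lfp as a $2$-category). Your promised extension of Proposition~\ref{prop:T-cat-power} from $\Delta[1]$ to $\Delta[n]$ is also unproblematic: since homs into a $T$-category are nerves, $\Delta[n]\pitchfork\ts X$ is the iterated pullback $(\Delta[1]\pitchfork\ts X)\times_{\ts X}\cdots\times_{\ts X}(\Delta[1]\pitchfork\ts X)$ over the spine, so closure under powers by $\Delta[1]$ together with closure under conical limits suffices.

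The one genuine gap is the step you yourself flag and leave open: closure of $\Cat_T(\ce)$ under limits in $s_T\ce$. You are right that limits in $s_T\ce$ are not levelwise (your $T^n(A\times B)$ example is correct), but neither of your proposed strategies resolves this as stated. The first merely renames the problem: since $U\colon s_T\ce\to[\Dtopop,\ce]$ does not preserve limits, the structure map $\xi^{\ts L}$ of a limit $\ts L$ is \emph{not} a limit of the $\xi^{i}$, so ``stability of cartesian transformations under limits'' cannot be checked without first describing how limits in $s_T\ce$ are formed --- which is the original difficulty. The second (an explicit Segalification reflector) would prove reflectivity but is far more work than needed, and finitariness of the inclusion would still require the filtered-colimit argument anyway. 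The efficient argument is direct: given a diagram of $T$-categories $\ts X^i$, form its limit $\ts P$ in $\Cat_T(\ce)$, which exists by Theorem~\ref{thm:lfp} and is created over $\Gph_T(\ce)$; concretely $P_0=\lim_i(X^i)_0$, $P_1$ is the pullback of $\lim_i (X^i)_1\to\lim_i\bigl(T(X^i)_0\times(X^i)_0\bigr)\leftarrow TP_0\times P_0$, and $P_n$ for $n\ge 2$ is forced by the nerve condition \eqref{eqn:nerve-condition}. Then verify that $\ts P$ is a limit in all of $s_T\ce$: a cone $f^i\colon\ts Y\to\ts X^i$ from an arbitrary $T$-simplicial object $\ts Y$ determines $f_0$ and $f_1$ by the universal properties of $P_0$ and $P_1$ (using that each $f^i$ commutes with $d_0$ and $d_1$), and then unique $f_n$ for $n\ge2$ recursively via the pullback defining $P_n$, the compatibility of the $f_n$ with the remaining faces and degeneracies, and the uniqueness of $f$, all following by induction from the same pullback property and the $T$-simplicial identities (exactly as in the proofs of Propositions~\ref{prop:homs-are-cats} and~\ref{prop:n-cells}). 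With that lemma supplied, your proof is complete and agrees with the paper's, which for its part asserts this closure without proof.
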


\begin{proof}
  Since $\Cat_T(\ce)$ is closed in $s_T\ce$ under limits and filtered
  colimits, it is reflective with finitary inclusion, and so locally
  finitely presentable as a simplicially-enriched category. So it is finitarily
  reflective in $[\cg\op,s\Set]$ where $\cg=(\Cat_T(\ce))_f$, and so
  also finitarily reflective in $[\cg\op,\Cat]$. Thus it is locally
  finitely presentable as a 2-category.
\end{proof}

\appendix

\section{Some results on locally finitely presentable categories}\label{apx:lfp}

The results in this appendix are surely known, but we could not find a
suitable reference so have treated them here. They are used in the
proofs of Theorem~\ref{thm:lfp} and Theorem~\ref{thm:set-lfp}.
For a locally finitely presentable category $\ca$, let $\ca_f$ be its full subcategory consisting of all finitely presentable objects.

\begin{proposition}\label{prop:comma}
  Let $\ca$ and $\cb$ be locally finitely presentable categories, and
  $F\colon\ca\to\cb$ a finitary functor. Then the comma category
  $\cb/F$ is locally finitely presentable, the projections
  $U\colon\cb/F\to\ca$ and $V\colon\cb/F\to\cb$ are left adjoints (and so in particular finitary), and $U$ is also a right adjoint.
\end{proposition}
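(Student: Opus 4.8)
The plan is to verify directly that $\cb/F$ is locally finitely presentable and then read off the adjoints. Recall that an object of $\cb/F$ is a triple $(B,A,f)$ with $B\in\cb$, $A\in\ca$ and $f\colon B\to FA$ in $\cb$, while a morphism $(B,A,f)\to(B',A',f')$ is a pair $(b\colon B\to B',a\colon A\to A')$ with $f'b=(Fa)f$. The first observation is that colimits are computed componentwise: given a diagram $(B_i,A_i,f_i)$ one takes $B=\colim_i B_i$ and $A=\colim_i A_i$, and the maps $B_i\xrightarrow{f_i}FA_i\to FA$ induce a unique $f\colon B\to FA$. This uses only the colimiting cocone on the $A_i$ and no preservation property of $F$, so $\cb/F$ is cocomplete and both $U$ and $V$ preserve all colimits. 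At this point $U$ is visibly a right adjoint: the functor $A\mapsto(0_\cb,A,!)$, with $0_\cb$ initial and $!$ the unique map, is left adjoint to $U$, since a morphism out of $(0_\cb,A,!)$ is determined by its $\ca$-component. Dually, $A\mapsto(FA,A,1_{FA})$ is right adjoint to $U$, so $U$ is a left adjoint too.

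The substance is showing that $\cb/F$ is locally finitely presentable, for which I would use the standard criterion that a cocomplete category is locally finitely presentable as soon as it has a small family of finitely presentable objects of which every object is a filtered colimit. The candidate family is $\cs=\{(B,A,f):A\in\ca_f,\ B\in\cb_f\}$, which is essentially small. The first claim is that each such $(B,A,f)$ is finitely presentable in $\cb/F$. Given a filtered colimit $(B',A',f')=\colim_k(B_k,A_k,f_k)$, computed componentwise and with $FA'=\colim_k FA_k$ because $F$ is finitary, a morphism $(B,A,f)\to(B',A',f')$ consists of $b\colon B\to B'$ and $a\colon A\to A'$ with $f'b=(Fa)f$; since $B$ and $A$ are finitely presentable, $b$ and $a$ factor through a common stage, and the defining equation, which lives in $\cb(B,FA')=\colim_k\cb(B,FA_k)$, already holds at a sufficiently late stage. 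This yields the bijection $\colim_k(\cb/F)\bigl((B,A,f),(B_k,A_k,f_k)\bigr)\cong(\cb/F)\bigl((B,A,f),(B',A',f')\bigr)$ witnessing finite presentability.

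The main step, and the part I expect to be the real obstacle, is to exhibit an arbitrary $(B',A',f')$ as a filtered colimit of objects of $\cs$. Writing $A'=\colim_i A_i$ and $B'=\colim_j B_j$ as the canonical filtered colimits of finitely presentable objects over $\ca_f/A'$ and $\cb_f/B'$, I would factor each composite $B_j\xrightarrow{f'\beta_j}FA'=\colim_i FA_i$ through some finite stage $FA_i$ — possible since $B_j$ is finitely presentable and $F$ is finitary — and assemble from these factorizations a single index category $\cd$ whose objects are compatible triples $(B_j,A_i,g\colon B_j\to FA_i)$ mapping to $(B',A',f')$. The work is then to check that $\cd$ is filtered and that the two projections $\cd\to\cb_f/B'$ and $\cd\to\ca_f/A'$ are final, so that the componentwise colimit of the evident diagram $\cd\to\cb/F$ recovers $B'$, $A'$, and (by construction) $f'$; both filteredness and finality rest on repeatedly using finite presentability of the $B_j$ and $A_i$ to push equations and factorizations to later stages, and this bookkeeping is the fiddly heart of the argument.

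Granting this, $\cb/F$ is locally finitely presentable. It then only remains to treat $V$: it is a colimit-preserving functor between locally finitely presentable categories, hence a left adjoint by the standard fact that cocontinuous functors between locally presentable categories admit right adjoints. Together with the two explicit adjoints exhibited for $U$, this gives the full statement.
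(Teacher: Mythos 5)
Your proposal is correct, but it establishes local finite presentability by a genuinely different route from the paper. The two arguments share all the bookends: componentwise colimits in $\cb/F$ (needing no hypothesis on $F$), the explicit left adjoint $A\mapsto(0\to FA)$ and right adjoint $A\mapsto(1_{FA}\colon FA\to FA)$ of $U$, the identification of the objects $(B,A,f)$ with $B\in\cb_f$ and $A\in\ca_f$ as finitely presentable (the step where finitariness of $F$ enters), and the cocontinuity-plus-local-presentability argument for $V$. The divergence is the lfp criterion itself. You verify the definition directly: every object $(B',A',f')$ must be exhibited as a filtered colimit of objects from your family $\cs$, which forces you to build the canonical index category $\cd$ of approximations, prove it filtered, and prove the projections to $\cb_f/B'$ and $\ca_f/A'$ final. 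The paper instead invokes the strong-generator criterion of \cite[Theorem~1.11]{Adamek-Rosicky-book} (in Kelly's sense of strong generator, as its footnote explains): a cocomplete category with a strong generator of finitely presentable objects is locally finitely presentable. Checking strong generation only requires showing that a morphism of $\cb/F$ inverted by all homs out of the generators is invertible, which the paper does in a few lines --- first for the $\ca$-component, using the generators $0\to FG$, then for the $\cb$-component by a single factorization of a map $H\to FA'$ through the filtered colimit $FA'\cong\colim_iFG_i$. What each approach buys: the paper's criterion eliminates precisely the ``fiddly bookkeeping'' you correctly flag as the heart of your argument, whereas your route is more self-contained and also produces the canonical (density-type) presentation of each object. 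Your unexecuted step does go through: filteredness of $\cd$ follows from finite coproducts and coequalizers of finitely presentable objects computed componentwise (with the structure maps $g$ induced via the jointly epimorphic injections, resp.\ epimorphic coequalizer maps), nonemptiness of the relevant comma categories is your factorization of $f'\beta$ through some $FA_i$, and their connectedness follows from the essential uniqueness of factorizations of maps out of finitely presentable objects through filtered colimits (using again that $F$ is finitary), together with pushouts in $\ca_f$. But be aware this is real work of roughly the same length as everything you did write down, not a routine remark.
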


\begin{proof}
  First observe that $\cb/F$ is cocomplete and the projections
  preserve colimits: the colimit of a diagram in $\cb/F$ involving
  maps $b_i\colon B_i\to FA_i$ is the map $\colim_iB_i\to
  F(\colim_iA_i)$. Given this description of colimits, it is clear
  that an object $H\to FG$ in $\cb/F$ is finitely presentable if
  $H$ is finitely presentable in $\cb$ and $G$ is finitely presentable
  in $\ca$.

Since $U$ and $V$ are cocontinuous, they will be left adjoints if $\cb/F$ is locally finitely presentable. In fact it is easy to construct the right adjoint to $U$ directly: it sends $A\in\ca$ to $1\colon FA\to FA$. The left adjoint to $U$ sends $A$ to $0\to FA$.

  Thus $\cb/F$ will be locally finitely presentable provided that the
  objects of the form $H\to FG$ as above constitute a strong
  generator (in the sense of \cite[Section~3.6]{Kelly-book}), by \cite[Theorem~1.11 and Remark below Definition~1.9]{Adamek-Rosicky-book}.\footnote{Whereas the definition of strong generator in \cite[0.6]{Adamek-Rosicky-book} differs from that of \cite[Section~3.6]{Kelly-book}, \cite[Proof of Theorem~1.11]{Adamek-Rosicky-book} works with respect to the latter definition without essential changes.} Suppose then that
  \[
    \begin{tikzcd}
      B \dar["b"'] \rar["g"] & B' \dar["b'"] \\
      FA \rar["Ff"] & FA'
    \end{tikzcd}
  \]
  is a morphism in $\cb/F$, which is inverted by
  $\cb/F((h,G),-)\colon\cb/F\to\Set$ for all $h\colon H\to FG$ with
  $G\in\ca_f$ and $H\in\cb_f$. Since the initial object $0$ of $\cb$
  is finitely presentable, this includes objects of the form $0\to
  FG$, and it follows that $\ca(G,f)$ is invertible for all
  $G\in\ca_f$, and so that $f$ is invertible.

  To see that $g$ is invertible, let $y\colon H\to B'$ be given, with $H\in \cb_f$. Write
  $A=\colim_iG_i$ as a filtered colimit of finitely presentable
  objects in $\ca$. Since $H$ is finitely presentable, $F$ is finitary, and $f$ is
  invertible, the composite $H\xrightarrow{y} B'\xrightarrow{b'} FA'$ factorizes through
  $G_i\to A\xrightarrow{f} A'$ for some $i$, and now there is a unique $x$ 
  as in
  \[
    \begin{tikzcd}
      H \ar[drr,"y"] \ar[dr,dashed,"x"'] \ar[d] \\
      FG_i \ar[dr] & B \ar[d,"b"] \rar["g" '] & B' \dar["b' "] \\
      & FA \rar["Ff"] & FA'
    \end{tikzcd}
  \]
  and so $\cb(H,g)$ is invertible and so $g$ is invertible. This
  proves that $\cb/F$ is locally finitely presentable. 
\end{proof}

\begin{proposition}\label{prop:ins}
  Let $\ca$ and $\cb$ be locally finitely presentable categories, let
  $F,G\colon\ca\to\cb$ be functors, with $F$ finitary and $G$ having a left
  adjoint $L$. Then the inserter $\mathrm{Ins}(F,G)$ is locally
  finitely presentable and the projection
  $P\colon\mathrm{Ins}(F,G)\to\ca$ is a finitary right adjoint.
\end{proposition}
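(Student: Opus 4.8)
The plan is to follow the template of Proposition~\ref{prop:comma}, but to recognise the inserter as a category of algebras for an endofunctor. Since $L\dashv G$, the adjunction bijection $\cb(FA,GA)\cong\ca(LFA,A)$ transposes an object $(A,f\colon FA\to GA)$ of $\Ins(F,G)$ into a pair $(A,\tilde f\colon LFA\to A)$, and a routine transposition (using naturality of the counit $\epsilon\colon LG\to 1$) turns the inserter morphism condition $Ga\cdot f=f'\cdot Fa$ into the algebra condition $a\cdot\tilde f=\tilde f'\cdot LFa$. Thus $\Ins(F,G)$ is isomorphic over $\ca$ to the category $\ca^{H}$ of algebras for the endofunctor $H:=LF\colon\ca\to\ca$, with $P$ corresponding to the forgetful functor. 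The structural point is that $H$ is finitary: $F$ is finitary by hypothesis and $L$, being a left adjoint, is cocontinuous and hence finitary. Note that this uses the hypothesis exactly: $G$ is not assumed finitary, only that it has a left adjoint.

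Granting the identification, I would read off the three assertions from the theory of finitary endofunctors. Completeness is automatic, since the forgetful functor from a category of $H$-algebras creates all limits that exist in $\ca$ (the structure map on $\lim A_i$ is $H(\lim A_i)\to\lim HA_i\xrightarrow{\lim\tilde f_i}\lim A_i$, with no preservation needed). For the left adjoint to $P$ I would build the free $H$-algebra, as the $\omega$-colimit $H^{*}A=\colim_{n}W_{n}A$ with $W_0=\mathrm{Id}_\ca$ and $W_{n+1}=\mathrm{Id}_\ca+HW_{n}$; this converges at $\omega$ precisely because $H$ is finitary, yielding a finitary monad $H^{*}$ with $\ca^{H}\cong\ca^{H^{*}}$. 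Hence $P$ is a right adjoint (indeed monadic). That $P$ is finitary follows because it creates filtered colimits: a filtered colimit of $H$-algebras is computed on underlying objects, its structure map being $H(\colim A_i)\cong\colim HA_i\xrightarrow{\colim\tilde f_i}\colim A_i$, where the isomorphism again uses that $H$ is finitary.

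Finally, local finite presentability of $\Ins(F,G)\cong\ca^{H^{*}}$ follows because the category of algebras for a finitary monad on a locally finitely presentable category is again locally finitely presentable. Alternatively, to stay entirely within the comma-category style, I would verify directly—exactly as in Proposition~\ref{prop:comma}—that the free algebras $H^{*}G$ on finitely presentable $G\in\ca_f$ constitute a strong generator of finitely presentable objects (each $H^{*}G$ being finitely presentable since $P$ is finitary and reflects finite presentability along its left adjoint), and then apply \cite[Theorem~1.11]{Adamek-Rosicky-book}.

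The main obstacle is cocompleteness of $\Ins(F,G)$ together with the control of its finitely presentable objects. Unlike the comma category, where colimits were read off directly via the single always-available comparison $\colim F\to F\colim$, the inserter admits no such elementary colimit formula for non-filtered diagrams, and cocompleteness genuinely rests on the existence and finitariness of the free monad $H^{*}$ (equivalently, on the convergence of the free-algebra construction at $\omega$). Everything else—creation of limits, creation of filtered colimits from the finitariness of $H$, and the strong-generator verification—is a direct transcription of the comma-category argument.
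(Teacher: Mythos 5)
Your proof is correct and takes essentially the same route as the paper's: both identify $\Ins(F,G)$ with the category of algebras for the finitary endofunctor $LF$ (the paper cites Bird's thesis for this identification), pass to the free finitary monad, and conclude from the fact that algebras for a finitary monad on a locally finitely presentable category form a locally finitely presentable category with a finitary right-adjoint forgetful functor. The details you supply explicitly --- the adjoint transposition, the $\omega$-chain free-algebra construction, and creation of limits and filtered colimits --- are exactly what the paper leaves implicit.
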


\begin{proof}
  As observed in \cite[Proposition~2.14]{BirdPhD}, the inserter
  $\mathrm{Ins}(F,G)$ can equivalently be described as the category of
  algebras for the (finitary) endofunctor $LF$ of the locally finitely
  presentable category $\ca$. A finitary endofunctor on a locally finitely presentable category has a (finitary) free monad, and the category of algebras for the monad is the category of algebras for the endofunctor. Since the category of algebras for a finitary monad on a locally finitely presentable category is again locally finitely presentable, and the forgetful functor is finitary, the result follows. 
\end{proof}

\begin{proposition}\label{prop:eq}
  Let $\ca$ and $\cb$ be locally finitely presentable categories, let
  $F,G\colon\ca\to\cb$ be functors, with $F$ finitary and $G$ having a left
  adjoint $L$, and let $\alpha,\beta\colon F\to G$ be a pair of
  natural transformations. Then the equifier $\mathrm{Eq}(\alpha,\beta)$ is locally
  finitely presentable and the projection
  $P\colon\mathrm{Eq}(\alpha,\beta)\to\ca$ is a finitary right adjoint.
\end{proposition}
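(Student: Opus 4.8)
The plan is to realize $\mathrm{Eq}(\alpha,\beta)$ as the full subcategory of $\ca$ spanned by those objects $A$ for which $\alpha_A=\beta_A\colon FA\to GA$, to show that this subcategory is closed in $\ca$ both under limits and under filtered colimits, and then to invoke the general theory of locally finitely presentable categories to conclude. Closure under filtered colimits will make the inclusion $P$ finitary; closure under limits, together with the accessibility coming from closure under filtered colimits, will make $\mathrm{Eq}(\alpha,\beta)$ a reflective and locally finitely presentable subcategory, so that $P$ is a finitary right adjoint.

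For closure under filtered colimits, let $A=\colim_i A_i$ be a filtered colimit of objects of $\mathrm{Eq}(\alpha,\beta)$, with cocone $c_i\colon A_i\to A$. Since $F$ is finitary, $FA=\colim_i FA_i$ and the maps $Fc_i$ form a jointly epimorphic family. Naturality of $\alpha$ and $\beta$ gives $\alpha_A\cdot Fc_i=Gc_i\cdot\alpha_{A_i}=Gc_i\cdot\beta_{A_i}=\beta_A\cdot Fc_i$ for every $i$, using $\alpha_{A_i}=\beta_{A_i}$; as the $Fc_i$ are jointly epic this forces $\alpha_A=\beta_A$, so $A\in\mathrm{Eq}(\alpha,\beta)$. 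This step uses only that $F$ is finitary, and it directly shows that $P$ preserves filtered colimits.

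For closure under limits, let $A=\lim_i A_i$ with projections $\pi_i\colon A\to A_i$, each $A_i\in\mathrm{Eq}(\alpha,\beta)$. As $G$ has a left adjoint it preserves limits, so $GA=\lim_i GA_i$ and the maps $G\pi_i$ are jointly monomorphic. Naturality gives $G\pi_i\cdot\alpha_A=\alpha_{A_i}\cdot F\pi_i=\beta_{A_i}\cdot F\pi_i=G\pi_i\cdot\beta_A$, and joint monicity of the $G\pi_i$ then yields $\alpha_A=\beta_A$. Thus $\mathrm{Eq}(\alpha,\beta)$ is closed under all limits existing in $\ca$; here the only property of $G$ used is that it preserves limits, which is exactly where the hypothesis that $G$ has a left adjoint enters.

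It remains to pass from these two closure properties to the conclusion, and I expect this to be the main obstacle, as it is the one part not reducing to a direct diagram chase. A full subcategory of a locally finitely presentable category that is closed under limits and under filtered colimits is reflective and itself locally finitely presentable, with the inclusion a finitary right adjoint; this can be cited from \cite{Adamek-Rosicky-book} (reflectivity of accessibly embedded, limit-closed subcategories). Alternatively, and more in the constructive spirit of Proposition~\ref{prop:ins}, one can build the reflector explicitly by the evident transfinite iteration that first coequalizes $\alpha$ and $\beta$ and then repairs the condition at each higher stage; because $F$ is finitary this process converges at $\omega$, giving a finitary reflector directly. Either route yields that $P\colon\mathrm{Eq}(\alpha,\beta)\to\ca$ is a finitary right adjoint and that $\mathrm{Eq}(\alpha,\beta)$ is locally finitely presentable.
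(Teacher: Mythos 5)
Your proof is correct, but it takes a genuinely different route from the paper's. You realize $\mathrm{Eq}(\alpha,\beta)$ as the full subcategory of \ca on objects where $\alpha$ and $\beta$ agree, check by elementary diagram chases that it is closed under limits (using only that $G$ preserves limits, which is exactly where the left adjoint enters) and under filtered colimits (using that $F$ is finitary), and then invoke the Reflection Theorem of Ad\'amek--Rosick\'y \cite{Adamek-Rosicky-book}. The paper instead follows Bird's thesis \cite{BirdPhD}: it transposes $\alpha,\beta$ across $L\dashv G$ to a pair $\alpha',\beta'\colon LF\to 1$, forms their pointwise coequalizer $\eta\colon 1\to S$, observes that $(S,\eta)$ is a finitary \emph{well-pointed} endofunctor whose algebras are precisely the objects inverting $\eta$, i.e.\ the objects of $\mathrm{Eq}(\alpha,\beta)$, and then obtains the reflector and closure under filtered colimits from Kelly's transfinite machinery \cite{Kelly-transfinite}; this parallels the treatment of inserters in Proposition~\ref{prop:ins} and yields an explicit construction of the reflection rather than an appeal to a black-box theorem. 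Your route buys shorter, more elementary verifications and a lighter use of the hypotheses (the functor $L$ itself never appears); the paper's buys constructivity and uniformity with the inserter case. One small point you should make explicit in the citation step: the Reflection Theorem as usually stated concludes that the subcategory is reflective and locally \emph{presentable}; to pin down local \emph{finite} presentability one adds the standard remark that a reflective subcategory of a locally finitely presentable category whose inclusion is finitary is itself locally finitely presentable, because the reflections of the finitely presentable objects of \ca form a strong generator of $\mathrm{Eq}(\alpha,\beta)$ consisting of finitely presentable objects. Finally, note that your fallback option --- the transfinite iteration that coequalizes and then ``repairs'' at higher stages, converging at $\omega$ by finitariness --- is in substance the paper's own argument, so the two approaches reconnect there.
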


\begin{proof}
  This time we largely follow the argument of
  \cite[Proposition~2.16]{BirdPhD} (which as stated would need $F$ to
  have a left adjont as well). First observe that the equifier of
  $\alpha$ and $\beta$ is equally the equifier of the induced maps
  $\alpha',\beta'\colon LF\to 1$, and that $LF$ is once again
  finitary. Now let $\eta \colon 1\to S$ be the (pointwise) coequalizer of
  $\alpha'$ and $\beta'$. 
  The resulting $S$ is a colimit of finitary
  functors and so is still finitary. The equifier of $\alpha'$ and
  $\beta'$ is likewise the inverter of $\eta$.

  Now $\eta S.\eta=S\eta.\eta$ by naturality, but $\eta$ is an
  epimorphism since it is a coequalizer, so $\eta S=S\eta$ and the
  pointed endofunctor $(S,\eta)$ is {\em well-pointed} in the sense of
  \cite[Section~5]{Kelly-transfinite}. Thus by \cite[Proposition~5.2]{Kelly-transfinite} an algebra for the pointed
  endofunctor is the same as an object $A\in\ca$ with $\eta A$
  invertible; in other words, an object of the inverter. Thus the desired category $\mathrm{Eq}(\alpha,\beta)$ is the
  category of algebras for the finitary well-pointed endofunctor
  $(S,\eta)$. 
  The existence of a left adjoint of $P\colon \mathrm{Eq}(\alpha,\beta)\to\ca$ follows from \cite[Theorem~6.2]{Kelly-transfinite} (with $\mathcal E=\mathcal E'=\{\text{isomorphisms in $\ca$}\}$ and $\alpha=\omega$); $\mathrm{Eq}(\alpha,\beta)$ (which is a full subcategory of $\ca$) is closed under filtered colimits in $\ca$ because $S$ is finitary. 
\end{proof}

\bibliography{my}
\bibliographystyle{plain}

\end{document}